\documentclass[12pt, reqno]{amsart}

 \usepackage{graphicx}
\usepackage{tikz}
\usepackage{tcolorbox}
\usepackage{subcaption}

\usetikzlibrary{shapes, positioning,patterns}
\usetikzlibrary{trees}
\usetikzlibrary{fit}
\usepackage{float}
\usepackage{ulem}
\usepackage[justification=centering]{caption}
\usepackage[all]{xy}
\usepackage{amsmath}
\usepackage{amsfonts}
\usepackage{amssymb}
\usepackage[utf8]{inputenc}
\usepackage{cite}
\usepackage{color}

\usepackage{epic}

\usepackage{lmodern}

\numberwithin{equation}{section}

\newtheorem{thm}{Theorem}[section]
\newtheorem{defi}[thm]{Definition}
\newtheorem{lem}[thm]{Lemma}
\newtheorem{prop}[thm]{Proposition}

\newtheorem{pf}{proof}[section]
\newtheorem{rem}[thm]{Remark}

\newtheorem{prob}[thm]{Problem}

\def\qed{\hfill \rule{4pt}{7pt}}

\allowdisplaybreaks[4]

\begin{document}

\title[Modular   Nahm Sums for symmetrizable matrices]{Modular
Nahm sums for symmetrizable matrices\\[5pt]  of indices $({2,\ldots, 2},1)$ and $({1,\ldots, 1},2)$}

\author[J.Q.D. Du]{Julia Q.D. Du}
\address[ Julia Q.D. Du]{School of Mathematical Sciences, Hebei Key Laboratory of Computational Mathematics and Applications,  Hebei Research Center of the Basic discipline Pure Mathematics, Hebei Normal University, Shijiazhuang 050024, P.R. China}
\email{qddu@hebtu.edu.cn}

\author[K.Q. Ji]{Kathy Q. Ji}
\address[Kathy Q. Ji]{Center for Applied Mathematics and KL-AAGDM, Tianjin University, Tianjin 300072, P.R. China}
\email{kathyji@tju.edu.cn}

\author[E.Y.Y. Shen]{Erin Y.Y. Shen}
\address[Erin Y.Y. Shen]{School of Mathematics, Hohai University, Nanjing 210098,  P. R. China}
\email{shenyy@hhu.edu.cn}

\author[C.X.Y. Xu]{Clara X.Y. Xu}
\address[Clara X.Y. Xu]{Center for Applied Mathematics and KL-AAGDM, Tianjin University, Tianjin 300072, P.R. China}
\email{claraxu@tju.edu.cn}

 \date{\today}
\begin{abstract}
In this paper, we present three families of modular   Nahm sums for symmetrizable matrices with arbitrary rank
$r\geq 2$  of indices $({2,\ldots, 2},1)$ and $({1,\ldots, 1},2)$. Specifically, the cases corresponding to
 $r = 2$ and $r = 3$ of these   families
have been previously demonstrated by Mizuno, Warnaar, and B. Wang--L. Wang.
 Building upon these three families,   we construct two vector-valued automorphic forms,
one of which is a vector-valued modular function when $r$ is odd.
\end{abstract}

\keywords{Nahm sums, symmetrizable matrices, Rogers--Ramanujan identities, modular functions, automorphic forms, transformation formulas}
\maketitle

\section{Introduction}

A Nahm sum or Nahm series was defined by Nahm \cite{Nahm-1995,Nahm-1994,Nahm-2007} as the following class of \(q\)-hypergeometric series:
    \begin{equation}
    f_{A,\boldsymbol{b},c}(q):=\sum_{\boldsymbol{n}=(n_1,\ldots,n_r)^T\in\mathbb{N}^r}\frac{q^{\frac{1}{2}\boldsymbol{n}^TA\boldsymbol{n}+\boldsymbol{n}^T\boldsymbol{b}+{c}}}{(q;q)_{n_1}\cdots(q;q)_{n_r}},
    \end{equation}
    where \(r\geq1\) is a positive integer, \(A \in\mathbb{Q}^{r\times r} \) is a real positive definite symmetric matrix,   \(\boldsymbol{b}\in\mathbb{Q}^r\) is a vector and \(c\in\mathbb{Q}\) is a scalar.
Here and throughout the paper,  we  adopt the standard notation on $q$-series \cite{Andrews-1976, Gasper-2004}:
\[
(a;q)_\infty =\prod_{n=0}^\infty (1-aq^n),
\]
\[ (a;q)_n=\frac{(a;q)_\infty}{(aq^n;q)_\infty},\\[3pt]
\]
\[(a_1,a_2,\ldots, a_k;q)_n=(a_1;q)_n(a_2;q)_n\cdots(a_k;q)_n,\]
and
\[(a_1,a_2,\ldots, a_k;q)_\infty=(a_1;q)_\infty(a_2;q)_\infty\cdots(a_k;q)_\infty.
\]

Nahm \cite{Nahm-2007} proposed the following famous  problem: find all $(A,\boldsymbol{b},c)$  with rational entries such that \(f_{A,\boldsymbol{b},c}(q)\) is a modular function, where $q=e^{2\pi i\tau}$
and $\tau\in \mathbb{H} := \{\tau \in \mathbb{C} : \text{Im}\ \tau > 0\}$. Such a triple $(A,\boldsymbol{b},c)$ is called a modular triple.    Recall that  a modular function is certain meromorphic function defined on $\mathbb{H}$ satisfying modular transformation and certain holomorphic/meromorphic conditions (see Definition \ref{defi-modu}).
Nahm also \cite{Nahm-2007} made a conjecture which provides a
necessary and sufficient condition about the matrix \( A \) so that it is the matrix part of some modular triples.  The conjecture  is formulated in terms of the Bloch group and a system of polynomial equations induced by \( A \), see Zagier \cite[p. 43]{Zagier-2007} for more details.

In 2007, Zagier \cite{Zagier-2007} made an important progress towards Nahm's problem. He confirmed Nahm's conjecture for $r = 1$ by showing
that there are exactly seven modular triples.
For $r\geq 2$, Nahm's conjecture is known to be false in general.
Vlasenko and Zwegers \cite{Vlasenko-Zwegers-2011}
provided a counterexample when $r = 2$,
that is, for $A=\left(~3/4~~-1/4 \atop -1/4 ~~~3/4 \right)$,
there exists a $\boldsymbol{b}$ and $c$ such that $f_{A,\boldsymbol{b},c}(q)$ is modular, but
not all solutions of Nahm's equation give a torsion element in the Bloch group.
For  \( r = 2 \) and \( 3 \), Zagier \cite{Zagier-2007} found many possible modular triples, which were later affirmed by Cao--Rosengren--Wang \cite{Cao-Rosengren-Wang-2024},
Cherednik--Feigin \cite{Cherednik-Feigin-2013},
Vlasenko--Zwegers \cite{Vlasenko-Zwegers-2011}, Wang \cite{Wang-2024a, Wang-2024b}
and Zagier \cite{Zagier-2007}.

According to physical predictions, Zagier \cite{Zagier-2007} expects
that if a Nahm sum associated with a matrix $A$ is modular,
then the collection of all modular Nahm sums for $A$ spans
a vector space that is invariant under the action of $SL_2(\mathbb{Z})$
in the bosonic case, or at least under $\Gamma(2)$ in the fermionic
case. Note that in the language of the physicists,
 for a sum-product identity,
the sum side is called a ``fermionic'' representation,
the product side is called a ``bosonic'' representation, see \cite{Sill-2004}.
Furthermore, Zagier  provided explicit transformation formulas for the vector-valued function arising from the Rogers--Ramanujan identities under the action of
$SL_2(\mathbb{Z})$. Building on this, Milas and Wang \cite{Milas-Wang-2024}
established analogous transformation laws for vector-valued functions composed of generalized tadpole Nahm sums.

Recently,  Mizuno \cite{Mizuno-2025} considered generalized Nahm sums associated with symmetrizable matrices, which take the form:
\begin{equation}\label{defi:GNahm}
\widetilde{f}_{A,\boldsymbol{b},{c}, \boldsymbol{d}}(q):=\sum_{\boldsymbol{n}=(n_1,\ldots,n_r)^T\in\mathbb{N}^r}\frac{q^{\frac{1}{2}\boldsymbol{n}^TAD\boldsymbol{n}+\boldsymbol{n}^T\boldsymbol{b}+c}}{(q^{d_1};q^{d_1})_{n_1}\cdots(q^{d_r};q^{d_r})_{n_r}},
\end{equation}
where  \(\boldsymbol{d}=(d_1,\ldots,d_r)\in\mathbb{Z}^r_{>0},\) \(\boldsymbol{b}\in\mathbb{Q}^r\) is a vector and \(c\in\mathbb{Q}\) is a scalar. Denote by  \(D:=\text{diag} (d_1,\ldots,d_r)_{r\times r}\) the  $r\times r$ diagonal matrix with diagonal entries $d_1,\ldots, d_r$. A matrix
    \(A\in\mathbb{Q}^{r\times r}\) is said to be  a symmetrizable matrix with the symmetrizer \(D=\text{diag} (d_1,\ldots,d_r)_{r\times r}\) if \(AD\) is symmetric positive definite.

Following the notation in  \cite{Mizuno-2025} and \cite{Wang-Wang-2025-1}, we call  \(\widetilde{f}_{A,\boldsymbol{b}, {c}, \boldsymbol{d}}(q)\) in \eqref{defi:GNahm} a  Nahm sum for the symmetrizable matrix $A$ of index $(d_1,\ldots,d_r)$.  When \(\widetilde{f}_{A,\boldsymbol{b}, {c}, \boldsymbol{d}}(q)\) is a modular function,  the quadruple \((A,\boldsymbol{b},c,\boldsymbol{d})\) is  called a  modular quadruple or   such a triple \((A,\boldsymbol{b},c)\)   is called a  modular triple of index $(d_1,\ldots,d_r)$.

Generalized Nahm sums of this kind are common in partition identities and the study of affine Lie algebras.  For example, one of Capparelli's partition identities \cite{Capparelli-1993} asserts that
\begin{align}\label{Cap-ide}
\sum_{n_1, n_2 \geq 0} \frac{q^{2n_1^2 + 6n_1n_2 + 6n_2^2}}{(q; q)_{n_1} (q^3; q^3)_{n_2}} = (-q^2, -q^3, -q^4, -q^6; q^6)_{\infty},
\end{align}
which was first discovered by Capparelli in purely combinatorial form
via the theory of affine Lie algebras. The double-sum representation on the left-hand side of \eqref{Cap-ide} was discovered by
 Kanade--Russell \cite{Kanade-Russell-2019}
and Kur\c{s}ung\"{o}z \cite{Kursungoz-2019}
independently.
Notice that \eqref{Cap-ide} reveals that  the generalized Nahm sum \( \widetilde{f}_{A,\boldsymbol{b},c,\boldsymbol{d}}(q) \) associated with \( A = \begin{pmatrix} 4 & 2 \\ 6 & 4 \end{pmatrix} \), \( \boldsymbol{b} = (0, 0)^{\mathrm{T}} \), \( c = -1/24 \) and \( \boldsymbol{d} = (1, 3) \) is modular.

Mizuno  \cite{Mizuno-2025}  found that the theory of Nahm sums for symmetric matrices can be analogously applied to  Nahm sums for symmetrizable matrices. For example,  following the strategy of Zagier \cite{Zagier-2007}, Mizuno  \cite{Mizuno-2025}  proposed potential  modular triples of  indices
$(1,2)$, $(1,3)$ and $(1,4)$, and  modular triples of indices  \((1, 1, 2)\) and \((2, 2, 1)\),  which have been  confirmed by
B. Wang and L. Wang \cite{Wang-Wang-2024, Wang-Wang-2025-1, Wang-Wang-2025-2} recently.

Mizuno \cite[Eqs. (45), (54)]{Mizuno-2025}
conjectured two vector-valued transformation formulas involving generalized Nahm sums
for symmetrizable matrices. In particular,   Mizuno observed  that there seem to be ``Langlands dual'' pairs of modular Nahm
sums  on the rank 3 case.  He conjectured   two Nahm sums associated with
\[A=\begin{pmatrix}
    1&0&1/2\\
    0&2&1\\
    1&2&2
\end{pmatrix} \quad \text{and} \quad A^{\vee}=\begin{pmatrix}
    1&0&1\\
    0&2&2\\
    1/2&1&2
\end{pmatrix}\]
 which are related by taking the transpose,
are related by the modular $S$-transformation.
 Mizuno's conjecture was subsequently proved by B. Wang--L. Wang \cite{Wang-Wang-2025-2, Wang-Wang-2025-1}.

While Nahm's problem has been studied in numerous examples
for rank $r\leq 3$,
a solution for the general case remains elusive. In this paper, we present three families of modular   Nahm sums for symmetrizable matrices with arbitrary rank $r\geq 2$  of indices $({2,\ldots, 2},1)$ and $({1,\ldots, 1},2)$. Based on these three families of generalized Nahm sums,  we construct two vector-valued automorphic forms, one of which is a vector-valued
modular function if $r$ is odd ( see Section \ref{sect:trans}   for the relevant definitions).

\begin{thm} \label{main1}
For  $r\geq 2$ and $0\leq j\leq r$, the Nahm sums \(\widetilde{f}_{A,\boldsymbol{b}_j, {c}_j, \boldsymbol{d}}(q^{32r-8})\)   for the symmetrizable matrix $A$ of index $\boldsymbol{d}=({2,\ldots, 2}, 1)$ are modular functions for $\Gamma_1(128(4r-1)^2)$, where

\begin{align*}
   &A=\begin{pmatrix}
    1 & 0 & 0 & 0 & \cdots & 0 & 0 & 1\\
    0 & 2 & 2 & 2 & \cdots & 2 & 2 & 2\\
    0 & 2 & 4 & 4 & \cdots & 4 & 4 & 4 \\
    0 & 2 & 4 & 6 & \cdots & 6 & 6 & 6 \\
    \vdots & \vdots & \vdots & \vdots & \ddots & \vdots & \vdots & \vdots\\
    0 & 2 & 4 & 6 & \cdots & 2(r-3) & 2(r-3)& 2(r-3)\\
    0 & 2 & 4 & 6 & \cdots & 2(r-3) & 2(r-2) & 2(r-2)\\
    \frac{1}{2} & 1 & 2 & 3 & \cdots & r-3& r-2 & r-1
\end{pmatrix}_{r\times r}, \\[10pt]
&\boldsymbol{b}_0=(0,\ldots,0)^T_{1\times r}, \quad c_0=\frac{5-4 r}{32 r-8},\\
&\text{and }  \text{for } 1\leq j\leq r,\\
&\boldsymbol{b}_j=(1,0,\ldots,0,2,4,\ldots,2(r-1-j ),r-j )^T_{1\times r},    \quad {c}_j=\frac{(4 r-4j-1)^2}{32 r-8}.
\end{align*}
\end{thm}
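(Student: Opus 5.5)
The plan is to turn each Nahm sum $\widetilde{f}_{A,\boldsymbol{b}_j,c_j,\boldsymbol{d}}(q)$ into an infinite product, and then to prove modularity of that product with standard criteria for generalized eta-quotients.

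\textbf{Rewriting the quadratic form.} With $D=\mathrm{diag}(2,\ldots,2,1)$, the form $\frac12\boldsymbol{n}^TAD\boldsymbol{n}$ splits into three parts:
\begin{itemize}
\item the piece $n_1^2+n_1n_r$ coming from the first row and column;
\item the Andrews--Gordon-type form $2\sum_{2\le i,j\le r-1}\min(i-1,j-1)\,n_in_j$ in the middle variables;
\item the coupling of the middle variables to $n_r$ through the last row, plus $\frac{r-1}{2}n_r^2$.
\end{itemize}
I would change variables to the partial sums $N_k=n_k+n_{k+1}+\cdots+n_{r-1}$ for $2\le k\le r-1$. In these variables the middle block becomes $2(N_2^2+\cdots+N_{r-1}^2)$. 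The coupling to $n_r$ becomes linear in the $N_k$, namely $n_r(N_2+\cdots+N_{r-1})$.

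\textbf{Summing out the variables.} The summation proceeds from the outside in.
\begin{enumerate}
\item The $n_1$-sum has denominator $(q^2;q^2)_{n_1}$ and exponent $n_1^2+n_1n_r+b_{j,1}n_1$. Euler's identity sums it to $(-q^{1+n_r+b_{j,1}};q^2)_\infty$. This equals $(-q;q^2)_\infty$ or $(-q^2;q^2)_\infty$ divided by a finite product depending on the parity of $n_r$ and on $b_{j,1}$.
\item The $N$-variables form a Bailey chain in base $q^2$ of length $r-2$, where the parameter $n_r$ enters the linear terms. I would run this chain from a seed Bailey pair that carries $n_r$. Alternatively, I would apply the Andrews--Gordon--Bressoud multisum identities termwise in $n_r$.
\item The resulting single sum over $n_r$, with denominator $(q;q)_{n_r}$, is then collapsed to a theta series using the Jacobi triple product.
\end{enumerate}
The exponents $c_j=(4r-4j-1)^2/(32r-8)$ and $c_0=(5-4r)/(32r-8)$ point to a specific target. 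It is a theta function $\sum_n (-1)^{\epsilon n}q^{(4r-1)n^2+(4r-4j-1)n}$-type series in base $q^2$, divided by $(q;q)_\infty$ or $(q^2;q^2)_\infty$. So I expect product formulas of the shape
\[
\widetilde{f}_{A,\boldsymbol{b}_j,0,\boldsymbol{d}}(q)=\frac{(q^{a_j},q^{2(4r-1)-a_j},q^{2(4r-1)};q^{2(4r-1)})_\infty}{(q;q)_\infty}\times(\text{simple }q^2\text{-factors}),
\]
for each $0\le j\le r$, with $a_j$ determined by $4r-4j-1$.

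\textbf{Proving modularity.} Replacing $q$ by $q^{32r-8}$ turns each such product into a quotient of generalized Dedekind eta functions $\eta_{\delta,g}(\tau)$ and ordinary $\eta(\delta\tau)$, with $\delta\mid 8(4r-1)\cdot$(small power of $2$). The factor $q^{c_j}$ becomes exactly the product of the required leading powers. I would then apply Robins' criterion for generalized eta-quotients. This means checking:
\begin{itemize}
\item the two congruence conditions $\sum\delta P_2(g/\delta)\equiv0$ and $\sum \frac{N}{\delta}P_2(0)\equiv 0 \pmod 2$;
\item weight zero;
\item the character condition.
\end{itemize}
Together these give invariance under $\Gamma_1(128(4r-1)^2)$, and meromorphy at the cusps is automatic for eta-quotients.

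\textbf{Main obstacle.} I expect the hardest step to be step 2: finding the right seed Bailey pair (or multisum identity) in which $n_r$ enters linearly. The difficulty is that the linear terms $\boldsymbol{b}_j=(1,0,\ldots,0,2,4,\ldots,2(r-1-j),r-j)$ must match the shifted Bailey lattice for each $j$ uniformly. I would first test $r=2,3$ against the known Mizuno and Wang--Wang identities, which should pin down the seed, and then induct on $r$ via one Bailey-lemma step per added row.
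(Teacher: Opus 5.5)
Your outer frame (evaluate the sums as generalized eta-quotients, then apply Robins' criterion) matches the paper. However, the central step has a genuine gap, and the target you are aiming at is wrong.

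These Nahm sums are not single products. Theorem \ref{main1a}(1),(2) writes each one as a \emph{sum of two} generalized eta-quotients. For example, for $\boldsymbol{b}_0$ the sum equals
\[
\frac{(q^{8r-4},q^{8r},q^{16r-4};q^{16r-4})_\infty}{(q,q^3,q^4;q^4)_\infty}+q^{\frac{r-1}{2}}\frac{(-q,q^{4r-2},-q^{4r-1};-q^{4r-1})_\infty}{(q^2,q^2,q^4;q^4)_\infty}.
\]
You can see directly that your predicted shape cannot hold. The form $\frac12\boldsymbol{n}^TAD\boldsymbol{n}$ contains $\frac{r-1}{2}n_r^2$; note also that the coupling is $2n_r(N_2+\cdots+N_{r-1})$, not $n_r(N_2+\cdots+N_{r-1})$. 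So for even $r$ the terms with odd $n_r$ contribute half-integral powers of $q$ with positive coefficients. No product with integral exponents can produce these.

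Your steps 2--3 also lack a working mechanism:
\begin{itemize}
\item For fixed $n_r$, the middle sum is an Andrews--Gordon sum deformed by $a=q^{2n_r}$, and this is not a product.
\item A one-sided sum over $n_r$ with $(q;q)_{n_r}$ in the denominator is not a theta series. Jacobi's triple product only enters through the $\alpha$-side of a Bailey pair. So $n_r$ has to be the seed (innermost) variable of the chain, not a parameter sitting outside it.
\end{itemize}

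The missing idea, which the half-integrality already hints at, is to split by the parity of $n_r$.
\begin{itemize}
\item Write $n_r=2m+\sigma$ and put $N_i=n_{i+1}+\cdots+n_{r-1}+m$ for $1\le i\le r-1$. The exponent becomes $n_1^2+n_1(2m+\sigma)+2\sum_{i}(N_i+\sigma/2)^2$ plus linear terms, so $m=N_{r-1}$ sits at the bottom of the chain.
\item Euler's identity in $n_1$ gives $(-q^{2m+\sigma+1+b_{j,1}};q^2)_\infty$. Combined with $(q;q)_{2m+\sigma}$, this yields (up to a constant infinite product, and a factor $1-q$ when $\sigma=1$) exactly the seed denominators needed.
\item For one parity the denominators are $(q^2;q^2)_m(q^2;q^4)_m$ or $(q^2;q^2)_m(q^6;q^4)_m$. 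These are handled by Slater's Group C chains in base $q^2$, i.e.\ \eqref{Nahmsumsa} and \eqref{Nahmsumsd} with $q\to q^2$.
\item For the other parity the denominators are $(q^4;q^4)_m(q^3;q^2)_m$ or $(q^4;q^4)_m(q;q^2)_m$. These are handled by the Wang--Wang identities \eqref{Nahmsumsg} and \eqref{Nahmsumsi} with $q\to -q$.
\end{itemize}
Each parity class gives one product, and the Nahm sum is their sum.

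Consequently, Robins' criterion must be applied to each summand separately, with a common dilation. The paper takes $N=16r-4$, $t=32r-8$ and $N_0=4$ for both eta-quotients. Both are then modular for $\Gamma_1(tN_0N)=\Gamma_1(128(4r-1)^2)$, and hence so is their sum.
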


\begin{thm} \label{main2}
For  $r\geq 2$ and $0\leq j\leq r$, the Nahm sums \(\widetilde{f}_{B,\boldsymbol{b}_j, {c}_j, \boldsymbol{d}}(q^{32r-24})\)   for the symmetrizable matrix $B$ of index $\boldsymbol{d}=({2,\ldots, 2}, 1)$ are modular functions for $\Gamma_1(64(4r-3)^2)$, where

\begin{align*}
   &B=\begin{pmatrix}
    1 & 0 & 0 & 0 & \cdots & 0 & 0 & 1\\
    0 & 2 & 2 & 2 & \cdots & 2 & 2 & 2\\
    0 & 2 & 4 & 4 & \cdots & 4 & 4 & 4\\
    0 & 2 & 4 & 6 & \cdots & 6 & 6 & 6\\
    \vdots & \vdots & \vdots & \vdots & \ddots & \vdots & \vdots & \vdots\\
    0 & 2 & 4 & 6 & \cdots & 2(r-3) & 2 (r-3) & 2(r-3)\\
    0 & 2 & 4 & 6 & \cdots & 2(r-3) & 2(r-2) & 2(r-2)\\
    \frac{1}{2} & 1 & 2 & 3 & \cdots & r-3 & r-2 & r-\frac{1}{2}
\end{pmatrix}_{r\times r},
\\[10pt]
&\boldsymbol{b}_0=\left(0,\ldots,0,-\frac{1}{2}\right)^T_{1\times r}, \quad c_0=\frac{7-4 r}{32 r-24},\\
&\text{and }  \text{for } 1\leq j\leq r,\\
&\boldsymbol{b}_j=\left(1,0,\ldots,0,2,4,\ldots,2(r-1-j ),r-j -1\right)^T_{1\times r},    \quad {c}_j=\frac{(4 r-4j-3)^2}{32 r-24}.
\end{align*}
\end{thm}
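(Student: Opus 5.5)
The plan is to write each Nahm sum in Theorem \ref{main2} as an infinite product, i.e.\ a quotient of theta functions, and then read off modularity from standard criteria for generalized eta quotients. First compute the quadratic form. With $\boldsymbol{n}=(n_1,\dots,n_r)^T$ and $D=\mathrm{diag}(2,\dots,2,1)$, the matrix $BD$ has $(1,1)$-entry $2$, $(1,r)$-entry $1$, and the middle block is $4\min(i-1,k-1)$. The last row/column gives $2(k-1)$ and the corner is $r-\tfrac12$. Using the standard identity $\sum_{i,k}\min(i,k)m_im_k=\sum_{i}N_i^2$ with tail sums $N_i=m_i+\dots+m_{r-1}$, the exponent becomes
\[
n_1^2+n_1n_r+2(N_2^2+\cdots+N_{r-1}^2)+2n_r(N_2+\cdots+N_{r-1})+\tfrac{2r-1}{4}n_r^2,
\]
with the analogous rewriting for the linear terms from $\boldsymbol{b}_j$.

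Next, perform the summations in stages.

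\emph{Step 1.} Sum over $n_1$ with $n_r$ fixed. The denominator is $(q^2;q^2)_{n_1}$ and the exponent is $n_1^2+n_1n_r$ (plus $n_1$ when $j\ge1$). Euler's identity $\sum q^{n^2-n}z^n/(q^2;q^2)_n=(-z;q^2)_\infty$ gives $(-q^{n_r+1};q^2)_\infty$ or $(-q^{n_r+2};q^2)_\infty$. Dividing by $(-q;q^2)_\infty$ or $(-q^2;q^2)_\infty$ leaves a finite product $1/(-q^{\epsilon};q^2)_{\lceil n_r/2\rceil}$-type factor. That factor depends on the parity of $n_r$, so I would split $n_r=2m$ or $2m+1$.

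\emph{Step 2.} The inner sum over $n_2,\dots,n_{r-1}$ in base $q^2$, with weight $q^{2\sum N_i^2+2n_r\sum N_i+\cdots}$, is an Andrews--Gordon-type multisum with a shifted linear term. Summing it via the Bailey lattice (or a Bailey chain with the pair generated from the finite product in $n_r$) should collapse all $r-1$ inner variables at once.

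\emph{Step 3.} Carry out the final single sum over $m$. Combined with Jacobi triple product, it should yield a theta quotient of the shape
\[
\frac{(q^{a},q^{M-a},q^{M};q^{M})_\infty}{(q;q)_\infty}\times(\text{simple factors}),\qquad M\approx 8r-6 \text{ or }4r-3,
\]
matching $c_j=(4r-4j-3)^2/(32r-24)$. The shape of $c_j$ strongly suggests a theta function $\sum_n(-1)^n q^{(8r-6)(n+\alpha_j)^2/\cdots}$ with $\alpha_j$ tied to $4r-4j-3$. This lets me write $\widetilde f(q^{32r-24})$ as a quotient of generalized Dedekind eta functions $\eta_{\delta,g}(\tau)$ times powers of $\eta(\delta\tau)$.

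Then I apply Robins' criterion (equivalently the transformation laws of Klein forms / generalized eta functions) for modularity on $\Gamma_1(N)$. This requires checking the congruence conditions on $\sum \delta P_2(g/\delta)$ and $\sum \delta^{-1}\cdot(\text{exponents})$ modulo $2$ and $24$, with $N=64(4r-3)^2$. The factor $64$ comes from the base $q^2$ and the parity splits, and $(4r-3)^2$ from the modulus $M$. Holomorphy at cusps is automatic for eta quotients (meromorphy suffices for a modular function). Theorem \ref{main1} goes identically with $r-1$ replacing $r-\tfrac12$, giving $M$ involving $4r-1$.

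The main obstacle is Step 2: finding the correct Bailey pair/lattice so that the $r-1$-fold sum with coupling $2n_r\sum N_i$ and the parity-dependent finite product telescopes. I would first verify $r=2,3$ against the Mizuno and Wang--Wang identities to pin down the pair, then induct on $r$ via one Bailey-lemma insertion per added row of $B$.
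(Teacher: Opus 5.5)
Your outline follows the paper's strategy in broad strokes: split $n_r$ by parity, remove $n_1$ with Euler's identity, evaluate the rest by a Bailey chain and the triple product, then finish with Robins' criterion. However, the step that carries all the content is left open, and the end result you predict is wrong.

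The two parity classes of $n_r$ do not recombine into a single theta quotient. They produce two multisums of different Slater type:
\begin{itemize}
\item For $j=0$, the class $n_r=2m$ gives $(-q;q^2)_\infty$ times a sum whose innermost factor is $q^{m^2-m}/\big((q^2;q^2)_m(q^2;q^4)_m\big)$. This is \eqref{Nahmsumsb} at $q^2$, an iterate of (C5).
\item Still for $j=0$, the class $n_r=2m+1$ gives $q^{(2r-3)/4}(-q^2;q^2)_\infty/(1-q)$ times a sum with $(q^3;q^2)_m(q^4;q^4)_m$ in the denominator. This is the $q\mapsto -q$ image of \eqref{Nahmsumsh}, an iterate of the group-G pair (G5).
\item For $j\ge 1$ the roles are exchanged: the even class is \eqref{Nahmsumsj} at $-q$ (from (G4$^*$), (G4$^{**}$)), and the odd class is \eqref{Nahmsumse} at $q^2$ (from (C6), (C7$^*$)).
\end{itemize}
So each Nahm sum is a \emph{sum} of two generalized eta quotients, exactly as in Theorem~\ref{main1a}(3),(4). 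Your final step therefore has to be different. First check that the single prefactor $q^{c_j}$ makes both summands genuine eta quotients simultaneously. It does, because $q^{(2r-3)/4}$, resp.\ $q^{(6r-4j-5)/4}$, is precisely the gap between their orders at $\infty$. Then apply Robins' criterion to each summand with the same $t=32r-24$ and $N_0=2$. Both summands, and hence their sum, are then modular for $\Gamma_1(64(4r-3)^2)$.

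Your Steps 2--3 are also ordered backwards. With $m$ fixed, the sum over $n_2,\dots,n_{r-1}$ is not a product in general; for $r=3$, $j=0$ it is $\sum_k q^{2k^2+4mk}/(q^2;q^2)_k$. The triple product does not enter through a last sum over $m$.

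In the working argument $m=N_{r-1}$ is the \emph{innermost} variable. The $m$-dependent factor is $\beta_m$ of a seed pair, Lemma~\ref{lemma-S1} is iterated $r-1$ times, and one lets $n\to\infty$ in \eqref{Bailey-limit}. The theta series then arises as $\sum_n\alpha^{(r-1)}_n$.

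For $1\le j<r$ the linear part $N_1+\dots+N_{j-1}+2N_j+\dots+2N_{r-1}$ changes in mid-chain. So one starts from a pair relative to $a=q^2$ (resp.\ $a=q$) and iterates $r-1-j$ times. One then applies the $a\mapsto a/q$ step of Lemma~\ref{lem-1} once and iterates $j$ more times with the new $a$.

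Your ``Bailey lattice'' remark points this way. Until these seed pairs and base changes are supplied, though, the product formulas remain unproved. Checking $r=2,3$ and inducting is a way to discover them, not a proof.
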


\begin{thm}\label{main3}
For  $r\geq 2$ and $0\leq j\leq r$, the Nahm sums \(\widetilde{f}_{A^{\vee},\boldsymbol{b}_j, {c}_j, \boldsymbol{d}^{\vee}}(q^{32r-8})\)   for the symmetrizable matrix $A^{\vee}$ of index
$\boldsymbol{d}^{\vee}=(1,\ldots,1,2)$ are modular functions for $\Gamma_1(128(4r-1)^2)$, where
\begin{align*}
   &A^{\vee}=\begin{pmatrix}
    1 & 0 & 0 & 0 & \cdots & 0 & 0 & \frac{1}{2}\\
    0 & 2 & 2 & 2 & \cdots & 2 & 2 & 1\\
    0 & 2 & 4 & 4 & \cdots & 4 & 4 & 2\\
    0 & 2 & 4 & 6 & \cdots & 6 & 6 & 3\\
    \vdots & \vdots & \vdots & \vdots & \ddots & \vdots & \vdots & \vdots \\
    0 & 2 & 4 & 6 & \cdots & 2(r-3) & 2(r-3) & r-3\\
    0 & 2 & 4 & 6 & \cdots & 2(r-3) & 2(r-2) & r-2\\
    1 & 2 & 4 & 6 & \cdots & 2(r-3) & 2(r-2) & r-1
\end{pmatrix}_{r\times r},\\[10pt]
&\textbf{b}_0=(1,1,2,3,\ldots,r-1)^T_{1\times r},\quad c_0=\frac{8r^2-14r+5}{32r-8},\\
&\text{and for }1\leq j\leq r,\\
&\boldsymbol{b}_j=(0,\ldots,0,1,2,\ldots,r-j)^T_{1\times r},
\quad c_j=\frac{8(r-j)^2+4j-6r+1}{32r-8}.
\end{align*}
\end{thm}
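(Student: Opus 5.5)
The plan is to collapse the $r$-fold sum in Theorem \ref{main3} to an infinite product by elementary $q$-series manipulations. That product should be a quotient of theta functions of modulus $4r-1$, and modularity on $\Gamma_1(128(4r-1)^2)$ would then follow from standard criteria for generalized eta-quotients.

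\emph{Step 1: rewrite the quadratic form.} With $D=\mathrm{diag}(1,\ldots,1,2)$, the matrix $A^{\vee}D$ is symmetric. Its middle block is $2\min(i-1,j-1)$ for $2\le i,j\le r-1$, and its last row and column are $(1,2,4,\ldots,2(r-2),2(r-1))$. Put $N_i=n_i+\cdots+n_{r-1}$ for $2\le i\le r-1$. Then
\[
\tfrac12\boldsymbol{n}^TA^{\vee}D\boldsymbol{n}=\tfrac12 n_1^2+n_1n_r+n_r^2+\sum_{i=2}^{r-1}(N_i+n_r)^2 .
\]
So the summand has $n_1$ coupled only to $n_r$, and an Andrews--Gordon type block in the $N_i$ shifted by $n_r$. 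The linear terms $\boldsymbol{b}_j$ become shifts of the form $N_{j+1}+\cdots+N_{r-1}$ plus terms in $n_1,n_r$; for $j=0$ there is also the extra $n_1+n_r$ coming from $\boldsymbol{b}_0$. Each of these is the standard Andrews--Gordon linear-term pattern.

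\emph{Step 2: remove $n_1$ and reduce to one variable.} First sum over $n_1$ by Euler's identity:
\[
\sum_{n_1\ge 0}\frac{q^{n_1^2/2+n_1(n_r+\beta)}}{(q;q)_{n_1}}=(-q^{1/2+n_r+\beta};q)_\infty .
\]
Combine $(-q^{1/2+\beta};q)_{n_r}^{-1}$-type factors with $(q^2;q^2)_{n_r}=(q;q)_{n_r}(-q;q)_{n_r}$.

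Next, for fixed $n_r$, treat the sum over $N_2,\ldots,N_{r-1}$ as $r-2$ iterations of the Bailey lemma with $a=q^{2n_r}$-type parameter. Equivalently, sum over all of $(n_2,\ldots,n_{r-1},n_r)$ at once by recognizing the whole expression as a Bailey chain of length $r-2$ applied to a seed Bailey pair. The seed is the pair encoded by the $n_r$-sum with denominator $(q;q)_{n_r}(-q;q)_{n_r}$ and factor $q^{n_r^2}$ (a Slater-type pair with $(-q)_n$ in the denominator, e.g. from Slater's list or Bressoud's pairs).

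Inserting the known $\beta_n$ and $\alpha_n$ of the seed, the chain turns the multisum into a bilateral single sum of the shape $\sum_n (\pm1)^n q^{(4r-1)n^2/2+\text{linear}}$, possibly times a prefactor $(-q^{1/2};q)_\infty/(q;q)_\infty$. The Jacobi triple product, or the quintuple product if the $\alpha_n$ come in two residue classes, then gives an explicit product. The expected residues are $(4r-4j-1)$-type, matching the exponents $c_j$ and the scaling $q^{32r-8}=q^{8(4r-1)}$. The index $j$ should enter only through which Bailey-lattice shift is used, that is, Agarwal--Andrews--Bressoud style truncations. So all $0\le j\le r$ can be handled uniformly, with $j=0$ and $j=r$ as boundary cases checked separately.

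\emph{Step 3: modularity.} After substituting $q\mapsto q^{32r-8}$ and including $q^{c_j}$, each product is a quotient of Dedekind eta functions and generalized eta functions $\eta_{\delta,g}$ with $\delta\mid 16(4r-1)$. Robins' criterion (order conditions at infinity and zero modulo $24$) then shows it is a modular function on $\Gamma_1(128(4r-1)^2)$.

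The main obstacle is Step 2: finding a seed Bailey pair, relative to the correct $a$ (likely $a=1$ or $a=q$ after the change $q\to q^2$ on the $n_r$-variable), such that the $(-q;q)_{n_r}$ denominator, the Euler product produced by $n_1$, and the shifts from $\boldsymbol{b}_j$ all fit one Bailey-lattice computation. I would first try to verify the cases $r=2,3$ against the identities of Mizuno and B.~Wang--L.~Wang. Those cases should reveal the seed pair, which can then be propagated by induction on $r$ through one Bailey-lemma step.
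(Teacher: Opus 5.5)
Your Step~1 is exactly the paper's route: the rewrite $\tfrac12\boldsymbol{n}^TA^{\vee}D^{\vee}\boldsymbol{n}=\tfrac12 n_1^2+n_1n_r+\sum_{i=1}^{r-1}N_i^2$, where the paper uses $N_i=n_{i+1}+\cdots+n_r$ and yours agrees after reindexing. So are the Euler summation over $n_1$ and the final appeal to Robins' criterion. The gap is Step~2, which carries the whole content of the theorem. You never obtain the product formula, and you leave the seed pair as ``the main obstacle''.

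Your description of the seed also points the wrong way. The factor $(-q;q)_n$ is only part of $(q^2;q^2)_n$. The distinctive factor is the $(-q^{1/2};q)_{N_{r-1}}$, or $(-q^{3/2};q)_{N_{r-1}}$, left over from the Euler step. The ``fixed $n_r$, $a=q^{2n_r}$'' variant should be dropped: a Bailey pair relative to $a=q^{2n_r}$ would need a factor $(q^{2n_r+1};q)_{N_{r-1}}$ that is not present. The paper does not derive this step either. It rewrites the sum and cites B.~Wang--L.~Wang [Cor.~4.2,~4.3] to obtain \eqref{eq-1.11}--\eqref{eq-1.12}.

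\textbf{Closing Step 2.} For $1\le j\le r$ we have $\boldsymbol{n}^T\boldsymbol{b}_j=N_j+\cdots+N_{r-1}$. Also $\boldsymbol{b}_0=\boldsymbol{b}_1+(1,0,\ldots,0)^T$, so for $j=0$ the only extra term is $n_1$, not $n_1+n_r$. The Euler step therefore leaves:
\begin{itemize}
\item for $j\ge1$, $(-q^{1/2};q)_\infty$ times an Andrews--Gordon multisum with bottom factor $1/\bigl((-q^{1/2};q)_{N_{r-1}}(q^2;q^2)_{N_{r-1}}\bigr)$;
\item for $j=0$, $(-q^{3/2};q)_\infty$ times one with bottom factor $1/\bigl((-q^{3/2};q)_{N_{r-1}}(q^2;q^2)_{N_{r-1}}\bigr)$.
\end{itemize}
These are Slater's (G1), (G1$^*$) and (G2). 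The chain follows the same scheme as the paper's Section~2 proofs, e.g.\ of \eqref{Nahmsumsj}:
\begin{itemize}
\item for $j=r$, iterate Lemma~\ref{lemma-S1} with $a=1$ on (G1), $r-1$ times;
\item for $j=0$, iterate it with $a=q$ on (G2), $r-1$ times;
\item for $1\le j\le r-1$, iterate it with $a=q$ on (G1$^*$) $r-1-j$ times, apply Lemma~\ref{lem-1}, then iterate with $a=1$ $j$ times. The factor $q^n$ from Lemma~\ref{lem-1} is precisely the extra $N_j$.
\end{itemize}
In the last case $\alpha_n$ becomes $(-1)^nq^{(r-\frac14)n^2}\bigl(q^{(r-j-\frac14)n}+q^{-(r-j-\frac14)n}\bigr)$. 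This folds into $\sum_{n\in\mathbb{Z}}(-1)^nq^{(r-\frac14)n^2+(r-j-\frac14)n}$, and the Jacobi triple product gives $(q^j,q^{2r-j-\frac12},q^{2r-\frac12};q^{2r-\frac12})_\infty$. So the quadratic coefficient is $(4r-1)/4$, not $(4r-1)/2$, and no quintuple product is needed. Combined with $(-q^{1/2};q)_\infty/(q;q)_\infty=1/(q^{\frac12},q^{\frac32},q^2;q^2)_\infty$, this is Theorem~\ref{main1a}(5),(6).

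\textbf{Step 3.} Your claim that after $q\mapsto q^{32r-8}$ all moduli divide $16(4r-1)$ is false: the theta modulus becomes $(2r-\tfrac12)(32r-8)=4(4r-1)^2$. The relevant conditions are also the parity conditions of Lemma~\ref{lem-mf}, not mod-$24$ conditions. The paper applies that lemma to the unscaled quotient with $N=4r-1$, $t=32r-8$ and $N_0=16$. Then $t\,\mathrm{Ord}_\infty$ and $N_0\,\mathrm{Ord}_0=2(5-4r)$ are even, and the level $tN_0N=128(4r-1)^2$ follows directly.
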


We now provide several remarks on Theorems   \ref{main1}--\ref{main3}.

\begin{rem}\label{rem-iden}

{\rm (1)} The symmetrizable matrix  $A$   in Theorem \ref{main1}  is the transpose of  the symmetrizable matrix $A^{\vee}$ in Theorem \ref{main3}.

{\rm (2)}
The cases $r=2$ and $r=3$ in Theorem \ref{main1} and Theorem \ref{main2}
were conjectured by Mizuno (see \cite[Table 1, Table 2, Table 3]{Mizuno-2025}).
The case $r=2$ in Theorem \ref{main1} was confirmed by B. Wang--L. Wang \cite{Wang-Wang-2025-1}.
For $r=2$ in Theorem \ref{main2},
the case $b_0$ was established by Warnaar \cite[(5.14)]{Warnaar-2003-2}, as observed by Mizuno,
while the remaining cases were proved by Mizuno \cite{Mizuno-2025}.
The cases $r=3$ in both Theorem \ref{main1}
and Theorem \ref{main2} were confirmed by B. Wang--L. Wang \cite{Wang-Wang-2025-2}.

{\rm (3)} The matrices   $A$ and $B$ in Theorem \ref{main1} and Theorem \ref{main2}  are  symmetrizable matrices with the symmetrizer \({D}={\rm diag}(2,\ldots,2,1)_{r\times r}\)   since  the positive definiteness of the symmetric matrices
 $AD$ and $BD$  given in \eqref{eq-1.3} and \eqref{eq-1.4}    are easily verified.
 \begin{align}
      &AD=\begin{pmatrix}
    2 & 0 & 0 & 0 &  \cdots & 0 & 0 & 1\\
    0 & 4 & 4 & 4 & \cdots & 4 & 4 & 2\\
    0 & 4 & 8 & 8 & \cdots & 8 & 8 & 4\\
    0 & 4 & 8 & 12 & \cdots & 12 & 12 & 6\\
    \vdots & \vdots & \vdots & \vdots & \ddots & \vdots & \vdots\\
    0 & 4 & 8 & 12 & \cdots &4(r-3) & 4(r-3) & 2(r-3)\\
    0 & 4 & 8 & 12 & \cdots & 4(r-3) & 4(r-2) & 2(r-2)\\
    1 & 2 & 4 & 6 & \cdots & 2(r-3) & 2(r-2) & r-1
\end{pmatrix}_{r\times r},\label{eq-1.3}\\[20pt]
&BD=\begin{pmatrix}
    2 & 0 & 0 & 0 & \cdots & 0 & 0 & 1\\
    0 & 4 & 4 & 4 & \cdots & 4 & 4 & 2\\
    0 & 4 & 8 & 8 & \cdots & 8 & 8 & 4\\
    0 & 4 & 8 & 12 & \cdots & 12 & 12 & 6\\
    \vdots & \vdots & \vdots & \vdots & \ddots & \vdots & \vdots & \vdots\\
    0 & 4 & 8 & 12 & \cdots & 4(r-3) & 4(r-3) & 2(r-3)\\
    0 & 4 & 8 & 12 & \cdots & 4(r-3) & 4(r-2) & 2(r-2)\\
    1 & 2 & 4 & 6 & \cdots & 2(r-3) & 2(r-2) & r-\frac{1}{2}
    \end{pmatrix}_{r\times r}.\label{eq-1.4}
 \end{align}

{\rm (4)} Similarly, the matrix $A^{\vee}$ in Theorem \ref{main3} is a symmetrizable matrix with the symmetrizer \({D}^{\vee}={\rm diag}(1,\ldots,1,2)_{r\times r}\) since
$A^{\vee}{D}^{\vee}$   as shown in \eqref{eq-1.5}  is a symmetric positive definite matrix.
\begin{align}
       &A^{\vee}{D}^{\vee}=\begin{pmatrix}
    1 & 0 & 0 & 0 & \cdots & 0 & 0 & 1\\
    0 & 2 & 2 & 2 & \cdots & 2 & 2 & 2\\
    0 & 2 & 4 & 4 & \cdots & 4 & 4 & 4\\
    0 & 2 & 4 & 6 & \cdots & 6 & 6 & 6\\
    \vdots & \vdots & \vdots & \vdots & \ddots & \vdots & \vdots & \vdots \\
    0 & 2 & 4 & 6 & \cdots & 2(r-3) & 2(r-3) & 2(r-3)\\
    0 & 2 & 4 & 6 & \cdots & 2(r-3) & 2(r-2) & 2(r-2)\\
    1 & 2 & 4 & 6 & \cdots & 2(r-3) & 2(r-2) & 2(r-1)
\end{pmatrix}_{r\times r}.\label{eq-1.5}
\end{align}

\end{rem}

To prove Theorems \ref{main1},  \ref{main2} and \ref{main3},  we   establish the
Rogers--Ramanujan type identities stated in Theorem \ref{main1a}.
Each multi-sum appearing in Theorem \ref{main1a} corresponds
to the generalized Nahm sums $\widetilde{f}_{A,\boldsymbol{b},c,\boldsymbol{d}}(q)$,
 specified in Theorems  \ref{main1}, \ref{main2} and \ref{main3},
whereas  each product term can be expressed as a generalized eta-quotient
or a sum of generalized eta-quotients, whose modularity can be determined using the criterion established by Robins \cite[Theorem~3]{Robins-1994}. It should be noted that the identities \eqref{eq-1.11} and \eqref{eq-1.12} in Theorem \ref{main1a}  are equivalent to those due to B. Wang--L. Wang \cite[Corollary 4.2 and  Corollary 4.3]{Wang-Wang-2024} and we include them in Theorem \ref{main1a}  for completeness.

\begin{thm} \label{main1a}
For  $r\geq 2$,
\begin{align}
    (1) &\sum_{\boldsymbol{n}=(n_1,\ldots,n_r)^T\in\mathbb{N}^r}\frac{q^{\frac{1}{2}\boldsymbol{n}^TAD\boldsymbol{n}+\boldsymbol{n}^T\boldsymbol{b}_0}}{(q^2;q^2)_{n_1}(q^2;q^2)_{n_2}\cdots(q^2;q^2)_{n_{r-1}}(q;q)_{n_r}}\notag\\[10pt]
        &\quad =\frac{(q^{8r-4},q^{8r},q^{16r-4};q^{16r-4})_\infty}{(q,q^3,q^4;q^4)_\infty}
        +q^{\frac{r-1}{2}}\frac{(-q,q^{4r-2},-q^{4r-1};-q^{4r-1})_\infty}{(q^2,q^2,q^4;q^4)_\infty},\label{eq-1.7}  \\[10pt]
&\text{where }AD \text{ is given by \eqref{eq-1.3} and }\boldsymbol{b}_0=(0,0,\cdots,0,0)^T_{1\times r}.\notag\\[20pt]
(2) & \text{ For }  1\leq j\leq r, \notag \\[20pt]
&\sum_{\boldsymbol{n}=(n_1,\ldots,n_r)^T\in\mathbb{N}^r}\frac{q^{\frac{1}{2}\boldsymbol{n}^TAD\boldsymbol{n}+\boldsymbol{n}^T\boldsymbol{b}_j}}{(q^2;q^2)_{n_1}\cdots(q^2;q^2)_{n_{r-1}}(q;q)_{n_r}}\label{eq-1.8} \\
        &=\frac{(q^{2j},-q^{4r-2j-1},-q^{4r-1};-q^{4r-1})_\infty}{(q^2,q^2,q^4;q^4)_\infty}
        +q^{\frac{3r-2j-1}{2}}\frac{(q^{4j},q^{16r-4j -4},q^{16r-4};q^{16r-4})_\infty}{(q,q^3,q^4;q^4)_\infty},\notag \\[10pt]
&\text{where }AD \text{ is given by \eqref{eq-1.3} and }\boldsymbol{b}_j=(1,0,\cdots,0,2,4,\cdots,2(r-1-j ),r-j )^T_{1\times r}.\notag
\end{align}
\begin{align}
(3) &\sum_{\boldsymbol{n}=(n_1,\ldots,n_r)^T\in\mathbb{N}^r}\frac{q^{\frac{1}{2}\boldsymbol{n}^TBD\boldsymbol{n}+\boldsymbol{n}^T\boldsymbol{b}_0}}{(q^2;q^2)_{n_1}(q^2;q^2)_{n_2}\cdots(q^2;q^2)_{n_{r-1}}(q;q)_{n_r}}\notag\\[10pt]
        &=\frac{(q^{8r-8},q^{8r-4},q^{16r-12};q^{16r-12})_\infty}{(q,q^3,q^4;q^4)_\infty}
        +q^{\frac{2r-3}{4}}\frac{(-q,q^{4r-4},-q^{4r-3};-q^{4r-3})_\infty}{(q^2,q^2,q^4;q^4)_\infty},\label{eq-1.9}
         \end{align}
\begin{align}
&\text{where }BD \text{ is given by \eqref{eq-1.4} and }\boldsymbol{b}_0=(0,0,\cdots,0,-\frac{1}{2})^T_{1\times r}.\notag\\[20pt]
(4)& \text{ For }  1\leq j\leq r, \notag \\[20pt]
&\sum_{\boldsymbol{n}=(n_1,\ldots,n_r)\in\mathbb{N}^r}\frac{q^{\frac{1}{2}\boldsymbol{n}^TBD\boldsymbol{n}+\boldsymbol{n}^T\boldsymbol{b}_j}}{(q^2;q^2)_{n_1}(q^2;q^2)_{n_2}\cdots(q^2;q^2)_{n_{r-1}}(q;q)_{n_r}}\label{eq-1.10}\\[10pt]
        &=\frac{(q^{2j},-q^{4r-3-2j},-q^{4r-3};-q^{4r-3})_\infty}{(q^2,q^2,q^4;q^4)_\infty}
        +q^{\frac{6r-4j -5}{4}}\frac{(q^{4j},q^{16r-4j -12},q^{16r-12};q^{16r-12})_\infty}{(q,q^3,q^4;q^4)_\infty},\notag\\[20pt]
&\text{where }BD \text{ is given by \eqref{eq-1.4} and }\boldsymbol{b}_j=(1,0,\cdots,0,2,4,\cdots,2(r-j -1),r-j -1)^T_{1\times r}.\notag
\end{align}
\begin{align}
(5)&{\sum_{{\boldsymbol n}=(n_1,\ldots,n_r)^T\in\mathbb{N}^r}\frac{q^{\frac{1}{2}\boldsymbol{n}^TA^{\vee}{D}^{\vee} \boldsymbol{n}+\boldsymbol{n}^T\boldsymbol{b}_0}}{(q;q)_{n_1}(q;q)_{n_2}\cdots(q;q)_{n_{r-1}}(q^2;q^2)_{n_r}}=\frac{(q^\frac{1}{2},q^{2r-1},q^{2r-\frac{1}{2}};q^{2r-\frac{1}{2}})_\infty}{(q^\frac{1}{2},q^\frac{3}{2},q^2;q^2)_\infty}},\notag\\\label{eq-1.11}&\\[8pt]
&\text{where }
A^{\vee}{D}^{\vee}  \text{ is given by } \eqref{eq-1.5} \text{ and } \textbf{b}_0=(1,1,2,3,\ldots,r-1)^T_{1\times r}.\notag\\[8pt]
(6)&\text{ For }1\leq j\leq r,\notag\\[8pt]
&\sum_{\boldsymbol{n}=(n_1,\ldots,n_r)^T\in\mathbb{N}^r}\frac{q^{\frac{1}{2}\boldsymbol{n}^TA^{\vee}{D}^{\vee}\boldsymbol{n}+\boldsymbol{n}^T\boldsymbol{b}_j}}{(q;q)_{n_1}\cdots(q;q)_{n_{r-1}}(q^2;q^2)_{n_r}}=\frac{(q^j,q^{2r-j-\frac{1}{2}},q^{2r-\frac{1}{2}};q^{2r-\frac{1}{2}})_\infty}{(q^\frac{1}{2},q^\frac{3}{2},  q^2;q^2)_\infty},\label{eq-1.12}\\[8pt]
&\text{where }
A^{\vee}{D}^{\vee} \text{ is given by } \eqref{eq-1.5} \text{ and } \boldsymbol{b}_j=(0,\ldots,0,1,2,\ldots,r-j)^T_{1\times r}.\notag
\end{align}
\end{thm}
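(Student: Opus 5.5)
The plan is to reduce every multi-sum in Theorem \ref{main1a} to a single sum by summing out variables one at a time, and then to evaluate that single sum.

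\emph{Step 1 (structure of the quadratic form).} The middle block of $AD$ (indices $2,\ldots,r-1$) is $4\min(i,j)$. The same block in $A^{\vee}D^{\vee}$ is $2\min(i,j)$. I will change variables to the tail sums $N_i=n_i+n_{i+1}+\cdots+n_{r-1}$. Then this block becomes $2\sum_i N_i^2$, respectively $\sum_i N_i^2$, in base $q^2$. The coupling with $n_r$ is $n_r(2N_2+\cdots)$, which in the $N$-variables is $n_r\sum_{i\ge2}N_i$ weighted appropriately. The coupling with $n_1$ is only $n_1n_r$. So the sum over $(n_2,\ldots,n_{r-1})$, with $n_r$ (and $n_1$) fixed, is an Andrews--Gordon/Bressoud type multisum with a linear term depending on $n_r$.

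\emph{Step 2 (inner sums).} I will apply the $(r-2)$-fold Bailey lattice, i.e.\ the Andrews--Gordon identity in finite (polynomial) form or the Bailey chain with the unit Bailey pair. This should evaluate
\[
\sum_{N_2\ge\cdots\ge N_{r-1}\ge0}\frac{q^{2(N_2^2+\cdots+N_{r-1}^2)+\cdots}}{(q^2;q^2)_{N_2-N_3}\cdots(q^2;q^2)_{N_{r-1}}}
\]
with the $n_r$-dependent linear term. The output is a single sum of the form $\sum_k (-1)^k q^{\text{quadratic}(k)}\,(\ldots)$ divided by $(q^2;q^2)$-factors. Equivalently, I can view the whole $(r-1)$-fold block plus $n_r$ as a Bailey chain inserted $r-2$ times, starting from a seed Bailey pair in which $n_1,n_r$ appear.

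\emph{Step 3 (seed sum).} The sum over $n_1$ is elementary by Euler's identity:
\[
\sum_{n_1}\frac{q^{n_1^2+n_1n_r+\epsilon n_1}}{(q^2;q^2)_{n_1}}=(-q^{1+\epsilon+n_r};q^2)_\infty.
\]
This factorisation produces the ratio $(-q;q^2)_{n_r}$-type terms. The remaining $n_r$-sum then has denominator $(q;q)_{n_r}$ times a $(q^2;q^2)$ factor, which is exactly the shape of a known Bailey pair relative to base $q^2$ (the pairs in Slater's list of type $(-q;q^2)_n/(q;q)_{2n}$-like). Splitting $n_r$ by parity explains why the right-hand sides of \eqref{eq-1.7}--\eqref{eq-1.10} are sums of two products, one with modulus $16r-4$ (or $16r-12$) and one with sign-twisted modulus $4r-1$ (or $4r-3$). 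Each parity class gives a theta series summed by the Jacobi triple product, divided by $(q,q^3,q^4;q^4)_\infty$ or $(q^2,q^2,q^4;q^4)_\infty$.

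For \eqref{eq-1.11}--\eqref{eq-1.12} the variable $n_r$ carries $(q^2;q^2)$ and the rest carry $(q;q)$. Here I will sum out $n_1$ by Euler's identity in base $q$, and then apply the Bailey lattice to the base-$q$ block. This yields a single theta function, hence one product; it agrees with the Wang--Wang corollaries, which I may simply cite.

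The main obstacle is Step 2/3 bookkeeping: choosing the right seed Bailey pair so that, after $r-2$ iterations, the linear terms $\boldsymbol b_j$ (with their shifted pattern $2,4,\ldots,2(r-1-j)$) match. Handling this in general requires the Bailey lattice of Agarwal--Andrews--Bressoud with the break at index $j$. I will first verify $r=2$ directly, then set up an induction on $r$ in which one Bailey step raises the moduli $16r-4\to16r+12$ and $4r-1\to4r+3$ consistently.
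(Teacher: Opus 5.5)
\textbf{Verdict: genuine gap.} You have the right ingredients: Euler's identity for $n_1$, a parity split of $n_r$, base-$q^2$ Bailey pairs, the Jacobi triple product, and a Bailey-lattice step at the break $j$. Citing Wang--Wang for \eqref{eq-1.11}--\eqref{eq-1.12} is also exactly what the paper does. But for \eqref{eq-1.7}--\eqref{eq-1.10} the order you propose does not close.

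\textbf{Where Step 2 fails.} You sum out $n_2,\dots,n_{r-1}$ at fixed $n_r$. With your $N_i=n_i+\cdots+n_{r-1}$, the block exponent is $2\sum N_i^2+2n_r\sum N_i$. So the inner sum is an Andrews--Gordon sum in base $q^2$ with parameter $a=q^{2n_r}$, and it is infinite: nothing bounds the $N_i$, so no finite or polynomial form applies. The Bailey chain from the unit pair gives
\[
\frac{1}{(q^{2n_r+2};q^2)_\infty}\sum_{k\ge 0}\frac{(q^{2n_r};q^2)_k\,(1-q^{2n_r+4k})}{(q^2;q^2)_k\,(1-q^{2n_r})}\,(-1)^k q^{k^2-k+2(r-1)(k^2+n_rk)}
\]
(read as a limit at $n_r=0$). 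This is not a theta series. For $r=3$ it is the Rogers--Ramanujan function $\sum_N q^{2N^2+2n_rN}/(q^2;q^2)_N$, which for $n_r\ge 2$ is a Laurent-polynomial combination of the two Rogers--Ramanujan products. Feeding this into the $n_r$-sum leaves a double sum in $(n_r,k)$ that matches no Bailey pair.

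\textbf{Where Step 3 fails.} The claim that the leftover $n_r$-sum ``has the shape of a base-$q^2$ Bailey pair'' cannot hold as stated. A denominator $(q;q)_{n_r}$ can only match a base-$q^2$ pair after $n_r$ is halved. Moreover, $n_r$ cannot be handled after the block, because it couples to every $N_i$.

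\textbf{The missing idea.} Do the parity split first and put $m=\lfloor n_r/2\rfloor$ at the bottom of the chain. Your ``equivalently'' remark points this way, but only in this order does it work. Write $n_r=2m+\sigma$ and $N_i=n_{i+1}+\cdots+n_{r-1}+m$, so that $N_{r-1}=m$. The quadratic part becomes
\[
n_1^2+n_1(2N_{r-1}+\sigma)+2\sum_{i=1}^{r-1}(N_i+\sigma/2)^2 .
\]
Euler's identity combined with $(q;q)_{2m+\sigma}$ then produces the seed weight in $m$. For \eqref{eq-1.7} these weights are:
\begin{itemize}
\item $\sigma=0$: $(-q;q^2)_\infty/\bigl((q^2;q^2)_m(q^2;q^4)_m\bigr)$, which is Slater's (C1) in base $q^2$;
\item $\sigma=1$: $(-q^2;q^2)_\infty/\bigl((1-q)(q^4;q^4)_m(q^3;q^2)_m\bigr)$, which is the group-G pair (G2) in base $q^2$ (the source of \eqref{Nahmsumsg}), but only after $q\mapsto -q$.
\end{itemize}
That substitution $q\mapsto -q$ is the actual source of the bases $-q^{4r-1}$ and $-q^{4r-3}$.

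\textbf{What the proof then consists of.} You must choose the seed for each identity and each parity. On one side these are the group-C pairs (C1), (C3), (C4$^*$), (C5), (C6), (C7$^*$). On the other side they are group-G pairs: (G2), (G4$^*$), (G4$^{**}$), (G5), with (G1), (G1$^*$) behind \eqref{Nahmsumsi}. Several of these, namely (C4$^*$), (C7$^*$), (G1$^*$) and (G4$^{**}$), are base-changed pairs not in Slater's list. You then run Lemma \ref{lemma-S1} $r-1$ times, with one application of Lemma \ref{lem-1} at the break $j$; that is your Bailey-lattice step. Your proposal names this choice as ``the main obstacle'' but leaves it open, and it is the substance of the proof.

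Similarly, an induction on $r$ must be run on Bailey pairs, not on the product identities. The base case $r=2$ is not a direct check either; it already needs the same pairs.
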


 Based on these three families of generalized Nahm sums,  we  introduce the following two vector-valued functions $\boldsymbol{G}_{4r-2}(\tau)$ and $\boldsymbol{H}_{4r-4}(\tau)$:
 \begin{align*}
\boldsymbol{G}_{4r-2}(\tau)&=(g_1(\tau), g_2(\tau), \ldots, g_{2r-1}(\tau),
g^{\vee}_1(\tau), g^{\vee}_2(\tau), \ldots, g^{\vee}_{2r-1}(\tau))^T
\end{align*}
and
\begin{align*}
\boldsymbol{H}_{4r-4}(\tau)=(h_{1}(\tau), h_2(\tau),
\ldots, h_{2r-2}(\tau),
h_1^{\vee}(\tau), h^{\vee}_2(\tau),\ldots, h_{2r-2}^{\vee}(\tau))^T,
\end{align*}
where
\begin{align}
g_j(\tau)&=q^{\frac{(4r-4j+1)^2}{32r-8}}\left(q^{\frac{2j-r-1}{2}}
\frac{(q^{8r-4j},q^{8r+4j-4},q^{16r-4};q^{16r-4})_\infty}{(q,q^3,q^4;q^4)_\infty}\right.\notag\\[5pt]
&\quad \left.+\frac{(-q^{2j-1},q^{4r-2j},-q^{4r-1};-q^{4r-1})_\infty}{(q^2,q^2,q^4;q^4)_\infty}\right),\label{gv-def}\\[5pt]
g^{\vee}_j(2\tau)&=\frac{1}{2}q^{\frac{2j^2-2j-2r+1}{16r-4}}
\left(\frac{(q^{2r-j},q^{2r+j-1},q^{4r-1};q^{4r-1})_\infty}{(q,q^3,q^4;q^4)_\infty}\right.\notag\\[10pt]
&\quad \left. +(-1)^{\frac{j(j-1)}{2}}\frac{((-q)^{2r-j},(-q)^{2r+j-1},-q^{4r-1};-q^{4r-1})_\infty}
{(-q,-q^3,q^4;q^4)_\infty}\right),\label{g-def}
\end{align}
\begin{align}
h_j(\tau)&=q^{\frac{(4 j-4 r+1)^2}{32 r-24}}\left(
q^{\frac{4 j-2 r-1}{4}}
\frac{(q^{4(2r-1-j)}, q^{8r+4j-8},q^{16r-12}; q^{16r-12})_\infty}{(q,q^3,q^4;q^4)_\infty}\right.\notag\\[10pt]
&\quad \left.
+
\frac{(q^{2(2r-j-1)},-q^{2j-1},-q^{4r-3};-q^{4r-3})_\infty}{(q^2,q^2,q^4;q^4)_\infty}\right),\label{hv-def}\\
h^{\vee}_j(2\tau)&=\frac{1}{2}q^{\frac{j^2-j-r+1}{8r-6}}
\left(\frac{(q^{2r-j-1},q^{2r+j-2},q^{4r-3};q^{4r-3})_\infty}
{(q,q^3,q^4;q^4)_\infty}\right.\notag\\[5pt]
&\quad \left.+
\frac{((-q)^{2r-j-1},(-q)^{2r+j-2},-q^{4r-3};-q^{4r-3})_\infty}
{(-q,-q^3,q^4;q^4)_\infty}\right).\label{h-def}
\end{align}

We then establish the following results.

\begin{thm}\label{vmf-G-AF}
For $r\geq 2$,  $\boldsymbol{G}_{4r-2}(\tau)$ is a vector-valued automorphic form
of some multiplier
for the group generated by $\left(1~~2\atop 0~~1\right)$
and $\left(1~~0\atop 4~~1\right)$.
In particular, for  odd integer $r\geq 3$,
$\boldsymbol{G}_{4r-2}(\tau)$ is a vector-valued modular function
of some multiplier with respect to
the congruence subgroup $\Gamma_0(2)$.
\end{thm}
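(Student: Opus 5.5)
The plan is to reduce everything to explicit transformation laws of theta functions under the two generators. Put $T=\left(1~~1\atop 0~~1\right)$ and $S=\left(0~-1\atop 1~~~0\right)$. Then the group in question is generated by $T^2$ and $\left(1~~0\atop 4~~1\right)=ST^{-4}S^{-1}$, so it suffices to show two things: that $\boldsymbol{G}_{4r-2}(\tau+2)=M_{T^2}\boldsymbol{G}_{4r-2}(\tau)$, and that $\boldsymbol{G}_{4r-2}(-1/\tau)=M_S\boldsymbol{G}_{4r-2}(\tau)$ for explicit constant matrices $M_{T^2}$ and $M_S$. The $S$-law is really the statement that $S$ exchanges the block $(g_j)$ with the block $(g_j^{\vee})$, which is the ``Langlands dual'' phenomenon coming from $A$ and $A^{\vee}$ in Theorems \ref{main1} and \ref{main3}. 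Combining the two laws then gives the action of $ST^{-4}S^{-1}$.

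\textbf{Step 1: rewrite the components as theta quotients.} Using the Jacobi triple product, each numerator $(x,y,q^N;q^N)_\infty$ with $x y=q^N$ is a theta function $\vartheta$ with rational characteristic. The numerators with base $-q^{4r-1}$ are the same thetas evaluated at $\tau+\tfrac12$, up to eighth roots of unity and a shift of the characteristic. The denominators $(q,q^3,q^4;q^4)_\infty$ and $(q^2,q^2,q^4;q^4)_\infty$ are eta quotients in $\eta(\tau),\eta(2\tau),\eta(4\tau)$, and $(-q,-q^3,q^4;q^4)_\infty$ is the same quotient evaluated at $\tau+\tfrac12$.

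After this rewriting, each $g_j$ and $g_j^{\vee}$ becomes a sum of the form $F(\tau)+\varepsilon F(\tau+\tfrac12)$, or a corresponding combination of an $N$-dissection. I would check that the prefactors $q^{\frac{(4r-4j+1)^2}{32r-8}}$ and the extra powers $q^{\frac{2j-r-1}{2}}$ are exactly the ones that make each summand a theta function $\sum_{n\equiv a \ (\mathrm{mod}\ N)} \pm q^{n^2/(2N)}$ with $N=2(4r-1)$ or $4(4r-1)$, divided by an eta quotient. This is the step where the identities of Theorem \ref{main1a} enter, since they show the two-term products are genuinely single theta-type objects.

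\textbf{Step 2: the $T^2$-law.} This law is immediate from the $q$-expansions. Each summand has the form $q^{c}\times$ (a power series in $q$ or in $q^{1/2}$ with fixed parity), so $T^2$ acts diagonally by roots of unity. One must verify that the two terms in each $g_j$ pick up the same root of unity, and this follows from comparing exponents modulo $2$.

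\textbf{Step 3: the $S$-law (main obstacle).} Here I would apply Poisson summation, i.e.\ the standard $S$-transformation of $\vartheta$ with characteristic and of $\eta$. The theta quotient for $g_j(-1/\tau)$ then becomes a finite sum $\sum_k \sin\!\big(\tfrac{\pi(\cdot)jk}{4r-1}\big)\times$ (theta quotients in $\tau$). The twist $\tau\mapsto\tau+\tfrac12$ conjugates $S$ into an element involving $\left(1~~0\atop 2~~1\right)$, and this must be tracked carefully. I expect it is exactly this twist that converts the moduli and bases of $g_j$ into the shape $(\ldots;q^{4r-1})$, $(\ldots;-q^{4r-1})$ appearing in $g_j^{\vee}(2\tau)$. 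The real difficulty is the bookkeeping: the characteristics, the signs $(-1)^{j(j-1)/2}$, and the factor $\tfrac12$ all have to match exactly. I would first carry this out for $r=2$, comparing numerically with the known formulas of B.~Wang--L.~Wang, and then do the general finite Fourier computation.

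\textbf{Step 4: the odd-$r$ refinement.} Once $M_S$ is known, the action of $\left(1~~0\atop 4~~1\right)$ follows by combining the laws as described above, so $\boldsymbol{G}_{4r-2}$ is automorphic for the group they generate. Holomorphy in $\mathbb{H}$ and meromorphy at the cusps come from the theta/eta form. For the claim about odd $r\geq 3$, note that $\Gamma_0(2)$ is generated by $T$ and $\left(1~~0\atop 2~~1\right)$ (together with $-I$), so I would show that $\boldsymbol{G}_{4r-2}$ in fact transforms under $T$ itself. For odd $r$, the exponents $\frac{(4r-4j+1)^2}{32r-8}+\frac{2j-r-1}{2}$ and the half-integral shifts appearing in $g_j^{\vee}$ become compatible, and $T$ permutes or rescales the two summands within each component. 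The same conjugation argument with $ST^{-2}S^{-1}$ then yields the transformation law under $\left(1~~0\atop 2~~1\right)$.
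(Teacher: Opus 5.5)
\textbf{There is a genuine gap in Step 3.} Your matrix identity $\left(1~~0\atop 4~~1\right)=ST^{-4}S^{-1}$ is correct, and your Step 2 (the law under $\tau\mapsto\tau+2$) is fine. The trouble is the law Step 3 needs, $\boldsymbol{G}_{4r-2}(-1/\tau)=M_S\,\boldsymbol{G}_{4r-2}(\tau)$: it is false, so the argument cannot be completed.

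The blocks $(g_j)$ and $(g_j^{\vee})$ are not exchanged by $\tau\mapsto-1/\tau$. They are exchanged by the Fricke involution $\tau\mapsto -1/(2\tau)$. The paper proves
$\boldsymbol{g}_{2r-1}(-\frac{1}{2\tau})=2C\,\boldsymbol{g}^{\vee}_{2r-1}(\tau)$ and $\boldsymbol{g}^{\vee}_{2r-1}(-\frac{1}{2\tau})=C\,\boldsymbol{g}_{2r-1}(\tau)$. Here $C=(s_{j,k})$ is the cosine matrix with $s_{j,k}=\sqrt{\frac{2}{4r-1}}\cos\frac{(2j-1)(2k-1)\pi}{2(4r-1)}$, and $2C^2=E$.

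Substituting $\tau\mapsto\tau/2$ gives $\boldsymbol{g}(\tau/2)=2C\,\boldsymbol{g}^{\vee}(-1/\tau)$. So if your $S$-law held, every $g_k(\tau/2)$ would lie in the span of the components of $\boldsymbol{G}_{4r-2}$. This is impossible, as the $q$-exponents show. Write all exponents over the common denominator $16(4r-1)$:
\begin{itemize}
\item every exponent of $g_j(\tau)$ is congruent to $2(4r-4j+1)^2$ modulo $8(4r-1)$;
\item every exponent of $g_j^{\vee}(\tau)$ is congruent to $2(2j^2-2j-2r+1)$ modulo $8(4r-1)$;
\item every exponent of $g_k(\tau/2)$ is congruent to $(4r-4k+1)^2$ modulo $4(4r-1)$.
\end{itemize}
The first two have even numerators and the third has odd numerators, so $g_k(\tau/2)$ cannot be a combination of the $g_j$ and $g_j^{\vee}$.

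Your own Poisson computation would reveal this at once. Applying $S$ to $V_j=\frac{\mathfrak{f}(2\tau)}{\eta(2\tau)}g_{2j-1,4r-1}(2\tau)$ produces $\mathfrak{f}(\tau/2)/\eta(\tau/2)$ times thetas in $\tau/2$. Step 4 fails for the same reason, since it uses $ST^{-2}S^{-1}$.

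\textbf{What to use instead.} The missing idea is to work with $W_2=\left(0~-1\atop 2~~0\right)$. It normalizes $\Gamma_0(2)$ and satisfies $W_2T^{-2}W_2=-2\left(1~~0\atop 4~~1\right)$ and $W_2T^{-1}W_2=-2\left(1~~0\atop 2~~1\right)$. Let $M=\left(O~~2C\atop C~~O\right)$ be the action of $W_2$ and $Q$ your $\tau\mapsto\tau+2$ multiplier. Then $\boldsymbol{G}_{4r-2}\bigl(\frac{\tau}{4\tau+1}\bigr)=MQ^{-1}M\,\boldsymbol{G}_{4r-2}(\tau)$. For odd $r$, the $\tau\mapsto\tau+1$ law gives $\Gamma_0(2)$ in the same way.

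The real work is proving the $W_2$-law, and it needs two different inversion formulas:
\begin{itemize}
\item The $S$-transformation of $g_{j,m}$ (Wakimoto) handles $V_j$.
\item The part $V_j^*$, with base $-q^{4r-1}$, must first be rewritten as a multiple of $\frac{\mathfrak{f}_2(2\tau)}{\eta(2\tau)}g_{\ell,4r-1}\bigl(\frac{2\tau-1}{4}\bigr)$. Its image under $W_2$ then needs the transformation of $g_{j,m}$ under $\tau\mapsto-\frac{\tau+1}{4\tau}$ (Lemma \ref{ww-lem}).
\end{itemize}
Finally, an explicit root-of-unity computation shows that the resulting coefficient matrix equals $C$.

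Incidentally, Theorem \ref{main1a} plays no role here, because $g_j$ and $g_j^{\vee}$ are defined directly as products.
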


\begin{thm}\label{vmf-H-AF}
For $r\geq 2$,  $\boldsymbol{H}_{4r-4}(\tau)$ is a vector-valued automorphic form
of some multiplier
for the group generated by $\left(1~~4\atop 0~~1\right)$
and $\left(1~~0\atop 8~~1\right)$.
\end{thm}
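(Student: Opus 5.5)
The plan is to mirror Zagier's treatment of the Rogers--Ramanujan vector and Milas--Wang's treatment of tadpole sums: write every component of $\boldsymbol{H}_{4r-4}(\tau)$ as a combination of theta-quotients, then compute the action of the two generators $T^4=\left(1~~4\atop 0~~1\right)$ and $\left(1~~0\atop 8~~1\right)$. Since $\left(1~~0\atop 8~~1\right)=S\,T^{-8}S^{-1}$ up to sign, it suffices to find the transformation of $\boldsymbol{H}_{4r-4}$ under $T^4$ and under $S$. More precisely, one needs that $S$ maps the span of the $h_j$ to the span of the $h_j^{\vee}$, and vice versa. Write $N=4r-3$, which is the modulus appearing throughout \eqref{hv-def} and \eqref{h-def}. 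Each product is rewritten via the Jacobi triple product. The denominators $(q,q^3,q^4;q^4)_\infty$, $(q^2,q^2,q^4;q^4)_\infty$ and $(-q,-q^3,q^4;q^4)_\infty$ are eta-quotients, namely $\eta(2\tau)/\eta(\tau)\eta(4\tau)\cdot(\ldots)$ up to $q$-powers; the precise forms are $\eta(4\tau)/\eta(\tau)$-type and $\eta(\tau)^2/\eta(2\tau)$-type. The numerators are theta functions $\sum_n (\pm1)^n q^{N n^2/2+\ldots}$ with characteristic depending on $j$. The prefactors $q^{(4j-4r+1)^2/(32r-24)}$ and $q^{(j^2-j-r+1)/(8r-6)}$ are exactly those that complete the square. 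Thus $h_j$ and $h_j^{\vee}$ become, up to a fixed eta-quotient, linear combinations of unary theta series $\theta_{a,N'}(\tau)=\sum_{n\equiv a \,(2N')} q^{n^2/(4N')}$, possibly with characters, for suitable $N'$ (essentially $N'=2N$ or $4N$).

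Next I apply the standard theta transformation law, $\theta_{a,M}(-1/\tau)=\sqrt{\tau/2Mi}\sum_b e^{-\pi i ab/M}\theta_{b,M}(\tau)$. I pair it with $\eta(-1/\tau)=\sqrt{\tau/i}\,\eta(\tau)$. The weights cancel because each component has total weight $0$, the theta weight $1/2$ being balanced by the eta weights. For $T^4$, each $h_j$ and $h^{\vee}_j$ is of the form $q^{\alpha}\cdot$(power series in $q^{1/4}$ or $q^{1/2}$) with a fixed leading exponent. The exponents $(4j-4r+1)^2/(32r-24)$ etc. combined with the shifts should make $h_j(\tau+4)$ a root of unity times $h_j(\tau)$. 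I will check this directly, expecting diagonal action with eighth-root-of-unity entries. This is why the group uses $T^4$ rather than $T^2$ (contrast with Theorem~\ref{vmf-G-AF}).

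The key step is to show that the $S$-images close up. That is, $h_j(-1/\tau)$ must be a linear combination of $h^{\vee}_1,\ldots,h^{\vee}_{2r-2}$, and $h^\vee_j(-1/\tau)$ a combination of the $h_k$. The finite Fourier sum produces all residues $b$ modulo $2N'$. I then fold it using $\theta_{-b}=\theta_b$ and $\theta_{b+2N'}=\theta_b$, together with the character twists coming from the $(-q)$ and $-q^{N}$ factors. After folding, I must verify that the resulting theta combinations regroup into precisely the two-term sums defining $h^{\vee}_k$ in \eqref{h-def}. These are the symmetrized sums over $q\mapsto -q$, reflecting the $2\tau$ argument. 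I must also verify that the number of independent classes is $2r-2$. This bookkeeping is the main obstacle. The mixed parity between the $q^4$-periodic denominators and the modulus $N$ (odd) forces one to split residues by parity and by class mod $4$. I would first test $r=2$ numerically to pin down signs and the matrix, then generalize. Once closure under $S$ is established, $\left(1~~0\atop 8~~1\right)=S T^{-8} S^{-1}$ acts by conjugating a diagonal matrix by the $S$-matrix, so it acts linearly. This yields a multiplier system on the generated group. Finally, holomorphy on $\mathbb{H}$ and meromorphy (at most exponential growth) at the cusps follow from the theta and eta expressions, giving the vector-valued automorphic form claimed in Theorem~\ref{vmf-H-AF}.
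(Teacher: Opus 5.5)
Your reduction depends on a transformation law for $\boldsymbol{H}_{4r-4}$ under $S\colon\tau\mapsto-1/\tau$, and no such law exists. The matrix identity $\left(1~~0\atop 8~~1\right)=ST^{-8}S^{-1}$ is correct, but $S$ does \emph{not} carry the span of the $h_j$ into the span of the $h^\vee_k$.

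The reason is a mismatch of arguments. The $h_j$ are built from $\mathfrak f(2\tau)/\eta(2\tau)$, $\mathfrak f_2(2\tau)/\eta(2\tau)$ and theta series at $2\tau$ and $(2\tau-1)/4$. The $h^\vee_k$ are built from $\mathfrak f(\tau)/\eta(\tau)$ and theta series at $\tau/4$. Under $\tau\mapsto-1/\tau$ the argument $2\tau$ becomes $-1/(\tau/2)$, so you land on functions of $\tau/2$. The true relation is $\boldsymbol h_{2r-2}(-1/\tau)=2\widetilde S\,\boldsymbol h^\vee_{2r-2}(\tau/2)$.

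This already breaks your key step at $r=2$:
\begin{itemize}
\item $h^\vee_1(\tau/2)=q^{-1/40}(1+O(q^{1/2}))$ and $h^\vee_2(\tau/2)=q^{1/40}(1+O(q^{1/2}))$.
\item Since $\widetilde s_{11}=\sqrt{2/5}\cos(\pi/10)\neq0$, the function $h_1(-1/\tau)$ has a nonzero $q^{-1/40}$ term.
\item Every combination of $h^\vee_1(\tau)\in q^{-1/20}\mathbb C[[q]]$ and $h^\vee_2(\tau)\in q^{1/20}\mathbb C[[q]]$ has exponents only in $\pm\frac1{20}+\mathbb Z$.
\end{itemize}
More generally, suppose $\boldsymbol H_{4r-4}$ had an $S$-law. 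Combined with the Fricke law below, the span of its components would be stable under $\tau\mapsto\tau/2$. The finite set of orders at $\infty$ would then be closed under halving, hence equal to $\{0\}$. But $h^\vee_1$ has order $(1-r)/(16r-12)\neq0$. So the residue-folding you flag as the main obstacle cannot close up, however the classes are split.

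The missing idea is the correct involution: the Fricke map $\tau\mapsto-1/(2\tau)$. It normalizes $\langle T^4,\left(1~~0\atop 8~~1\right)\rangle$ and exchanges the two halves of $\boldsymbol H_{4r-4}$. The paper proceeds as follows.
\begin{itemize}
\item It proves $\boldsymbol h_{2r-2}(-1/(2\tau))=2\widetilde S\,\boldsymbol h^\vee_{2r-2}(\tau)$. The reverse law then follows from $2\widetilde S^2=E$ (Lemma \ref{inv-S}).
\item It establishes the translation laws $\boldsymbol h_{2r-2}(\tau+4)=\widetilde T\,\boldsymbol h_{2r-2}(\tau)$ and $\boldsymbol h^\vee_{2r-2}(\tau+1)=\widetilde T^\vee\boldsymbol h^\vee_{2r-2}(\tau)$.
\item It uses $\frac{\tau}{8\tau+1}=-1/\bigl(2(-4-\frac{1}{2\tau})\bigr)$, i.e.\ Fricke, then $T^{-4}$, then Fricke. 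This gives $\boldsymbol H_{4r-4}(\frac{\tau}{8\tau+1})=\widetilde M\widetilde Q^{-1}\widetilde M\,\boldsymbol H_{4r-4}(\tau)$, where $\widetilde M$ is block anti-diagonal with blocks $2\widetilde S,\widetilde S$ and $\widetilde Q=\mathrm{diag}(\widetilde T,(\widetilde T^\vee)^4)$.
\end{itemize}
The twisted pieces $\widetilde V_j^*$ also need more than the plain $S$-law for theta series. Up to a phase, $\widetilde V_j^*$ is $\frac{\mathfrak f_2(2\tau)}{\eta(2\tau)}g_{\ell,4r-3}(\frac{2\tau-1}{4})$ with $\ell$ odd, and under the Fricke map the theta argument becomes $-\frac{\tau+1}{4\tau}$. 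The paper handles this with the $\Gamma_0(4)$-type formula of Lemma \ref{ww-lem}, which carries the factor $\epsilon_m$. It then matches the result with $\widetilde U_k(\tau+1)$ and hence with $\widetilde U_k^*(\tau)$.

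Your $T^4$ step is right in spirit, since the two pieces of $h_j$ differ by $q^{(4j-2r-1)/4}$. However, the diagonal entries are roots of unity of order dividing $2(4r-3)$, not eighth roots.
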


Several remarks are in order.

\begin{rem}
$(1)$~
In order to prove Theorem \ref{vmf-G-AF} and Theorem \ref{vmf-H-AF},
we establish modular transformation formulas on
``Langlands dual'' pair $\boldsymbol{g}_{2r-1}(\tau)$
and $\boldsymbol{g}^{\vee}_{2r-1}(\tau)$,
and $\boldsymbol{h}_{2r-2}(\tau)$
and $\boldsymbol{h}^{\vee}_{2r-2}(\tau)$ (see Theorem \ref{vmf-G} and Theorem \ref{vmf-H}), where
\begin{align*}
&\boldsymbol{g}_{2r-1}(\tau)=(g_1(\tau), g_2(\tau), \ldots, g_{2r-1}(\tau))^T,
\quad
\boldsymbol{g}_{2r-1}^{\vee}(\tau)=(g^{\vee}_1(\tau), g^{\vee}_2(\tau), \ldots, g^{\vee}_{2r-1}(\tau))^T,
\end{align*}
and
\begin{align*}
&\boldsymbol{h}_{2r-2}(\tau)=(h_1(\tau), h_2(\tau), \ldots, h_{2r-2}(\tau))^T,
\quad
\boldsymbol{h}_{2r-2}^{\vee}(\tau)=(h^{\vee}_1(\tau), h^{\vee}_2(\tau), \ldots, h^{\vee}_{2r-2}(\tau))^T.
\end{align*}
Notice that the case $r=3$ of Theorem \ref{vmf-G}
was conjectured by Mizuno \cite[Eq. (54)]{Mizuno-2025}), and subsequently confirmed by
B. Wang--L. Wang \cite{Wang-Wang-2025-2}.

$(2)$~
It follows from Theorem \ref{main1a} (1) and (2) that
\begin{align*}
g_1(\tau)&= \widetilde{f}_{A,\boldsymbol{b}_{0},{c}_{0},\boldsymbol{d}}(\tau),
\end{align*}
and, for \(r\leq j\leq2r-1\), we have
\begin{align*}
g_j(\tau)&= \widetilde{f}_{A,\boldsymbol{b}_{2r-j},{c}_{2r-j},\boldsymbol{d}}(\tau),
\end{align*}
where $(A,\boldsymbol{b}_j,{c}_j)$ is the  triple of index
$\boldsymbol{d}=({2,\ldots, 2}, 1)$ given in Theorem  \ref{main1}.

Moreover,
 by Theorem \ref{main1a} (5)  and (6), we have
\begin{align*}
g^{\vee}_{2r-1}(2\tau)=\frac{1}{2}\left(\widetilde{f}_{A^{\vee},2\boldsymbol{b}_{0},2c_{0},2\boldsymbol{d}^{\vee}}(\tau)+(-1)^{r-1}\widetilde{f}_{A^{\vee},2\boldsymbol{b}_{0},2c_{0},2\boldsymbol{d}^{\vee}}\big(\tau+\frac{1}{2}\big)\right),
\end{align*}
and, for \(2\leq j\leq 2r-2\) and \(j\) is even, we have
\[g^{\vee}_j(2\tau)=\frac{1}{2}\left(\widetilde{f}_{A^{\vee},2\boldsymbol{b}_{r-\frac{j}{2}},2c_{r-\frac{j}{2}},2\boldsymbol{d}^{\vee}}(\tau)+(-1)^\frac{j(j-1)}{2}\widetilde{f}_{A^{\vee},2\boldsymbol{b}_{r-\frac{j}{2}},2c_{r-\frac{j}{2}},2\boldsymbol{d}^{\vee}}\big(\tau+\frac{1}{2}\big)\right),\]
where \((A^{\vee},\boldsymbol{b}_i,c_i)\) is the triple of index
\(\boldsymbol{d}^{\vee}=(1,\ldots,1,2)\) given in Theorem \ref{main3}.

$(3)$~By Theorem \ref{main1a} (3) and (4), we obtain
\begin{align*}
h_1(\tau)&= \widetilde{f}_{{B},\boldsymbol{b}_{0},{c}_{0},\boldsymbol{d}}(\tau),
\end{align*}
and for $r-1\leq j\leq 2r-2$,
\begin{align*}
h_j(\tau)&= \widetilde{f}_{{B},\boldsymbol{b}_{2r-j-1},{c}_{2r-j-1},\boldsymbol{d}}(\tau),
\end{align*}
where $({B},\boldsymbol{b}_j,{c}_j)$ is the  triple of index $\boldsymbol{d}=({2,\ldots, 2}, 1)$
given in Theorem \ref{main2}.
\end{rem}

{ When $r\geq 3$ and $j=2$, we find  that
$g_2(\tau)$ can be expressed as the sum of
two  Nahm sums for the symmetrizable matrix $A$ of index $\boldsymbol{d}=({2,\ldots, 2}, 1)$, as shown in the following proposition.
\begin{prop} We have
\begin{align*}
g_2(\tau)&= \widetilde{f}_{A,\boldsymbol{b}_{r+1},{c}_{r+1},\boldsymbol{d}}(\tau)
+\widetilde{f}_{{A},\boldsymbol{\widetilde{b}}_{r+1},\widetilde{c}_{r+1},\boldsymbol{d}}(\tau),%\label{eq-g-j-2}
\end{align*}
 where
$A$ is the symmetrizable matrix with the symmetrizer \(D={\rm diag} ({2,\ldots, 2}, 1)_{r\times r}\)    given in Theorem \ref{main1},
\begin{align*}
&\boldsymbol{b}_{r+1}=(0,\ldots,0,1)^T_{1\times r}, \quad
c_{r+1}=\frac{37-4r}{32r-8},\\
&\boldsymbol{\widetilde{b}}_{r+1}=(1,2,4,\ldots, 2(r-2),r)^T_{1\times r},\quad
\widetilde{c}_{r+1}=\frac{(4r-7)^2}{32r-8}.
\end{align*}
\end{prop}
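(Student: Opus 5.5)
We need to prove that $g_2(\tau)$ is the sum of the two Nahm sums in the proposition. Here $g_2$ consists of two pieces. The first is a "$(q,q^3,q^4;q^4)$" product: $q^{c}q^{(3-r)/2}(q^{8r-8},q^{8r+4},q^{16r-4};q^{16r-4})_\infty/(q,q^3,q^4;q^4)_\infty$. The second is a "$(q^2,q^2,q^4;q^4)$" product: $(-q^3,q^{4r-4},-q^{4r-1};-q^{4r-1})_\infty/(q^2,q^2,q^4;q^4)_\infty$.

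\textbf{Approach.} The plan is to mirror the proof of Theorem \ref{main1a}(1),(2), splitting on the parity of the last summation index $n_r$. In those identities each Nahm sum equals one product of each type. So we expect each of $\widetilde f_{A,\boldsymbol b_{r+1},\dots}$ and $\widetilde f_{A,\widetilde{\boldsymbol b}_{r+1},\dots}$ to also be a sum of two products, $P_1+Q_1$ and $P_2+Q_2$. The content of the proposition is then that $P_1+P_2$ and $Q_1+Q_2$ reproduce the two pieces of $g_2$.

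\textbf{Step 1: normalize the constants.} Write $c_{r+1}=c_1'+\text{shift}$ with $c_1'=(4r-7)^2/(32r-8)$ as in $g_2$. Compute $(4r-7)^2-(37-4r)=16r^2-52r+12$, so
\[
\frac{(4r-7)^2-(37-4r)}{32r-8}=\frac{(4r-1)(4r-12)}{8(4r-1)}=\frac{r-3}{2}.
\]
This shows $c_{r+1}=\widetilde c_{r+1}-\tfrac{r-3}{2}$. The proposition therefore becomes the $q$-series identity
\[
\sum_{\boldsymbol n}\frac{q^{\frac12\boldsymbol n^TAD\boldsymbol n+n_r}\bigl(q^{\frac{3-r}{2}}+q^{\boldsymbol n^T(\widetilde{\boldsymbol b}_{r+1}-\boldsymbol b_{r+1})}\bigr)}{(q^2;q^2)_{n_1}\cdots(q^2;q^2)_{n_{r-1}}(q;q)_{n_r}}
= q^{\frac{3-r}{2}}\frac{(q^{8r-8},q^{8r+4},q^{16r-4};q^{16r-4})_\infty}{(q,q^3,q^4;q^4)_\infty}+\frac{(-q^3,q^{4r-4},-q^{4r-1};-q^{4r-1})_\infty}{(q^2,q^2,q^4;q^4)_\infty}.
\]

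\textbf{Step 2: evaluate each Nahm sum.} For each sum, carry out the inner summations exactly as in the proof of Theorem \ref{main1a}. First sum over $n_1$ using Euler's identity in base $q^2$; this couples only to $n_r$ through the entry $1$ in $AD$. Then sum $n_2,\dots,n_{r-1}$ by the Andrews–Gordon/Bressoud-type iteration in $q^2$: the tridiagonal-free "$\min$" structure $4\min(i,j)$ is exactly the Andrews multisum. This reduces each sum to a single sum over $n_r$ involving a Rogers–Selberg-type kernel.

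Next split $n_r=2m$ and $n_r=2m+1$. One parity class produces, via the Jacobi triple product with modulus $16r-4$ in $q$, the $(q,q^3,q^4;q^4)$-type product. The other produces, via the triple product with base $-q^{4r-1}$, the $(q^2,q^2,q^4;q^4)$-type product. This gives the decompositions $P_1+Q_1$ and $P_2+Q_2$.

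\textbf{Step 3: combine.} The $\widetilde{\boldsymbol b}_{r+1}$-sum should behave like the $j=1$ case of \eqref{eq-1.8} with one linear term modified, and the $\boldsymbol b_{r+1}$-sum like the $\boldsymbol b_0$-case shifted by $n_r$. By the quintuple-product/three-term relation for theta functions, $P_1+P_2$ should equal $q^{\frac{3-r}{2}}\theta$ with the parameters of $g_2$, and likewise for $Q_1+Q_2$. Concretely, the two theta series have characteristics differing by the shift from $n_r$ versus $n_1$. So I would write each resulting series as a sum over $\mathbb Z$ of a quadratic exponential and reindex so that the two sums interleave residue classes into the single series for $(q^{8r-8},q^{8r+4},q^{16r-4};q^{16r-4})_\infty$.

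\textbf{Main obstacle.} I expect the main obstacle to be Step 3: showing the two half-theta contributions assemble exactly, with matching signs and exponents, into the Jacobi triple products appearing in $g_2$. This requires careful bookkeeping of the residue classes. If the direct interleaving fails, I would instead verify it as an identity of weight-zero modular functions on $\Gamma_1(128(4r-1)^2)$ via Robins' criterion and a Sturm-bound coefficient check, though that only works for fixed $r$. The general-$r$ argument therefore has to come from the explicit theta reindexing.
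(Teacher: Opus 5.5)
Your Step 1 is correct: $c_{r+1}=\widetilde{c}_{r+1}-\frac{r-3}{2}$, so the proposition is equivalent to Theorem~\ref{Nahm-SumC}. The gap is in Steps 2--3, which assume that each of the two Nahm sums is separately ``one product of each type''. That assumption is false.

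Split the $\boldsymbol{b}_{r+1}$-sum $F$ and the $\widetilde{\boldsymbol{b}}_{r+1}$-sum $H$ by the parity of $n_r$, so $F=F_0+F_1$ and $H=H_0+H_1$. After Euler's summation over $n_1$, $F_0$ and $H_1$ pick up $(-q;q^2)_\infty$, while $F_1$ and $H_0$ pick up $(-q^2;q^2)_\infty$. The parities swap roles because $\widetilde{\boldsymbol{b}}_{r+1}$ has first entry $1$. So the four pieces do sort into your two types, but none of them is a single product, and no Slater pair of the kind used for Theorem~\ref{main1a} evaluates them.

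For instance, $F_0=(-q;q^2)_\infty\Phi(q^2)$ with $\Phi(q)=\sum_{N_1\geq\cdots\geq N_{r-1}\geq 0}\frac{q^{N_1^2+\cdots+N_{r-1}^2+N_{r-1}}}{(q;q)_{N_1-N_2}\cdots(q;q)_{N_{r-1}}(q;q^2)_{N_{r-1}}}$. Its Bailey seed is $\beta_n=q^n/((q;q)_n(q;q^2)_n)$ relative to $a=1$. Bailey inversion gives $\alpha_0=1$, $\alpha_1=q^2-1$, $\alpha_2=-q^2-q^4$. For $r=3$ this yields $(q,q^3,q^4;q^4)_\infty F_0=1-q^4+q^8-q^{20}-q^{24}+\cdots$. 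A triple product $(q^s,q^{M-s},q^M;q^M)_\infty$ with $0<s<M$ has its two lowest nonconstant terms both negative, so this numerator is not one.

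Hence there are no products $P_1,P_2$ for a theta three-term or quintuple-product relation to merge, and Step 3 has nothing to act on. Your fallback does not help either. A Sturm-bound comparison needs both sides to be known modular forms, and the Nahm-sum side is only known to be modular as a consequence of the identity itself. It would also cover only fixed $r$.

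The missing idea is to combine the two Nahm sums, with crossed parities, before evaluating anything: pair $F_0$ with $q^{\frac{r-3}{2}}H_1$, and $F_1$ with $q^{\frac{r-3}{2}}H_0$.

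In $H_1$ the Euler step leaves $(q^2;q^4)_{N_{r-1}+1}$ in the denominator. The shift $N_i\mapsto N_i-1$ turns its summand into $q^{2(N_1^2+\cdots+N_{r-1}^2)+2N_{r-1}}(q^{-2}-q^{2N_{r-1}-2})$, over the same denominator and with the same prefactor $(-q;q^2)_\infty$ as in $F_0$. So the summand of $F_0+q^{\frac{r-3}{2}}H_1$ carries the factor $q^{2N_{r-1}}(1+q^{-2}-q^{2N_{r-1}-2})$. In base $q^2$ this is exactly the $\beta_n$ of the Wang--Wang pair \eqref{W-Bailey}. Its $\alpha_{2n}=(-1)^nq^{3n^2}(q^{3n}+q^{-3n})$ survives iteration of Lemma~\ref{lemma-S1} as a single theta series, which gives \eqref{SumC-1} and the $(q,q^3,q^4;q^4)$-type product in $g_2$.

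Likewise, $F_1+q^{\frac{r-3}{2}}H_0$ produces the factor $1+q^2-q^{2N_{r-1}+1}$. It is evaluated by \eqref{SumC-2}, that is, by the second pair \eqref{W-Bailey-2}.

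The merging has to happen at the level of the summand, where it produces a nonstandard Bailey pair. It is not a relation among separately evaluated products.
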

This relation  is implied by the following identity.
\begin{thm}\label{Nahm-SumC}
For $r\geq 3$,
\begin{align}
&\sum_{\boldsymbol{n}=(n_1,\ldots,n_r)^T\in\mathbb{N}^r}\frac{q^{\frac{1}{2}\boldsymbol{n}^T{A}D\boldsymbol{n}
+\boldsymbol{n}^T\boldsymbol{b}_{r+1}}
+q^{\frac{1}{2}\boldsymbol{n}^T{A}D\boldsymbol{n}
+\boldsymbol{n}^T\boldsymbol{\widetilde{b}}_{r+1}+\frac{r-3}{2}}}
{(q^2;q^2)_{n_1}(q^2;q^2)_{n_2}\cdots(q^2;q^2)_{n_{r-1}}(q;q)_{n_r}}\notag\\[5pt]
&=
\frac{(q^{8r+4},q^{8r-8},q^{16r-4};q^{16r-4})_\infty}
{(q, q^3, q^4 ;q^4)_\infty}
+q^{\frac{r-3}{2}}
\frac{(-q^3,q^{4r-4},-q^{4r-1}; -q^{4r-1})_\infty}{(q^2,q^2,q^4;q^4)_\infty},\label{SumC-eq}
\end{align}
where $AD$ is given by \eqref{eq-1.3},
$\boldsymbol{b}_{r+1}=(0,\ldots,0,1)^T_{1\times r}$, and
$\boldsymbol{\widetilde{b}}_{r+1}=(1,2,4,\ldots, 2(r-2),r)^T_{1\times r}$.
\end{thm}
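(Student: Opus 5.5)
We will reduce the $r$-fold sum in \eqref{SumC-eq} to a single sum and then evaluate that sum by the same product identities used for Theorem \ref{main1a} (1)--(2). Write the quadratic form coming from \eqref{eq-1.3} in partial-sum form. Put $N_k=n_k+n_{k+1}+\cdots+n_{r-1}$ for $2\le k\le r-1$. Then the block indexed by $n_2,\ldots,n_{r-1}$ contributes $2\sum_{k=2}^{r-1}N_k^2$, plus the cross terms with $n_r$, namely $2n_r\sum_{k=2}^{r-1}N_k$. The first coordinate interacts only with $n_r$, through $n_1n_r$.

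The structure of the argument is as follows.

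\textbf{Step 1: sum out $n_1$.} We sum over $n_1$ by Euler's identity $\sum_{n}q^{n^2+n a}z^n/(q^2;q^2)_n=(-zq^{1+a};q^2)_\infty$. This produces a factor $(-q^{1+n_r};q^2)_\infty$ for the $\boldsymbol b_{r+1}$ term. For the $\widetilde{\boldsymbol b}_{r+1}$ term, which has linear coefficient $1$ in $n_1$, it produces $(-q^{2+n_r};q^2)_\infty$.

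\textbf{Step 2: sum out $n_2,\ldots,n_{r-1}$.} These variables form an Andrews--Gordon/Bressoud-type chain in base $q^2$. For the linear terms $(0,\ldots,0)$ and $(2,4,\ldots,2(r-2))$ respectively, we use the known evaluation of the inner $(r-2)$-fold sum for fixed $n_r$. Following the approach that must underlie the proof of Theorem \ref{main1a}, we rewrite it as a single sum over $N_2$ times a $q$-binomial, or pass directly to a Bailey-lattice representation.

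\textbf{Step 3: reduce to one variable and identify the Bailey pair.} After Steps 1 and 2, each term becomes a sum over $n_r$ (and possibly $N_2$) with $(q;q)_{n_r}$ in the denominator. Combining $(-q^{1+n_r};q^2)_\infty$ with $1/(q;q)_{n_r}$, we write the result as $(-q;q^2)_\infty$ times a sum with denominators $(q;q)_{n_r}(-q;q^2)_{\lceil n_r/2\rceil}$-type factors. Splitting $n_r$ by parity produces two pieces. These should match the two products on the right of \eqref{SumC-eq}: the modulus-$(16r-4)$ theta quotient over $(q,q^3,q^4;q^4)_\infty$, and the modulus-$(4r-1)$ theta quotient in $-q$ over $(q^2,q^2,q^4;q^4)_\infty$.

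The cleanest route is the Bailey lattice. Take the unit Bailey pair relative to $a=1$ or $a=q$ in base $q^2$ and insert it $r-2$ times with the Bailey lemma. Then apply a final step in base $q$ with a parameter choice that encodes the $n_1$ and $n_r$ interaction. Finally, apply the Jacobi triple product to the resulting bilateral theta series. The two linear vectors $\boldsymbol b_{r+1}$ and $\widetilde{\boldsymbol b}_{r+1}$ correspond to the two halves of a theta series split by residue class. Adding them, with the shift $q^{(r-3)/2}$, should assemble the full theta functions $(q^{8r+4},q^{8r-8},q^{16r-4};q^{16r-4})_\infty$ and $(-q^3,q^{4r-4},-q^{4r-1};-q^{4r-1})_\infty$. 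This is why only the sum, not each term separately, has a nice product form.

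\textbf{Alternative route.} We could avoid redoing the Bailey machinery by comparing with Theorem \ref{main1a} (2) at $j=r-1$ and $j=r$. The $\widetilde{\boldsymbol b}_{r+1}$ vector differs from $\boldsymbol b_{j}$ by shifting the linear term in $n_2,\ldots,n_{r-1}$ upward. Applying $1/(q^2;q^2)_n=q^{2n}/(q^2;q^2)_n+1/(q^2;q^2)_{n-1}$ on $n_2$ gives contiguous relations between Nahm sums with adjacent $\boldsymbol b$'s. Iterating these relations should express the left side of \eqref{SumC-eq} as a combination of the already proved sums \eqref{eq-1.7} and \eqref{eq-1.8}. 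One would then need a theta-function identity (a quintuple-product or Weierstrass three-term identity) to collapse the combination into the right side.

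\textbf{Main obstacle.} The hardest step is the final product identification. The theta functions appearing, with $q^{8r+4}$ (that is, $j=-1$ formally in \eqref{eq-1.8}) and $-q^3$, lie outside the range $1\le j\le r$. So we expect to need the formal extension of \eqref{eq-1.8} to $j=-1$, justified by the reflection $j\mapsto 4r-1-j$ of the theta series. Checking that the two pieces combine with exactly the prefactor $q^{(r-3)/2}$ is where the bookkeeping must be done carefully. We will verify it for $r=3,4$ by series expansion first.
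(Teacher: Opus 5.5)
Your outline never actually evaluates anything, and the way it proposes the two products arise is not the way that works. Write $F$, $H$ for the sums with $\boldsymbol b_{r+1}$ and $\widetilde{\boldsymbol b}_{r+1}$, and $F_\sigma$, $H_\sigma$ for their parts with $n_r\equiv\sigma\pmod 2$.

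\textbf{The missing idea.} Your own Step~1 shows that the Euler sum over $n_1$ gives the prefactor $(-q;q^2)_\infty$ for $F_0$ and $H_1$, but $(-q^2;q^2)_\infty$ for $F_1$ and $H_0$. So the form you claim in Step~3 is wrong for odd $n_r$.

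The parity pieces of a single sum do not match the two products. For instance, with $N_i=n_{i+1}+\cdots+n_r$ and $q^2\to q$, the multisum in $F_0$ equals $\frac{1}{(q;q)_\infty}\sum_{n\ge0}q^{(r-1)n^2}\alpha_n$ for the Bailey pair with $\beta_n=q^n/((q;q)_n(q;q^2)_n)$. Its $\alpha_n$ begin $1,\ q^2-1,\ -q^2-q^4$, so you get $1-q^{r-1}+q^{r+1}-\cdots$, not $\sum_n(-1)^nq^{(4r-1)n^2+3n}$.

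What works is to pair the pieces \emph{crosswise}. Shifting every $N_i$ by one in $H_1$ turns $q^{\frac{r-3}{2}}H_1$ into a sum with the same prefactor and denominators as $F_0$, and numerator $q^{-2}-q^{2N_{r-1}-2}$. Hence $F_0+q^{\frac{r-3}{2}}H_1$ is a single multisum carrying the factor $1+q^{-2}-q^{2N_{r-1}-2}$. Likewise $F_1+q^{\frac{r-3}{2}}H_0$ carries $1+q^{2}-q^{2N_{r-1}+1}$. These are \eqref{SumC-1} and \eqref{SumC-2} in disguise.

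Evaluating them needs non-standard seed Bailey pairs whose $\beta_n$ have three-term numerators, namely \eqref{W-Bailey} and \eqref{W-Bailey-2} of B.~Wang--L.~Wang, iterated by Lemma~\ref{lemma-S1}. For example, $\beta_n(1;q)=\frac{q^n(1+q^{-1})-q^{2n-1}}{(q;q)_n(q;q^2)_n}$ with $\alpha_{2n}(1;q)=(-1)^nq^{3n^2}(q^{3n}+q^{-3n})$ and $\alpha_{2n+1}(1;q)=0$. Iterating Lemma~\ref{lemma-S1} on the unit pair only produces Andrews--Gordon sums with single-term summands. Your ``final step with a parameter choice'' is left unspecified, so your main route has no way to reach these sums. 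Similarly, Step~2 relies on a ``known evaluation of the inner sum for fixed $n_r$'' that does not exist. Instead, $n_r$ must sit at the bottom of the Bailey chain, which is what the substitution $N_i=n_{i+1}+\cdots+n_r$ arranges.

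\textbf{The alternative route.} It does not close the gap either. A finite chain of the contiguous relations $1/(q^2;q^2)_n=q^{2n}/(q^2;q^2)_n+1/(q^2;q^2)_{n-1}$ could only write the left side of \eqref{SumC-eq}, which equals $q^{-c_{r+1}}g_2(\tau)$, as a combination of the sums in \eqref{eq-1.7}--\eqref{eq-1.8} with coefficients that are powers of $q$. Those sums are, up to powers of $q$, $g_1,g_r,\dots,g_{2r-1}$. You give no reason such a relation exists, and at least when $4r-1$ is prime it cannot. Fractional powers of $q$ are transcendental over modular functions, so the relation would reduce to a $\mathbb C$-linear one between $g_2$ and these $g_j$. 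But by Theorem~\ref{tran-transl-G}, the eigenvalue of $g_2$ under $\tau\mapsto\tau+2$ differs from that of every $g_j$ with $j\in\{1,r,\dots,2r-1\}$, since equality forces $(4r-1)\mid(j-2)(j+1)$.

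\textbf{The index in your ``main obstacle''.} The products in \eqref{SumC-eq} are, after multiplication by $q^{\frac{r-3}{2}}$, the formal case $j=2r-2$ of the right side of \eqref{eq-1.8}, not $j=-1$. Reflecting the product side cannot help, because $\boldsymbol b_j$ on the sum side is undefined for $j>r$. Replacing that missing $\boldsymbol b_j$ by a sum of two Nahm sums, whose cross-paired parity pieces the new Bailey pairs evaluate, is precisely the content of the theorem.
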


Notice that the case $r=3$ of Theorem \ref{Nahm-SumC} has been
given by B. Wang--L. Wang \cite[Eq. (1.13)]{Wang-Wang-2025-2}.

}

To express $h^{\vee}_j(2\tau)$ as a combination of
generalized Nahm sums of index \((1,\ldots,1,2)\), we employ the notion of  partial Nahm sums introduced in \cite{Wang-Zeng-2025}.
For the   Nahm sum associated with symmetrizable matrix $A$:
\[\widetilde{f}_{A,\boldsymbol{b},{c},\boldsymbol{d}}(q):=\sum_{\boldsymbol{n}=(n_1,\ldots,n_r)^T\in \mathbb{N}^r}\frac{q^{\frac{1}{2}\boldsymbol{n}^TAD\boldsymbol{n}+\boldsymbol{n}^T\boldsymbol{b}+c}}{(q^{d_1};q^{d_1})_{n_1}\cdots(q^{d_r};q^{d_r})_{n_r}},\]
its partial Nahm sum is defined as
\[\widetilde{f}_{A,\boldsymbol{b},{c},\boldsymbol{d}, \sigma}(q):=\sum_{\boldsymbol{n}=(n_1,\ldots,n_r)\in\mathbb{N}^r \atop n_r\equiv \sigma \pmod{2}}\frac{q^{\frac{1}{2}\boldsymbol{n}^TAD\boldsymbol{n}+\boldsymbol{n}^T\boldsymbol{b}+c}}{(q^{d_1};q^{d_1})_{n_1}\cdots(q^{d_r};q^{d_r})_{n_r}},\]
where $\sigma=0,1$.

It turns out that $h^{\vee}_j(2\tau)$  can be expressed  as a combination of
the generalized Nahm sums $\widetilde{f}_{B^{\vee},\boldsymbol{b}_j,{c}_j,\boldsymbol{d}^{\vee}}(q)$ of index \(\boldsymbol{d}^{\vee}=(1,\ldots,1,2)\), where \((B^{\vee},\,\boldsymbol{b}_j,\,c_j)\) are given by
\begin{align}
&B^{\vee}=\begin{pmatrix}
            1 & 0 & 0 & 0 & \cdots & 0 & 0 & \frac{1}{2}\\
            0 & 2 & 2 & 2 & \cdots & 2 & 2 & 1\\
            0 & 2 & 4 & 4 & \cdots & 4 & 4 & 2\\
            0 & 2 & 4 & 6 & \cdots & 6 & 6 & 3\\
            \vdots & \vdots & \vdots & \vdots & \ddots & \vdots & \vdots & \vdots\\
            0 & 2 & 4 & 6 & \cdots & 2(r-3) & 2(r-3) & r-3\\
            0 & 2 & 4 & 6 & \cdots & 2(r-3) & 2(r-2) & r-2\\
            1 & 2 & 4 & 6 & \cdots & 2(r-3) & 2(r-2) & r-\frac{1}{2}
        \end{pmatrix}_{r\times r}, \label{matrix:Bvee}\\[10pt]
        &\boldsymbol{b}_0=(1,1,2,3,\ldots,r-1)^T_{1\times r},\quad c_0=\frac{4 r^2-11 r+7}{16 r-12},\\
        &\text{and for }1\leq j\leq r, \nonumber \\
        &\boldsymbol{b}_j=(0,\ldots,0,1,2,\ldots,r-j-1,r-j-1)^T_{1\times r},\quad c_j=\frac{4(r-j)^2+6j-7r+3}{16r-12}.
\end{align}
It is straightforward to verify that $B^{\vee}D^{\vee}$
as shown in \eqref{eq-1.6} is a symmetric positive definite matrix.
\begin{align}
        &B^{\vee}D^{\vee}=\begin{pmatrix}
            1 & 0 & 0 & 0 & \cdots & 0 & 0 & 1\\
            0 & 2 & 2 & 2 & \cdots & 2 & 2 & 2\\
            0 & 2 & 4 & 4 & \cdots & 4 & 4 & 4\\
            0 & 2 & 4 & 6 & \cdots & 6 & 6 & 6\\
            \vdots & \vdots & \vdots & \vdots & \ddots & \vdots & \vdots & \vdots\\
            0 & 2 & 4 & 6 & \cdots & 2(r-3) & 2(r-3) & 2(r-3)\\
            0 & 2 & 4 & 6 & \cdots & 2(r-3) & 2(r-2) & 2(r-2)\\
            1 & 2 & 4 & 6 & \cdots & 2(r-3) & 2(r-2) & 2r-1
        \end{pmatrix}_{r\times r}.\label{eq-1.6}
    \end{align}

Note that the symmetrizable matrix  $B^{\vee}$ given  in \eqref{matrix:Bvee}  is the transpose of  the symmetrizable matrix $B$ in Theorem \ref{main2}.

\begin{prop} We have
\begin{align*}
h^{\vee}_{2r-2}(2\tau)&=\frac{1}{2}\left(\widetilde{f}_{B^{\vee},2\boldsymbol{b}_{0},2c_{0},2\boldsymbol{d}^{\vee},0}(\tau)-\widetilde{f}_{B^{\vee},2\boldsymbol{b}_{0},2c_{0},2\boldsymbol{d}^{\vee},1}(\tau)\right.\\[5pt]
&\quad \left.+\widetilde{f}_{B^{\vee},2\boldsymbol{b}_{0},2c_{0},2\boldsymbol{d}^{\vee},0}\big(\tau+\frac{1}{2}\big)
-\widetilde{f}_{B^{\vee},2\boldsymbol{b}_{0},2c_{0},2\boldsymbol{d}^{\vee},1}\big(\tau+\frac{1}{2}\big)\right),
\end{align*}
and for \(1\leq j\leq 2r-3\) and \(j\) is odd, we have
\begin{align*}
h^{\vee}_j(2\tau)&=\frac{1}{2}\left(\widetilde{f}_{B^{\vee},2\boldsymbol{b}_{r-\frac{j+1}{2}},2c_{r-\frac{j+1}{2}},2\boldsymbol{d}^{\vee},0}(\tau)-
\widetilde{f}_{B^{\vee},2\boldsymbol{b}_{r-\frac{j+1}{2}},2c_{r-\frac{j+1}{2}},2\boldsymbol{d}^{\vee},1}(\tau)\right.\\[5pt]
&\quad \left.+\widetilde{f}_{B^{\vee},2\boldsymbol{b}_{r-\frac{j+1}{2}},2c_{r-\frac{j+1}{2}},2\boldsymbol{d}^{\vee},0}\big(\tau+\frac{1}{2}\big)
-\widetilde{f}_{B^{\vee},2\boldsymbol{b}_{r-\frac{j+1}{2}},2c_{r-\frac{j+1}{2}},2\boldsymbol{d}^{\vee},1}\big(\tau+\frac{1}{2}\big)\right).
\end{align*}
\end{prop}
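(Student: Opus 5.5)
The proof reduces the proposition to a Rogers--Ramanujan type identity for the partial Nahm sums of $B^{\vee}$, and then to a bookkeeping argument with $\tau\mapsto\tau+\frac12$. The plan is to first establish, for each $0\le i\le r-1$ (with $\boldsymbol{b}_0$ and $\boldsymbol{b}_i$ as in \eqref{matrix:Bvee}), the identity
\begin{equation*}
\sum_{\boldsymbol{n}\in\mathbb{N}^r}\frac{(-1)^{n_r}q^{\frac12\boldsymbol{n}^TB^{\vee}D^{\vee}\boldsymbol{n}+\boldsymbol{n}^T\boldsymbol{b}_i}}{(q;q)_{n_1}\cdots(q;q)_{n_{r-1}}(q^2;q^2)_{n_r}}
=\frac{(q^{k},q^{4r-3-k},q^{4r-3};q^{4r-3})_\infty\ \text{(suitably in $q^{1/2}$)}}{(q^{\frac12},q^{\frac32},q^2;q^2)_\infty},
\end{equation*}
where $k$ is matched to $j$ through $i=r-\frac{j+1}{2}$ (or $i=0$ for $j=2r-2$). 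This is the $B^{\vee}$ analogue of \eqref{eq-1.11}--\eqref{eq-1.12}. The sign $(-1)^{n_r}$ is exactly the combination $\widetilde f_{\cdot,0}-\widetilde f_{\cdot,1}$ of partial Nahm sums, so it is what the stated formula needs.

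To prove this identity, I would sum out $n_1,\dots,n_{r-1}$ in the same way that \eqref{eq-1.11}--\eqref{eq-1.12} are obtained. The plan has three steps.

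\begin{enumerate}
\item Change variables to $N_k=n_k+\dots+n_{r-1}$, so that the block $2\min(k,l)$ becomes $\sum N_k^2$ and the inner sum becomes an Andrews--Gordon/Bressoud type multisum with $n_r$ entering linearly.
\item Use the $q$-binomial theorem or Euler's identity in $n_1$, which pairs with $n_r$ through the entry $1$, to absorb the coupling. The $(-1)^{n_r}$ together with $(q^2;q^2)_{n_r}$ then collapses through $\sum_n (-1)^n z^n q^{n^2-n}/(q^2;q^2)_n=(z;q^2)_\infty$.
\item Recognise the remaining $(r-1)$-fold sum as a known Andrews--Gordon or Bressoud identity of modulus $4r-3$, which produces the Jacobi triple product in the numerator. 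The prefactor $1/(q^{1/2},q^{3/2},q^2;q^2)_\infty$ comes from the $n_1$-summation.
\end{enumerate}

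With the identities in hand, the stated formula follows by splitting $h^{\vee}_j(2\tau)$ into its two products.

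\begin{itemize}
\item The first product, times $q^{\frac{j^2-j-r+1}{8r-6}}$, should equal $\widetilde f_{B^{\vee},2\boldsymbol b_i,2c_i,2\boldsymbol d^{\vee},0}(\tau)-\widetilde f_{\cdots,1}(\tau)$. Here the doubling $2\boldsymbol b,2c,2\boldsymbol d^\vee$ corresponds to replacing $q$ by $q^2$ in the identity, i.e.\ $q^{1/2}\to q$ on the product side.
\item The second product is obtained from the first by $q\mapsto -q$, i.e.\ $\tau\mapsto\tau+\frac12$.
\end{itemize}

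So the sum of the two shifted expressions, divided by $2$, reproduces \eqref{h-def}. In this step I would check three things: that the exponent $2c_i$ matches $\frac{j^2-j-r+1}{8r-6}$ under $j=2r-2i-1$; that the phase $e^{\pi i\cdot 2c_i}$ arising from the shift by $\frac12$ is absorbed consistently, compared with the $(-1)^{j(j-1)/2}$ present in \eqref{g-def} but absent in \eqref{h-def}; and that $\boldsymbol{n}^T(\cdot)\boldsymbol{n}$ has the right parity so the sign flips only via $n_r$.

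The main obstacle is the multisum identity itself, specifically handling the last diagonal entry $2r-1$ of $B^{\vee}D^{\vee}$. Compared with \eqref{eq-1.5}, the $n_r^2$ coefficient drops by $\frac12$, and this is what forces the $(-1)^{n_r}$ twist rather than a plain sum. If Euler's identity does not close directly, I would first try writing $q^{-n_r^2/2}$ via $\sum_n (-1)^n q^{n^2}z^n/(q^2;q^2)_n$ combined with the $-\frac12$-shifted last entry of $\boldsymbol b_i$, reducing to the already-known $A^{\vee}$ identities \eqref{eq-1.11}--\eqref{eq-1.12} with modulus shifted from $4r-1$ to $4r-3$.
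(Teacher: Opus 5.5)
Your outer reduction matches the paper's. The proposition is the signed identity of Theorem~\ref{main1b} with $q\mapsto q^2$, which produces $2\boldsymbol b_i,2c_i,2\boldsymbol d^{\vee}$, plus $\tau\mapsto\tau+\frac12$ for the second product. Your matching of $2c_i$ with $\frac{j^2-j-r+1}{8r-6}$, and of $(-1)^{n_r}$ with $\widetilde f_{\cdot,0}-\widetilde f_{\cdot,1}$, is also correct.

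The gap is in the proof of the signed identity itself. Step~2 cannot work as written. The right variables are $N_i=n_{i+1}+\cdots+n_r$, so that $N_{r-1}=n_r$, not your $N_k=n_k+\cdots+n_{r-1}$. Then $n_r$ sits inside every square $(n_k+\cdots+n_r)^2$, so it cannot be summed by Euler's identity. Working with the doubled sum, only $n_1$ is summed out, via \eqref{eq-3.3}. This gives $(-q^{2N_{r-1}+3};q^2)_\infty$ for $\boldsymbol b_0$ and $(-q^{2N_{r-1}+1};q^2)_\infty$ for $\boldsymbol b_i$, and the sign survives as $(-1)^{N_{r-1}}$. For $\boldsymbol b_0$ what remains is
\begin{equation*}
\sum_{N_1\geq\cdots\geq N_{r-1}\geq 0}\frac{(-1)^{N_{r-1}}q^{2(N_1^2+\cdots+N_{r-1}^2+N_1+\cdots+N_{r-1})+N_{r-1}^2}}{(q^2;q^2)_{N_1-N_2}\cdots(q^2;q^2)_{N_{r-2}-N_{r-1}}(q^4;q^4)_{N_{r-1}}(-q^3;q^2)_{N_{r-1}}}.
\end{equation*}
For $\boldsymbol b_i$ it is the analogous sum with $(-q;q^2)_{N_{r-1}}$ and linear part $2(N_i+\cdots+N_{r-1})-2N_{r-1}$.

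These sums are not instances of the Andrews--Gordon or Bressoud identities. They are exactly the new input the paper must supply, namely \eqref{Nahmsumsh} and \eqref{Nahmsumsj} of Theorem~\ref{thm:multisumb}. The paper proves them by iterating Lemma~\ref{lemma-S1} on Slater's pair (G5) for $\boldsymbol b_0$. For $\boldsymbol b_i$, $1\le i\le r-1$, it iterates on (G4$^{**}$), applies the base change of Lemma~\ref{lem-1}, and iterates further with $a=1$; the Jacobi triple product then finishes both cases. Your proposal supplies no Bailey pairs or other mechanism for this step.

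Your fallback does not close the gap either. $B^{\vee}D^{\vee}$ raises, not lowers, the $(r,r)$-entry of $A^{\vee}D^{\vee}$ from $2r-2$ to $2r-1$. The target products have modulus $4r-3\equiv 1\pmod 4$, which no instance of \eqref{eq-1.11}--\eqref{eq-1.12} (modulus $4r'-1$) produces, so the twisted sums need their own Bailey chain.

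Two smaller points on the bookkeeping. First, no parity argument is needed: the second product is just the signed identity under $q\mapsto -q$. The sign that this substitution puts on the summand is $(-1)^{n_1+n_r}$, not a flip through $n_r$ alone. Second, the phase check will not succeed literally. The shift $\tau\mapsto\tau+\frac12$ multiplies $q^{2c_i}$ by $e^{2\pi i c_i}\neq 1$, and nothing in \eqref{h-def} absorbs it. The formula holds only if the shifted terms are read as $q^{2c_i}$ times the $q$-series evaluated at $-q$, which is how the paper implicitly uses it.
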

This relation follows directly from the following result:

 \begin{thm}\label{main1b}
We have
\begin{align}
(1)&\sum_{\boldsymbol{n}=(n_1,\ldots,n_r)^T\in\mathbb{N}^r}
\frac{(-1)^{n_r}q^{\frac{1}{2}
\boldsymbol{n}^TB^{\vee}D^{\vee}\boldsymbol{n}+\boldsymbol{n}^T\boldsymbol{b}_0}}{(q;q)_{n_1}\cdots(q;q)_{n_{r-1}}(q^2;q^2)_{n_r}}=\frac{(q^{\frac{1}{2}},q^{2r-2},q^{2r-\frac{3}{2}};q^{2r-\frac{3}{2}})_\infty}{(q^{\frac{1}{2}},q^{\frac{3}{2}},q^2;q^2)_\infty},\label{eq-1.13}\\[8pt]
&\text{ where } B^{\vee}D^{\vee} \text{ is given by } \eqref{eq-1.6} \text{ and } \textbf{b}_0=(1,1,2,3,\ldots,r-1)^T_{1\times r},\notag\\[8pt]
    (2)&\text{for }1\leq j\leq r,\notag\\
        &\sum_{\boldsymbol{n}=(n_1,\ldots,n_r)^T\in\mathbb{N}^r}
\frac{(-1)^{n_r}q^{\frac{1}{2}
\boldsymbol{n}^TB^{\vee}D^{\vee}\boldsymbol{n}+\boldsymbol{n}^T\boldsymbol{b}_j}}{(q;q)_{n_1}\cdots(q;q)_{n_{r-1}}(q^2;q^2)_{n_r}}
=\frac{(q^{j},q^{2r-j-\frac{3}{2}},q^{2r-\frac{3}{2}};q^{2r-\frac{3}{2}})_\infty}{(q^{\frac{1}{2}},q^{\frac{3}{2}},q^2;q^2)_\infty},\label{eq-1.14}&\\[8pt]
&\text{where }B^{\vee}D^{\vee} \text{ is given by } \eqref{eq-1.6} \text{ and }\boldsymbol{b}_j=(0,\ldots,0,1,2,\ldots,r-j-1,r-j-1)^T_{1\times r}.\notag
    \end{align}
\end{thm}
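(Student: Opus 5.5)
Our plan is to reduce the $r$-fold sum to a single sum by summing over $n_1,\dots,n_{r-1}$ with $n_r$ fixed, following the route that presumably gives \eqref{eq-1.11} and \eqref{eq-1.12}; the only change is the extra $(-1)^{n_r}$ and the modified last diagonal entry $2r-1$ of $B^{\vee}D^{\vee}$. Write $m=n_r$. The quadratic form in \eqref{eq-1.6} is
\[
\tfrac12\boldsymbol n^TB^{\vee}D^{\vee}\boldsymbol n=\tfrac12 n_1^2+n_1m+\Big(\sum_{i=2}^{r-1}N_i^2\Big)+2m\sum_{i=2}^{r-1}N_i+\tfrac{2r-1}{2}m^2,
\]
where $N_i=n_i+\cdots+n_{r-1}$ are partial sums and the middle block $(2\min(i,k))$ gives $\sum N_i^2$. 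We substitute $N_2\ge N_3\ge\cdots\ge N_{r-1}\ge 0$, so $n_i=N_i-N_{i+1}$. The linear terms in $\boldsymbol b_j$ become linear forms in the $N_i$ (for $\boldsymbol b_j=(0,\dots,0,1,2,\dots)$, only the last $N_i$ pick up a unit shift). The $n_1$-sum is then summed with Euler's identity
\[
\sum_{n_1}\frac{q^{n_1^2/2+n_1(m+b)}}{(q;q)_{n_1}}=(-q^{1/2+m+b};q)_\infty,
\]
which leaves $(-q^{1/2};q)_\infty/(-q^{1/2};q)_{m+b}$.

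The inner sum over $N_2\ge\cdots\ge N_{r-1}$ is an Andrews–Gordon-type multisum with $z=q^{2m}$ inserted. We evaluate it in the form
\[
\sum\frac{q^{\sum N_i^2+2m\sum N_i+\cdots}}{(q)_{N_2-N_3}\cdots(q)_{N_{r-1}}}.
\]
Our preferred route is the Bailey lattice: start from the unit Bailey pair, iterate the Bailey lemma $r-2$ times with $\rho_1,\rho_2\to\infty$ and parameter $a=q^{2m}$ (or $a=q^{2m+1}$ according to $b_j$), producing a Bailey chain. Alternatively we would reuse the intermediate single-sum representations the paper already establishes for \eqref{eq-1.11}–\eqref{eq-1.12}, since the $N$-part of the forms is identical. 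After this step the full sum becomes a double sum of the form
\[
\sum_{m,k}(-1)^m\frac{q^{Q(m,k)}}{(q^2;q^2)_m\cdots},
\]
and after interchanging summation it is a single bilateral theta-like sum $\sum_k(-1)^k q^{(2r-3/2)k^2/\cdots}$ multiplied by a factor independent of $r$.

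Next we carry out the $m$-sum. The factor $(-1)^m/\big((q^2;q^2)_m(-q^{1/2};q)_{m+b}\big)$ combined with $q^{(2r-1)m^2/2+\cdots}$ should collapse, after the Bailey chain, to a $q$-Gauss or $q$-Kummer summation. We expect the sign $(-1)^m$ together with the shift $2r-1$ (versus $2r-2$ for $A^{\vee}$) to turn the resulting theta function's modulus from $q^{2r-1/2}$ into $q^{2r-3/2}$. Jacobi triple product then gives $(q^{j},q^{2r-j-3/2},q^{2r-3/2};q^{2r-3/2})_\infty$, and the prefactor $(q^{1/2},q^{3/2},q^2;q^2)_\infty^{-1}=1/(q^{1/2};q)_\infty(q^2;q^2)_\infty$ arises from $(-q^{1/2};q)_\infty$ and the $q$-binomial theorem. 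Case (1), $\boldsymbol b_0=(1,1,2,\dots,r-1)$, is handled the same way with $j$ replaced by the shift $1/2$ in the triple product.

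The main obstacle is the $m$-sum: it is not a standard Bailey step because of the mixed base ($q$ in $n_1$ versus $q^2$ in $m$) and the sign. We would first try to rewrite $(q^2;q^2)_m=(q;q)_m(-q;q)_m$ and view the $m$-summation as one additional Bailey-lemma step with $\rho_1=-q^{1/2}$-type parameters, so that $(-q^{1/2};q)_m$ cancels. If that fails, we would fall back to proving the identity by induction on $r$: show that both sides satisfy the same $q$-difference recurrence in $r$ (moving from $r$ to $r+1$ adds one $N$-variable) and check the base case $r=2$ directly by a constant-term/triple-product computation.
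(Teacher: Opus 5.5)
Your expansion of $\tfrac12\boldsymbol n^TB^{\vee}D^{\vee}\boldsymbol n$ and the Euler summation over $n_1$ are correct and match the paper's first step. The proposal, however, stops exactly where the real work begins.

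Suppose you fix $m=n_r$ and run a chain with $a=q^{2m}$ on the remaining $r-2$ variables, seeded by the unit pair. Then \eqref{Bailey-limit} does not give a product. For $\boldsymbol b_r$, say, the inner sum is Andrews' single sum
\[
\frac{(q;q)_{2m}}{(q;q)_\infty}\sum_{k\ge 0}\frac{(-1)^k q^{(r-1)(k^2+2mk)+\binom{k}{2}}(1-q^{2m+2k})(q^{2m};q)_k}{(1-q^{2m})(q;q)_k},
\]
so you are left with a genuine double sum over $(m,k)$ whose $m$-dependence still carries $q^{\frac{2r-1}{2}m^2+2(r-1)mk}$. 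A sum with such an $r$-dependent quadratic exponent is not of $q$-Gauss or $q$-Kummer type, and you give no mechanism that makes it collapse. Your sentence about the sign and the entry $2r-1$ turning the modulus into $q^{2r-3/2}$ restates the theorem rather than proving it.

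The fallbacks do not close the gap either. The extra Bailey step with $\rho_1=-q^{1/2}$ does not fit, because in your set-up $m$ enters through the parameter $a=q^{2m}$, not as an index of the chain. For the induction, the recurrence in $r$ is never specified, and already the base case $r=2$ is the Rogers--Ramanujan-type identity
$\sum_{m}(-1)^mq^{\frac32m^2+m}/\bigl((-q^{3/2};q)_m(q^2;q^2)_m\bigr)=(q^{1/2},q^{2},q^{5/2};q^{5/2})_\infty/\bigl((1-q^{1/2})(q^2;q)_\infty\bigr)$.
That identity itself needs a Slater-type Bailey pair; it is not a routine triple-product check. Finally, for $1\le j\le r-2$ the linear form $\boldsymbol n^T\boldsymbol b_j$ shifts only some of the inner variables, so no single fixed $a$ reproduces it. A change of $a$ partway (Lemma~\ref{lem-1}) is needed, and you do not address it.

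The missing idea is to make $n_r$ the \emph{innermost} index of the Bailey chain, not an outer parameter. Put $N_i=n_{i+1}+\cdots+n_r$ for $1\le i\le r-1$, so $N_{r-1}=n_r$. The exponent becomes $\tfrac12n_1^2+n_1N_{r-1}+\sum_{i=1}^{r-1}N_i^2+\tfrac12N_{r-1}^2$ plus linear terms. After the Euler step, the factor depending on $N=N_{r-1}$ alone is, up to a constant, $\frac{(-1)^Nq^{N^2/2}}{(-q^{3/2};q)_N(q^2;q^2)_N}$ in case (1) and $\frac{(-1)^Nq^{N^2/2-N}}{(-q^{1/2};q)_N(q^2;q^2)_N}$ in case (2). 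These are exactly the $\beta_N$ of Slater's mixed-base pairs: (G5) relative to $a=q$ for case (1); for case (2), (G4$^{*}$) relative to $a=1$ and its base change (G4$^{**}$) relative to $a=q$. So the sign and the mixed base are absorbed into the seed, not into an extra summation.

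What remains is $q^{\sum_i(N_i^2+\epsilon_iN_i)}/\prod_{i=1}^{r-2}(q;q)_{N_i-N_{i+1}}$ with $\epsilon_i\in\{0,1\}$. This is produced by $r-1$ applications of Lemma~\ref{lemma-S1}; in case (2) with $j<r$, that means $r-1-j$ applications with $a=q$ on (G4$^{**}$), then Lemma~\ref{lem-1}, then $j$ applications with $a=1$. Letting $n\to\infty$ in \eqref{Bailey-limit} and applying \eqref{eq-2.18} gives the theta function of modulus $q^{2r-3/2}$ directly. This is how the paper proves \eqref{Nahmsumsh} and \eqref{Nahmsumsj} in Theorem~\ref{thm:multisumb}. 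It then obtains \eqref{eq-1.13} and \eqref{eq-1.14} (after $q\mapsto q^2$) by the same Euler step you describe.
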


It remains to address the following problem:
\begin{prob} \label{proba}
$(a)$ For $r\geq 4$ and
$3\leq j\leq r-1$,
can $g_j(\tau)$ be expressed as a combination of
 Nahm sums   for the symmetrizable matrix $A$ given in Theorem  \ref{main1}  of index
$({2,\ldots, 2}, 1)$?

$(b)$ For $r\geq 2$ and $1\leq j\leq 2r-3$ and $j$ is odd,
can $g^{\vee}_j(2\tau)$ be expressed as a combination of
  Nahm sums for the symmetrizable matrix $A^{\vee}$ given in Theorem  \ref{main3}  of index \((1,\ldots,1,2)\)?

$(c)$ For $r\geq 4$ and $2\leq j\leq r-2$,
can $h_j(\tau)$ be expressed as a combination of
  Nahm sums for the symmetrizable matrix $B$ given in Theorem  \ref{main2}
of index $({2,\ldots, 2}, 1)$?

$(d)$ For $r\geq 3$ and $2\leq j\leq 2r-4$ and $j$ is even,
can $h^{\vee}_j(2\tau)$ be expressed as a combination of   Nahm sums for the symmetrizable matrix $B^{\vee}$ given in   \eqref{matrix:Bvee}
of index \((1,\ldots,1,2)\)?
\end{prob}

The paper is organized as follows. In Section 2, we review some relevant known results on the Bailey machinery, and then utilize this machinery to provide proofs of
three
families of multi-sum  Rogers--Ramanujan type identities (see {Theorem }\ref{thm:multisuma}, {Theorem }\ref{thm:multisumb}
and Theorem \ref{thm-SumC}). Section 3 is devoted to proving {Theorem}  \ref{main1a}, Theorem \ref{Nahm-SumC} and Theorem \ref{main1b},
using the results of Theorems \ref{thm:multisuma}, \ref{thm:multisumb} and \ref{thm-SumC}
in  Section 2.
In Section 4, we show that the generalized Nahm sums
specified in {Theorems}  \ref{main1}, \ref{main2}, \ref{main3}
are modular functions. Section 5 is devoted to
proving Theorem \ref{vmf-G-AF} and Theorem \ref{vmf-H-AF}
by establishing the modular transformation formulas
on two ``Langlands dual'' pairs.

\section{Multi-sum Rogers--Ramanujan type identities}

In this section,
we aim to present the following
{  four} families of multi-sum Rogers--Ramanujan type identities
which serve as key ingredients in the proof of  {Theorem} \ref{main1a}, Theorem \ref{Nahm-SumC} and Theorem \ref{main1b}.

\begin{thm} \label{thm:multisuma}
For $r\geq 2$,

\begin{enumerate}
    \item[$(a)$]
\begin{align}
    &\sum_{N_1\geq\cdots\geq N_{r-1}\geq0}\frac{q^{N_1^2+\cdots+N_{r-1}^2}}{(q;q)_{N_1-N_2}\cdots(q;q)_{N_{r-2}-N_{r-1}}(q;q)_{N_{r-1}}(q;q^2)_{N_{r-1}}}\nonumber \\[5pt]
        &=\frac{(q^{4r-2},q^{4r},q^{8r-2};q^{8r-2})_\infty}{(q;q)_\infty}, \label{Nahmsumsa}
\end{align}

\item[$(b)$]
for  $1\leq j\leq r$,
\begin{align}
&\sum_{N_1\geq\cdots\geq N_{r-1}\geq0}\frac{q^{N_1^2+\cdots+N_{r-1}^2+N_1+\cdots+N_{j-1}+2N_j+\cdots+2N_{r-1}}}{(q;q)_{N_1-N_2}\cdots(q;q)_{N_{r-2}-N_{r-1}}(q;q)_{N_{r-1}}(q^3;q^2)_{N_{r-1}}}\nonumber \\[5pt]
&= \frac{(q^{2j},q^{8r-2j-2},q^{8r-2};q^{8r-2})_\infty}{(q^2;q)_\infty}, \label{Nahmsumsd}
\end{align}

\item[$(c)$]
\begin{align}
&\sum_{N_1\geq\cdots\geq N_{r-1}\geq0}\frac{q^{N_1^2+\cdots+N_{r-1}^2+{N_{r-1}\choose 2}}}{(q;q)_{N_1-N_2}\cdots(q;q)_{N_{r-2}-N_{r-1}}(q;q)_{N_{r-1}}(q;q^2)_{N_{r-1}}}\nonumber \\[5pt]
&=\frac{(q^{4r-4},q^{4r-2},q^{8r-6};q^{8r-6})_\infty}{(q;q)_\infty},\label{Nahmsumsb}
\end{align}

\item[$(d)$]
for  $1\leq j\leq r$,
\begin{align}
&\sum_{N_1\geq\cdots\geq N_{r-1}\geq0}\frac{q^{N_1^2+\cdots+N_{r-1}^2+N_1+\cdots+N_{j-1}+2N_j+\cdots+2N_{r-1}+{N_{r-1}\choose 2}}}{(q;q)_{N_1-N_2}\cdots(q;q)_{N_{r-2}-N_{r-1}}(q;q)_{N_{r-1}}(q^3;q^2)_{N_{r-1}}}\nonumber \\[5pt]
&= \frac{(q^{2j},q^{8r-2j-6},q^{8r-6};q^{8r-6})_\infty}{(q^2;q)_\infty}. \label{Nahmsumse}
\end{align}
\end{enumerate}
\end{thm}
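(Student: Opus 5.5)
We prove all four parts with the Bailey machinery. First we produce a Bailey pair relative to $a=1$ (for (a),(c)) or $a=q$ (for (b),(d)) whose $\beta_n$ involves $1/((q;q)_n(q;q^2)_n)$ or $1/((q;q)_n(q^3;q^2)_n)$. Then we iterate the standard Bailey lemma $r-1$ times with $\rho_1,\rho_2\to\infty$, and finish with the Jacobi triple product.

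\emph{Seed pairs.} For (a) we use the known Bailey pair relative to $a=1$ (Slater's list, group with denominator $(q;q^2)_n(q;q)_n$):
\[
\beta_n=\frac{1}{(q;q)_n(q;q^2)_n},
\]
where $\alpha_n$ is supported on $n$ with quadratic exponent $\tfrac{3}{2}n^2$-type, i.e.\ $\alpha_n=(-1)^nq^{3n^2/2-n/2}(1+q^n)$ up to normalization. For (c), the extra factor $q^{\binom{N_{r-1}}{2}}$ is produced by one application of the Bailey lemma with $\rho_1=-\sqrt{q}$-type, or equivalently with $\rho_1\to\infty$, $\rho_2=-q^{1/2}$, applied on top of the unit pair. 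Concretely, I will start from the pair with $\beta_n=1/(q^2;q^2)_n$, whose $\alpha_n$ is a quadratic in $n$ with coefficient $1$ (Slater). One Bailey step with $\rho_1=q^{1/2}$ converts $(q^2;q^2)_n=(q;q)_n(-q;q)_n$ and $(q;q^2)_n$ appropriately. The choice of parameters decides whether $\binom{n}{2}$ appears. For (b),(d) I do the same relative to $a=q$, replacing $(q;q^2)_n$ by $(q^3;q^2)_n$.

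\emph{Iteration and linear terms.} The summand $q^{N_1^2+\cdots+N_{r-1}^2}/(q;q)_{N_1-N_2}\cdots$ is exactly what $r-1$ steps of the lemma with $\rho_1,\rho_2\to\infty$ give, each step multiplying $\alpha_n$ by $a^nq^{n^2}$. For the linear terms $N_1+\cdots+N_{j-1}+2N_j+\cdots$ with $a=q$, the first $r-j$ steps use $a=q$, where $a^nq^{n^2}=q^{n^2+n}$ yields the $2N_i$ after the $+N_i$ coming from the shift. We then switch from $a=q$ to $a=1$ using the standard lemma converting a pair relative to $q$ into one relative to $1$ (Lovejoy / Andrews' change of base; equivalently the ``$q^{N}$'' insertion trick, Bressoud--Ismail--Stanton). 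The remaining $j-1$ steps run with $a=1$ but keep one $q^{N_i}$ each.

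\emph{Final evaluation.} Letting $N_1\to\infty$ gives $\frac{1}{(aq;q)_\infty}\sum_n q^{(r-1)n^2}a^{(r-1)n}\alpha_n$. With $\alpha_n$ a signed two-term theta piece, this sum becomes a bilateral theta series $\sum_{n\in\mathbb Z}(-1)^nq^{(4r-1)n^2+cn}$. The Jacobi triple product then gives $(q^{4r-2},q^{4r},q^{8r-2};q^{8r-2})_\infty$, and similarly for modulus $8r-6$ in (c),(d), where the $\binom{n}{2}$ reduces the quadratic coefficient by $1/2$ times two, so that $4r-1$ becomes $4r-3$. In (b),(d) the prefactor $1/(q^2;q)_\infty$ matches $a=q$. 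The exponent $2j$ is set by where the switch from $a=q$ to $a=1$ occurs, and one checks the pair $(2j,8r-2j-2)$.

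The main obstacle is pinning down the correct seed pair and the $a=q\to a=1$ switching step so that the linear exponents come out exactly as $N_1+\cdots+N_{j-1}+2N_j+\cdots+2N_{r-1}$. That is the bookkeeping step I will verify carefully, for instance by checking $r=2$ against known Slater identities.
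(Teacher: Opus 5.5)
\textbf{There is a genuine gap.} Your framework (iterate Lemma~\ref{lemma-S1}, change base, finish with the Jacobi triple product) is the one the paper uses. But the ingredients you defer as ``bookkeeping'' are the whole proof, and as written they fail.

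\textbf{Parts (a) and (c): the seeds are wrong or missing.} For (a), your seed is not a Bailey pair with the stated $\beta$. Your $\alpha_n=(-1)^nq^{n(3n-1)/2}(1+q^n)$ is the image of the unit pair under Lemma~\ref{lemma-S1}, so it pairs with $\beta_n=1/(q;q)_n$; already at $n=1$ it gives $\beta_1=1/(1-q)$, not $1/(1-q)^2$. Moreover, iterating Lemma~\ref{lemma-S1} on any $\alpha$ supported on all $n$ with quadratic part $\tfrac32 n^2$ produces $\sum(-1)^nq^{(r+\frac12)n^2+\cdots}$, not the series in $(4r-1)n^2$ that you then assert.

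The correct seed is Slater's (C1), with $\alpha_{2n+1}=0$ and $\alpha_{2n}=(-1)^nq^{3n^2}(q^n+q^{-n})$. The even support is the whole point: each step multiplies $\alpha_{2n}$ by $q^{(2n)^2}=q^{4n^2}$, giving $3+4(r-1)=4r-1$ and $\sum_{n\in\mathbb Z}(-1)^nq^{(4r-1)n^2+n}$. Likewise (c) needs (C5): $\alpha_{2n}=(-1)^nq^{n^2}(q^n+q^{-n})$, $\alpha_{2n+1}=0$, $\beta_n=q^{\binom n2}/((q;q)_n(q;q^2)_n)$, which leads to $(4r-3)n^2$.

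Your route to (c) cannot reach this pair. Since $\alpha$ is determined by $\beta$, and a Bailey-lemma step multiplies $\alpha_n$ by a factor that cannot kill all odd $n$ while keeping the even ones, you cannot get an even-supported $\alpha$ this way. This holds whether you start from the unit pair or from $\alpha_n=2(-1)^nq^{n^2}$ (the partner of $\beta_n=1/(q^2;q^2)_n$), since both are supported on all $n$. Besides, the lemma outputs $\beta'_n$ as a finite sum, not a closed form.

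\textbf{Parts (b) and (d): the change of base runs in the wrong direction.} Lemma~\ref{lemma-S1} with $a=q$ contributes $q^{N^2+N}$, so it only produces linear coefficients $1$. After your switch to $a=1$, the remaining steps contribute bare $q^{N_i^2}$; nothing supplies ``one $q^{N_i}$ each''. The Bailey limit would then give $1/(q;q)_\infty$, contradicting the $1/(q^2;q)_\infty$ you yourself attribute to $a=q$.

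Everything has to be raised by one power of $q$. For $1\le j<r$:
\begin{enumerate}
\item Start from a pair relative to $q^2$ with $\beta_n=1/((q;q)_n(q^3;q^2)_n)$, resp.\ $q^{\binom n2}/((q;q)_n(q^3;q^2)_n)$. These are the base-changed pairs (C4$^*$), (C7$^*$).
\item Apply Lemma~\ref{lemma-S1} $r-1-j$ times at $a=q^2$, giving $N_i^2+2N_i$ for $i>j$.
\item Apply Lemma~\ref{lem-1} from $a=q^2$ to $a=q$; its factor $q^n$ becomes the second copy of $N_j$.
\item Finish with $j$ steps at $a=q$.
\end{enumerate}
You must also check that Lemma~\ref{lem-1} cancels the factor $(1-q^{4n+2})/(1-q^2)$ and leaves $\alpha_{2n}=-\alpha_{2n+1}=(-1)^nq^{(4r-4j-1)n^2+(4r-4j-1)n}$ (with $4r-4j-3$ in (d)). Then, after the last $j$ steps, the two one-sided sums merge into $\sum_{n\in\mathbb Z}(-1)^nq^{(4r-1)n^2+(4r-2j-1)n}$ (resp.\ $(4r-3)n^2+(4r-2j-3)n$). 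The case $j=r$ is separate and comes from (C3), (C6) directly at $a=q$.
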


\begin{thm}\label{thm:multisumb}
For  $r\geq 2$,

\begin{enumerate}
\item[$(a)$]
\begin{align}
&\sum_{N_1\geq\cdots\geq N_{r-1}\geq0}\frac{(-1)^{N_{r-1}}q^{2(N_1^2+\cdots+N_{r-1}^2+N_1+\cdots+N_{r-1})+N_{r-1}^2}}{(q^2;q^2)_{N_1-N_2}\cdots(q^2;q^2)_{N_{r-2}-N_{r-1}}(q^4;q^4)_{N_{r-1}}(-q^3;q^2)_{N_{r-1}}}\nonumber \\[10pt]
&=\frac{(q,q^{4r-4},q^{4r-3};q^{4r-3})_\infty}{(1-q)(q^4;q^2)_\infty},\label{Nahmsumsh}
\end{align}

\item[$(b)$]
for  $1\leq j\leq r$,
\begin{align}
&\sum_{N_1\geq\cdots\geq N_{r-1}\geq0}\frac{(-1)^{N_{r-1}}q^{2(N_1^2+\cdots+N_{r-1}^2+N_j+\cdots+N_{r-1})+N^2_{r-1}-2N_{r-1}}}{(q^2;q^2)_{N_1-N_2}\cdots(q^2;q^2)_{N_{r-2}-N_{r-1}}(q^4;q^4)_{N_{r-1}}(-q;q^2)_{N_{r-1}}}\notag\\[5pt]
&=\frac{(q^{2j},q^{4r-2j-3},q^{4r-3};q^{4r-3})_\infty}{(q^2;q^2)_\infty}. \label{Nahmsumsj}
\end{align}
\end{enumerate}
\end{thm}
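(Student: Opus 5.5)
The plan is to prove both parts of Theorem~\ref{thm:multisumb} with the Bailey machinery in base $q^2$, exactly parallel to what one expects for Theorem~\ref{thm:multisuma}. The nested structure with weights $q^{2(N_i^2+N_i)}/(q^2;q^2)_{N_i-N_{i+1}}$ is the signature of $r-2$ applications of the Bailey lemma. I use it in the limiting form $\rho_1,\rho_2\to\infty$ with $a=q^2$ for part $(a)$ and $a=1$ for part $(b)$, in base $q^2$. Iterating this form of the lemma gives
\[
\sum_{N_1\ge\cdots\ge N_{r-1}\ge 0}\frac{a^{N_1+\cdots+N_{r-2}}q^{2(N_1^2+\cdots+N_{r-2}^2)}\,\beta_{N_{r-1}}}{(q^2;q^2)_{N_1-N_2}\cdots(q^2;q^2)_{N_{r-2}-N_{r-1}}}
=\frac{1}{(aq^2;q^2)_\infty}\sum_{n\ge0}a^{(r-2)n}q^{2(r-2)n^2}\alpha_n .
\]
So the whole problem reduces to producing a suitable seed Bailey pair $(\alpha_n,\beta_n)$ relative to $a$ in base $q^2$.

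For part $(a)$ the seed has to be
\[
\beta_N=\frac{(-1)^N q^{3N^2+2N}}{(q^4;q^4)_N(-q^3;q^2)_N},
\]
where $(q^4;q^4)_N=(q^2;q^2)_N(-q^2;q^2)_N$, so the denominator is of the standard form $(q^2;q^2)_N(-q^2,-q^3;q^2)_N$. I will first absorb the factor $a^{N}q^{2N^2}$ coming from the last Bailey step, so that the genuine seed is a single-sum object. I would then find $\alpha_n$ in one of two ways: by locating the corresponding entry in Slater's list (a pair with denominators $(q;q)_{2n}$-type after the substitution $q^2\to q$), or by deriving it directly. The direct route uses the Bailey inversion $\alpha_n=(1-aq^{4n})\sum_k(\ldots)\beta_k$. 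Alternatively it uses the $q$-Pfaff--Saalsch\"utz/$_6\phi_5$ evaluation inserted into a known pair with a free parameter, specialising that parameter to $-q^{3}$ or $-q$. I expect $\alpha_n$ to be of the form $q^{Q(n)}(1\pm q^{\ell n})$ times a simple factor. After multiplying by $a^{(r-2)n}q^{2(r-2)n^2}$, the right-hand side becomes a bilateral theta series with quadratic part $\frac{4r-3}{2}n^2$. Jacobi's triple product then turns it into $(q,q^{4r-4},q^{4r-3};q^{4r-3})_\infty$. The prefactor $1/(q^4;q^2)_\infty$ together with the extra $1/(1-q)$ should come from $1/(aq^2;q^2)_\infty$ and the normalisation of $\alpha_0$.

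Part $(b)$ carries the extra factor $q^{2(N_j+\cdots+N_{r-1})}$ with $j$ variable. This is the standard situation for the Bailey lattice of Agarwal--Andrews--Bressoud, in which one switches from $a=q^2$ to $a=1$ at step $j$. Equivalently, I would apply $j-1$ steps with $a=1$ and the remaining steps with $a=q^2$, using the Andrews-type change-of-base lemma. First I will establish the seed pair relative to $a=1$ with
\[
\beta_N=\frac{(-1)^N q^{3N^2-2N}}{(q^4;q^4)_N(-q;q^2)_N}.
\]
Then I will run the lattice. The resulting $\alpha$-side is a linear combination of two theta-type sums, whose difference collapses by triple product to $(q^{2j},q^{4r-2j-3},q^{4r-3};q^{4r-3})_\infty/(q^2;q^2)_\infty$. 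I will check the cases $j=1$ and $j=r$ separately, since at these ends the lattice degenerates to a pure $a=1$ or pure $a=q^2$ chain.

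The main obstacle is the seed: identifying (or proving) the two Bailey pairs with the unusual denominator $(-q^{3};q^2)_N$ or $(-q;q^2)_N$ and the sign $(-1)^N$. I would attack it first by writing $\beta_N$ as a terminating basic hypergeometric sum via the Bailey-pair definition. I would then evaluate the inner sum with the $q$-Chu--Vandermonde or $q$-Saalsch\"utz formula, guessing $\alpha_n$ from small-$n$ computations. Once the seeds are in hand, the remaining steps are routine.
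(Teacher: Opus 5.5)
Your architecture is the paper's: iterate Lemma~\ref{lemma-S1} in base $q^2$ (the paper works in base $q$ and substitutes $q\mapsto q^2$ at the end), switch $a$ once to produce the $j$-dependent linear terms, and finish with the triple product. The gap is that everything non-routine is deferred to ``finding the seed'', and part of what you specify as the seed is the wrong object.

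Your displayed iteration formula is consistent only if $\beta_N$ and $\alpha_n$ there both already contain the innermost weight $a^Nq^{2N^2}$. Read with $(\alpha_n,\beta_n)$ a Bailey pair, it is off by one: for $r=2$ it asserts $\sum_N\beta_N=(aq^2;q^2)_\infty^{-1}\sum_n\alpha_n$. So in (a) you should not look for a pair whose $\beta_N$ carries $q^{3N^2+2N}$. The pair needed is $\beta_N=(-1)^Nq^{N^2}/((q^4;q^4)_N(-q^3;q^2)_N)$ relative to $a=q^2$. This is Slater's (G5) with $q\mapsto q^2$, with $\alpha_n=(-1)^nq^{\binom{n}{2}}(1-q^{2n+1})/(1-q)$. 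It sits in group G (denominators $(q^2;q^2)_n(-q^{3/2};q)_n$ in base $q$), not among $(q;q)_{2n}$-type pairs. For (b) with $j=r$ the pair is (G4$^{*}$), $\beta_N=(-1)^Nq^{N^2-2N}/((q^4;q^4)_N(-q;q^2)_N)$ relative to $a=1$, as you say. Also, guessing $\alpha_n$ and verifying via $q$-Chu--Vandermonde or $q$-Saalsch\"utz will not work. For (G5) in base $q$, the sum $\sum_k\alpha_k/((q;q)_{n-k}(q^2;q)_{n+k})$ is very-well-poised in base $q^{1/2}$. It needs a limiting case of the terminating ${}_6\phi_5$ (Jackson's $q$-Dougall) sum, which is how Slater's group G arises.

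For $1\le j<r$ your plan fails as stated. The inner variables $N_j,\dots,N_{r-1}$ carry $q^{2N_i^2+2N_i}$, so the chain must begin with a pair relative to $a=q^2$. The first $r-1-j$ steps are at $a=q^2$, and the switch you invoke only lowers $a$. That switch is the Agarwal--Andrews--Bressoud lattice, or Lemma~\ref{lem-1}, which in base $q^2$ gives $\beta'_n=q^{2n}\beta_n$ and takes a pair relative to $q^2$ to one relative to $1$. A seed relative to $a=1$ cannot start this chain unless you first raise $a$ to $q^2$, and that raising step is exactly what is missing. Your own endpoint check at $j=1$ (a ``pure $a=q^2$ chain'') already needs it.

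The required pair has the same $\beta_N=(-1)^Nq^{N^2-2N}/((q^4;q^4)_N(-q;q^2)_N)$ but is relative to $a=q^2$, with $\alpha_n=(-1)^nq^{\frac{1}{2}n^2-\frac{3}{2}n}(1-q^{4n+2})/(1-q^2)$. This is (G4$^{**}$), which is not in Slater's list. B.~Wang--L.~Wang obtained it from (G4$^{*}$) by Lovejoy's lemma that raises $a$ to $aq$ while keeping $\beta$; in base $q$ this reads $\alpha'_n=\frac{(1-aq^{2n+1})a^nq^{n^2}}{1-aq}\sum_{k=0}^{n}a^{-k}q^{-k^2}\alpha_k$. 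With it the proof runs as follows:
\begin{itemize}
\item apply $r-1-j$ steps of Lemma~\ref{lemma-S1} at $a=q^2$;
\item apply Lemma~\ref{lem-1} once;
\item apply $j$ steps at $a=1$;
\item take the $n\to\infty$ limit and use the triple product.
\end{itemize}
For $j=r$ you iterate on (G4$^{*}$) at $a=1$ throughout.
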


{
\begin{thm}\label{thm-SumC}
For  $r\geq 3$,
\begin{align}
&(a) \quad \sum_{N_1\geq N_2\geq \cdots \geq N_{r-1}\geq 0}
\frac{q^{N_1^2+N_2^2+N_3^2+\cdots +N_{r-1}^2+N_{r-1}}(1+q^{-1}-q^{N_{r-1}-1})}{(q;q)_{N_1-N_2}\cdots (q;q)_{N_{r-2}-N_{r-1}}(q;q)_{N_{r-1}}(q;q^2)_{N_{r-1}}}\notag\\[5pt]
&\quad =\frac{(q^{4r+2},q^{4r-4},q^{8r-2};q^{8r-2})_\infty}
{(q;q)_\infty},\label{SumC-1}\\[5pt]
&(b) \sum_{N_1\geq N_2\geq \cdots \geq N_{r-1}\geq 0}
\frac{q^{N_1^2+N_2^2+N_3^2+\cdots +N_{r-1}^2+N_1+N_2+\cdots N_{r-2}+2N_{r-1}}(1+q+q^{N_{r-1}+\frac{1}{2}})}
{(q;q)_{n-N_2}(q;q)_{N_2-N_3}\cdots (q;q)_{N_{r-2}-N_{r-1}}(-q^{1/2};q)_{N_{r-1}+1}(q^2;q^2)_{N_{r-1}}}\notag\\[5pt]
&\quad =\frac{(q^{\frac{3}{2}}, q^{2r-2}, q^{2r-\frac{1}{2}};q^{2r-\frac{1}{2}})_\infty}{(q;q)_\infty}.\label{SumC-2}
\end{align}
\end{thm}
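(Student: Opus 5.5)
Both identities in Theorem \ref{thm-SumC} will be proved with the Bailey machinery already used for Theorems \ref{thm:multisuma} and \ref{thm:multisumb}. The nested sum over $N_1\geq\cdots\geq N_{r-1}$ with weights $q^{N_i^2}/(q;q)_{N_i-N_{i+1}}$ is exactly what $r-2$ iterations of the Bailey lemma with $\rho_1,\rho_2\to\infty$ produce. So it suffices, in each case, to find the Bailey pair relative to $a=1$ (for (a)) or $a=q$ (for (b)) whose $\beta_{N}$ is the innermost factor. In (a) this factor is
\[
\beta_N=\frac{q^{N}(1+q^{-1}-q^{N-1})}{(q;q)_N(q;q^2)_N},
\]
and in (b) it is the corresponding $(-q^{1/2};q)_{N+1}(q^2;q^2)_N$ factor. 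Once the pair is found, we compute its $\alpha_n$, run the Bailey chain $r-2$ times, and sum the resulting bilateral theta series with the Jacobi triple product. This gives the modulus $8r-2$ in (a) and $2r-\tfrac12$ in (b).

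\emph{Splitting.} The factor $1+q^{-1}-q^{N-1}$ is linear in $q^{N}$, so I will write $\beta_N$ as a combination of two or three "standard" $\beta$'s. The pieces $q^{N}/((q;q)_N(q;q^2)_N)$ and $q^{2N}/((q;q)_N(q;q^2)_N)$ are close to the pairs behind \eqref{Nahmsumsa} and \eqref{Nahmsumsd} (Slater-type pairs with $(q;q^2)_N$ in the denominator, i.e. the Bressoud/Warnaar-type pairs giving modulus $8r-2$). The aim is to reuse the $\alpha_n$ already computed for Theorem \ref{thm:multisuma}. The combination should make the theta series collapse to the single product $(q^{4r+2},q^{4r-4},q^{8r-2};q^{8r-2})_\infty$. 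This is the "$j=-1$" or "$j=r+1$" member of the family $(q^{2j},q^{8r-2j-2},q^{8r-2};q^{8r-2})_\infty$, which is not directly covered by \eqref{Nahmsumsd}. Equivalently, it should follow by adding suitable multiples of the $j=r$ case of \eqref{Nahmsumsd} and \eqref{Nahmsumsa}, with a shift $N_{r-1}$-linear term.

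\emph{Alternative route.} Rather than summing theta series afresh, I may derive (a) directly from Theorem \ref{thm:multisuma}(a),(b). Use $q^{N}/(q;q^2)_N$ versus $q^{2N}/(q^3;q^2)_N$ together with the identity $(q;q^2)_{N}(1-q^{2N+1})=(1-q)(q^3;q^2)_N$. This converts the bracket into a difference of summands of \eqref{Nahmsumsa} and \eqref{Nahmsumsd}. One then needs a three-term product identity among theta functions of modulus $8r-2$, which follows from the quintuple/triple product or by checking the $\alpha_n$ side. For (b), I will apply the analogous Bailey pair relative to $a=q$ with base changed to $q^{1/2}$ (the $(-q^{1/2};q)_{N+1}$ arises from the $q\to q^{1/2}$ pairs as in Theorem \ref{thm:multisumb}). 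Again the bracket $1+q+q^{N+1/2}$ splits $\beta_N$ into two known pairs, whose $\alpha_n$ combine into a theta series of modulus $2r-\tfrac12$ with the $q^{3/2}$ residue.

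\emph{Main obstacle.} The delicate step is verifying the Bailey pair for the composite $\beta_N$: showing that the combined $\alpha_n$ is a clean two-term quadratic exponential whose chain image sums to a single triple product. If a direct verification of $\alpha_n$ is messy, I will first try the $r=3$ case, which is known (Wang--Wang), to fix the correct decomposition. Then I will argue that the Bailey lemma propagates it to all $r\geq3$.
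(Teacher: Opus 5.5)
You have the paper's skeleton: a Bailey chain on a pair relative to $a=1$ (resp.\ $a=q$) whose $\beta_N$ is the innermost factor, followed by the Jacobi triple product. There is a genuine gap, however. The step that carries the whole proof, the Bailey pair with the composite $\beta_N$, is left as ``once the pair is found''. The paper imports these pairs from B.~Wang--L.~Wang, \eqref{W-Bailey} and \eqref{W-Bailey-2}:
\[
\alpha_0=1,\quad \alpha_{2n}(1;q)=(-1)^nq^{3n^2}\bigl(q^{3n}+q^{-3n}\bigr),\quad \alpha_{2n+1}(1;q)=0,\qquad \beta_n(1;q)=\frac{q^n(1+q^{-1})-q^{2n-1}}{(q;q)_n(q;q^2)_n},
\]
\[
\alpha_n(q;q)=(-1)^nq^{\frac34(n^2-n)}\,\frac{1-q^{3n+\frac32}}{1-q^{\frac12}},\qquad \beta_n(q;q)=\frac{q^n\bigl(1+q+q^{n+\frac12}\bigr)}{(-q^{3/2};q)_n(q^2;q^2)_n}.
\]
For \eqref{SumC-2} the $\beta$-denominator must be $(q^2;q^2)_n$, as written here. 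Given these pairs, apply Lemma~\ref{lemma-S1} $r-2$ times and once more in the limit $n\to\infty$. This yields $\frac{1}{(q;q)_\infty}\sum_{n\in\mathbb{Z}}(-1)^nq^{(4r-1)n^2+3n}$. For (b), using $(-q^{1/2};q)_{N+1}=(1+q^{1/2})(-q^{3/2};q)_N$, it yields $\frac{1}{(q;q)_\infty}\sum_{n\in\mathbb{Z}}(-1)^nq^{(r-\frac14)n^2+(r-\frac74)n}$. These are the two products.

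None of your proposed ways of obtaining these pairs works.

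\emph{Splitting.} Splitting $\beta_N$ into $q^N/((q;q)_N(q;q^2)_N)$ and $q^{2N}/((q;q)_N(q;q^2)_N)$ relative to $a=1$ lets you reuse nothing from Theorem~\ref{thm:multisuma}. The $a=1$ pairs used there are (C1) and (C5), with numerators $1$ and $q^{\binom N2}$. The pairs (C3), (C4$^*$), (C6), (C7$^*$) are relative to $a=q$ or $q^2$ and carry $(q^3;q^2)_N$. Solving \eqref{eq-2.9} for your two pieces gives $\alpha_1=-(1-q^2)$ and $\alpha_1=-(1-q^2)(1+q)$ respectively, and $\alpha_2=q-q^2-q^3-q^4-q^5+q^7$ for the second piece. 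Only the combination $(1+q^{-1})\beta^{(1)}_N-q^{-1}\beta^{(2)}_N$ collapses to $\alpha_{2n+1}=0$ and $\alpha_{2n}=(-1)^nq^{3n^2}(q^{3n}+q^{-3n})$. Proving that for all $n$ is a genuine $q$-summation, not a consequence of linearity. The same holds in (b): the pieces over $(-q^{3/2};q)_N(q^2;q^2)_N$ are not among the Group~G pairs.

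\emph{Alternative route.} This fails termwise. Every summand of \eqref{Nahmsumsd} carries $N_1+\cdots+N_{j-1}+2N_j+\cdots+2N_{r-1}$, which involves all $N_i$. The summand of \eqref{SumC-1} has a linear term only in $N_{r-1}$, so it is not a difference of summands of \eqref{Nahmsumsa} and \eqref{Nahmsumsd}. Moreover, the target product is the $j=2r+1$ (equivalently $j=2r-2$) member of $(q^{2j},q^{8r-2j-2},q^{8r-2};q^{8r-2})_\infty$, outside $1\le j\le r$. It is neither your $j=-1$ nor $j=r+1$; the latter agrees only when $r=3$.

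\emph{Bootstrapping from $r=3$.} The known $r=3$ case determines only the single weighted sum $\sum_n q^{2n^2}\alpha_n$, not the individual $\alpha_n$. So there is nothing for the Bailey lemma to propagate to other $r$.

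To complete the argument, either cite Wang--Wang's pairs or verify \eqref{eq-2.9} for them directly.
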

}

The proofs of Theorems \ref{thm:multisuma}, \ref{thm:multisumb}, \ref{thm-SumC} rely on  the Bailey machinery. To this end,  we review some necessary background on the Bailey machinery in Subsection \ref{Bailey}. We proceed to lay out the derivations of each
identity in Theorem \ref{thm:multisuma}, Theorem \ref{thm:multisumb} and Thereom \ref{thm-SumC} in Subsection \ref{subsec2.2}, Subsection \ref{subsec2.3}
and Subsection \ref{subsec-thmSC-pr}, respectively.

\subsection{Preliminaries} \label{Bailey} We first recall the definition of Bailey pair.
A pair of sequences \((\alpha_n(a;q),\beta_n(a;q))\) is called a Bailey pair relative to \(a\) if for all \(n\geq0,\)
\begin{align}
    \beta_n(a;q)=\sum_{r=0}^n\frac{\alpha_r(a;q)}{(q;q)_{n-r}(aq;q)_{n+r}}.\label{eq-2.9}
\end{align}

Note that taking the limit as $n\rightarrow \infty$ yields the following expression, subject to appropriate convergence conditions:
\begin{equation}
\lim_{n\rightarrow \infty} \beta_{n}(a;q)=\frac{1}{(q;q)_\infty(aq;q)_\infty} \sum_{r=0}^\infty \alpha_{r}(a;q).\label{Bailey-limit}
\end{equation}
In practice, the above sum  can often be rewritten as an infinite product via an application of the Jacobi triple product  identity \cite[Theorem 2.8]{Andrews-1976},
    \begin{align}
    (q^2,zq,q/z;q^2)_\infty&=\sum_{n=-\infty}^\infty(-1)^nq^{n^2}z^n.\label{eq-2.18}
    \end{align}

Once we have a Bailey pair, the following Bailey's lemma provides  a method to generate a new Bailey pair.

\begin{lem}[Bailey's Lemma] \label{BaileyLemm}Suppose that \((\alpha_n(a;q),\beta_n(a;q))\) is a Bailey pair relative to \(a.\) Then \((\alpha_n'(a;q),\beta_n'(a;q))\) is also a Bailey pair relative to \(a,\) where
\begin{equation}\label{eq-2.10}
\begin{split}
&\alpha_n'(a;q)=\frac{(\rho_1,\rho_2;q)_n(aq/\rho_1\rho_2)^n}{(aq/\rho_1,aq/\rho_2;q)_n}\alpha_n(a;q),\\
    &\beta_n'(a;q)=\sum_{r=0}^n\frac{(\rho_1,\rho_2;q)_r(aq/\rho_1\rho_2;q)_{n-r}(aq/\rho_1\rho_2)^r}{(aq/\rho_1,aq/\rho_2;q)_n(q;q)_{n-r}}\beta_r(a;q).
\end{split}
\end{equation}
\end{lem}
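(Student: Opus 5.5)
The plan is the classical proof of Bailey's lemma: insert the defining relation \eqref{eq-2.9} for $\beta_r(a;q)$ into the formula for $\beta_n'(a;q)$, interchange the two finite sums, and evaluate the resulting inner sum by the $q$-Pfaff--Saalsch\"utz summation. After the interchange, $\beta_n'(a;q)=\sum_{k=0}^n \alpha_k(a;q)\,S_{n,k}$, where
\[
S_{n,k}=\sum_{r=k}^{n}\frac{(\rho_1,\rho_2;q)_r\,(aq/\rho_1\rho_2;q)_{n-r}\,(aq/\rho_1\rho_2)^r}{(aq/\rho_1,aq/\rho_2;q)_n\,(q;q)_{n-r}\,(q;q)_{r-k}\,(aq;q)_{r+k}} .
\]
Comparing with \eqref{eq-2.9} applied to the pair $(\alpha'_n,\beta'_n)$, it suffices to prove
\[
S_{n,k}=\frac{(\rho_1,\rho_2;q)_k\,(aq/\rho_1\rho_2)^k}{(aq/\rho_1,aq/\rho_2;q)_k\,(q;q)_{n-k}\,(aq;q)_{n+k}} \qquad (0\le k\le n).
\]

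To do this, put $r=k+m$ and $N=n-k$, so that $m$ runs from $0$ to $N$. Split off the factors depending only on $k$ using
\[
(\rho_i;q)_{k+m}=(\rho_i;q)_k(\rho_iq^k;q)_m,\qquad (aq;q)_{2k+m}=(aq;q)_{2k}(aq^{2k+1};q)_m .
\]
Next, reverse the two factors indexed by $N-m$ with the standard identity
\[
(x;q)_{N-m}=\frac{(x;q)_N}{(q^{1-N}/x;q)_m}\,(-q/x)^m q^{\binom m2-Nm},
\]
applied to $x=q$ and to $x=aq/\rho_1\rho_2$. The powers of $-1$ and the quadratic powers of $q$ cancel between the two reversals. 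The extra factor $(\rho_1\rho_2/a)^m$ produced by the second reversal cancels against $(aq/\rho_1\rho_2)^m$ up to $q^m$. One thereby arrives at
\[
S_{n,k}=\frac{(\rho_1,\rho_2;q)_k(aq/\rho_1\rho_2)^k(aq/\rho_1\rho_2;q)_N}{(aq/\rho_1,aq/\rho_2;q)_n(q;q)_N(aq;q)_{2k}}\;
{}_3\phi_2\!\left(\begin{matrix}q^{-N},\ \rho_1q^k,\ \rho_2q^k\\ aq^{2k+1},\ \rho_1\rho_2q^{1-N}/a\end{matrix};q,q\right).
\]
This ${}_3\phi_2$ is terminating and balanced, since $q\cdot q^{-N}\cdot\rho_1q^k\cdot\rho_2q^k=aq^{2k+1}\cdot\rho_1\rho_2q^{1-N}/a$.

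The $q$-Pfaff--Saalsch\"utz formula then evaluates it. With $A=\rho_1q^k$, $B=\rho_2q^k$, $C=aq^{2k+1}$ it gives
\[
\frac{(C/A,C/B;q)_N}{(C,C/AB;q)_N}=\frac{(aq^{k+1}/\rho_1,\ aq^{k+1}/\rho_2;q)_{n-k}}{(aq^{2k+1};q)_{n-k}\,(aq/\rho_1\rho_2;q)_{n-k}} .
\]
Substituting back, three simplifications finish the argument:
\begin{itemize}
\item $(aq/\rho_1\rho_2;q)_N$ cancels against the denominator factor $(aq/\rho_1\rho_2;q)_{n-k}$;
\item $(aq;q)_{2k}(aq^{2k+1};q)_{n-k}=(aq;q)_{n+k}$;
\item $(aq/\rho_i;q)_n=(aq/\rho_i;q)_k\,(aq^{k+1}/\rho_i;q)_{n-k}$ for $i=1,2$.
\end{itemize}
What remains is exactly the claimed value of $S_{n,k}$, so $(\alpha_n',\beta_n')$ is a Bailey pair relative to $a$.

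The only real obstacle is the bookkeeping in the reversal step: one must check that the signs, the factors $q^{\binom m2}$, and the powers of $\rho_1\rho_2/a$ cancel exactly, so that a balanced ${}_3\phi_2$ with argument $q$ results. Everything else is a formal interchange of finite sums, so no convergence issues arise.
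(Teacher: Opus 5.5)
The paper gives no proof of this lemma. It recalls Bailey's lemma in Subsection~2.1 as a classical result and only ever uses its limiting case (S1). Your argument is the standard one: substitute the defining relation for $\beta_r$, interchange the finite sums, reindex by $r=k+m$, reverse the two factors of length $N-m$, and sum the resulting terminating balanced ${}_3\phi_2$ by $q$-Pfaff--Saalsch\"utz. It is correct apart from one slip.

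\textbf{The slip.} Applying the reversal identity with $x=aq/\rho_1\rho_2$ gives $q^{1-N}/x=\rho_1\rho_2q^{-N}/a$. So the second lower parameter of your ${}_3\phi_2$ must be $\rho_1\rho_2q^{-N}/a$, not $\rho_1\rho_2q^{1-N}/a$. As printed, your balancing check is false: the left side equals $\rho_1\rho_2q^{2k+1-N}$, while the right side equals $\rho_1\rho_2q^{2k+2-N}$.

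With the corrected parameter, $q\cdot q^{-N}\cdot\rho_1q^k\cdot\rho_2q^k=aq^{2k+1}\cdot\rho_1\rho_2q^{-N}/a$ holds. Moreover $\rho_1\rho_2q^{-N}/a$ is exactly $ABq^{1-N}/C$ for your choice $A=\rho_1q^k$, $B=\rho_2q^k$, $C=aq^{2k+1}$. Hence the Pfaff--Saalsch\"utz value you then write down, with $C/AB=aq/\rho_1\rho_2$, is the right one, and your three closing simplifications give the claimed $S_{n,k}$ unchanged.

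\textbf{Degenerate parameters.} The reversal introduces the denominator $(\rho_1\rho_2q^{-N}/a;q)_m$, and the Saalsch\"utz formula has $(aq/\rho_1\rho_2;q)_N$ in its denominator. Both can vanish, and only when $(aq/\rho_1\rho_2;q)_N=0$; there your intermediate expressions become $0\cdot\infty$. The target identity for $S_{n,k}$ is an identity of rational functions in $\rho_1,\rho_2,a$. You should therefore say explicitly that you prove it for generic parameters and extend it by continuity.
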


Upon taking \(\rho_1 \rightarrow \infty, \rho_2 \rightarrow \infty,\) we obtain

\begin{lem}{\!\!\rm \cite[Eq. (S1)]{Bressoud-2000}} \label{lemma-S1}
    If \((\alpha_n(a;q),\beta_n(a;q))\) is a Bailey pair relative to \(a\), then \((\alpha_n'(a;q),\beta_n'(a;q))\) is also a Bailey pair relative to \(a\), where
    \begin{align}        \alpha_n'(a;q)=a^nq^{n^2}\alpha_n(a;q),\quad\beta_n'(a;q)=\sum_{r=0}^n\frac{a^rq^{r^2}}{(q;q)_{n-r}}\beta_r(a;q). \label{eq-S1}
    \end{align}
\end{lem}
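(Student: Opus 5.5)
Lemma \ref{lemma-S1} is the specialization $\rho_1,\rho_2\to\infty$ of Bailey's Lemma \ref{BaileyLemm}, so the plan is to take that limit directly in \eqref{eq-2.10}. One could instead verify \eqref{eq-2.9} for the new pair by hand, but the limit is quicker and the two factors to control are explicit.

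For fixed $n$ and $r\le n$ I would first compute the limits of the $\rho$-dependent factors. By $(\rho;q)_r=\prod_{i=0}^{r-1}(1-\rho q^i)$ we have $(\rho;q)_r\sim(-\rho)^r q^{\binom r2}$ as $\rho\to\infty$, so
\[
(\rho_1,\rho_2;q)_r\Bigl(\frac{aq}{\rho_1\rho_2}\Bigr)^r \longrightarrow \rho_1^r\rho_2^r q^{2\binom r2}\frac{a^rq^r}{\rho_1^r\rho_2^r}=a^rq^{r^2}.
\]
The remaining factors behave simply: $(aq/\rho_1,aq/\rho_2;q)_n\to 1$ and $(aq/\rho_1\rho_2;q)_{n-r}\to 1$. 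Substituting these limits into \eqref{eq-2.10} gives $\alpha_n'=a^nq^{n^2}\alpha_n$ and $\beta_n'=\sum_{r=0}^n a^rq^{r^2}\beta_r/(q;q)_{n-r}$, which is exactly \eqref{eq-S1}.

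It then remains to justify that the Bailey pair relation survives the limit. For each fixed $n$, the defining relation \eqref{eq-2.9} for $(\alpha',\beta')$ is a finite identity whose two sides are rational functions of $\rho_1,\rho_2$. Both sides have finite limits, so the relation passes to the limit termwise. Equivalently, one can verify the relation directly: substitute $\beta_r=\sum_{k\le r}\alpha_k/((q)_{r-k}(aq)_{r+k})$, swap the sums, and reduce the inner sum to the identity
\[
\sum_{r=k}^n \frac{q^{r^2-k^2}a^{r-k}}{(q)_{n-r}(q)_{r-k}(aq)_{r+k}}=\frac{1}{(q)_{n-k}(aq)_{n+k}}.
\]
This identity is the $\rho\to\infty$ case of the $q$-Pfaff--Saalschütz sum. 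No step here is a real obstacle; the only point needing care is the asymptotic $(\rho;q)_r\sim(-\rho)^rq^{\binom r2}$, where the signs from $\rho_1$ and $\rho_2$ cancel.
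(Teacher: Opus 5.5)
Your proposal is correct and follows the paper's own route. The paper obtains this lemma exactly by letting $\rho_1,\rho_2\to\infty$ in Bailey's Lemma (citing Bressoud), and your limit computation, including the sign cancellation giving $(\rho_1,\rho_2;q)_r(aq/\rho_1\rho_2)^r\to a^rq^{r^2}$, fills in that step; your alternative direct check through the $\rho\to\infty$ case of $q$-Pfaff--Saalsch\"utz is also valid.
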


%Applying \eqref{eq-S1} into \eqref{Bailey-limit},
%Letting $n, \rho_1, \rho_2\rightarrow\infty$ in \eqref{eq-2.10},
%we have

%\begin{lem}\!\!{\rm \cite[Eq. (1.2.8)]{McLaughlin-2008}}\label{lem-B-2}
%If $(\alpha_n(a;q), \beta(a;q))$ is a Bailey pair relative to $a$,
%then
%\begin{align*}
%\sum_{n=0}^\infty a^nq^{n^2}\beta_n(a;q)
%=\frac{1}{(aq;q)_\infty}\sum_{n=0}^\infty
%a^{n}q^{n^2}\alpha_n(a;q).
%\end{align*}
%\end{lem}}

The following Lemma  provides a method for generating a Bailey pair related to \(a/q\) from a Bailey pair relative to \(a.\)

\begin{lem}{\!\rm \cite[Lemma 3.1]{Lovejoy-2022}}\label{lem-1}
If \((\alpha_n(a;q),\beta_n(a;q))\) is a Bailey pair relative to \(a\),  then \((\alpha_n'(a/q;q),\beta_n'(a/q;q))\) is  a Bailey pair relative to \(a/q\), where
\begin{equation}\label{eq-2.13}
\begin{split}
&\alpha_n'(a/q;q)=(1-a)\left(\frac{q^n\alpha_n(a;q)}{1-aq^{2n}}-\frac{q^{n-1}\alpha_{n-1}(a;q)}{1-aq^{2n-2}}\right),\\
&\beta_n'(a/q;q)=q^n\beta_n(a;q).
\end{split}
\end{equation}
\end{lem}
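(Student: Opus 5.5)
This is Lovejoy's Lemma 3.1; I would verify it directly from the definition \eqref{eq-2.9}. Writing $b=a/q$, the task is to show that for every $n\ge0$,
\[
q^n\beta_n(a;q)=\sum_{k=0}^n\frac{\alpha'_k}{(q;q)_{n-k}(a;q)_{n+k}},
\]
where $\alpha'_k$ is as in \eqref{eq-2.13} and $\alpha_{-1}:=0$. Here I used $(bq;q)_{n+k}=(a;q)_{n+k}$.

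\textbf{Step 1: collect coefficients.} I substitute $\alpha'_k$ and shift the index in the second sum ($k\mapsto k+1$), so that the right-hand side becomes
\[
(1-a)\sum_{k=0}^n\frac{\alpha_k(a;q)}{1-aq^{2k}}\,q^k\left[\frac{1}{(q;q)_{n-k}(a;q)_{n+k}}-\frac{1}{(q;q)_{n-k-1}(a;q)_{n+k+1}}\right].
\]
When $k=n$, the second term in the bracket is zero because $1/(q;q)_{-1}=0$.

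\textbf{Step 2: simplify the bracket.} This is the key computation. Putting the two terms over the common denominator $(q;q)_{n-k}(a;q)_{n+k+1}$, the numerator is
\[
(1-aq^{n+k})-(1-q^{n-k})=q^{n-k}(1-aq^{2k}).
\]
The bracket therefore equals $q^{n-k}(1-aq^{2k})/\bigl((q;q)_{n-k}(a;q)_{n+k+1}\bigr)$. The factor $1-aq^{2k}$ cancels, and the powers combine as $q^k\cdot q^{n-k}=q^n$.

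\textbf{Step 3: recognise $\beta_n$.} Using $(a;q)_{n+k+1}=(1-a)(aq;q)_{n+k}$, the factor $1-a$ cancels as well. The right-hand side becomes
\[
q^n\sum_{k=0}^n\frac{\alpha_k(a;q)}{(q;q)_{n-k}(aq;q)_{n+k}}=q^n\beta_n(a;q),
\]
which proves the lemma.

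\textbf{Main obstacle.} The only delicate point is the bookkeeping at the boundaries of the sum. At $k=0$, the $\alpha_{-1}$ term vanishes by convention. At $k=n$, the second bracket term vanishes, and the formula from Step 2 still holds there, since $q^{0}(1-aq^{2n})/\bigl((q;q)_0(a;q)_{2n+1}\bigr)$ agrees with $1/(a;q)_{2n}$. One must also assume $aq^{2k}\neq1$ so that the denominators in \eqref{eq-2.13} make sense; this is treated as a generic condition on $a$, or formally in $a$.
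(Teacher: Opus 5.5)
Your verification is correct and complete. After you shift the index in the $\alpha_{k-1}$ part, the bracket collapses via $(1-aq^{n+k})-(1-q^{n-k})=q^{n-k}(1-aq^{2k})$, and $(a;q)_{n+k+1}=(1-a)(aq;q)_{n+k}$ absorbs the remaining factor, giving exactly $q^n\beta_n(a;q)$; you also handle the boundary terms $k=0$ and $k=n$ correctly.

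The paper gives no proof of this lemma and simply quotes Lovejoy's Lemma~3.1, so your direct check against \eqref{eq-2.9} is the standard argument the citation stands in for. In the paper's two applications ($a=q^2$ and $a=q$), your nondegeneracy conditions $a\neq 1$ and $aq^{2k}\neq 1$ hold automatically.
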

% Conversely, the following lemma give a way to generate a Bailey pair related to \(aq\) from a Bailey pair related to \(a.\)
% \begin{lem}\cite[Lemma 2.4]{Lovejoy-2022}
%     If \((\alpha_n(a;q),\beta_n(a;q))\) is a Bailey pair related to \(a,\) then
%     \begin{align}
%         \alpha_n'(aq;q)=&\frac{(1-aq^{2n+1})(aq/b;q)_n(-b)^nq^{n(n-1)/2}}{(1-aq)(bq;q)_n}\notag\\
%         &\quad\times\sum_{r=0}^n\frac{(b;q)_r}{(aq/b;q)_r}(-b)^{-r}q^{-r(r-1)/2}\alpha_r(a;q),\notag\\
%         \beta_n'(aq;q)=&\frac{(b;q)_n}{(bq;q)_n}\beta_n(a;q)\label{eq-2.14}
%     \end{align}
%     is a Bailey pair related to \(aq.\)
% \end{lem}
% When \(b\rightarrow\infty,\) \eqref{eq-2.14} becomes
% \begin{lem}\cite[Eq. (2.14)]{Wang-Wang-2025-2}\label{lem-2}
% If \((\alpha_n(a;q),\beta_n(a;q))\) is a Bailey pair related to \(a,\) then
% \begin{align}
%         &\alpha_n'(aq;q)=\frac{1-aq^{2n+1}}{1-aq}q^{-n}\sum_{r=0}^n\alpha_r(a;q),\notag\\
%         &\beta_n'(aq;q)=q^{-n}\beta_n(a;q)\label{eq-2.15}
%     \end{align}
%     is a Bailey pair related to \(aq.\)
%     \end{lem}
%     When \(b=0,\) \eqref{eq-2.14} becomes
% \begin{lem}\cite[Eq. (2.15]{Wang-Wang-2025-2}\label{lem-3}
%     If \((\alpha_n(a;q),\beta_n(a;q))\) is a Bailey pair related to \(a\), then
%     \begin{align}
%         &\alpha_n'(aq;q)=\frac{(1-aq^{2n+1})a^nq^{n^2}}{1-aq}\sum_{r=0}^na^{-r}q^{-r^2}\alpha_r(a;q),\notag\\
%         &\beta_n'(aq;q)=\beta_n(a;q).\label{eq-2.16}
%     \end{align}
% \end{lem}

\subsection{Proof of Theorem \ref{thm:multisuma} } \label{subsec2.2}

We begin by recalling the following Bailey pairs from Group    C of Slater's list \cite[p. 469]{Slater-1951}, which are required in the proof of  Theorem \ref{thm:multisuma}.

In what follows, we   set  \(\alpha_0(a;q)=1\) and   adopt the convention that \(\alpha_n(a;q)=0\) for \(n<0\) unless specified otherwise.

\begin{align*}
{\rm (C1)} \quad &\alpha_{2n}(1;q)=(-1)^nq^{3n^2}(q^n+q^{-n}),\quad\alpha_{2n+1}(1;q)=0,\\[5pt]
    &\beta_n(1;q)=\frac{1}{(q;q)_n(q;q^2)_n};\\[10pt]
 {\rm (C3)} \quad  &\alpha_{2n}(q;q)=(-1)^nq^{3n^2+n},\quad\alpha_{2n+1}(q;q)=(-1)^{n+1}q^{3n^2+5n+2},\\[5pt]
    &\beta_n(q;q)=\frac{1}{(q;q)_n(q^3;q^2)_n};\\[10pt]
    {\rm (C4)}\quad &\alpha_{2n}(q;q)=(-1)^nq^{3n^2+3n},\quad\alpha_{2n+1}(q;q)=(-1)^{n+1}q^{3n^2+3n},\\[5pt]
    &\beta_n(q;q)=\frac{q^n}{(q;q)_n(q^3;q^2)_n};\\[10pt]
     {\rm (C4^*)}\quad&\alpha_{2n}(q^2;q)=(-1)^nq^{3n^2+n}\frac{1-q^{4n+2}}{1-q^2},\quad\alpha_{2n+1}(q^2;q)=0,\\[5pt]
    &\beta_n(q^2;q)=\frac{1}{(q;q)_n(q^3;q^2)_n};\\[10pt]
    {\rm (C5)}\quad&\alpha_{2n}(1;q)=(-1)^nq^{n^2}(q^n+q^{-n}),\quad\alpha_{2n+1}(1;q)=0,\\[5pt]
    &\beta_n(1;q)=\frac{q^{\frac{1}{2}(n^2-n)}}{(q;q)_n(q;q^2)_n};\\[10pt]
       {\rm (C6)}\quad&\alpha_{2n}(q;q)=(-1)^nq^{n^2-n},\quad\alpha_{2n+1}(q;q)=(-1)^{n+1}q^{n^2+3n+2},\\[5pt]
    &\beta_n(q;q)=\frac{q^{\frac{1}{2}(n^2-n)}}{(q;q)_n(q^3;q^2)_n};\\[10pt]
    {\rm (C7)}\quad&\alpha_{2n}(q;q)=(-1)^nq^{n^2+n},\quad\alpha_{2n+1}(q;q)=(-1)^{n+1}q^{n^2+n},\\[5pt]
    &\beta_n(q;q)=\frac{q^{\frac{1}{2}(n^2+n)}}{(q;q)_n(q^3;q^2)_n};\\[10pt]
       {\rm (C7^*)}\quad&\alpha_{2n}'(q^2;q)=(-1)^nq^{n^2-n}\frac{1-q^{4n+2}}{1-q^2},\quad\alpha_{2n+1}'(q^2;q)=0,\notag\\[5pt]
    &\beta_n'(q^2;q)=\frac{q^{\frac{1}{2}(n^2-n)}}{(q;q)_n(q^3;q^2)_n}.
\end{align*}
 Note that ${\rm (C4^*)}$ and ${\rm (C7^*)}$ were first stated  by B. Wang--L. Wang  \cite[p. 13, p. 14]{Wang-Wang-2025-2}, who derived them by applying  base-change method  due to Lovejoy \cite[Lemma 2.4]{Lovejoy-2014} to  the Bailey pairs (C4) and (C7), respectively.

We now give a proof of  Theorem \ref{thm:multisuma}. We prove each of the four identities in the theorem one by one.

\noindent  {\bf  $(a)$ Proof of  \eqref{Nahmsumsa}.}  Iterate Lemma \ref{lemma-S1} (with $a=1$)  \(r-1\) times on the Bailey pair (C1) to get a new Bailey pair $(\alpha^{(r-1)}_{n}(1;q), \beta^{(r-1)}_{n}(1;q))$ relative to $1$, where
    \begin{align*}
        &\alpha^{(r-1)}_{2n}(1;q)=(-1)^nq^{(4r-1)n^2}(q^n+q^{-n}),\quad\alpha_{2n+1}^{(r-1)}(1;q)=0,\notag\\[5pt]
        &\beta_n^{(r-1)}(1;q)=\sum_{n\geq N_1\geq\cdots\geq N_{r-1}\geq0}\frac{q^{N_1^2+\cdots+N_{r-1}^2}}{(q;q)_{n-N_1}(q;q)_{N_1-N_2}\cdots(q;q)_{N_{r-2}-N_{r-1}}(q;q)_{N_{r-1}}(q;q^2)_{N_{r-1}}}.
    \end{align*}
It follows from \eqref{Bailey-limit} that
    \begin{align*}
         &\sum_{N_1\geq\cdots\geq N_{r-1}\geq0}\frac{q^{N_1^2+\cdots+N_{r-1}^2}}{(q;q)_{N_1-N_2}\cdots(q;q)_{N_{r-2}-N_{r-1}}(q;q)_{N_{r-1}}(q;q^2)_{N_{r-1}}}\\[5pt]
        &=\frac{1}{(q;q)_\infty}\left(1+\sum_{n=1}^\infty(-1)^nq^{(4r-1)n^2}(q^n+q^{-n})\right)\notag\\[5pt]
        &=\frac{1}{(q;q)_\infty}\sum_{n=-\infty}^\infty(-1)^nq^{(4r-1)n^2+n}\notag \\[5pt]
        &\overset{\eqref{eq-2.18}}{=}\frac{(q^{4r},q^{4r-2},q^{8r-2};q^{8r-2})_\infty}{(q;q)_\infty}.
    \end{align*}

\noindent  {\bf $(b)$ Proof of  \eqref{Nahmsumsd}.}  We consider the
following two cases:

Case 1: If $j=r$, then applying Lemma \ref{lemma-S1} (with $a=q$) \(r-1\) times to   the Bailey pair (C3), we obtain a new Bailey pair:
    \begin{align*}
        &\alpha_{2n}^{(r-1)}(q;q)=(-1)^nq^{(4r-1)n^2+(2r-1)n},\quad\alpha_{2n+1}^{(r-1)}(q;q)=(-1)^{n+1}q^{(4r-1)n^2+(6r-1)n+2r},\\[5pt]
        &\beta_n^{(r-1)}(q;q)=\sum_{n\geq N_1\geq\cdots\geq N_{r-1}\geq0}\frac{q^{N_1^2+\cdots+N_{r-1}^2+N_1+\cdots+N_{r-1}}}{(q;q)_{n-N_1}(q;q)_{N_1-N_2}\cdots(q;q)_{N_{r-2}-N_{r-1}}(q;q)_{N_{r-1}}(q^3;q^2)_{N_{r-1}}}.
    \end{align*}
By \eqref{Bailey-limit},  we derive
    \begin{align*}
         &\sum_{N_1\geq\cdots\geq N_{r-1}\geq0}\frac{q^{N_1^2+\cdots+N_{r-1}^2+N_1+\cdots+N_{r-1}}}{(q;q)_{N_1-N_2}\cdots(q;q)_{N_{r-2}-N_{r-1}}(q;q)_{N_{r-1}}(q^3;q^2)_{N_{r-1}}}\\[5pt]
        &=\frac{1}{(q^2;q)_\infty}\left(\sum_{n=0}^\infty(-1)^nq^{(4r-1)n^2+(2r-1)n}+ \sum_{n=0}^\infty (-1)^{n+1}q^{(4r-1)n^2+(6r-1)n+2r} \right)\notag\\[5pt]
        &=\frac{1}{(q^2;q)_\infty}\sum_{n=-\infty}^\infty(-1)^nq^{(4r-1)n^2+(2r-1)n}\notag \\[5pt]
        &\overset{\eqref{eq-2.18}}{=}\frac{(q^{6r-2},q^{2r},q^{8r-2};q^{8r-2})_\infty}{(q^2;q)_\infty}.
    \end{align*}

Case 2: If $1\leq j <r$, then  iterating Lemma \ref{lemma-S1} (with $a=q^2$) \(r-1-j \) times on the  Bailey pair (C4\(^*\)) yields a new Bailey pair:
\begin{align}
    \alpha_{2n}^{(r-1-j )}(q^2;q)&=(-1)^nq^{(4r-4j -1)n^2+(4r-4j -3)n}\frac{1-q^{4n+2}}{1-q^2},\quad\alpha_{2n+1}^{(r-j -1)}(q^2;q)=0,\notag\\[5pt]
    \beta_n^{(r-1-j )}(q^2;q)&=\sum_{n\geq N_{j+1}\geq\cdots\geq N_{r-1}\geq0}\frac{q^{N_{j+1}^2+\cdots+N_{r-1}^2+2N_{j+1}+\cdots+2N_{r-1}}}{(q;q)_{n-N_{j+1}}(q;q)_{N_{j+1}-N_{j+2}}\cdots(q;q)_{N_{r-2}-N_{r-1}}}\notag\\
    &\hskip 2cm\times\frac{1}{(q;q)_{N_{r-1}}(q^3;q^2)_{N_{r-1}}}.\label{eq-2.19}
\end{align}
Applying Lemma \ref{lem-1} (with $a=q^2$) to the Bailey pair \eqref{eq-2.19}, we get a new Bailey pair relative to $q$:
\begin{align}
    \alpha^{(r-j )}_{2n}(q;q)&=(-1)^nq^{(4r-4j -1)n^2+(4r-4j -1)n},\alpha^{(r-j )}_{2n+1}(q;q)=(-1)^{n+1}q^{(4r-4j -1)n^2+(4r-4j -1)n},\notag\\[5pt]
    \beta^{(r-j )}_n(q;q)&=\sum_{n\geq N_{j+1}\geq\cdots\geq N_{r-1}\geq0}\frac{q^{N_{j+1}^2+\cdots+N_{r-1}^2+n+2N_{j+1}+\cdots+2N_{r-1}}}{(q;q)_{n-N_{j+1}}(q;q)_{N_{j+1}-N_{i+2}}\cdots(q;q)_{N_{r-2}-N_{r-1}}}\notag\\
     &\hskip 2cm \times\frac{1}{(q;q)_{N_{r-1}}(q^3;q^2)_{N_{r-1}}}.\label{eq-2.20}
\end{align}
Iterate Lemma \ref{lemma-S1} (with $a=q$)  \(j\) times on \eqref{eq-2.20} to yield
\begin{align*}
    \alpha_{2n}^{(r)}(q;q)&=(-1)^nq^{(4r-1)n^2+(4r-2j-1)n},~\alpha_{2n+1}^{(r)}(q;q)=(-1)^{n+1}q^{(4r-1)n^2+(4r+2j-1)n+2j},\notag\\[5pt]
    \beta_n^{(r)}(q;q)&=\sum_{n\geq N_1\geq\cdots\geq N_{r-1}\geq0}\frac{q^{N_1^2+\cdots+N_{r-1}^2+N_1+\cdots+N_{j-1}+2N_j+\cdots+2N_{r-1}}}{(q;q)_{n-N_1}(q;q)_{N_1-N_2}\cdots(q;q)_{N_{r-2}-N_{r-1}}}\notag\\[5pt]
    &\hskip 2cm \times\frac{1}{(q;q)_{N_{r-1}}(q^3;q^2)_{N_{r-1}}}.
\end{align*}
By means of \eqref{Bailey-limit}, we get
\begin{align*}
& \sum_{N_1\geq\cdots\geq N_{r-1}\geq0}\frac{q^{N_1^2+\cdots+N_{r-1}^2+N_1+\cdots+N_{j-1}+2N_j+\cdots+2N_{r-1}}}{(q;q)_{N_1-N_2}(q;q)_{N_2-N_3}\cdots(q;q)_{N_{r-2}-N_{r-1}}(q;q)_{N_{r-1}}(q^3;q^2)_{N_{r-1}}}\nonumber \\[5pt]
    &=\frac{1}{(q^2;q)_\infty}\left(\sum_{n=0}^\infty(-1)^nq^{(4r-1)n^2+(4r-2j-1)n}+\sum_{n=0}^\infty  (-1)^{n+1}q^{(4r-1)n^2+(4r+2j-1)n+2j} \right)\notag\\[5pt]
    &=\frac{1}{(q^2;q)_\infty}\times \sum_{n=-\infty}^\infty(-1)^nq^{(4r-1)n^2+(4r-2j-1)n}\notag\\[5pt]
    &\overset{\eqref{eq-2.18}}{=}\frac{(q^{2j},q^{8r-2j-2},q^{8r-2};q^{8r-2})_\infty}{(q^2;q)_\infty}.
\end{align*}

\noindent  {\bf $(c)$ Proof of  \eqref{Nahmsumsb}.}
Iterating Lemma \ref{lemma-S1} (with $a=1$) \(r-1\) times on the Bailey pair (C5) yields a new Bailey pair $(\alpha^{(r-1)}_{n}(1;q), \beta^{(r-1)}_{n}(1;q))$ relative to $1$, where
      \begin{align*}
        &\alpha^{(r-1)}_{2n}(1;q)=(-1)^nq^{(4r-3)n^2}(q^n+q^{-n}),\quad\alpha_{2n+1}^{(r-1)}(1;q)=0,\notag\\[5pt]
        &\beta_n^{(r-1)}(1;q)=\sum_{n\geq N_1\geq\cdots\geq N_{r-1}\geq0}\frac{q^{N_1^2+\cdots+N_{r-1}^2+{N_{r-1}\choose 2}}}{(q;q)_{n-N_1}(q;q)_{N_1-N_2}\cdots(q;q)_{N_{r-2}-N_{r-1}}(q;q)_{N_{r-1}}(q;q^2)_{N_{r-1}}}.
    \end{align*}
In view of \eqref{Bailey-limit},  we get
    \begin{align*}
         &\sum_{N_1\geq\cdots\geq N_{r-1}\geq0}\frac{q^{N_1^2+\cdots+N_{r-1}^2+{N_{r-1}\choose 2}}}{(q;q)_{N_1-N_2}\cdots(q;q)_{N_{r-2}-N_{r-1}}(q;q)_{N_{r-1}}(q;q^2)_{N_{r-1}}}\\[5pt]
        &=\frac{1}{(q;q)_\infty}\left(1+\sum_{n=1}^\infty(-1)^nq^{(4r-3)n^2}(q^n+q^{-n})\right)\notag\\[5pt]
        &=\frac{1}{(q;q)_\infty}\sum_{n=-\infty}^\infty(-1)^nq^{(4r-3)n^2+n}\notag \\[5pt]
        &\overset{\eqref{eq-2.18}}{=}\frac{(q^{4r-4},q^{4r-2},q^{8r-6};q^{8r-6})_\infty}{(q;q)_\infty}.
    \end{align*}

\noindent  {\bf  $(d)$ Proof of  \eqref{Nahmsumse}.}
We consider the following two cases:

Case 1: If $j=r$, then iterating Lemma \ref{lemma-S1} (with $a=q$) \(r-1\) times on the Bailey pair (C6)  yields
    \begin{align*}
        &\alpha_{2n}^{(r-1)}(q;q)=(-1)^nq^{(4r-3)n^2+(2r-3)n},\quad\alpha_{2n+1}^{(r-1)}(q;q)=(-1)^{n+1}q^{(4r-3)n^2+(6r-3)n+2r},\\[5pt]
        &\beta_n^{(r-1)}(q;q)=\sum_{n\geq N_1\geq\cdots\geq N_{r-1}\geq0}\frac{q^{N_1^2+\cdots+N_{r-1}^2+N_1+\cdots+N_{r-1}+{N_{r-1}\choose 2}}}{(q;q)_{n-N_1}(q;q)_{N_1-N_2}\cdots(q;q)_{N_{r-2}-N_{r-1}}(q;q)_{N_{r-1}}(q^3;q^2)_{N_{r-1}}}.
    \end{align*}
From \eqref{Bailey-limit}, we obtain
    \begin{align*}
         &\sum_{N_1\geq\cdots\geq N_{r-1}\geq0}\frac{q^{N_1^2+\cdots+N_{r-1}^2+N_1+\cdots+N_{r-1}+{N_{r-1}\choose 2}}}{(q;q)_{N_1-N_2}\cdots(q;q)_{N_{r-2}-N_{r-1}}(q;q)_{N_{r-1}}(q^3;q^2)_{N_{r-1}}}\\[5pt]
        &=\frac{1}{(q^2;q)_\infty}\left(\sum_{n=0}^\infty(-1)^nq^{(4r-3)n^2+(2r-3)n}+ \sum_{n=0}^\infty (-1)^{n+1}q^{(4r-3)n^2+(6r-3)n+2r} \right)\notag\\[5pt]
        &=\frac{1}{(q^2;q)_\infty}\sum_{n=-\infty}^\infty(-1)^nq^{(4r-3)n^2+(2r-3)n}\notag \\[5pt]
        &\overset{\eqref{eq-2.18}}{=}\frac{(q^{6r-6},q^{2r},q^{8r-6};q^{8r-6})_\infty}{(q^2;q)_\infty}.
    \end{align*}

Case 2: If $1\leq j <r$, then  applying  Lemma \ref{lemma-S1} (with $a=q^2$) \(r-1-j \) times to the  Bailey pair (C7\(^*\)) yields the following Bailey pair:
\begin{align}
    \alpha_{2n}^{(r-1-j )}(q^2;q)&=(-1)^nq^{(4r-4j -3)n^2+(4r-4j -5)n}\frac{1-q^{4n+2}}{1-q^2},\quad\alpha_{2n+1}^{(r-j -1)}(q^2;q)=0,\notag\\[5pt]
    \beta_n^{(r-1-j )}(q^2;q)&=\sum_{n\geq N_{j+1}\geq\cdots\geq N_{r-1}\geq0}\frac{q^{N_{j+1}^2+\cdots+N_{r-1}^2+2N_{j+1}+\cdots+2N_{r-1}+{N_{r-1}\choose 2}}}{(q;q)_{n-N_{j+1}}(q;q)_{N_{j+1}-N_{j+2}}\cdots(q;q)_{N_{r-2}-N_{r-1}}}\notag\\
    &\hskip 2cm \times\frac{1}{(q;q)_{N_{r-1}}(q^3;q^2)_{N_{r-1}}}.\label{eq-2.30db}
\end{align}
Employing Lemma \ref{lem-1} (with $a=q^2$) to the Bailey pair \eqref{eq-2.30db}, we get
\begin{align}
    \alpha^{(r-j )}_{2n}(q;q)&=(-1)^nq^{(4r-4j -3)n^2+(4r-4j -3)n},\quad\alpha^{(r-j )}_{2n+1}(q;q)=(-1)^{n+1}q^{(4r-4j -3)n^2+(4r-4j -3)n},\notag\\[5pt]
    \beta^{(r-j )}_n(q;q)&=\sum_{n\geq N_{j+1}\geq\cdots\geq N_{r-1}\geq0}\frac{q^{n+N_{j+1}^2+\cdots+N_{r-1}^2+2N_{j+1}+\cdots+2N_{r-1}+{N_{r-1}\choose 2}}}{(q;q)_{n-N_{j+1}}(q;q)_{N_{j+1}-N_{i+2}}\cdots(q;q)_{N_{r-2}-N_{r-1}}}\notag\\
    &\hskip 2cm \times\frac{1}{(q;q)_{N_{r-1}}(q^3;q^2)_{N_{r-1}}}.\label{eq-2.22}
\end{align}
Iterate  Lemma \ref{lemma-S1} (with $a=q$)  \(j\) times on \eqref{eq-2.22} to generate the following Bailey pair:
\begin{align*}
    \alpha_{2n}^{(r)}(q;q)&=(-1)^nq^{(4r-3)n^2+(4r-2j-3)n},\quad\alpha_{2n+1}^{(r)}(q;q)=(-1)^{n+1}q^{(4r-3)n^2+(4r+2j-3)n+2j},\notag\\[5pt]
    \beta_n^{(r)}(q;q)&=\sum_{n\geq N_1\geq\cdots\geq N_{r-1}\geq0}\frac{q^{N_1^2+\cdots+N_{r-1}^2+N_1+\cdots+N_{j-1}+2N_j+\cdots+2N_{r-1}+{N_{r-1}\choose 2}}}{(q;q)_{n-N_1}(q;q)_{N_1-N_2}\cdots(q;q)_{N_{r-2}-N_{r-1}}}\notag\\[5pt]
    &\hskip 2cm \times\frac{1}{(q;q)_{N_{r-1}}(q^3;q^2)_{N_{r-1}}}.
\end{align*}
Using \eqref{Bailey-limit}, we deduce
\begin{align*}
& \sum_{N_1\geq\cdots\geq N_{r-1}\geq0}\frac{q^{N_1^2+\cdots+N_{r-1}^2+N_1+\cdots+N_{j-1}+2N_j+\cdots+2N_{r-1}+{N_{r-1}\choose 2}}}{(q;q)_{N_1-N_2}(q;q)_{N_2-N_3}\cdots(q;q)_{N_{r-2}-N_{r-1}}}\notag\\
    &\quad\times\frac{1}{(q;q)_{N_{r-1}}(q^3;q^2)_{N_{r-1}}} \nonumber \\[5pt]
    &=\frac{1}{(q^2;q)_\infty}\left(\sum_{n=0}^\infty(-1)^nq^{(4r-3)n^2+(4r-2j-3)n}+\sum_{n=0}^\infty  (-1)^{n+1}q^{(4r-3)n^2+(4r+2j-3)n+2j} \right)\notag\\[5pt]
    &=\frac{1}{(q^2;q)_\infty}\times \sum_{n=-\infty}^\infty(-1)^nq^{(4r-3)n^2+(4r-2j-3)n}\notag\\[5pt]
    &\overset{\eqref{eq-2.18}}{=}\frac{(q^{2j},q^{8r-2j-6},q^{8r-6};q^{8r-6})_\infty}{(q^2;q)_\infty}.
\end{align*}
This completes the proof of Theorem \ref{thm:multisuma}.

\subsection{Proof of Theorem \ref{thm:multisumb}}  \label{subsec2.3}

We first recall some Bailey pairs from  Group G of Slater's list \cite[p. 469]{Slater-1951}, which are necessary in the proof of  Theorem \ref{thm:multisumb}.
\begin{align*}
    {\rm (G1)}\quad&\alpha_n(1;q)=(-1)^nq^{\frac{1}{2}n^2+\frac{1}{2}{n\choose 2}}(1+q^{n/2}),\quad\beta_n(1;q)=\frac{1}{(-q^{\frac{1}{2}};q)_n(q^2;q^2)_n};\\[10pt]
    {\rm (G1^*)}\quad&\alpha_n(q;q)=(-1)^nq^{\frac{3}{2}{n+1\choose 2}}\frac{q^{-n}-q^{n+1}}{1-q},\quad\beta_n(q;q)=\frac{1}{(-q^{\frac{1}{2}};q)_n(q^2;q^2)_n};\\[10pt]
    {\rm (G2)}\quad&\alpha_n(q;q)=(-1)^nq^{\frac{3}{2}{n+1\choose2}}\frac{q^{-\frac{n}{2}}-q^{\frac{n+1}{2}}}{1-q^{\frac{1}{2}}},\quad\beta_n(q;q)=\frac{1}{(-q^{\frac{3}{2}};q)_n(q^2;q^2)_n};\\[10pt]
    {\rm (G4)}\quad&\alpha_n(1;q)=(-1)^nq^{\frac{1}{2}{n\choose 2}}(1+q^{\frac{1}{2}n}),\quad\beta_n(1;q)=\frac{(-1)^nq^{\frac{1}{2}n^2}}{(-q^{\frac{1}{2}};q)_n(q^2;q^2)_n};\\[10pt]
    {\rm (G4^*)}\quad&\alpha_n(1;q)=(-1)^nq^{\frac{1}{2}{n\choose 2}-\frac{1}{2}n}(1+q^{\frac{3}{2}n}),\quad\beta_n(1;q)=\frac{(-1)^nq^{\frac{1}{2}n^2-n}}{(-q^{\frac{1}{2}};q)_n(q^2;q^2)_n};\\[10pt]
    {\rm (G4^{**})}\quad &\alpha_n'(q;q)=(-1)^n\frac{1-q^{2n+1}}{1-q}q^{\frac{1}{4}n^2-\frac{3}{4}n},\quad\beta_n'(q;q)=\frac{(-1)^nq^{\frac{1}{2}n^2-n}}{(-q^{\frac{1}{2}};q)_n(q^2;q^2)_n}; \\[10pt]
    {\rm (G5)}\quad&\alpha_n(q;q)=(-1)^n\frac{q^{\frac{1}{2}{n\choose 2}}(1-q^{n+\frac{1}{2}})}{1-q^{\frac{1}{2}}},\quad\beta_n(q;q)=\frac{(-1)^nq^{\frac{1}{2}n^2}}{(-q^{\frac{3}{2}};q)_n(q^2;q^2)_n}.\\[5pt]
\end{align*}

 The Bailey pair (G1\(^*\)) was established by B. Wang--L. Wang  \cite[Lemma 2.5, Eq. (2.13)]{Wang-Wang-2024} via using a generalization of a limiting case of Jackson's basic analogue of Dougall's theorem,  as originally derived by Slater \cite[Eq. (4.2)]{Slater-1951}. In fact,   (G1\(^*\)) can also be derived by   applying the  base-change method due to Lovejoy  \cite[Lemma 2.4]{Lovejoy-2022} to  the Bailey pair (G1). We note that (G4\(^*\)) refers to the unlabeled Bailey pair corresponding to (G4) in \cite{Slater-1951}. Meanwhile, the Bailey pair that we term  (G4\(^{**}\))  first appeared in the work of B. Wang--L. Wang \cite[Eq. (2.15)]{Wang-Wang-2025-2}, who obtained it by applying base-change method due to Lovejoy \cite[Lemma 2.4]{Lovejoy-2022} to the Bailey pair (G4\(^*\)).
A typo for (G5) in \cite{Slater-1951} and \cite{Wang-Wang-2025-2} has been
corrected here.

We are now ready to prove Theorem  \ref{thm:multisumb},  establishing each of its two identities in turn.

\noindent {\bf $(a)$ Proof of  \eqref{Nahmsumsh}.}
Applying Lemma \ref{lemma-S1} (with \(a=q\)) \(r-1\) times to the Bailey pair (G5), we derive
    \begin{align*}
        &\alpha_n^{(r-1)}(q;q)=(-1)^n\frac{q^{(r-\frac{3}{4})n^2+(r-\frac{5}{4})n}(1-q^{n+\frac{1}{2}})}{1-q^{\frac{1}{2}}},\\
        &\beta_n^{(r-1)}(q;q)=\sum_{n\geq N_1\geq\cdots\geq N_{r-1}\geq0}\frac{(-1)^{N_{r-1}}q^{N_1^2+\cdots+N_{r-1}^2+N_1+\cdots+N_{r-1}+\frac{1}{2}N_{r-1}^2}}{(q;q)_{n-N_1}(q;q)_{N_1-N_2}\cdots(q;q)_{N_{r-2}-N_{r-1}}(-q^{\frac{3}{2}};q)_{N_{r-1}}(q^2;q^2)_{N_{r-1}}}.
    \end{align*}
It follows from \eqref{Bailey-limit} that
    \begin{align*}
        &\sum_{N_1\geq\cdots\geq N_{r-1}\geq0}\frac{(-1)^{N_{r-1}}q^{2(N_1^2+\cdots+N_{r-1}^2+N_1+\cdots+N_{r-1})+N_{r-1}^2}}{(q^2;q^2)_{N_1-N_2}\cdots(q^2;q^2)_{N_{r-2}-N_{r-1}}(-q^3;q^2)_{N_{r-1}}(q^4;q^4)_{N_{r-1}}}\\
        &=\frac{1}{(1-q)(q^4;q^2)_\infty}\sum_{n=0}^\infty (-1)^nq^{(2r-3/2)n^2+(2r-5/2)n}(1-q^{2n+1})\\[5pt]
        &\overset{\eqref{eq-2.18}}{=}\frac{(q,q^{4r-4},q^{4r-3};q^{4r-3})_\infty}{(1-q)(q^4;q^2)_\infty}.
    \end{align*}

\noindent  {\bf  $(b)$ Proof of  \eqref{Nahmsumsj}.}
There are two cases to be considered.

Case 1: If \(j=r,\) then applying Lemma \ref{lemma-S1} (with \(a=1\)) \(r-1\) times to the Bailey pair (G4\(^*\)), we obtain
    \begin{align*}
        &\alpha_n^{(r-1)}(1;q)=(-1)^nq^{(r-3/4)n^2-3n/4}(1+q^{3n/2}),\\
        &\beta_n^{(r-1)}(1;q)=\sum_{n\geq N_1\geq\cdots\geq N_{r-1}}\frac{(-1)^{N_{r-1}}q^{N_1^2+\cdots+N_{r-1}^2+\frac{1}{2}N_{r-1}^2-N_{r-1}}}{(q;q)_{n-N_1}(q;q)_{N_1-N_2}\cdots(q;q)_{N_{r-2}-N_{r-1}}(-q^{1/2};q)_{N_{r-1}}(q^2;q^2)_{N_{r-1}}}.
    \end{align*}
By \eqref{Bailey-limit}, we get
    \begin{align*}
        &\sum_{N_1\geq\cdots\geq N_{r-1}\geq0}\frac{(-1)^{N_{r-1}}q^{2(N_1^2+\cdots+N_{r-1}^2)+N_{r-1}^2-2N_{r-1}}}{(q^2;q^2)_{N_1-N_2}\cdots(q^2;q^2)_{N_{r-2}-N_{r-1}}(-q;q^2)_{N_{r-1}}(q^4;q^4)_{N_{r-1}}}\\
        &=\frac{1}{(q^2;q^2)_\infty}\left(1+\sum_{n=1}^\infty(-1)^nq^{(2r-3/2)n^2-3n/2}(1+q^{3n})\right)\\[5pt]
        &\overset{\eqref{eq-2.18}}{=}\frac{(q^{2r-3},q^{2r},q^{4r-3};q^{4r-3})_\infty}{(q^2;q^2)_\infty}.
    \end{align*}

    Case 2: If \(1\leq j <r,\) then applying Lemma \ref{lemma-S1} \(r-1-j\) (with \(a=q\)) to the Bailey pair (G4\({^{**}}\)),  we obtain
        \begin{align}
            \alpha_n^{(r-1-j )}(q;q)&=(-1)^nq^{(r-j -\frac{3}{4})n^2+(r-j -\frac{7}{4})n}\frac{1-q^{2n+1}}{1-q},\notag\\
            \beta_n^{(r-1-j )}(q;q)&=\sum_{n\geq N_{j+1}\geq\cdots\geq N_{r-1}\geq0}\frac{(-1)^{N_{r-1}}q^{N_{j+1}^2+\cdots+N_{r-1}^2+N_{j+1}+\cdots+N_{r-1}+\frac{1}{2}N_{r-1}^2-N_{r-1}}}{(q;q)_{n-N_{j+1}}(q;q)_{N_{j+1}-N_{i+2}}\cdots(q;q)_{N_{r-2}-N_{r-1}}}\notag\\
            &\quad\times\frac{1}{(-q^{1/2};q)_{N_{r-1}}(q^2;q^2)_{N_{r-1}}}.\label{eq-2.25}
        \end{align}
        Applying Lemma \ref{lem-1} (with \(a=q\)) to the Bailey pair \eqref{eq-2.25}, we get
        \begin{align}
            \alpha_n^{(r-j)}(1;q)&=(-1)^nq^{(r-j -\frac{3}{4})n^2+(r-j -\frac{3}{4})n}(1+q^{(2j-2r+\frac{3}{2})n}),\notag\\
            \beta_n^{(r-j)}(1;q)&=\sum_{n\geq N_{j+1}\geq\cdots\geq N_{r-1}\geq0}\frac{(-1)^{N_{r-1}}q^{N_{j+1}^2+\cdots+N_{r-1}^2+N_{j+1}+\cdots+N_{r-1}+n+\frac{1}{2}N_{r-1}^2-N_{r-1}}}{(q;q)_{n-N_{j+1}}(q;q)_{N_{j+1}-N_{i+2}}\cdots(q;q)_{N_{r-2}-N_{r-1}}}\notag\\
            &\quad\times\frac{1}{(-q^{1/2};q)_{N_{r-1}}(q^2;q^2)_{N_{r-1}}}.\label{eq-2.26}
        \end{align}
        Applying Lemma \ref{lemma-S1} (with \(a=1\))  \(j\) times to \eqref{eq-2.26}, we obtain the Bailey pair:
        \begin{align*}
            &\alpha_n^{(r)}(1;q)=(-1)^nq^{(r-\frac{3}{4})n^2+(r-j -\frac{3}{4})n}(1+q^{(2j-2r+\frac{3}{2})n}),\\
            &\beta_n^{(r)}(1;q)=\sum_{n\geq N_1\geq\cdots\geq N_{r-1}\geq0}\frac{(-1)^{N_{r-1}}q^{N_1^2+\cdots+N_{r-1}^2+N_j+\cdots+N_{r-1}+\frac{1}{2}N_{r-1}^2-N_{r-1}}}{(q;q)_{n-N_1}(q;q)_{N_1-N_2}\cdots(q;q)_{N_{r-2}-N_{r-1}}(-q^{1/2};q)_{N_{r-1}}(q^2;q^2)_{N_{r-1}}}.
        \end{align*}
Due to \eqref{Bailey-limit}, we deduce that
        \begin{align*}
            &\sum_{ N_1\geq\cdots\geq N_{r-1}\geq0}\frac{(-1)^{N_{r-1}}q^{2(N_1^2+\cdots+N_{r-1}^2+N_j+\cdots+N_{r-1})+N_{r-1}^2-2N_{r-1}}}{(q^2;q^2)_{N_1-N_2}\cdots(q^2;q^2)_{N_{r-2}-N_{r-1}}(-q;q^2)_{N_{r-1}}(q^4;q^4)_{N_{r-1}}}\\[5pt]
            &=\frac{1}{(q^2;q^2)_\infty}\left(\sum_{n=1}^\infty(-1)^nq^{(2r-\frac{3}{2})n^2+(2r-2j-\frac{3}{2})n}(1+q^{(4j-4r+3)n})+1\right)\\[5pt]
            &\overset{\eqref{eq-2.18}}{=}\frac{(q^{2j},q^{4r-2j-3},q^{4r-3};q^{4r-3})_\infty}{(q^2;q^2)_\infty}.
        \end{align*}
This finishes the proof of Theorem  \ref{thm:multisumb}.

{
\subsection{Proof of  Theorem \ref{thm-SumC}}\label{subsec-thmSC-pr} Here we require the following Bailey pairs, established by B. Wang--L. Wang \cite[Lemma 2.5 and Lemma 2.8]{Wang-Wang-2025-2}:
\begin{equation}\label{W-Bailey}
\begin{split}
&\alpha_0(1;q)=1, \alpha_{2n}(1;q)=(-1)^nq^{3n^2}(q^{3n}+q^{-3n}),
\alpha_{2n+1}(1;q)=0,\\
&\beta_n(1;q)=\frac{q^n(1+q^{-1})-q^{2n-1}}{(q;q)_n(q;q^2)_n},
\end{split}
\end{equation}
and
\begin{equation}\label{W-Bailey-2}
\begin{split}
&\alpha_{n}(q;q)=(-1)^nq^{\frac{3}{4}(n^2-n)}\frac{1-q^{3n+\frac{3}{2}}}{1-q^{\frac{1}{2}}},\\
&\beta_n(q;q)=\frac{q^n(1+q+q^{n+\frac{1}{2}})}{(-q^{3/2};q)_n(q;q^2)_n}.
\end{split}
\end{equation}

We now prove each of the   identities \eqref{SumC-1} and \eqref{SumC-2} in Theorem \ref{thm-SumC}  one by one.

$(a)$ {\bf Proof of  \eqref{SumC-1}.}
Iterating Lemma \ref{lemma-S1} (with $a=1$) $r-2$ times
on the Bailey pair \eqref{W-Bailey} yields
\begin{equation}\label{neq-BP-w1}
\begin{split}
&\alpha_0^{(r-2)}(1;q)=1,\
\alpha_{2n}^{(r-2)}(1;q)=(-1)^n q^{(4r-5)n^2}(q^{3n}+q^{-3n}),\
\alpha_{2n+1}^{(r-2)}(1;q)=0,\\
&\beta^{(r-2)}_n(1;q)=\sum_{n\geq N_2\geq \cdots N_{r-1}\geq 0}
\frac{q^{N_2^2+N_3^2+\cdots +N_{r-1}^2+N_{r-1}}(1+q^{-1}-q^{N_{r-1}-1})}{(q;q)_{n-N_2}\cdots (q;q)_{N_{r-2}-N_{r-1}}(q;q)_{N_{r-1}}(q;q^2)_{N_{r-1}}}.
\end{split}
\end{equation}
By \eqref{Bailey-limit}, we have
\begin{align*}
&\quad \sum_{N_1\geq N_2\geq \cdots N_{r-1}\geq 0}
\frac{q^{N_1^2+N_2^2+N_3^2+\cdots +N_{r-1}^2+N_{r-1}}(1+q^{-1}-q^{N_{r-1}-1})}{(q;q)_{N_1-N_2}\cdots (q;q)_{N_{r-2}-N_{r-1}}(q;q)_{N_{r-1}}(q;q^2)_{N_{r-1}}}\\[5pt]
&=\frac{1}{(q;q)_\infty} \left(\sum_{n=1}^\infty (-1)^nq^{(4r-1)n^2}(q^{3n}+q^{-3n})+1\right)\\[5pt]
&=\frac{1}{(q;q)_\infty} \sum_{n=-\infty}^\infty
(-1)^nq^{(4r-1)n^2+3n}\\[5pt]
&\overset{\eqref{eq-2.18}}{=}\frac{(q^{4r+2},q^{4r-4},q^{8r-2};q^{8r-2})_\infty}
{(q;q)_\infty}.
\end{align*}

$(b)$ {\bf Proof of  \eqref{SumC-2}.}
Iterating Lemma \ref{lemma-S1} (with $a=q$) $r-2$ times
on the Bailey pair \eqref{W-Bailey-2} gives
\begin{align*}
\alpha_n^{(r-2)}(q;q)&=(-1)^nq^{(r-\frac{5}{4})n^2+(r-\frac{11}{4})n}
\frac{1-q^{3n+\frac{3}{2}}}{1-q^{\frac{1}{2}}},\\[5pt]
\beta_n^{(r-2)}(q;q)&=\sum_{n\geq N_2\geq \cdots \geq N_{r-1}\geq 0}
\frac{q^{N_2^2+N_3^2+\cdots +N_{r-1}^2+N_2+\cdots N_{r-2}+2N_{r-1}}(1+q+q^{N_{r-1}+\frac{1}{2}})}
{(q;q)_{n-N_2}(q;q)_{N_2-N_3}\cdots (q;q)_{N_{r-2}-N_{r-1}}(-q^{3/2};q)_{N_{r-1}}(q^2;q^2)_{N_{r-1}}}.
\end{align*}
It follows from \eqref{Bailey-limit} that
\begin{align*}
&\sum_{N_1\geq N_2\geq \cdots \geq N_{r-1}\geq 0}
\frac{q^{N_1^2+N_2^2+N_3^2+\cdots +N_{r-1}^2+N_1+N_2+\cdots N_{r-2}+2N_{r-1}}(1+q+q^{N_{r-1}+\frac{1}{2}})}
{(q;q)_{n-N_2}(q;q)_{N_2-N_3}\cdots (q;q)_{N_{r-2}-N_{r-1}}(-q^{1/2};q)_{N_{r-1}+1}(q^2;q^2)_{N_{r-1}}}\\[5pt]
&=\frac{1}{(1+q^{\frac{1}{2}})(q^2;q)_\infty}
\sum_{n=0}^\infty (-1)^nq^{(r-\frac{1}{4})n^2+(r-\frac{7}{4})n}
\frac{1-q^{3n+\frac{3}{2}}}{1-q^{\frac{1}{2}}}\\[5pt]
&=\frac{1}{(q;q)_\infty}\sum_{n=-\infty}^\infty (-1)^n
q^{(r-\frac{1}{4})n^2+(r-\frac{7}{4})n}\\[5pt]
&\overset{\eqref{eq-2.18}}{=}\frac{(q^{\frac{3}{2}}, q^{2r-2}, q^{2r-\frac{1}{2}};q^{2r-\frac{1}{2}})_\infty}{(q;q)_\infty}.
\end{align*}
This completes the proof of Theorem \ref{thm-SumC}.

}

\section{Proofs of {Theorems}  \ref{main1a}, \ref{Nahm-SumC}, \ref{main1b}}

Armed with identities in Theorems \ref{thm:multisuma}, \ref{thm:multisumb} and \ref{thm-SumC}, we are ready to prove {Theorems} \ref{main1a}, \ref{Nahm-SumC} and \ref{main1b}. Throughout the entire article, we let \(N_i=n_{i+1}+\cdots+n_r\) for \(1\leq i\leq r-1\) and \(N_i=0\) for \(i>r-1\).

\subsection{Proof of  Theorem \ref{main1a}}
We prove each of the six identities in the theorem one by one.

{\bf $(1)$ Proof of  \eqref{eq-1.7}.} It relies on \eqref{Nahmsumsa} from Theorem \ref{thm:multisuma}
and the following multi-sum Rogers--Ramanujan identity from
\cite[Eq. (1.16)]{Wang-Wang-2024}:
\begin{align}
 &\sum_{N_1\geq\cdots\geq N_{r-1}\geq0}\frac{q^{2(N_1^2+\cdots+N_{r-1}^2+N_1+\cdots+N_{r-1})}}{(q^2;q^2)_{N_1-N_2}\cdots(q^2;q^2)_{N_{r-2}-N_{r-1}}(q^4;q^4)_{N_{r-1}}(-q^3;q^2)_{N_{r-1}}}\nonumber \\[10pt]
    &=\frac{(q,q^{4r-2},q^{4r-1};q^{4r-1})_\infty}{(1-q)(q^4;q^2)_\infty}.\label{Nahmsumsg}
\end{align}

Let  $AD$ be the symmetric matrix given by \eqref{eq-1.3}. Then we have
\begin{align} \label{symexpan}
 &\frac{1}{2}\boldsymbol{n}^TAD\boldsymbol{n} \nonumber \\[5pt]
=&n_1^2+\sum_{j=2}^{r-1}\left(2(j-1)n_j^2\right)+\frac{r-1}{2}n_r^2+2\sum_{2\leq j<k\leq r-1}\left(2(j-1)n_jn_k\right)+n_1n_r\nonumber  \\[5pt]
&+2\sum_{j=2}^{r-1}\left((j-1)n_jn_r\right) \nonumber  \\[5pt]
=&{n_1}^2+n_1n_r+2\left(\left(n_2+\cdots+n_{r-1}+\frac{n_r}{2}\right)^2+\cdots+\left(n_{r-1}+\frac{n_r}{2}\right)^2+\frac{n_r^2}{4}\right).
  \end{align}
Since   $\boldsymbol{b}_0=(0,0,\cdots,0,0)^T$,   the multiple summation of \eqref{eq-1.7} simplifies to:
\begin{align*}
F:&=\sum_{{n}=(n_1,\ldots,n_r)^T\in\mathbb{N}^r}\frac{q^{\frac{1}{2}\boldsymbol{n}^TAD\boldsymbol{n}+\boldsymbol{n}^T\boldsymbol{b}_0}}{(q^2;q^2)_{n_1}(q^2;q^2)_{n_2}\cdots(q^2;q^2)_{n_{r-1}}(q;q)_{n_r}}\notag\\[10pt]
&=\sum_{n_1,n_2,\ldots,n_r\geq0}\frac{q^{{n_1}^2+n_1n_r+2\left(\left(n_2+\cdots+\frac{n_r}{2}\right)^2+\cdots+\left(n_{r-1}+\frac{n_{r}}{2}\right)^2+\frac{n_{r}^2}{4}\right)}}{(q^2;q^2)_{n_1}(q^2;q^2)_{n_2}\cdots(q^2;q^2)_{n_{r-1}}(q;q)_{n_{r}}}.
\end{align*}
For \(\sigma\in\{0,1\},\) let
    \begin{align}
        F_\sigma&=\sum_{n_r\equiv\sigma\pmod2\atop n_1,n_2,\ldots,n_r\geq0}\frac{q^{{n_1}^2+n_1n_{r}+2\left(\left(n_2+\cdots+n_{r-1}+\frac{n_r}{2}\right)^2+\cdots+\left(n_{r-1}+\frac{n_r}{2}\right)^2+\frac{n_r^2}{4}\right)}}{(q^2;q^2)_{n_1}(q^2;q^2)_{n_2}\cdots(q^2;q^2)_{n_{r-1}}(q;q)_{n_r}}\notag\\
&=\sum_{n_1,\ldots,n_r\geq0}\frac{q^{{n_1}^2+n_1(2n_r+\sigma)+2\left(\left(n_2+\cdots+n_{r-1}+\frac{2n_r+\sigma}{2}\right)^2+\cdots+
        \left(n_{r-1}+\frac{2n_r+\sigma}{2}\right)^2+\frac{(2n_r+\sigma)^2}{4}\right)}}{(q^2;q^2)_{n_1}(q^2;q^2)_{n_2}\cdots(q^2;q^2)_{n_{r-1}}(q;q)_{2n_r+\sigma}}. \label{eq-aaaa}
    \end{align}
 It is straightforward that
  \begin{align}
        F=F_0+F_1.\label{eq-3.2}
    \end{align}
We proceed to simplify $F_0$ and $F_1$ respectively.  To this end, we invoke Euler's
$q$-exponential identity \cite[Corollary 2.2]
    {Andrews-1976}:
    \begin{align}
        \sum_{n=0}^\infty\frac{q^{n\choose 2}z^n}{(q;q)_n}=(-z;q)_\infty.\label{eq-3.3}
    \end{align}
 First, for $F_0$, we find that
    \begin{align}        F_0&=\sum_{n_1,\ldots,n_r\geq0}\frac{q^{{n_1}^2+2n_1n_r+2((n_2+\cdots+n_{r-1}+n_{r})^2+\cdots+(n_{r-1}+n_r)^2+n_r^2)}}{(q^2;q^2)_{n_1}(q^2;q^2)_{n_2}\cdots(q^2;q^2)_{n_{r-1}}(q;q)_{2n_r}}\notag\\[5pt]
        &=\sum_{N_1\geq\cdots\geq N_{r-1}\geq0}\frac{q^{2(N_1^2+N_2^2+\cdots+N_{r-1}^2)}}{(q^2;q^2)_{N_1-N_2}\cdots(q^2;q^2)_{N_{r-2}-N_{r-1}}(q;q)_{2N_{r-1}}}\sum_{n_1=0}^\infty\frac{q^{{n_1}^2+2n_1N_{r-1}}}{(q^2;q^2)_{n_1}}\notag\\[5pt]
        &\overset{\eqref{eq-3.3}}{=}\sum_{N_1\geq\cdots\geq N_{r-1}\geq0}\frac{q^{2(N_1^2+\cdots+N_{r-1}^2)}(-q^{2N_{r-1}+1};q^2)_\infty}{(q^2;q^2)_{N_1-N_2}\cdots(q^2;q^2)_{N_{r-2}-N_{r-1}}(q;q)_{2N_{r-1}}}\notag\\[5pt]
        &=\sum_{N_1\geq\cdots\geq N_{r-1}\geq0}\frac{(-q;q^2)_\infty q^{2(N_1^2+\cdots+N_{r-1}^2)}}{(q^2;q^2)_{N_1-N_2}\cdots(q^2;q^2)_{N_{r-2}-N_{r-1}}(q^2;q^2)_{N_{r-1}}(q^2;q^4)_{N_{r-1}}}.
        \label{eq-3.4}
    \end{align}

Similarly, for $F_1$, we obtain
    \begin{align}
        \small F_1&=\sum_{n_1,\ldots,n_r\geq0}\frac{q^{{n_1}^2+n_1(2n_r+1)+2\left(\left(n_2+\cdots+n_{r-1}+\frac{2n_r+1}{2}\right)^2+\cdots+\left(n_{r-1}+\frac{2n_r+1}{2}\right)^2+\frac{(2n_r+1)^2}{4}\right)}}{(q^2;q^2)_{n_1}(q^2;q^2)_{n_2}\cdots(q^2;q^2)_{n_{r-1}}(q;q)_{2n_r+1}}\notag\\[5pt]
    &=\sum_{n_1,N_1,\ldots,N_{r-1}\geq0}\frac{q^{{n_1}^2+n_1(2N_{r-1}+1)+2\left(\left(N_1+\frac{1}{2}\right)^2+\cdots+\left(N_{r-1}+\frac{1}{2}\right)^2\right)}}{(q^2;q^2)_{n_1}(q^2;q^2)_{N_1-N_2}\cdots(q^2;q^2)_{N_{r-2}-N_{r-1}}(q;q)_{2N_{r-1}+1}}\notag\\[5pt]
        &=q^\frac{r-1}{2}\sum_{N_1\geq\cdots\geq N_{r-1}\geq0}\frac{q^{2(N_1^2+\cdots+N_{r-1}^2+N_1+\cdots+N_{r-1})}}{(q^2;q^2)_{N_1-N_2}\cdots(q^2;q^2)_{N_{r-2}-N_{r-1}}(q;q)_{2N_{r-1}+1}}\sum_{n_1=0}^\infty\frac{q^{n_1^2+n_1(2N_{r-1}+1)}}{(q^2;q^2)_{n_1}}\notag\\[5pt]
        &\overset{\eqref{eq-3.3}}{=}q^\frac{r-1}{2}\sum_{N_1\geq\cdots\geq N_{r-1}\geq0}\frac{q^{2(N_1^2+\cdots+N_{r-1}^2+N_1+\cdots+N_{r-1})}(-q^{2N_{r-1}+2};q^2)_\infty}{(q^2;q^2)_{N_1-N_2}\cdots(q^2;q^2)_{N_{r-2}-N_{r-1}}(q;q)_{2N_{r-1}+1}}\notag\\[5pt]
        &=\frac{q^\frac{r-1}{2}(-q^2;q^2)_\infty}{(1-q)}\sum_{N_1\geq\cdots\geq N_{r-1}\geq0}\frac{q^{2(N_1^2+\cdots+N_{r-1}^2+N_1+\cdots+N_{r-1})}}{(q^2;q^2)_{N_1-N_2}\cdots(q^2;q^2)_{N_{r-2}-N_{r-1}}(q^4;q^4)_{N_{r-1}}(q^3;q^2)_{N_{r-1}}}.\label{eq-3.5}
    \end{align}
 Next, substituting \eqref{Nahmsumsa} with $q$ replaced by $q^2$ into
 \eqref{eq-3.4}, we have
    \begin{align}
        F_0
        &=\frac{(-q;q^2)_\infty}{(q^2;q^2)_\infty}(q^{8r-4},q^{8r},q^{16r-4};q^{16r-4})_\infty\notag \\[5pt]
        &=\frac{(q^{8r-4},q^{8r},q^{16r-4};q^{16r-4})_\infty}{(q,q^3,q^4;q^4)_\infty}.\label{eq-3.6}
    \end{align}
 Likewise, substituting \eqref{Nahmsumsg} with \(q\) replaced by \(-q\) into \eqref{eq-3.5}, we find
    \begin{align}
        F_1&=q^\frac{r-1}{2}\frac{(-q^2;q^2)_\infty}{1-q}\frac{(-q,q^{4r-2},-q^{4r-1};-q^{4r-1})_\infty}{(1+q)(q^4;q^2)_\infty}\notag\\
        &=q^\frac{r-1}{2}\frac{(-q,q^{4r-2},-q^{4r-1};-q^{4r-1})_\infty}{(q^2,q^2,q^4;q^4)_\infty}.\label{eq-3.7}
    \end{align}
 Finally, adding \eqref{eq-3.6} and \eqref{eq-3.7}, by \eqref{eq-3.2}, we obtain \eqref{eq-1.7}.

 %%%%%%%%%%%%%

$(2)$ {\bf Proof of  \eqref{eq-1.8}.}  It is based on  \eqref{Nahmsumsd}  from Theorem \ref{thm:multisuma} and the following multi-sum Rogers--Ramanujan identity from \cite[Eq. (1.14)]{Wang-Wang-2024}:
\begin{align}
&\sum_{N_1\geq\cdots\geq N_{r-1}\geq0}\frac{q^{2(N_1^2+\cdots+N_{r-1}^2+N_j+\cdots+N_{r-1})}}{(q^2;q^2)_{N_1-N_2}\cdots(q^2;q^2)_{N_{r-2}-N_{r-1}}(q^4;q^4)_{N_{r-1}}(-q;q^2)_{N_{r-1}}}\notag\\[5pt]
&=\frac{(q^{2j},q^{4r-2j-1},q^{4r-1};q^{4r-1})_\infty}{(q^2;q^2)_\infty}. \label{Nahmsumsi}
\end{align}

Let \(AD\) be the symmetric matrix defined by \eqref{eq-1.3}, and let \(\boldsymbol{b}_j = (1,0,\ldots,0,2,4,\cdots,2(r-1-j ),r-j )^T\) for \(1\leq j\leq r\). By virtue of   \eqref{symexpan}, we deduce that
\begin{align*}
& \frac{1}{2}\boldsymbol{n}^\mathrm{T}AD\boldsymbol{n}+\boldsymbol{n}^\mathrm{T}\boldsymbol{b}_j\\[5pt]
=&n_1^2+n_1n_r+2\left(\left(n_2+\cdots+n_{r-1}+\frac{n_r}{2}\right)^2+\cdots+\left(n_{r-1}+\frac{n_r}{2}\right)^2+\frac{n_r^2}{4}\right)\\    &+n_1+2\left(\left(n_{j+1}+\cdots+n_{r-1}+\frac{n_r}{2}\right)+\cdots+\left(n_{r-1}+\frac{n_r}{2}\right)+\frac{n_r}{2}\right).
\end{align*}
Hence the multiple summation of \eqref{eq-1.8} can be simplified to:
\begin{align*}
    H&=\sum_{\boldsymbol{n}=(n_1,\ldots,n_r)^T\in\mathbb{N}^r}\frac{q^{\frac{1}{2}\boldsymbol{n}^\mathrm{T}AD\boldsymbol{n}+\boldsymbol{n}^\mathrm{T}\boldsymbol{b}_j}}{(q^2;q^2)_{n_1}(q^2;q^2)_{n_2}\cdots(q^2;q^2)_{n_{r-1}}(q;q)_{n_r}}\\[10pt]
    &=\sum_{n_2,\ldots,n_r\geq0}\frac{q^{2\left(\left(n_2+\cdots+n_{r-1}+\frac{n_r}{2}\right)^2+\cdots+\left(n_{r-1}+\frac{n_r}{2}\right)^2+\frac{1}{4}n_r^2\right)}}{(q^2;q^2)_{n_2}\cdots(q^2;q^2)_{n_{r-1}}(q;q)_{n_r}}\\[10pt]
    &\quad\times\sum_{n_1=0}^\infty\frac{q^{n_1^2+n_1n_r+n_1+2\left(n_{j+1}+\cdots+n_{r-1}+\frac{n_r}{2}+\cdots+n_{r-1}+\frac{n_r}{2}+\frac{n_r}{2}\right)}}{(q^2;q^2)_{n_1}}.
\end{align*}
For \(\sigma\in\{0,1\}\), we define \(H_\sigma\) corresponding to \(n_r\equiv\sigma\pmod{2}\):
\begin{align*}
    H_\sigma&=\sum_{n_2,\ldots,n_r\geq0}\frac{q^{2\left(\left(n_2+\cdots+n_{r-1}+\frac{2n_r+\sigma}{2}\right)^2+\cdots+\left(n_{r-1}+\frac{2n_r+\sigma}{2}\right)^2+\frac{(2n_r+\sigma)^2}{4}\right)}}{(q^2;q^2)_{n_2}\cdots(q^2;q^2)_{n_{r-1}}(q;q)_{2n_r+\sigma}}\\[10pt]
&\quad\times\sum_{n_1=0}^\infty\frac{q^{n_1^2+n_1(2n_r+\sigma)+n_1
+2\left((n_{j+1}+\cdots+n_{r-1}+\frac{2n_r+\sigma}{2})+\cdots+(n_{r-1}+\frac{2n_r+\sigma}{2})+\frac{2n_r+\sigma}{2}\right)}}{(q^2;q^2)_{n_1}}.
\end{align*}
It is immediate that
\begin{align}
    H=H_0+H_1.\label{eq-3.13}
\end{align}
Next, we simplify \(H_0\) as follows:
\begin{align}
H_0&=\sum_{n_2,\ldots,n_r\geq0}\frac{q^{2((n_2+\cdots+n_r)^2+\cdots+(n_{r-1}+n_r)^2+n_r^2)}}{(q^2;q^2)_{n_2}\cdots(q^2;q^2)_{n_{r-1}}(q;q)_{2n_r}}\notag\\[5pt]   &\quad\times\sum_{n_1=0}^\infty\frac{q^{n_1^2+2n_1n_r+n_1+2((n_{j+1}+\cdots+n_r)+\cdots+(n_{r-1}+n_r)+n_r)}}{(q^2;q^2)_{n_1}}\notag\\[5pt]
    &=\sum_{N_1\geq\cdots\geq N_{r-1}\geq0}\frac{q^{2(N_1^2+\cdots+N_{r-1}^2+N_j+\cdots+N_{r-1})}}{(q^2;q^2)_{N_1-N_2}\cdots(q^2;q^2)_{N_{r-2}-N_{r-1}}(q;q)_{2N_{r-1}}}\sum_{n_1=0}^\infty\frac{q^{n_1^2+2n_1N_{r-1}+n_1}}{(q^2;q^2)_{n_1}}\notag\\[5pt]
    &\overset{\eqref{eq-3.3}}{=}\sum_{N_1\geq\cdots\geq N_{r-1}\geq0}\frac{q^{2(N_1^2+\cdots+N_{r-1}^2+N_j+\cdots+N_{r-1})}(-q^{2N_{r-1}+2};q^2)_\infty}{(q^2;q^2)_{N_1-N_2}\cdots(q^2;q^2)_{N_{r-2}-N_{r-1}}(q;q)_{2N_{r-1}}}\notag\\[5pt]
    &=\sum_{N_1\geq\cdots\geq N_{r-1}\geq0}\frac{q^{2(N_1^2+\cdots+N_{r-1}^2+N_j+\cdots+N_{r-1})}(-q^2;q^2)_\infty}{(q^2;q^2)_{N_1-N_2}\cdots(q^2;q^2)_{N_{r-2}-N_{r-1}}(q^4;q^4)_{N_{r-1}}(q;q^2)_{N_{r-1}}}.\label{eq-3.14}
\end{align}
Similarly, the expression for \(H_1\) is derived as
\begin{align}    H_1&=\sum_{n_2,\ldots,n_r\geq0}\frac{q^{2\left(\left(n_2+\cdots+n_{r-1}+\frac{2n_r+1}{2}\right)^2+\cdots+\left(n_{r-1}+\frac{2n_r+1}{2}\right)^2+\frac{(2n_r+1)^2}{4}\right)}}{(q^2;q^2)_{n_2}\cdots(q^2;q^2)_{n_{r-1}}(q;q)_{2n_r+1}}\notag\\[5pt]     &\quad\times\sum_{n_1=0}^\infty\frac{q^{n_1^2+n_1(2n_r+1)+n_1+2\left(\left(n_{j+1}+\cdots+n_{r-1}+\frac{2n_r+1}{2}\right)+\cdots+\left(n_{r-1}+\frac{2n_r+1}{2}\right)+\frac{2n_r+1}{2}\right)}}{(q^2;q^2)_{n_1}}\notag\\[5pt]
    &=\sum_{N_1\geq\cdots\geq N_{r-1}\geq0}\frac{q^{2\left(\left(N_1+\frac{1}{2}\right)^2+\cdots+\left(N_{r-1}+\frac{1}{2}\right)^2+N_j+\cdots+N_{r-1}+\frac{r-j }{2}\right)}}{(q^2;q^2)_{N_1-N_2}\cdots(q^2;q^2)_{N_{r-2}-N_{r-1}}(q;q)_{2N_{r-1}+1}}\sum_{n_1=0}^\infty\frac{q^{n_1^2+n_1(2N_{r-1}+1)+n_1}}{(q^2;q^2)_{n_1}}\notag\\[5pt]
    &\overset{\eqref{eq-3.3}}{=}q^\frac{3r-2j-1}{2}\sum_{N_1\geq\cdots\geq N_{r-1}\geq0}\frac{q^{2\left(N_1^2+\cdots+N_{r-1}^2+N_1+\cdots+N_{j-1}+2N_j+\cdots+2N_{r-1}\right)}(-q^{2N_{r-1}+3};q^2)_\infty}{(q^2;q^2)_{N_1-N_2}\cdots(q^2;q^2)_{N_{r-2}-N_{r-1}}(q;q)_{2N_{r-1}+1}}\notag\\[5pt]
    &=\frac{q^\frac{3r-2j-1}{2}(-q^3;q^2)_\infty}{(1-q)}\sum_{N_1\geq\cdots\geq N_{r-1}\geq0}\frac{q^{2(N_1^2+\cdots+N_{r-1}^2+N_1+\cdots+N_{j-1}+2N_j+\cdots+2N_{r-1})}}{(q^2;q^2)_{N_1-N_2}\cdots(q^2;q^2)_{N_{r-2}-N_{r-1}}(q^2;q^2)_{N_{r-1}}(q^6;q^4)_{N_{r-1}}}.\label{eq-3.15}
\end{align}
Substituting the identity \eqref{Nahmsumsi} (for \(1\leq j\leq r\)) with \(q\mapsto -q\) into \eqref{eq-3.14}, we obtain
\begin{align}
    H_0&=\frac{(-q^2;q^2)_\infty(q^{2j},-q^{4r-2j-1},-q^{4r-1};-q^{4r-1})_\infty}{(q^2;q^2)_\infty}\notag\\
    &=\frac{(q^{2j},-q^{4r-2j-1},-q^{4r-1};-q^{4r-1})_\infty}{(q^2,q^2,q^4;q^4)_\infty}.\label{eq-3.16}
\end{align}
Likewise, substituting \eqref{Nahmsumsd} (for \(1\leq j\leq r\)) with \(q\mapsto q^2\) into \eqref{eq-3.15}, we get
\begin{align}
    H_1
    &=q^\frac{3r-2j-1}{2}\frac{(-q^3;q^2)_\infty(q^{4j},q^{16r-4j -4},q^{16r-4};q^{16r-4})_\infty}{(1-q)(q^4;q^2)_\infty}\notag\\
    &=q^\frac{3r-2j-1}{2}\frac{(q^{4j},q^{16r-4j -4},q^{16r-4};q^{16r-4})_\infty}{(q,q^3,q^4;q^4)_\infty}.\label{eq-3.17}
\end{align}
Combining \eqref{eq-3.13}, \eqref{eq-3.16} and \eqref{eq-3.17}, we arrive at the identity \eqref{eq-1.8}.

%%%%%%%%%%%%%%%%%%%%%%%%%%

$(3)$ {\bf Proof of   \eqref{eq-1.9}.} It depends on \eqref{Nahmsumsb} from Theorem \ref{thm:multisuma}  and \eqref{Nahmsumsh} from Theorem \ref{thm:multisumb}.

Let \(BD\) be the symmetric matrix given by \eqref{eq-1.4}, and \(\boldsymbol{b}_0=(0,0,\ldots,0,-\frac{1}{2})^T\). Similar to \eqref{symexpan}, we derive that
\begin{align*}
   & \frac{1}{2}\boldsymbol{n}^\mathrm{T}BD\boldsymbol{n}+\boldsymbol{n}^\mathrm{T}\boldsymbol{b}_0\\[5pt]
=&n_1^2+n_1n_r+2\left(\left(n_2+\cdots+n_{r-1}+\frac{n_r}{2}\right)^2+\cdots+\left(n_{r-1}+\frac{n_r}{2}\right)^2+\frac{n_r^2}{4}\right)+\frac{n_r^2}{4}-\frac{n_r}{2}.
\end{align*}
Hence the multiple summation of \eqref{eq-1.9} can be written as:
\begin{align*}
    I&=\sum_{\boldsymbol{n}=(n_1,\ldots,n_r)^T\in\mathbb{N}^r}\frac{q^{\frac{1}{2}\boldsymbol{n}^\mathrm{T}BD\boldsymbol{n}+\boldsymbol{n}^\mathrm{T}\boldsymbol{b}_0}}{(q^2;q^2)_{n_1}(q^2;q^2)_{n_2}\cdots(q^2;q^2)_{n_{r-1}}(q;q)_{n_r}}\\
    &=\sum_{n_1,\ldots,n_r\geq0}\frac{q^{n_1^2+n_1n_r}q^{2\left(\left(n_2+\cdots+n_{r-1}+\frac{1}{2}n_r\right)^2+\cdots+\left(n_{r-1}+\frac{1}{2}n_r\right)^2+\frac{1}{4}n_r^2\right)+\frac{1}{4}n_r^2-\frac{1}{2}n_r}}{(q^2;q^2)_{n_1}(q^2;q^2)_{n_2}\cdots(q^2;q^2)_{n_{r-1}}(q;q)_{n_r}}.
\end{align*}
For \(\sigma\in\{0,1\}\), we set
\begin{align*}
    I_\sigma=\sum_{n_1,\ldots,n_r\geq0}&\frac{q^{2\left(\left(n_2+\cdots+n_{r-1}+\frac{1}{2}(2n_r+\sigma)\right)^2+\cdots+\left(n_{r-1}+\frac{1}{2}(2n_r+\sigma)\right)^2+\frac{1}{4}(2n_r+\sigma)^2\right)}}{(q^2;q^2)_{n_2}\cdots(q^2;q^2)_{n_{r-1}}(q;q)_{2n_r+\sigma}}\\
    &\times\frac{q^{n_1^2+n_1(2n_r+\sigma)}q^{\frac{1}{4}(2n_r+\sigma)^2-\frac{1}{2}(2n_r+\sigma)}}{(q^2;q^2)_{n_1}}.
\end{align*}
It follows that
\begin{align}
    I=I_0+I_1.\label{eq-3.18}
\end{align}
We proceed to simplify \(I_0\):
\begin{align}
    I_0&=\sum_{n_1,\ldots,n_r\geq0}\frac{q^{n_1^2+2n_1n_r}q^{2((n_2+\cdots+n_r)^2+\cdots+(n_{r-1}+n_r)^2+n_r^2)+n_r^2-n_r}}{(q^2;q^2)_{n_1}(q^2;q^2)_{n_2}\cdots(q^2;q^2)_{n_{r-1}}(q;q)_{2n_r}}\notag\\[5pt]
    &=\sum_{N_1\geq\cdots\geq N_{r-1}\geq0}\frac{q^{2(N_1^2+N_2^2+\cdots+N_{r-1}^2)+N_{r-1}^2-N_{r-1}}}{(q^2;q^2)_{N_1-N_2}\cdots(q^2;q^2)_{N_{r-2}-N_{r-1}}(q;q)_{2N_{r-1}}}\sum_{n_1=0}^\infty\frac{q^{n_1^2+2n_1N_{r-1}}}{(q^2;q^2)_{n_1}}\notag\\[5pt]
    &\overset{\eqref{eq-3.3}}{=}\sum_{N_1\geq\cdots\geq N_{r-1}\geq0}\frac{q^{2(N_1^2+\cdots+N_{r-1}^2)+N_{r-1}^2-N_{r-1}}(-q^{2N_{r-1}+1};q^2)_\infty}{(q^2;q^2)_{N_1-N_2}\cdots(q^2;q^2)_{N_{r-2}-N_{r-1}}(q;q)_{2N_{r-1}}}   \notag\\[5pt]
    &=(-q;q^2)_\infty\sum_{N_1\geq\cdots\geq N_{r-1}\geq0}\frac{q^{2(N_1^2+\cdots+N_{r-1}^2)}q^{N_{r-1}^2-N_{r-1}}}{(q^2;q^2)_{N_1-N_2}\cdots(q^2;q^2)_{N_{r-2}-N_{r-1}}(q^2;q^2)_{N_{r-1}}(q^2;q^4)_{N_{r-1}}}.\label{eq-3.19}
\end{align}

Similarly, the simplification of \(I_1\) yields
\begin{align}    I_1&=\sum_{n_1,\ldots,n_r\geq0}\frac{q^{n_1^2+n_1(2n_r+1)+2\left(\left(n_2+\cdots+n_{r-1}+\frac{2n_r+1}{2}\right)^2+\cdots+\frac{(2n_r+1)^2}{4}\right)+\frac{(2n_r+1)^2}{4}-\frac{2n_r+1}{2}}}{(q^2;q^2)_{n_1}(q^2;q^2)_{n_2}\cdots(q^2;q^2)_{n_{r-1}}(q;q)_{2n_{r}+1}}\notag\\[5pt]
    &=\sum_{N_1\geq\cdots\geq N_{r-1}\geq0}\frac{q^{2\left(\left(N_1+\frac{1}{2}\right)^2+\cdots+\left(N_{r-1}+\frac{1}{2}\right)^2\right)+\frac{(2N_{r-1}+1)^2}{4}-\frac{2N_{r-1}+1}{2}}}{(q^2;q^2)_{N_1-N_2}\cdots(q^2;q^2)_{N_{r-2}-N_{r-1}}(q;q)_{2N_{r-1}+1}}\sum_{n_1=0}^\infty\frac{q^{n_1^2+n_1(2N_{r-1}+1)}}{(q^2;q^2)_{n_1}}\notag\\[5pt]
    &\overset{\eqref{eq-3.3}}{=}q^{\frac{2r-3}{4}}\sum_{N_1\geq\cdots\geq N_{r-1}\geq0}\frac{q^{2(N_1^2+\cdots+N_{r-1}^2+N_1+\cdots+N_{r-1})+N_{r-1}^2}(-q^{2N_{r-1}+2};q^2)_\infty}{(q^2;q^2)_{N_1-N_2}\cdots(q^2;q^2)_{N_{r-2}-N_{r-1}}(q;q)_{2N_{r-1}+1}}\notag\\[5pt]
    &=q^{\frac{2r-3}{4}}\sum_{N_1\geq\cdots\geq N_{r-1}\geq0}\frac{q^{2(N_1^2+\cdots+N_{r-1}^2+N_1+\cdots+N_{r-1})}q^{N_{r-1}^2}(-q^2;q^2)_\infty}{(q^2;q^2)_{N_1-N_2}\cdots(q^2;q^2)_{N_{r-2}-N_{r-1}}(q^3;q^2)_{N_{r-1}}(q^4;q^4)_{N_{r-1}}(1-q)}.\label{eq-3.20}
\end{align}
Substituting \eqref{Nahmsumsb} with \(q\mapsto q^2\) into \eqref{eq-3.19}, we get
\begin{align}
    I_0&=\frac{(-q;q^2)_\infty(q^{8r-8},q^{8r-4},q^{16r-12};q^{16r-12})_\infty}{(q^2;q^2)_\infty}\notag\\[5pt]
    &=\frac{(q^{8r-8},q^{8r-4},q^{16r-12};q^{16r-12})_\infty}{(q,q^3,q^4;q^4)_\infty}.\label{eq-3.21}
\end{align}
Substituting \eqref{Nahmsumsh} with \(q\mapsto -q\) into \eqref{eq-3.20}, we obtain
\begin{align}
    I_1&=q^{\frac{2r-3}{4}}\frac{(-q^2;q^2)_\infty(-q,q^{4r-4},-q^{4r-3};-q^{4r-3})_\infty}{(1-q)(1+q)(q^4;q^2)_\infty}\notag\\[5pt]
    &=q^{\frac{2r-3}{4}}\frac{(-q,q^{4r-4},-q^{4r-3};-q^{4r-3})_\infty}{(q^2,q^2,q^4;q^4)_\infty}.\label{eq-3.22}
\end{align}
By combining \eqref{eq-3.18}, \eqref{eq-3.21} and \eqref{eq-3.22}, we establish the identity \eqref{eq-1.9}.

%%%%%%%%%%%%%%%%

$(4)$ {\bf Proof of  \eqref{eq-1.10}.} The argument utilizes \eqref{Nahmsumse} from Theorem \ref{thm:multisuma}  and \eqref{Nahmsumsj} from Theorem \ref{thm:multisumb}.

Let \(BD\) be the symmetric matrix given by \eqref{eq-1.4}, and let \(\boldsymbol{b}_j=(1,0,\ldots,0,2,4,\ldots,2(r-j -1),r-j -1)^T\). We deduce that
\begin{align*}
    &\frac{1}{2}\boldsymbol{n}^\mathrm{T}BD\boldsymbol{n}+\boldsymbol{n}^\mathrm{T}\boldsymbol{b}_j\\[5pt]
=&n_1^2+n_1n_r+2\left(\left(n_2+\cdots+n_{r-1}+\frac{n_r}{2}\right)^2+\cdots+\left(n_{r-1}+\frac{n_r}{2}\right)^2+\frac{n_r^2}{4}\right)+\frac{n_r^2}{4}\\[5pt]
        &+n_1-n_r+2\left(\left(n_{j+1}+\cdots+n_{r-1}+\frac{n_r}{2}\right)+\cdots+\left(n_{r-1}+\frac{n_r}{2}\right)+\frac{n_r}{2}\right).
\end{align*}
Thus, the multiple sum in    \eqref{eq-1.10}  takes the form
\begin{align*}
    K&=\sum_{\boldsymbol{n}=(n_1,\ldots,n_r)^T\in\mathbb{N}^r}\frac{q^{\frac{1}{2}\boldsymbol{n}^\mathrm{T}BD\boldsymbol{n}+\boldsymbol{n}^\mathrm{T}\boldsymbol{b}_j}}{(q^2;q^2)_{n_1}(q^2;q^2)_{n_2}\cdots(q^2;q^2)_{n_{r-1}}(q;q)_{n_r}}\\[5pt]    &=\sum_{n_1,\ldots,n_r\geq0}\frac{q^{2\left(\left(n_2+\cdots+n_{r-1}+\frac{1}{2}n_r\right)^2+\cdots+\left(n_{r-1}+\frac{1}{2}n_r\right)^2+\frac{1}{4}n_r^2\right)+\frac{1}{4}n_r^2-n_r}}{(q^2;q^2)_{n_2}\cdots(q^2;q^2)_{n_{r-1}}(q;q)_{n_r}}\\[5pt]    &\quad \quad \times\frac{q^{n_1^2+n_1n_r+n_1+2\left(\left(n_{j+1}+\cdots+n_{r-1}+\frac{1}{2}n_r\right)+\cdots+\left(n_{r-1}+\frac{1}{2}n_r\right)+\frac{1}{2}n_r\right)}}{(q^2;q^2)_{n_1}}.
\end{align*}
For \(\sigma\in\{0,1\}\), we set
\begin{align*}
    K_\sigma=\sum_{n_1,\ldots,n_r\geq0}&\frac{q^{2\left(\left(n_2+\cdots+n_{r-1}+\frac{1}{2}(2n_r+\sigma)\right)^2+\cdots+\left(n_{r-1}+\frac{1}{2}(2n_r+\sigma)\right)^2+\frac{1}{4}(2n_r+\sigma)^2\right)+\frac{1}{4}(2n_r+\sigma)^2-(2n_r+\sigma)}}{(q^2;q^2)_{n_2}\cdots(q^2;q^2)_{n_{r-1}}(q;q)_{2n_r+\sigma}}\\
    &\times\frac{q^{n_1^2+n_1(2n_r+\sigma)+n_1+2\left(\left(n_{j+1}+\cdots+n_{r-1}+\frac{1}{2}(2n_r+\sigma)\right)+\cdots+\left(n_{r-1}+\frac{1}{2}(2n_r+\sigma)\right)+\frac{1}{2}(2n_r+\sigma)\right)}}{(q^2;q^2)_{n_1}}.
\end{align*}
It is clear that
\begin{align}
    K=K_0+K_1.\label{eq-3.28}
\end{align}
Next, we simplify \(K_0\) as follows:
\begin{align}
    K_0&=\sum_{n_1,\ldots,n_r\geq0}\frac{q^{2((n_2+\cdots+n_r)^2+\cdots+(n_{r-1}+n_r)^2+n_r^2)+n_r^2-2n_r}}{(q^2;q^2)_{n_2}\cdots(q^2;q^2)_{n_{r-1}}(q;q)_{2n_r}}\notag\\[5pt]    &\quad \quad \quad \times\frac{q^{n_1^2+2n_1n_r+n_1+2((n_{j+1}+\cdots+n_r)+\cdots+(n_{r-1}+n_r)+n_r)}}{(q^2;q^2)_{n_1}}\notag\\[5pt]
    &=\sum_{N_1\geq\cdots\geq N_{r-1}\geq0}\frac{q^{2(N_1^2+\cdots+N_{r-1}^2+N_j+\cdots+N_{r-1})+N_{r-1}^2-2N_{r-1}}}{(q^2;q^2)_{N_1-N_2}\cdots(q^2;q^2)_{N_{r-2}-N_{r-1}}(q;q)_{2N_{r-1}}}\sum_{n_1=0}^\infty\frac{q^{n_1^2+2n_1N_{r-1}+n_1}}{(q^2;q^2)_{n_1}}\notag\\[5pt]
    &\overset{\eqref{eq-3.3}}{=}\sum_{N_1\geq\cdots\geq N_{r-1}\geq0}\frac{q^{2(N_1^2+\cdots+N_{r-1}^2+N_j+\cdots+N_{r-1})+N_{r-1}^2-2N_{r-1}}(-q^{2N_{r-1}+2};q^2)_\infty}{(q^2;q^2)_{N_1-N_2}\cdots(q^2;q^2)_{N_{r-2}-N_{r-1}}(q;q)_{2N_{r-1}}}\notag\\[5pt]
    &=\sum_{N_1\geq\cdots\geq N_{r-1}\geq0}\frac{q^{2(N_1^2+\cdots+N_{r-1}^2+N_j+\cdots+N_{r-1})+N_{r-1}^2-2N_{r-1}}(-q^2;q^2)_\infty}{(q^2;q^2)_{N_1-N_2}\cdots(q^2;q^2)_{N_{r-2}-N_{r-1}}(q^4;q^4)_{N_{r-1}}(q;q^2)_{N_{r-1}}}.\label{1.10-Eq-1}
\end{align}
The simplification of \(K_1\) gives
\begin{align}
    K_1&=\sum_{n_1,\ldots,n_r\geq0}\frac{q^{2\left((n_2+\cdots+n_{r-1}+\frac{2n_r+1}{2})^2+\cdots+(n_{r-1}+\frac{2n_r+1}{2})^2+\frac{(2n_r+1)^2}{4}\right)+\frac{(2n_r+1)^2}{4}-(2n_r+1)}}{(q^2;q^2)_{n_2}\cdots(q^2;q^2)_{n_{r-1}}(q;q)_{2n_r+1}}\notag\\[5pt]     &\quad\quad\times\frac{q^{n_1^2+n_1(2n_r+1)+n_1+2\left(\left(n_{j+1}+\cdots+n_{r-1}+\frac{2n_r+1}{2}\right)+\cdots+\left(n_{r-1}+\frac{2n_r+1}{2}\right)+\frac{2n_r+1}{2}\right)}}{(q^2;q^2)_{n_1}}\notag\\[5pt]
    &=\sum_{N_1\geq\cdots\geq N_{r-1}\geq0}\frac{q^{2\left(\left(N_1+\frac{1}{2}\right)^2+\cdots+\left(N_{r-1}+\frac{1}{2}\right)^2+N_j+\cdots+N_{r-1}+\frac{r-j }{2}\right)+N_{r-1}^2-N_{r-1}-\frac{3}{4}}}{(q^2;q^2)_{N_1-N_2}\cdots(q^2;q^2)_{N_{r-2}-N_{r-1}}(q;q)_{2N_{r-1}+1}}\notag\\[5pt]
    &\quad \sum_{n_1=0}^\infty\frac{q^{n_1^2+n_1(2N_{r-1}+1)+n_1}}{(q^2;q^2)_{n_1}}\notag\\[5pt]
    &\overset{\eqref{eq-3.3}}{=}(-q^{2N_{r-1}+3};q^2)_\infty\notag\\[5pt]
    &
    \times \sum_{N_1\geq\cdots\geq N_{r-1}\geq0}\frac{q^{2\left(N_1^2+\cdots+N_{r-1}^2+N_1+\cdots+N_{j-1}+2N_j+\cdots+2N_{r-1}+\frac{3r-2j-1}{4}\right)+N_{r-1}^2-N_{r-1}-\frac{3}{4}}}{(q^2;q^2)_{N_1-N_2}\cdots(q^2;q^2)_{N_{r-2}-N_{r-1}}(q;q)_{2N_{r-1}+1}}\notag\\[5pt]
    &=\frac{q^{\frac{6r-4j -5}{4}}(-q^3;q^2)_\infty}{(1-q)}\notag\\[5pt]
    &\times \sum_{N_1\geq\cdots\geq N_{r-1}\geq0}\frac{q^{2(N_1^2+\cdots+N_{r-1}^2+N_1+\cdots+N_{j-1}+2N_j+\cdots+2N_{r-1})+N_{r-1}^2-N_{r-1}}}{(q^2;q^2)_{N_1-N_2}\cdots(q^2;q^2)_{N_{r-2}-N_{r-1}}(q^2;q^2)_{N_{r-1}}(q^6;q^4)_{N_{r-1}}}.\label{1.10-Eq-2}
\end{align}
Substituting \eqref{Nahmsumsj} (for \(1\leq j \leq r\)) with \(q\mapsto -q\) into \eqref{1.10-Eq-1}, we obtain
\begin{align}
    K_0&=\frac{(-q^2;q^2)_\infty(q^{2j},-q^{4r-2j-3},-q^{4r-3};-q^{4r-3})_\infty}{(q^2;q^2)_\infty}\notag\\
    &=\frac{(q^{2j},-q^{4r-2j-3},-q^{4r-3};-q^{4r-3})_\infty}{(q^2,q^2,q^4;q^4)_\infty}.\label{eq-3.31}
\end{align}
Substituting \eqref{Nahmsumse} (for \(1\leq j \leq r\)) with \(q\mapsto q^2\) into \eqref{1.10-Eq-1}, we derive that
\begin{align}
    K_1
    &=q^{\frac{6r-4j -5}{4}}\frac{(-q^3;q^2)_\infty(q^{4j},q^{16r-4j -12},q^{16r-12};q^{16r-12})_\infty}{(1-q)(q^4;q^2)_\infty}\notag\\
    &=q^{\frac{6r-4j -5}{4}}\frac{(q^{4j},q^{16r-4j -12},q^{16r-12};q^{16r-12})_\infty}{(q,q^3,q^4;q^4)_\infty}.\label{eq-3.32}
\end{align}
Finally, substituting  \eqref{eq-3.31} and \eqref{eq-3.32} into \eqref{eq-3.28}, we establish the identity \eqref{eq-1.10}.

%%%%%%%%%%%%%%%%
\medskip

$(5)$ {\bf Proof of  \eqref{eq-1.11}.} This identity is equivalent to that  established by  B. Wang--L. Wang in \cite[Corollary 4.3]{Wang-Wang-2024}. We provide an explanation for completeness.

Let \(A^\vee D^\vee\) be the symmetric matrix defined by \eqref{eq-1.5}, and let \(\boldsymbol{b}_0=(1,1,2,3,\ldots,r-1)_{1\times r}^T\). We have
\begin{align*}
    &\frac{1}{2}\boldsymbol{n}^TA^\vee D^\vee\boldsymbol{n}+\boldsymbol{n}^T\boldsymbol{b}_0\\[5pt]
    =&\frac{1}{2}n_1^2+n_1n_{r}+\left((n_2+\cdots+n_r)^2+\cdots+(n_{r-1}+n_r)^2+n_r^2\right)\\[5pt]
    &+n_1+(n_2+\cdots+n_r)+\cdots+(n_{r-1}+n_r)+n_r.
\end{align*}
It follows that the multiple sum in  \eqref{eq-1.11} becomes
\begin{align*}
    &\sum_{\boldsymbol{n}=(n_1,\ldots,n_r)^T\in\mathbb{N}^r}\frac{q^{\frac{1}{2}\boldsymbol{n}^TA^\vee D^\vee\boldsymbol{n}+\boldsymbol{n}^T\boldsymbol{b}_0}}{(q;q)_{n_1}(q;q)_{n_2}\cdots(q;q)_{n_{r-1}}(q^2;q^2)_{n_r}}\\[10pt]
    =&\sum_{n_1,\ldots,n_r\geq0}\frac{q^{\frac{1}{2}n_1^2+n_1n_r+\left((n_2+\cdots+n_r)^2+\cdots+(n_{r-1}+n_r)^2+n_r^2\right)+n_1+(n_2+\cdots+n_r)+\cdots+(n_{r-1}+n_r)+n_r}}{(q;q)_{n_1}(q;q)_{n_2}\cdots(q;q)_{n_{r-1}}(q^2;q^2)_{n_r}}\\[10pt]
    =&\sum_{n_1,\ldots,n_r\geq0}\frac{q^{\frac{1}{2}n_1^2+n_1N_{r-1}+\left(N_1^2+\cdots+N_{r-1}^2\right)+n_1+N_1+\cdots+N_{r-1}}}{(q;q)_{n_1}(q;q)_{n_2}\cdots(q;q)_{n_{r-1}}(q^2;q^2)_{n_r}}\\[10pt]
    =&\frac{(-q^\frac{1}{2};q)_\infty(q^\frac{1}{2},q^{2r-1},q^{2r-\frac{1}{2}};q^{2r-\frac{1}{2}})_\infty}{(q;q)_\infty}\\[10pt]
    =&\frac{(q^\frac{1}{2},q^{2r-1},q^{2r-\frac{1}{2}};q^{2r-\frac{1}{2}})_\infty}{(q^\frac{1}{2},q^\frac{3}{2},q^2;q^2)_\infty},
\end{align*}
where the second last identity is due to
B. Wang--L. Wang \cite[Corollary 4.3]{Wang-Wang-2024}.

%%%%%%%%%%%%%%%%

$(6)$ {\bf Proof of \eqref{eq-1.12}.}   This identity is equivalent to the one established by B. Wang and L. Wang in \cite[Corollary 4.2]{Wang-Wang-2024}. For the sake of completeness, we provide an explanation herein.

Let \(A^\vee D^\vee\) be the symmetric matrix defined by \eqref{eq-1.5}, and let \(\boldsymbol{b}_j=(0,\ldots,0,1,2,\ldots,r-j)_{1\times r}^T\). We have
\begin{align*}
    &\frac{1}{2}\boldsymbol{n}^TA^\vee D^\vee\boldsymbol{n}+\boldsymbol{n}^T\boldsymbol{b}_j\\[5pt]
    =&\frac{1}{2}n_1^2+n_1n_{r}+\left((n_2+\cdots+n_r)^2+\cdots+(n_{r-1}+n_r)^2+n_r^2\right)\\[5pt]
    &+(n_{j+1}+\cdots+n_r)+\cdots+(n_{r-1}+n_r)+n_r.
\end{align*}
We consider the multiple summation of \eqref{eq-1.12},
\begin{align*}
    &\sum_{\boldsymbol{n}=(n_1,\ldots,n_r)^T\in\mathbb{N}^r}\frac{q^{\frac{1}{2}\boldsymbol{n}^TA^\vee D^\vee\boldsymbol{n}+\boldsymbol{n}^T\boldsymbol{b}_j}}{(q;q)_{n_1}(q;q)_{n_2}\cdots(q;q)_{n_{r-1}}(q^2;q^2)_{n_r}}\\[10pt]
    =&\sum_{n_1,\ldots,n_r\geq0}\frac{q^{\frac{1}{2}n_1^2+n_1n_{r}+\left((n_2+\cdots+n_r)^2+\cdots+(n_{r-1}+n_r)^2+n_r^2\right)+(n_{j+1}+\cdots+n_r)+\cdots+(n_{r-1}+n_r)+n_r}}{(q;q)_{n_1}(q;q)_{n_2}\cdots(q;q)_{n_{r-1}}(q^2;q^2)_{n_r}}\\[10pt]
    =&\sum_{n_1,\ldots,n_r\geq0}\frac{q^{\frac{1}{2}n_1^2+n_1N_{r-1}+\left(N_1^2+\cdots+N_{r-1}^2\right)+N_j+\cdots+N_{r-1}}}{(q;q)_{n_1}(q;q)_{n_2}\cdots(q;q)_{n_{r-1}}(q^2;q^2)_{n_r}}\\[10pt]
    =&\frac{(-q^\frac{1}{2};q)_\infty(q^j,q^{2r-j-\frac{1}{2}},q^{2r-\frac{1}{2}};q^{2r-\frac{1}{2}})_\infty}{(q;q)_\infty}\\[10pt]
    =&\frac{(q^j,q^{2r-j-\frac{1}{2}},q^{2r-\frac{1}{2}};q^{2r-\frac{1}{2}})_\infty}{(q^\frac{1}{2},q^\frac{3}{2},q^2;q^2)_\infty}.
\end{align*}
 The second last identity is due to B. Wang--L. Wang \cite[Corollary 4.2]{Wang-Wang-2024}.  This completes the proof of Theorem \ref{main1a}. \qed

{
\subsection{Proof of  Theorem \ref{Nahm-SumC}} We invoke   \eqref{SumC-1}   and  \eqref{SumC-2} in Theorem~\ref{thm-SumC} for this derivation.
\begin{proof}
Let
\begin{align*}
F=\sum_{n_1,\ldots, n_r\geq 0}
\frac{q^{\frac{1}{2}\boldsymbol{n}^T{A}D\boldsymbol{n}+\boldsymbol{b}_{r+1} \boldsymbol{n}}}
{(q^2;q^2)_{n_1}\cdots (q^2;q^2)_{n_{r-1}}(q;q)_{n_r}},
\end{align*}
and
\begin{align*}
F_\sigma=\sum_{n_1,\ldots, n_r\geq 0\atop n_r\equiv \sigma\pmod{2}}
\frac{q^{\frac{1}{2}\boldsymbol{n}^T{A}D\boldsymbol{n}+\boldsymbol{b}_{r+1} \boldsymbol{n}}}
{(q^2;q^2)_{n_1}\cdots (q^2;q^2)_{n_{r-1}}(q;q)_{n_r}},
\end{align*}
where $\sigma =0,1$.
By \eqref{eq-3.4},
\begin{align}
F_0&
=(-q;q^2)_\infty\sum_{N_1\geq\cdots\geq N_{r-1}\geq0}\frac{ q^{2(N_1^2+\cdots+N_{r-1}^2+N_{r-1})}}{(q^2;q^2)_{N_1-N_2}\cdots(q^2;q^2)_{N_{r-2}-N_{r-1}}(q^2;q^2)_{N_{r-1}}(q^2;q^4)_{N_{r-1}}}.\label{F-0}
\end{align}
From \eqref{eq-3.5}, we have
\begin{align}
F_1=\frac{q^\frac{r+1}{2}(-q^2;q^2)_\infty}{(1-q)}\sum_{N_1\geq\cdots\geq N_{r-1}\geq0}\frac{q^{2(N_1^2+\cdots+N_{r-1}^2+N_1+\cdots+N_{r-2})+4N_{r-1}}}{(q^2;q^2)_{N_1-N_2}\cdots(q^2;q^2)_{N_{r-2}-N_{r-1}}(q^4;q^4)_{N_{r-1}}(q^3;q^2)_{N_{r-1}}}.\label{F-1}
\end{align}
Let
\begin{align*}
H=\sum_{n_1,\ldots, n_r\geq 0}
\frac{q^{\frac{1}{2}\boldsymbol{n}^T{A}D\boldsymbol{n}+\boldsymbol{\widetilde{b}}_{r+1} \boldsymbol{n}}}
{(q^2;q^2)_{n_1}\cdots (q^2;q^2)_{n_{r-1}}(q;q)_{n_r}}.
\end{align*}
For $\sigma=0,1$, define
\begin{align*}
H_\sigma=\sum_{n_1,\ldots, n_r\geq 0\atop n_r\equiv \sigma\pmod{2}}
\frac{q^{\frac{1}{2}\boldsymbol{n}^T{A}D\boldsymbol{n}+\boldsymbol{\widetilde{b}}_{r+1} \boldsymbol{n}}}
{(q^2;q^2)_{n_1}\cdots (q^2;q^2)_{n_{r-1}}(q;q)_{n_r}}.
\end{align*}
Then $H=H_0+H_1$.

We have
\begin{align*}
&\quad \frac{1}{2}\boldsymbol{n}^T{A}D\boldsymbol{n}+\boldsymbol{\widetilde{b}}_{r+1} \boldsymbol{n}\\
&=n_1^2+n_1n_r+n_1+2\left((n_2+n_3+\cdots +n_{r-1}+\frac{n_r}{2})^2\right.\\
&\quad
\left.+(n_3+\cdots +n_{r-1}+\frac{n_r}{2})^2
+\cdots (n_{r-1}+\frac{n_r}{2})^2+\left(\frac{n_r}{2}\right)^2\right)\\
&\quad +2((n_2+n_3+\cdots +n_{r-1}+\frac{n_r}{2})
+(n_3+\cdots +n_{r-1}+\frac{n_r}{2})
+\cdots\\
&\quad + (n_{r-1}+\frac{n_r}{2})+\frac{n_r}{2})
+2\cdot \frac{n_r}{2}.
\end{align*}
It follows that
\begin{align}
&q^{\frac{r-3}{2}}H_0\notag\\[5pt]
&=
q^{\frac{r-3}{2}}\sum_{n_2,\ldots, n_r\geq 0}
q^{2((n_2+\cdots+n_r)^2+(n_3+\cdots+n_r)^2
+\cdots+(n_{r-1}+n_{r})^2+n_{r}^2)}\notag\\
&\quad \times
\frac{q^{2((n_2+\cdots+n_r)+(n_3+\cdots+n_r)
+\cdots+(n_{r-1}+n_{r})+n_{r})
+2n_{r}}}
{(q^2;q^2)_{n_2}\cdots (q^2;q^2)_{n_{r-1}}(q;q)_{2n_r}}
\sum_{n_1=0}^\infty \frac{q^{n_1^2+(2n_r+1)n_1}}{(q^2;q^2)_{n_1}}\notag\\
&\overset{\eqref{eq-3.3}}{=}
q^{\frac{r-3}{2}}(-q^2;q^2)_\infty
\sum_{n_2,\ldots, n_r\geq 0}
q^{2((n_2+\cdots+n_r)^2+(n_3+\cdots+n_r)^2
+\cdots+(n_{r-1}+n_{r})^2+n_{r}^2)}\notag\\
&\quad \times
\frac{q^{2((n_2+\cdots+n_r)+(n_3+\cdots+n_r)
+\cdots+(n_{r-1}+n_{r})+n_{r})
+2n_{r}}}
{(q^2;q^2)_{n_2}\cdots (q^2;q^2)_{n_{r-1}}(q;q)_{2n_r}(-q^2;q^2)_{n_r}}
\notag\\[5pt]
&=q^{\frac{r-3}{2}}(-q^2;q^2)_\infty
\sum_{n_2,\ldots, n_r\geq 0}
q^{2((n_2+\cdots+n_r)^2+(n_3+\cdots+n_r)^2
+\cdots+(n_{r-1}+n_{r})^2+n_{r}^2)}\notag\\
&\quad \times
\frac{q^{2((n_2+\cdots+n_r)+(n_3+\cdots+n_r)
+\cdots+(n_{r-1}+n_{r})+n_{r})
+2n_{r}}}
{(q^2;q^2)_{n_2}\cdots (q^2;q^2)_{n_{r-1}}(q;q^2)_{n_r}(q^4;q^4)_{n_r}}\notag\\
&=q^{\frac{r-3}{2}}(-q^2;q^2)_\infty
\notag\\
&\quad \times
\sum_{N_1\geq N_2\geq\cdots\geq N_{r-1}\geq 0}\frac{q^{2(N_1^2+N_2^2
+\cdots+N_{r-2}^2+N_{r-1}^2)+2(N_1+N_2
+\cdots+N_{r-2})
+4N_{r-1}}}
{(q^2;q^2)_{N_1-N_2}\cdots (q^2;q^2)_{N_{r-2}-N_{r-1}}(q;q^2)_{N_{r-1}}(q^4;q^4)_{N_{r-1}}},\label{H-0}
\end{align}
and
\begin{align}
&q^{\frac{r-3}{2}}H_1\notag\\[5pt]
&=
q^{\frac{r-3}{2}}\sum_{n_2,\ldots, n_r\geq 0}
q^{2((n_2+\cdots+n_r+\frac{1}{2})^2+(n_3+\cdots+n_r+\frac{1}{2})^2
+\cdots+(n_{r-1}+n_{r}+\frac{1}{2})^2+(n_{r}+\frac{1}{2})^2)}\notag\\
&\quad \times
\frac{q^{2((n_2+\cdots+n_r+\frac{1}{2})+(n_3+\cdots+n_r+\frac{1}{2})
+\cdots+(n_{r-1}+n_{r}+\frac{1}{2})+(n_{r}+\frac{1}{2}))
+2(n_{r}+\frac{1}{2})}}
{(q^2;q^2)_{n_2}\cdots (q^2;q^2)_{n_{r-1}}(q;q)_{2n_r+1}}
\sum_{n_1=0}^\infty \frac{q^{n_1^2+(2n_r+2)n_1}}{(q^2;q^2)_{n_1}}\notag\\
&\overset{\eqref{eq-3.3}}{=}
q^{\frac{r-3}{2}}(-q;q^2)_\infty
\sum_{n_2,\ldots, n_r\geq 0}
q^{2((n_2+\cdots+n_r+\frac{1}{2})^2+(n_3+\cdots+n_r+\frac{1}{2})^2
+\cdots+(n_{r-1}+n_{r}+\frac{1}{2})^2+(n_{r}+\frac{1}{2})^2)}\notag\\
&\quad \times
\frac{q^{2((n_2+\cdots+n_r+\frac{1}{2})+(n_3+\cdots+n_r+\frac{1}{2})
+\cdots+(n_{r-1}+n_{r}+\frac{1}{2})+(n_{r}+\frac{1}{2}))
+2(n_{r}+\frac{1}{2})}}
{(q^2;q^2)_{n_2}\cdots (q^2;q^2)_{n_{r-1}}(q^2;q^2)_{n_{r}}(q^2;q^4)_{n_r+1}}\notag\\[5pt]
&=(-q;q^2)_\infty\sum_{N_1\geq N_2\geq \cdots \geq N_{r-1}\geq 0}
\frac{q^{2(N_1^2+\cdots+N_{r-1}^2)+4
(N_1+\cdots N_{r-2})+6N_{r-1}+2r-2}}
{(q^2;q^2)_{N_1-N_2}\cdots (q^2;q^2)_{N_{r-2}-N_{r-1}}
(q^2;q^2)_{N_{r-1}}(q^2;q^4)_{N_{r-1}+1}}\notag\\[5pt]
&=(-q;q^2)_\infty\sum_{N_1\geq N_2\geq \cdots \geq N_{r-1}\geq 1}
\frac{q^{2(N_1^2+\cdots+N_{r-1}^2)+2N_{r-1}}(q^{-2}-q^{2N_{r-1}-2})}
{(q^2;q^2)_{N_1-N_2}\cdots (q^2;q^2)_{N_{r-2}-N_{r-1}}
(q^2;q^2)_{N_{r-1}}(q^2;q^4)_{N_{r-1}}}.\label{H-1}
\end{align}

It follows from \eqref{F-0} and \eqref{H-1} that
\begin{align*}
&\quad F_0+q^{\frac{r-3}{2}}H_1\\[5pt]
&=
(-q;q^2)_\infty\sum_{N_1\geq\cdots\geq N_{r-1}\geq0}\frac{ q^{2(N_1^2+\cdots+N_{r-1}^2+N_{r-1})}(1+q^{-2}-q^{2N_{r-1}-2})}{(q^2;q^2)_{N_1-N_2}\cdots(q^2;q^2)_{N_{r-2}-N_{r-1}}(q^2;q^2)_{N_{r-1}}(q^2;q^4)_{N_{r-1}}}.
\end{align*}
Substituting $q=q^2$ in \eqref{SumC-1}, we have
\begin{align}
F_0+q^{\frac{r-3}{2}}H_1
=\frac{(q^{8r+4},q^{8r-8},q^{16r-4};q^{16r-4})_\infty}
{(q, q^3, q^4 ;q^4)_\infty}.\label{F-H-1}
\end{align}

In view of \eqref{F-1} and \eqref{H-0}, we derive
\begin{align*}
&\quad F_1+q^{\frac{r-3}{2}}H_0\\[5pt]
&=q^{\frac{r-3}{2}}(-q^2;q^2)_\infty\\[5pt]
&\times
\sum_{N_1\geq N_2\geq\cdots \geq N_{r-1}\geq 0}\frac{q^{2(N_1^2+N_2^2
+\cdots+N_{r-2}^2+N_{r-1}^2)+2(N_1+N_2
+\cdots+N_{r-2})
+4N_{r-1}}(1+q^2-q^{2N_{r-1}+1})}
{(q^2;q^2)_{N_1-N_2}\cdots (q^2;q^2)_{N_{r-2}-N_{r-1}}(q;q^2)_{N_{r-1}+1}(q^4;q^4)_{N_{r-1}}}.
\end{align*}
Letting $q\rightarrow q^2$ and then $q\rightarrow -q$
in \eqref{SumC-2}, we have
\begin{align}
F_1+q^{\frac{r-3}{2}}H_0=q^{\frac{r-3}{2}}
\frac{(-q^3,q^{4r-4},-q^{4r-1}; -q^{4r-1})_\infty}{(q^2,q^2,q^4;q^4)_\infty}.\label{F-H-2}
\end{align}
Therefore, combining \eqref{F-H-1} and \eqref{F-H-2} yields
\begin{align*}
F+q^{\frac{r-3}{2}} H=
\frac{(q^{8r+4},q^{8r-8},q^{16r-4};q^{16r-4})_\infty}
{(q, q^3, q^4 ;q^4)_\infty}
+q^{\frac{r-3}{2}}
\frac{(-q^3,q^{4r-4},-q^{4r-1}; -q^{4r-1})_\infty}{(q^2,q^2,q^4;q^4)_\infty},
\end{align*}
which is exactly \eqref{SumC-eq}.
\end{proof}}

\subsection{Proof of  Theorem \ref{main1b}}

We prove the two identities stated in the theorem case by case.

$(1)$ {\bf Proof of  \eqref{eq-1.13}.}  This follows from \eqref{Nahmsumsh}  in Theorem \ref{thm:multisumb}.

Let \(B^\vee D^\vee\) be the symmetric matrix defined by \eqref{eq-1.6}, and \(\boldsymbol{b}_0=(1,1,2,3,\ldots,r-1)^T.\) We deduce that
\begin{align*}
    &\frac{1}{2}\boldsymbol{n}^TB^\vee D^\vee\boldsymbol{n}+\boldsymbol{n}^T\boldsymbol{b}_0\\
    =&\frac{1}{2}n_1^2+n_1n_r+n_1+(n_2+\cdots+n_r)^2+\cdots+n_r^2+(n_2+\cdots+n_r)+\cdots+n_r+\frac{1}{2}n_r^2.
\end{align*}
Then
\begin{align*}
    &\sum_{\boldsymbol{n}=(n_1,\ldots,n_r)^T\in\mathbb{N}^r}\frac{(-1)^{n_r}q^{\boldsymbol{n}^TB^\vee D^\vee\boldsymbol{n}+2\boldsymbol{n}^T\boldsymbol{b}_0}}{(q^2;q^2)_{n_1}\cdots(q^2;q^2)_{n_{r-1}}(q^4;q^4)_{n_r}}\\[5pt]
&=\sum_{n_1,\ldots,n_r\geq0}\frac{(-1)^{n_r}q^{n_1^2+2n_1n_r+2n_1+2\left((n_2+\cdots+n_r)^2+\cdots+n_r^2+(n_2+\cdots+n_r)+\cdots+n_r\right)+n_r^2}}{(q^2;q^2)_{n_1}\cdots(q^2;q^2)_{n_{r-1}}(q^4;q^4)_{n_r}}\\[5pt]
    &=\sum_{N_1\geq\cdots\geq N_{r-1}\geq0}\frac{(-1)^{N_{r-1}}q^{2\left(N_1^2+\cdots+N_{r-1}^2+N_1+\cdots+N_{r-1}\right)+N_{r-1}^2}}{(q^2;q^2)_{N_1-N_2}\cdots(q^2;q^2)_{N_{r-2}-N_{r-1}}(q^4;q^4)_{N_{r-1}}}\sum_{n_1=0}^\infty\frac{q^{n_1^2+2n_1N_{r-1}+2n_1}}{(q^2;q^2)_{n_1}}\\[5pt]
    &\overset{\eqref{eq-3.3}}{=}\sum_{N_1\geq\cdots\ge N_{r-1}\geq0}\frac{(-1)^{N_{r-1}}q^{2\left(N_1^2+\cdots+N_{r-1}^2+N_1+\cdots+N_{r-1}\right)+N_{r-1}^2}(-q^{2N_{r-1}+3};q^2)_\infty}{(q^2;q^2)_{N_1-N_2}\cdots(q^2;q^2)_{N_{r-2}-N_{r-1}}(q^4;q^4)_{N_{r-1}}}\\[5pt]
    &=(-q^3;q^2)_\infty\sum_{N_1\geq\cdots\geq N_{r-1}\geq0}\frac{(-1)^{N_{r-1}}q^{2\left(N_1^2+\cdots+N_{r-1}^2+N_1+\cdots+N_{r-1}\right)+N_{r-1}^2}}{(q^2;q^2)_{N_1-N_2}\cdots(q^2;q^2)_{N_{r-2}-N_{r-1}}(-q^3;q^2)_{N_{r-1}}(q^4;q^4)_{N_{r-1}}}\\[5pt]
    &\overset{\eqref{Nahmsumsh}}{=}\frac{(-q^3;q^2)_\infty(1+q)(q,q^{4r-4},q^{4r-3};q^{4r-3})_\infty}{(q^2;q^2)_\infty}\\[5pt]
    &=\frac{(q,q^{4r-4},q^{4r-3};q^{4r-3})_\infty}{(q,q^3,q^4;q^4)_\infty},
\end{align*}
as desired.

%%%%%%%%%%%%%%%%%%

$(2)$ {\bf Proof of  \eqref{eq-1.14}.}  This derives from \eqref{Nahmsumsj}  in Theorem \ref{thm:multisumb}.

Let \(B^\vee D^\vee\) be the symmetric matrix defined by \eqref{eq-1.6}, and let \(\boldsymbol{b}_j=(0,\ldots,0,1,2,\ldots,r-j-1,r-j-1)^T.\) We deduce that
\begin{align*}
    &\frac{1}{2}\boldsymbol{n}^TB^\vee D^\vee\boldsymbol{n}+\boldsymbol{n}^T\boldsymbol{b}_j\\
    =&\frac{1}{2}n_1^2+n_1n_r+(n_2+\cdots+n_r)^2+\cdots+n_r^2+(n_{j+1}+\cdots+n_r)+\cdots+n_r+\frac{1}{2}n_r^2-n_r.
\end{align*}
Then
\begin{align*}
    &\sum_{\boldsymbol{n}=(n_1,\ldots,n_r)^T\in\mathbb{N}^r}\frac{(-1)^{n_r}q^{\boldsymbol{n}^TB^\vee D^\vee\boldsymbol{n}+2\boldsymbol{n}^T\boldsymbol{b}_j}}{(q^2;q^2)_{n_1}\cdots(q^2;q^2)_{n_{r-1}}(q^4;q^4)_{n_r}}\\[5pt]
&=\sum_{n_1,\ldots,n_r\geq0}\frac{(-1)^{n_r}q^{n_1^2+2n_1n_r+2\left((n_2+\cdots+n_r)^2+\cdots+n_r^2+(n_{j+1}+\cdots+n_r)+\cdots+n_r\right)+n_r^2-2n_r}}{(q^2;q^2)_{n_1}\cdots(q^2;q^2)_{n_{r-1}}(q^4;q^4)_{n_r}}\\[5pt]
    &=\sum_{N_1\geq\cdots\geq N_{r-1}\geq0}\frac{(-1)^{N_{r-1}}q^{2\left(N_1^2+\cdots+N_{r-1}+N_j+\cdots+N_{r-1}\right)+N_{r-1}^2-2N_{r-1}}}{(q^2;q^2)_{N_1-N_2}\cdots(q^2;q^2)_{N_{r-2}-N_{r-1}}(q^4;q^4)_{N_{r-1}}}\sum_{n_1=0}^\infty\frac{q^{n_1^2+2n_1N_{r-1}}}{(q^2;q^2)_{n_1}}\\[5pt]
    &\overset{\eqref{eq-3.3}}{=}\sum_{N_1\geq\cdots\geq N_{r-1}\geq0}\frac{(-1)^{N_{r-1}}q^{2\left(N_1^2+\cdots+N_{r-1}^2+N_j+\cdots+N_{r-1}\right)+N_{r-1}^2-2N_{r-1}}(-q^{2N_{r-1}+1};q^2)_\infty}{(q^2;q^2)_{N_1-N_2}\cdots(q^2;q^2)_{N_{r-2}-N_{r-1}}(q^4;q^4)_{N_{r-1}}}\\[5pt]
    &=(-q;q^2)_\infty\sum_{N_1\geq\cdots\geq N_{r-1}\geq0}\frac{(-1)^{N_{r-1}}q^{2\left(N_1^2+\cdots+N_{r-1}^2+N_j+\cdots+N_{r-1}\right)+N_{r-1}^2-2N_{r-1}}}{(q^2;q^2)_{N_1-N_2}\cdots(q^2;q^2)_{N_{r-2}-N_{r-1}}(-q;q^2)_{N_{r-1}}(q^4;q^4)_{N_{r-1}}}\\[5pt]
    &\overset{\eqref{Nahmsumsj}}{=}\frac{(-q;q^2)_\infty(q^{2j},q^{4r-3-2j},q^{4r-3};q^{4r-3})_\infty}{(q^2;q^2)_\infty}\\[5pt]
    &=\frac{(q^{2j},q^{4r-3-2j},q^{4r-3};q^{4r-3})_\infty}{(q,q^3,q^4;q^4)_\infty},
\end{align*}
which is \eqref{eq-1.14}. This finishes the proof of  Theorem \ref{main1b}. \qed

\section{Proofs of Theorems \ref{main1}, \ref{main2} and \ref{main3}}

Armed with Theorem \ref{main1a}, we are now in a position to establish the modularity of the generalized Nahm sums   specified in Theorems \ref{main1}, \ref{main2} and \ref{main3}.
Let us first review some knowledge on modular functions. It turns out that  such families of the generalized Nahm sums are certain modular functions for
some congruence subgroup $\Gamma_1(N)$.

For a positive integer $N$, the congruence subgroups $\Gamma_0(N)$ and $\Gamma_1(N)$ are defined as
\begin{align*}
\Gamma_0(N) &=\left\{\begin{pmatrix}a &b\\ c &d\end{pmatrix}\in
\Gamma\colon  c\equiv 0\bmod{N}\right\},\\[5pt]
\Gamma_1(N) &=\left\{\begin{pmatrix}a &b\\ c &d\end{pmatrix}\in
\Gamma_0(N)\colon a\equiv d\equiv 1\bmod{N}\right\},
\end{align*}
where $\Gamma$ is the
full modular group given by
\begin{align*}
\Gamma=\left\{\begin{pmatrix}a &b\\ c &d\end{pmatrix}
\colon a, b, c, d\in\mathbb{Z},~\textrm{and}~ad-bc=1\right\}.
\end{align*}
Let $\gamma=\begin{pmatrix}a &b\\ c &d\end{pmatrix}\in\Gamma$ act on $\tau\in \mathbb{C}$ by the linear fractional transformation
\begin{align*}
    \gamma\tau = \frac{a\tau+b}{c\tau+d}, \qquad \text{and} \qquad
    \gamma\infty =\lim_{\tau\rightarrow \infty} \gamma\tau.
\end{align*}

\begin{defi}\label{defi-modu} \kern-0.1cm
{\rm (\kern-0.2cm\cite[Chapter~3]{Diamond-Shurman-2005})} Let $\mathbb{H}=\{\tau\in\mathbb{C}\colon \mathrm{Im}(\tau)>0\}$.
A meromorphic function $f\colon \mathbb{H}\rightarrow \mathbb{C}$ is
called a modular function of weight $k\in\mathbb{N}^+$ for $\Gamma_1(N)$, if it satisfies the
following two conditions:

$(1)$ for all $\gamma=\begin{pmatrix}a &b\\ c &d\end{pmatrix}\in\Gamma_1(N)$, $f(\gamma\tau)=(c\tau+d)^kf(\tau)$;

$(2)$ for  $\gamma=\begin{pmatrix}a &b\\ c &d\end{pmatrix}\in\Gamma$, $(c\tau+d)^{-k}f(\gamma\tau)$ has a Fourier expansion
of the form
\begin{align*}
(c\tau+d)^{-k}f(\gamma\tau)=\sum_{n=n_\gamma}^\infty a(n)q_{w_\gamma}^n,
\end{align*}
where $a(n_\gamma)\neq 0$, $q_{w_\gamma}=e^{2\pi i\tau/w_{\gamma}}$,
and $w_\gamma$ is the minimal positive integer $h$ such that
\begin{align*}
\begin{pmatrix}1 &h\\ 0 &1\end{pmatrix}\in\gamma^{-1}\Gamma_1(N)\gamma.
\end{align*}
\end{defi}

A modular function of weight $0$ for $\Gamma_1(N)$ is referred to a modular function for $\Gamma_1(N)$.

As shown in Theorem \ref{main1a}, we find that the generalized Nahm sums specified in Theorems  \ref{main1}, \ref{main2} and \ref{main3} can be expressed as generalized eta-quotients or sums of generalized eta-quotients. Generalized eta-quotients can be viewed as the quotients of generalized Dedekind eta-functions.

For a positive
integer $\delta$ and a residue class $g\pmod {\delta}$, the
generalized Dedekind eta-function $\eta_{\delta, g}(\tau)$ is defined
by
\begin{align*}
\eta_{\delta,g}(\tau)=q^{\frac{\delta}{2}
P_2{\left(\frac{g}{\delta}\right)}}
\prod_{\substack{n>0\\ n\equiv g\pmod{\delta}}}(1-q^n)
\prod_{\substack{n>0\\ n\equiv -g\pmod{\delta}}}(1-q^n),%\label{gen}
\end{align*}
where $q=e^{2\pi i\tau}$ and
\begin{align*}
P_2(t)=\{t\}^2-\{t\}+\frac{1}{6}
\end{align*}
is the second Bernoulli function and $\{t\}$ is the fractional part of
$t$; see, for example, \cite{Robins-1994, Sch74}.

%Note that
%\begin{align*}
%\eta_{\delta,0}(\tau)=\eta^2(\delta\tau)\qquad\text{and}\qquad
%\eta_{\delta,\frac{\delta}{2}}(\tau)
%=\dfrac{\eta^2(\frac{\delta}{2}\tau)}{\eta^2(\delta\tau)}.%\label{ge-e}
%\end{align*}

%It follows from the definitions of the eta-function and generalized eta-function that
%\begin{align}
%J_m=q^{-\frac{m}{24}}\eta_m(\tau) \text{ and }
%J_{a,m}=q^{-\frac{m}{24}-\frac{a}{2}P_2(\frac{a}{m})}\eta_m(\tau)\eta_{a,m}(\tau).
%\label{J-2-eta}
%\end{align}

The following criterion established by Robins \cite[Theorem~3]{Robins-1994} can be used to determine the modularity of generalized eta-quotients. In particular, the generalized eta-quotients linked to the generalized Nahm sums
 outlined in  Theorems  \ref{main1}, \ref{main2}, and  \ref{main3} also fall into this category.

\begin{lem}{\rm (\!\cite[Theorem~3]{Robins-1994})}\label{lem-mf}
For the generalized eta-quotient $f(\tau)$ of the following form:
\begin{align}
f(\tau)=\prod_{\delta|N\atop 0\leq g<\delta} \eta_{\delta,g}^{r_{\delta,g}}(\tau),\label{gen-quotient}
\end{align}
where
\begin{align*}
r_{\delta,g}\in
\begin{cases}
\frac{1}{2}\mathbb{Z}, &\text{if } g=0 \text{ or } \frac{\delta}{2},\\
\mathbb{Z}, & \text{otherwise},
\end{cases}
\end{align*}
we assume that $f(\tau)$ satisfies the following three conditions:
\begin{align*}
(1)\  & w(f)=\sum_{\delta|N} r_{\delta, 0}=0; \quad
(2)\ \mathrm{Ord}_\infty(f)=\sum_{\delta|N\atop 0\leq g<\delta}\delta P_2\Big(\frac{g}{\delta}\Big)r_{\delta,g}=\frac{m_1}{n_1};\\[5pt]
(3)\ &\mathrm{Ord}_0(f)=\sum_{\delta|N\atop 0\leq g<\delta} \frac{N}{\delta}P_2(0)r_{\delta,g}=\frac{m_2}{n_2},
\end{align*}
where $m_j, n_j\in\mathbb{Z}$, $n_j>0, j=1,2$.
%and $\gcd(m_j,n_j)=1$ for $j=1,2$.
Let $t$ and $N_0$ be the least positive integers such that both
$tm_1/n_1$ and $N_0m_2/n_2$ are even integers.
Then $f(t\tau)$ is a modular function for $\Gamma_1(tN_0N)$.
\end{lem}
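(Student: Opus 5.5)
The plan is to compute the multiplier of each generalized eta-function under $\Gamma_0(N)$, show that conditions (1)--(3) make the combined multiplier of $f(t\tau)$ trivial on $\Gamma_1(tN_0N)$, and then check the cusp conditions. Throughout I treat Schoeneberg's transformation theory for $\eta_{\delta,g}$ \cite{Sch74} as the basic input.

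\textbf{Step 1 (multiplier of a single factor).} For $\gamma=\left(a~b\atop c~d\right)\in\Gamma_0(N)$ and $\delta\mid N$, take $g\not\equiv 0,\delta/2 \pmod{\delta}$. Then $\eta_{\delta,g}$ is, up to a $q$-power, a product of Jacobi theta quotients, and Schoeneberg's formula gives
\[
\eta_{\delta,g}(\gamma\tau)=e^{\pi i\mu_{\delta,g}(\gamma)}\,\eta_{\delta,ag}(\tau),
\qquad
\mu_{\delta,g}(\gamma)=\delta ab\,P_2\Big(\frac{g}{\delta}\Big)+\frac{cd}{\delta}\,P_2(0)
\]
modulo $2$, possibly after correcting the constant terms. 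For $g=0$ or $g=\delta/2$, the identities $\eta_{\delta,0}(\tau)=\eta(\delta\tau)^2$ and $\eta_{\delta,\delta/2}(\tau)=\eta(\delta\tau/2)^2/\eta(\delta\tau)^2$ reduce everything to the classical Dedekind eta multiplier. That multiplier carries a factor $(c\tau+d)^{1/2}$ and a Dedekind-sum phase; this is why half-integral exponents $r_{\delta,g}$ are allowed exactly in these two cases.

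\textbf{Step 2 (specialize to $\Gamma_1$).} For $\gamma\in\Gamma_1(N)$ we have $a\equiv1\pmod N$, hence $ag\equiv g\pmod\delta$. So each factor returns to itself up to a phase. Condition (1), $\sum r_{\delta,0}=0$, cancels the automorphy factors $(c\tau+d)^{r_{\delta,0}}$, which gives weight $0$. The same condition, together with the $a\equiv d\equiv1$ congruences, should also make the Dedekind-sum contributions collapse into the two quadratic terms above. The total phase of $f$ then becomes
\[
\exp\Big(\pi i\big(ab\,\mathrm{Ord}_\infty(f)+\tfrac{cd}{N}\,\mathrm{Ord}_0(f)\big)\Big).
\]

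\textbf{Step 3 (rescale).} Take $\gamma=\left(a~b\atop c~d\right)\in\Gamma_1(tN_0N)$ and set $\gamma'=\left(a~~tb\atop c/t~~d\right)$. Then $t(\gamma\tau)=\gamma'(t\tau)$ and $\gamma'\in\Gamma_1(N_0N)$. By Step 2 the phase of $f(t\tau)$ under $\gamma$ is
\[
\pi i\big(tab\,m_1/n_1+\tfrac{c}{tN}\,d\,m_2/n_2\big).
\]
Here $tm_1/n_1$ is even by the choice of $t$, and $c/(tN)$ is a multiple of $N_0$, so $\tfrac{c}{tN}\,m_2/n_2$ is even by the choice of $N_0$. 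Hence the phase is trivial and condition (1) of Definition \ref{defi-modu} holds.

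\textbf{Step 4 (cusps).} Each $\eta_{\delta,g}$ is holomorphic and nonvanishing on $\mathbb{H}$. Under an arbitrary $\gamma\in\Gamma$, Schoeneberg's general formula expresses it as a constant times a product of the form $q_w^{\alpha}\prod(1-\zeta q_w^{n})$. So $f(\gamma t\tau)$ has a Fourier expansion in $q_{w_\gamma}$ with a finite leading exponent, which is condition (2) of Definition \ref{defi-modu}.

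The main obstacle is Step 2: getting the exact multiplier right, including the $\pm$ sign and constant-term conventions in Schoeneberg's formula and the Dedekind-sum bookkeeping for the $g=0,\delta/2$ factors with half-integral exponents. I would first check the formula on $\eta(\delta\tau)^2$, where the classical multiplier is known explicitly, and then extend it to general $g$ via the theta-function representation.
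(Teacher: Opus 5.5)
The paper gives no proof of this lemma; it quotes Robins's Theorem~3. Your outline (Schoeneberg multipliers, then rescaling, then cusps) is a natural way to re-derive it. Your Step~4 and the identity $t(\gamma\tau)=\gamma'(t\tau)$ in Step~3 are fine.

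\textbf{The gap is Step~2, and everything rests on it.} The closed form $\exp\bigl(\pi i(ab\,\mathrm{Ord}_\infty(f)+\tfrac{cd}{N}\mathrm{Ord}_0(f))\bigr)$ cannot be the multiplier of $f$ on $\Gamma_1(N)$ in general, because it is not a character. Take $T=\begin{pmatrix}1&1\\0&1\end{pmatrix}$ and $U=\begin{pmatrix}1&0\\N&1\end{pmatrix}$. The formula assigns exponents $\pi i\,\mathrm{Ord}_\infty(f)$ to $T$, $\pi i\,\mathrm{Ord}_0(f)$ to $U$, and $\pi i((1+N)\mathrm{Ord}_\infty(f)+\mathrm{Ord}_0(f))$ to $TU$. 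This is consistent only if $N\,\mathrm{Ord}_\infty(f)\in 2\mathbb{Z}$. The same test with $N$ replaced by $\delta$ shows that your single-factor exponent in Step~1 also fails.

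A concrete counterexample: $f=\eta(\tau)/\eta(2\tau)=\eta_{1,0}^{1/2}\eta_{2,0}^{-1/2}$ has $N=2$, $w(f)=0$, $\mathrm{Ord}_\infty(f)=-\tfrac1{12}$ and $\mathrm{Ord}_0(f)=\tfrac1{12}$. Since $f(\tau)=\mathfrak{f}_1(2\tau)$, the Weber relations give $f\bigl(\tfrac{\tau}{2\tau+1}\bigr)=e^{-\pi i/12}f(\tau)$. Step~2 predicts $e^{+\pi i/12}$.

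The true Schoeneberg multiplier contains the multiplier of $\eta(\delta\tau)^2$ under $\begin{pmatrix}a&b\delta\\ c/\delta&d\end{pmatrix}$. That involves Dedekind sums, and also Jacobi symbols once odd powers of $\eta(\delta\tau)$ occur through half-integral $r_{\delta,0}$ or $r_{\delta,\delta/2}$. It also carries an extra $g$-dependent phase. These pieces collapse to a trivial multiplier only under the full hypotheses: weight zero and \emph{both} orders even. That implication is exactly the content of Robins's theorem. Your phrase ``should also make the Dedekind-sum contributions collapse'' leaves it unproved, and states it in a form that is false.

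This also breaks Step~3 as written. There you apply Step~2 to $f$ itself at $\gamma'\in\Gamma_1(N_0N)$, although the orders of $f$ are not assumed even. The only correct statement available is the conditional one (both orders even $\Rightarrow$ trivial multiplier), and it does not apply to $f$ at that level.

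The repair is to rescale the function rather than the matrix; this is how the stated lemma follows from the case $t=N_0=1$.
\begin{itemize}
\item Since $\eta_{\delta,g}(t\tau)=\eta_{t\delta,tg}(\tau)$, the function $f(t\tau)=\prod\eta_{t\delta,tg}^{r_{\delta,g}}(\tau)$ is a generalized eta-quotient of level $N'=tN_0N$.
\item It has the same half-integrality pattern, since $tg\in\{0,t\delta/2\}$ iff $g\in\{0,\delta/2\}$.
\item Its weight is still $\sum_\delta r_{\delta,0}=0$.
\item At level $N'$ its two orders are $\sum t\delta P_2(g/\delta)r_{\delta,g}=t\,\mathrm{Ord}_\infty(f)$ and $\sum \frac{N'}{t\delta}P_2(0)r_{\delta,g}=N_0\,\mathrm{Ord}_0(f)$. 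Both are even by the choice of $t$ and $N_0$.
\end{itemize}
The unscaled theorem at level $N'$ then gives modularity on $\Gamma_1(tN_0N)$ directly. What remains is to prove or cite that unscaled statement, which requires the exact multiplier bookkeeping rather than Step~2's formula.
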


% \begin{thm}\label{main1-gg} For the generalized Nahm sum
% $\widetilde{f}_{A,\boldsymbol{b}_j,c_j, \boldsymbol{d}}(q)$ of index $({2,\ldots, 2}, 1)$,

% {\rm (1)} if  $(A,\boldsymbol{b}_j,c_j)$ {\rm (}$0\leq j\leq r${\rm )}   are the  triples  specified   in Theorem  \ref{main1},  then $\widetilde{f}_{A,\boldsymbol{b}_j,c_j,\boldsymbol{d}}(q^{32r-8})$ are modular functions for    $\Gamma_1(128(4r-1)^2)$;

% {\rm (2)}  if $(A,\boldsymbol{b}_j,c_j)$ {\rm (}$0\leq j\leq r${\rm )}  are the  triples  given in Theorem  \ref{main2},  then $\widetilde{f}_{A,\boldsymbol{b}_j,c_j,\boldsymbol{d}}(q^{32r-24})$ are modular functions for $\Gamma_1(64(4r-3)^2)$.
% \end{thm}

We now prove Theorems \ref{main1}, \ref{main2} and \ref{main3} with the aid of Theorem \ref{main1a} and Lemma \ref{lem-mf}.

\begin{proof}[Proof of Theorem \ref{main1}]
Let $(A,\boldsymbol{b}_j,{c}_j)$  be the  triple of index $\boldsymbol{d}=({2,\ldots, 2}, 1)$ given in Theorem \ref{main1}.
It follows from Theorem \ref{main1a} (1) and (2) that, for $0\leq j\leq r$,
\begin{align*}
&\widetilde{f}_{A,\boldsymbol{b}_j,{c}_j, \boldsymbol{d}}({\tau})=\widetilde{f}_{A,\boldsymbol{b}_j,{c}_j, \boldsymbol{d}}(q)\\[3pt]
&=\frac{\eta_{16r-4,4k_j}(\tau)\eta_{16r-4,0}^{1/2}(\tau)}{\eta_{4,1}(\tau)\eta_{4,0}^{1/2}(\tau)}+\frac{\eta_{8r-2,2k_j}(\tau)\eta_{16r-4,8r-4k_j-2}(\tau)\eta_{8r-2,0}^{3/2}(\tau)}
{\eta_{8r-2,4r-2k_j-1}(\tau)\eta_{4,2}(\tau)\eta_{4,0}^{1/2}(\tau)\eta_{4r-1,0}^{1/2}(\tau)\eta_{16r-4,0}^{1/2}(\tau)},
\end{align*}
where $k_0=2r-1$ and $k_j=j$ for $1\leq j\leq r$.

Let
\begin{align*}
    f_1(\tau)&=\frac{\eta_{16r-4,4k_j}(\tau)\eta_{16r-4,0}^{1/2}(\tau)}{\eta_{4,1}(\tau)\eta_{4,0}^{1/2}(\tau)},\\[10pt]
    f_2(\tau)&=\frac{\eta_{8r-2,2k_j}(\tau)\eta_{16r-4,8r-4k_j-2}(\tau)\eta_{8r-2,0}^{3/2}(\tau)}
{\eta_{8r-2,4r-2k_j-1}(\tau)\eta_{4,2}(\tau)\eta_{4,0}^{1/2}(\tau)\eta_{4r-1,0}^{1/2}(\tau)\eta_{16r-4,0}^{1/2}(\tau)}.
\end{align*}
Then $\widetilde{f}_{A,\boldsymbol{b}_j,c_j, \boldsymbol{d}}({\tau})=f_1(\tau)+f_2(\tau)$.
Take $N=16r-4$. Then we have
\begin{align*}
    &\quad \quad w(f_1)=\frac{1}{2}-\frac{1}{2}=0, \quad
    \mathrm{Ord}_0(f_1)=-r+\frac{1}{2},\\
     &\mathrm{Ord}_\infty(f_1)=\frac{64r^2-(64k_j+36)r+16k_j^2+16k_j+5}{16r-4},
\end{align*}
and
\begin{align*}
    &\quad \quad \quad w(f_2)=\frac{3}{2}-\frac{1}{2}-\frac{1}{2}-\frac{1}{2}=0,\\
    &\mathrm{Ord}_\infty(f_2)=\frac{(4 r-4 k_j-1)^2}{16 r-4},\quad
    \mathrm{Ord}_0(f_2)=-r+\frac{1}{2}.
\end{align*}
Let $t=32r-8$ and $N_0=4$. Then
\begin{align*}
  t\cdot \mathrm{Ord}_\infty(f_1)&=2(64r^2-(64k_j+36)r+16k_j^2+16k_j+5),\\
  t\cdot\mathrm{Ord}_\infty(f_2)&=2(4 r-4 k_j-1)^2
\end{align*}
and
\begin{align*}
  N_0\cdot \mathrm{Ord}_0(f_1)=N_0\cdot \mathrm{Ord}_0(f_2)=2(-2r+1)
\end{align*}
are even.
By Lemma \ref{lem-mf},
we know that $f_1((32r-8)\tau)$ and $f_2((32r-8)\tau)$ are modular functions for $\Gamma_1(128(4r-1)^2)$.
Therefore,
the function $\widetilde{f}_{A,\boldsymbol{b}_j,{c}_j, \boldsymbol{d}}((32r-8)\tau)$
(i.e., $\widetilde{f}_{A,\boldsymbol{b}_j,{c}_j, \boldsymbol{d}}(q^{32r-8})$) is
a modular function for $\Gamma_1(128(4r-1)^2)$.
\end{proof}

\begin{proof}[Proof of Theorem \ref{main2}]
Let $({B},{\boldsymbol{b}_j},{{c}_j})$ be the triple
of index $\boldsymbol{d}=({2,\ldots, 2}, 1)$ specified in Theorem \ref{main2}.
It follows from Theorem \ref{main1a} (3) and (4) that, for $0\leq j\leq r$,  we have
\begin{align*}
&\widetilde{f}_{{B},\boldsymbol{b}_j,{c}_j, \boldsymbol{d}}({\tau})=\widetilde{f}_{{B},\boldsymbol{b}_j,{c}_j, \boldsymbol{d}}(q)\\[5pt]
&=\frac{\eta_{16r-12,4\tilde{k}_j}(\tau)\eta_{16r-12,0}^{1/2}(\tau)}
{\eta_{4,1}(\tau)\eta_{4,0}^{1/2}(\tau)}+
\frac{\eta_{8r-6,2\tilde{k}_j}(\tau)\eta_{16r-12,8r-4\tilde{k}_j-6}(\tau)\eta_{8r-6,0}^{3/2}(\tau)}
{\eta_{8r-6,4r-2\tilde{k}_j-3}(\tau)\eta_{4,2}(\tau)\eta_{4,0}^{1/2}(\tau)\eta_{4r-3,0}^{1/2}(\tau)\eta_{16r-12,0}^{1/2}(\tau)},
\end{align*}
where $\tilde{k}_0=2r-2$ and $\tilde{k}_j=j$ and $1\leq j\leq r$.

Let
\begin{align*}
    g_1(\tau)&
=\frac{\eta_{16r-12,4\tilde{k}_j}(\tau)\eta_{16r-12,0}^{1/2}(\tau)}
{\eta_{4,1}(\tau)\eta_{4,0}^{1/2}(\tau)},\\[10pt]
g_2(\tau)&=\frac{\eta_{8r-6,2\tilde{k}_j}(\tau)\eta_{16r-12,8r-4\tilde{k}_j-6}(\tau)\eta_{8r-6,0}^{3/2}(\tau)}
{\eta_{8r-6,4r-2\tilde{k}_j-3}(\tau)\eta_{4,2}(\tau)\eta_{4,0}^{1/2}(\tau)\eta_{4r-3,0}^{1/2}(\tau)\eta_{16r-12,0}^{1/2}(\tau)}.
\end{align*}
Then $\widetilde{f}_{{B},\boldsymbol{b}_j,{c}_j, \boldsymbol{d}}({\tau})=g_1(\tau)+g_2(\tau)$. Set $\tilde{N}=16r-12$, we have
\begin{align*}
    & w(g_1)=\frac{1}{2}-\frac{1}{2}=0, \quad \mathrm{Ord}_0(g_1)=1-r,\\[7pt]
    &\mathrm{Ord}_\infty(g_1)=\frac{64 r^2-(64 k_j+100) r+16 \tilde{k}_j^2+48 \tilde{k}_j+39}{4(4 r-3)},
\end{align*}
and
\begin{align*}
    &w(g_2)=\frac{3}{2}-\frac{1}{2}-\frac{1}{2}-\frac{1}{2}=0, \quad
    \mathrm{Ord}_0(g_2)=1-r,\\[7pt]
    &\mathrm{Ord}_\infty(g_2)=\frac{(4 r-4 \tilde{k}_j-3)^2}{4(4 r-3)}.
\end{align*}
Let $\tilde{t}=32r-24$ and $\tilde{N}_0=2$. Then
\begin{align*}
  \tilde{t}\cdot \mathrm{Ord}_\infty(g_1)&=2(64 r^2-(64 \tilde{k}_j+100) r+16 \tilde{k}_j^2+48 \tilde{k}_j+39),\\
  \tilde{t}\cdot\mathrm{Ord}_\infty(g_2)&=2(4 r-4 \tilde{k}_j-3)^2
\end{align*}
and
\begin{align*}
  \tilde{N}_0\cdot \mathrm{Ord}_0(g_1)=\tilde{N}_0\cdot \mathrm{Ord}_0(g_2)=2(1-r)
\end{align*}
are even.
By Lemma \ref{lem-mf}, we have $g_1((32r-24)\tau)$ and $g_2((32r-24)\tau)$
are modular functions for $\Gamma_1(64(4r-3)^2)$.
Hence, $\widetilde{f}_{{B},\boldsymbol{b}_j,{c}_j, \boldsymbol{d}}((32r-24)\tau)$
(i.e., $\widetilde{f}_{{B},\boldsymbol{b}_j,{c}_j, \boldsymbol{d}}(q^{32r-24})$)
is a modular function for $\Gamma_1(64(4r-3)^2)$.  This completes the proof.
\end{proof}

% \begin{proof}[Proof of  Theorem \ref{main1} and Theorem \ref{main2}]
% Theorem \ref{main1} and Theorem \ref{main2}
% follow immediately from Theorem \ref{main1-gg}.
% \end{proof}

% As shown in Theorem \ref{main1b}, the sums $\widetilde{f}_{A,b,c, \boldsymbol{d}}(q)$
% and $\widetilde{f'}_{A',b',c', d'}(q)$ are generalized eta-quotients,
% where $({A,b,c, \boldsymbol{d}})$ are the quadruples specified in Theorem \ref{main3}
% and $({A',b',c',d'})$ are the quadruples given in Theorem \ref{MT-3}.
% Their modularity follows from the following theorems.

% \begin{thm}\label{main2-gg}
% Let $(A,\boldsymbol{b}_j,c_j)$ {\rm (}$0\leq j\leq r${\rm )}  are the  triples  specified   in Theorem  \ref{main3}.
% Then for the generalized Nahm sums
% $\widetilde{f}_{A,\boldsymbol{b}_j,c_j, \boldsymbol{d}}(q)$ with the symmetrizer $D={\rm diag} ({1,\ldots, 1}, 2)_{r\times r}$,
% we have $\widetilde{f}_{A,\boldsymbol{b}_j,c_j, \boldsymbol{d}}(q^{32r-8})$ are modular functions for   $\Gamma_1(128(4r-1)^2)$.
% \end{thm}

\begin{proof}[Proof of Theorem \ref{main3}]
Let $(A^{\vee},\boldsymbol{b}_j,c_j)$ {\rm (}$0\leq j\leq r${\rm )}  be
the  triple of index $\boldsymbol{d}^{\vee}=(1,\ldots,1,2)$ specified  in Theorem  \ref{main3}.
From Theorem \ref{main1a} (5)  and (6), we can see that, for $0\leq j\leq r$,
\begin{align*}
\widetilde{f}_{A^{\vee},\boldsymbol{b}_j,{c}_j, \boldsymbol{d}^{\vee}}(\tau)=\widetilde{f}_{A^{\vee},\boldsymbol{b}_j,{c}_j, \boldsymbol{d}^{\vee}}(q)=\frac{\eta_{2r-\frac{1}{2},k_j}(\tau)\eta_{2r-\frac{1}{2},0}^{1/2}(\tau)}
{\eta_{2,\frac{1}{2}}(\tau)\eta_{2,0}^{1/2}(\tau)},
\end{align*}
where $k_0=\frac{1}{2}$ and $k_j=j$ for $1\leq j\leq r$.

Take $N=4r-1$. Then we have
\begin{align*}
    &\quad \quad w(\widetilde{f}_{A^{\vee},\boldsymbol{b}_j,{c}_j, \boldsymbol{d}^{\vee}}(\tau))=0, \quad
    \mathrm{Ord}_0(\widetilde{f}_{A^{\vee},\boldsymbol{b}_j,{c}_j, \boldsymbol{d}^{\vee}}(\tau))=\frac{5-4 r}{8} ,\\
     &\mathrm{Ord}_\infty(\widetilde{f}_{A^{\vee},\boldsymbol{b}_j,{c}_j, \boldsymbol{d}^{\vee}}(\tau))=\frac{8 r^2-2(8 k_j+3)r+8{k_j}^2+4 k_j+1}{16 r-4}.
\end{align*}
Let $t=32r-8$ and $N_0=16$. Then both
$$t\cdot \mathrm{Ord}_\infty(\widetilde{f}_{A^{\vee},\boldsymbol{b}_j,{c}_j, \boldsymbol{d}^{\vee}}(\tau)) \text{ and } N_0\cdot \mathrm{Ord}_0(\widetilde{f}_{A^{\vee},\boldsymbol{b}_j,{c}_j, \boldsymbol{d}^{\vee}}(\tau))$$ are even.
It follows from Lemma \ref{lem-mf} that
$\widetilde{f}_{A^{\vee},\boldsymbol{b}_j,{c}_j, \boldsymbol{d}^{\vee}}((32r-8)\tau)$ (that is, \\ $\widetilde{f}_{A^{\vee},\boldsymbol{b}_j,{c}_j, \boldsymbol{d}^{\vee}}(q^{32r-8})$) is a  modular function for $\Gamma_1(128(4r-1)^2)$.
\end{proof}

\section{Modularity for vector-valued functions}\label{sect:trans}

In this section, we will prove Theorem \ref{vmf-G-AF}
and Theorem \ref{vmf-H-AF}.
We first recall some necessary definitions and background in Subsection \ref{Subsect--MF-Pre}.
Then based on Theorems \ref{main1}, \ref{main2} and  \ref{main3},
we construct four vector-valued functions, divide them into two ``Langlands dual''
pairs,
and derive modular transformation formulas for each pair.
Using these transformation formulas, we prove Theorem \ref{vmf-G-AF}
and Theorem \ref{vmf-H-AF}
in Subsection \ref{Proof-thmG} and Subsection \ref{proof-thmH},
respectively.

\subsection{Preliminary}\label{Subsect--MF-Pre}

Let $\Gamma'$ be a discrete subgroup of
$$PSL_2(\mathbb{R})=\left\{\left(a~~b \atop c~~d\right)\colon  a,b,c,d\in\mathbb{R}, \text{ and } ad-bc>0\right\}$$
with $\rho\colon \Gamma'\rightarrow GL_n(\mathbb{C})$
a representation of $\Gamma'$.
A column vector of meromorphic functions $\boldsymbol{F}(\tau)=(f_1(\tau), f_2(\tau),$ $\ldots, f_n(\tau))^T$
on the upper half-plane
is called a vector-valued automorphic form of multiplier $\rho$
and integral weight $k$ with respect to $\Gamma'$, if it satisfies for  $\gamma\in\Gamma'$,
\begin{align*}
\boldsymbol{F}(\gamma\tau)=\rho(\gamma) (c\tau+d)^k \boldsymbol{F}(\tau).
\end{align*}
See \cite{Saber-Sebbar-2017}.

Furthermore, suppose   $\Gamma'$ is a finite index subgroup of  $\Gamma$
and let $\boldsymbol{F}(\tau)=(f_1(\tau), f_2(\tau),$ $\ldots, f_n(\tau))^T$  be a vector-valued automorphic form of multiplier $\rho$
and integral weight $k$ for $\Gamma'$. If, in addition, for  all $\gamma\in \Gamma$ and each component function $f_i(\tau)$,
there exists a positive integer $N_{\gamma,i}$ such that
\begin{align*}
f_i(\gamma\tau)=(c\tau+d)^k \sum_n a_n e^{2\pi in \tau/N_{\gamma,i}}
\end{align*}
with $a_n=0$ for  $n\ll 0$,
then $\boldsymbol{F}(\tau)$ is called a vector-valued modular function
of integral weight $k$ associated with $\rho$ for  $\Gamma'$;
see \cite{Knopp-Mason-2003}.

A vector-valued automorphic form (modular function)
of weight 0 associated with $\rho$ with respect to $\Gamma'$
is referred to a vector-valued automorphic form (modular function)
associated with $\rho$ for $\Gamma'$.

To prove Theorem \ref{vmf-G-AF}
and Theorem \ref{vmf-H-AF}, we first recall some properties of Weber's modular functions and theta series. The Dedekind eta function is defined as follows:
\begin{align*}
    \eta(\tau)=q^{1/24}(q;q)_\infty,
\end{align*}
and Weber's modular functions are defined as
\begin{align*}
    \mathfrak{f}(\tau)=q^{-1/48}(-q^{1/2};q)_\infty,\,
    \mathfrak{f}_1(\tau)=q^{-1/48}(q^{1/2};q)_\infty,\,
    \mathfrak{f}_2(\tau)=q^{1/24}(-q;q)_\infty,
\end{align*}
where $q=e^{2\pi i\tau}$ and $\tau\in\mathbb{H}$.
They have the following properties:
\begin{align}
& \eta(-1 / \tau)=\sqrt{-i \tau} \eta(\tau), \quad \eta(\tau+1)=e^{\pi i / 12} \eta(\tau),
\label{eta-trans}\\[5pt]
& \mathfrak{f}(-1 / \tau)=\mathfrak{f}(\tau), \quad \mathfrak{f}_2(-1 / \tau)=\frac{1}{\sqrt{2}} \mathfrak{f}_1(\tau), \quad \mathfrak{f}_1(-1 / \tau)=\sqrt{2} \mathfrak{f}_2(\tau),
\label{f-p-trans}\\[5pt]
& \mathfrak{f}(\tau+1)=e^{-\pi i / 24} \mathfrak{f}_1(\tau), \quad \mathfrak{f}_1(\tau+1)=e^{-\pi i / 24} \mathfrak{f}(\tau), \quad \mathfrak{f}_2(\tau+1)=e^{\pi i / 12} \mathfrak{f}_2(\tau).
\label{f-a-trans}
\end{align}
For $m\in\mathbb{Q}_{>0}$ and $j\in\mathbb{Q}$, define the theta series
\begin{align*}
    h_{j, m}(\tau)=\sum_{k \in \mathbb{Z}} q^{m\left(k+\frac{j}{2 m}\right)^2}, \quad
    g_{j, m}(\tau)=\sum_{k \in \mathbb{Z}}(-1)^k q^{m\left(k+\frac{j}{2 m}\right)^2}.
\end{align*}
It follows from the Jacobi triple product identity \eqref{eq-2.18}  that
\begin{align}
& h_{j, m}(\tau)=q^{\frac{j^2}{4 m}}\left(-q^{m-j},-q^{m+j}, q^{2 m}; q^{2 m}\right)_{\infty},
\label{def-h}\\[5pt]
& g_{j, m}(\tau)=q^{\frac{j^2}{4 m}}\left(q^{m+j}, q^{m-j}, q^{2 m}; q^{2 m}\right)_{\infty}.
\label{def-g}
\end{align}
Moreover, $h_{j, m}(\tau)$ and $g_{j, m}(\tau)$ satisfy the following properties:
\begin{align}
& h_{j, m}(\tau)=h_{-j, m}(\tau)=h_{2 m+j, m}(\tau), \quad g_{j, m}(\tau)=g_{-j, m}(\tau)=-g_{2 m+j, m}(\tau),\label{iden-h}\\[5pt]
& h_{j, m}(\tau)=h_{2 j, 4 m}(\tau)+h_{4 m-2 j, 4 m}(\tau),\label{iden-h-a}\\[5pt]
& g_{j, m}(\tau)=h_{2 j, 4 m}(\tau)-h_{4 m-2 j, 4 m}(\tau),\label{iden-g-h}\\[5pt]
& h_{j, m}(2 \tau)=h_{2 j, 2 m}(\tau), \quad g_{j, m}(2 \tau)=g_{2 j, 2 m}(\tau).\label{iden-h-2}
\end{align}

\begin{lem}\label{Wak-lem}{\rm (\!\!\cite[Theorem 4.5]{Wakimoto-2001})}
For $j\in\mathbb{Z}$ and $m\in\frac{1}{2}\mathbb{N}$,
\begin{align*}
g_{j, m}\left(-\frac{1}{\tau}\right)
=\frac{(-i \tau)^{\frac{1}{2}}}{\sqrt{2 m}} \sum_{\substack{0 \leq k \leq 4 m-1 \\ k \text { odd }}} e^{\frac{\pi i j k}{2 m}} h_{\frac{k}{2}, m}(\tau).
\end{align*}
\end{lem}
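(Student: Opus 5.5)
The plan is to prove the formula directly by Poisson summation. It is a classical theta inversion; by \eqref{def-g} it could also be derived from Jacobi's imaginary transformation, but Poisson summation is the cleanest route.

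\emph{Step 1: the summand and its Fourier transform.} Set $a=\frac{j}{2m}$. Then
\[
g_{j,m}(-1/\tau)=\sum_{k\in\mathbb{Z}}\varphi(k),\qquad
\varphi(x)=e^{\pi i x}\,e^{-2\pi i m (x+a)^2/\tau}.
\]
Since $\mathrm{Im}(-1/\tau)>0$, the function $\varphi$ is a Schwartz function, so Poisson summation gives $\sum_k\varphi(k)=\sum_{n}\widehat{\varphi}(n)$. To compute $\widehat{\varphi}(n)$, substitute $x\mapsto x-a$. This reduces the integral to the Fourier transform of the Gaussian $e^{-\pi A x^2}$ with $A=2im/\tau$, evaluated at $\xi=n-\tfrac12$. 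It carries the factor $e^{2\pi i a(n-\frac12)}$ and the extra phase $e^{-\pi i a}$ from the shift in $e^{\pi i x}$; these combine to $e^{2\pi i a (n-1/2)}$ after collecting terms. Using $\widehat{e^{-\pi A x^2}}(\xi)=A^{-1/2}e^{-\pi\xi^2/A}$ on $\mathrm{Re}A>0$, by analytic continuation with the principal branch, we get
\[
\widehat{\varphi}(n)=\frac{(-i\tau)^{1/2}}{\sqrt{2m}}\,e^{\frac{\pi i j(2n-1)}{2m}}\,q^{\frac{(2n-1)^2}{16m}}.
\]
The key branch identity here is $A^{-1/2}=\sqrt{\tau/(2im)}=(-i\tau)^{1/2}/\sqrt{2m}$.

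\emph{Step 2: regroup the odd integers.} Put $l=2n-1$, so that $l$ runs over all odd integers. Because $m\in\frac12\mathbb{N}$, the number $4m$ is an even positive integer. Hence every odd $l$ can be written uniquely as $l=k+4ms$ with $s\in\mathbb{Z}$ and $k$ odd, $0\le k\le 4m-1$. Since $j\in\mathbb{Z}$, the phase satisfies
\[
e^{\pi i j l/(2m)}=e^{\pi i jk/(2m)}\,e^{2\pi i js}=e^{\pi i jk/(2m)},
\]
so it depends only on $k$. For the exponent, $\frac{(k+4ms)^2}{16m}=m\bigl(s+\frac{k/2}{2m}\bigr)^2$. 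Summing over $s$ therefore yields exactly $h_{k/2,m}(\tau)$, and this gives the stated formula.

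\emph{Main obstacle.} The only delicate point is bookkeeping rather than ideas. One must fix the branch of $(-i\tau)^{1/2}$ consistently with $A^{-1/2}$, and track the phase coming from the half-integer shift $\xi=n-\frac12$. The hypotheses $j\in\mathbb{Z}$ and $4m\in2\mathbb{Z}$ are exactly what make the residues $k$ of $l$ modulo $4m$ well defined and odd, and what kill the phase $e^{2\pi i j s}$. I would double-check the final constant by the case $j=0$, $m=\tfrac12$: there $g_{0,1/2}(\tau)=\eta(\tau/2)^2/\eta(\tau)$, and the formula must reduce to the known transformation obtained from \eqref{eta-trans} and \eqref{f-p-trans}.
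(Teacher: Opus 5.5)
Your proof is correct. It takes a different route from the paper: the paper gives no proof and simply imports the formula from Wakimoto's Theorem~4.5, whereas you derive it from scratch. You apply Poisson summation to $\varphi(x)=e^{\pi i x}e^{-2\pi i m(x+a)^2/\tau}$ and then regroup the odd frequencies $l=2n-1$ into residue classes modulo the even integer $4m$.

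The key steps check out:
\begin{itemize}
\item The formula $\widehat{\varphi}(n)=\frac{(-i\tau)^{1/2}}{\sqrt{2m}}e^{\pi i j(2n-1)/(2m)}q^{(2n-1)^2/(16m)}$ is right.
\item The identity $A^{-1/2}=(-i\tau)^{1/2}/\sqrt{2m}$ holds for the principal branch, because both $A=2im/\tau$ and $-i\tau$ lie in the right half-plane.
\item The identity $(k+4ms)^2/(16m)=m\bigl(s+\frac{k/2}{2m}\bigr)^2$ produces exactly $h_{k/2,m}(\tau)$.
\end{itemize}

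There is one slip in your prose, though the final formula is unaffected. After substituting $x=y-a$, the combined modulation $e^{\pi i x}e^{-2\pi i n x}=e^{-2\pi i(n-\frac12)x}$ gives the phase $e^{2\pi i a(n-\frac12)}$ all at once. There is no further factor $e^{-\pi i a}$ to multiply in; doing so would give $e^{2\pi i a(n-1)}$.

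Your sanity check does work. At $j=0$, $m=\tfrac12$ one has $g_{0,1/2}(\tau)=\eta(\tau/2)^2/\eta(\tau)$ and $h_{1/2,1/2}(\tau)=2\eta(2\tau)^2/\eta(\tau)$, and \eqref{eta-trans} then confirms the constant.

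What your route buys is self-containment and transparency. It shows exactly where each hypothesis enters: $j\in\mathbb{Z}$ kills the phase $e^{2\pi i js}$, and $m\in\tfrac12\mathbb{N}$ makes the residues of odd $l$ modulo $4m$ odd. It also fixes the branch of $(-i\tau)^{1/2}$. That branch matters in Section~\ref{sect:trans}, where this factor must cancel against the one coming from \eqref{eta-trans}.

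What the citation buys is brevity. It also keeps the lemma in the same theta-function framework as Lemma~\ref{ww-lem}, whose transformation $\tau\mapsto-(\tau+1)/(4\tau)$ is considerably more tedious to derive by hand.
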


\begin{lem}\label{ww-lem}{\rm(\!\!\cite[Lemma 2.13]{Wang-Wang-2025-2})}
For any integer $m$ and an odd integer $1\leq j\leq m$,
\begin{align*}
& g_{j, m}\left(-\frac{\tau+1}{4 \tau}\right) \\[5pt]
&\quad  =\sqrt{\frac{-\tau}{m}} \epsilon_m \sum_{\substack{1 \leq \ell \leq m-1 \\
\ell \text { odd }}}\left(e^{\pi i \frac{1-(j+\ell)-(j+\ell-2)^2 \delta_m}{2 m}}+e^{\pi i \frac{1-(j-\ell)-(j-\ell-2)^2 \delta_m}{2 m}}\right) g_{\ell, m}\left(\frac{\tau+1}{4}\right),
\end{align*}
where
\begin{align*}
    \epsilon_m=
    \begin{cases}
        1, &\text{if } m\equiv 1\pmod{4},\\[5pt]
        i, &\text{if } m\equiv 3\pmod{4},
    \end{cases}
\end{align*}
and $\delta_m$ is any integer satisfying $4\delta_m\equiv 1\pmod{m}$.
\end{lem}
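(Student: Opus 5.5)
The identity is Lemma~\ref{ww-lem}, the transformation of $g_{j,m}$ under $\tau\mapsto -(\tau+1)/(4\tau)$. I would reduce it to Lemma~\ref{Wak-lem} (the $S$-transformation of $g_{j,m}$) combined with translations and the elementary identities \eqref{iden-h}--\eqref{iden-h-2}. The starting point is the matrix factorization
\[
-\frac{\tau+1}{4\tau}=-\frac14-\frac{1}{4\tau}.
\]
Put $\tau'=4\tau$. Then the left side equals $g_{j,m}(-1/\tau'-1/4)$. Since $q^{m(k+j/2m)^2}$ picks up the phase $e^{-\pi i m(k+j/2m)^2/2}$ under $\tau\mapsto\tau-1/4$, and $m(k+j/2m)^2=mk^2+jk+j^2/4m$, this phase depends on $k$ only through $mk^2+jk \bmod 4$, i.e.\ on $k\bmod 4$ (and on the parity of $m$).

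The plan is therefore to split the sum over $k$ by residue mod $4$ (or mod $2$ when $m$ is odd). This writes $g_{j,m}(\sigma-1/4)$ as a finite linear combination of theta series $h_{j',4^2m}$ or $g_{j',16m}$ evaluated at $\sigma$, via \eqref{iden-h-a}, \eqref{iden-g-h} applied twice. Now substitute $\sigma=-1/\tau'$ and apply Lemma~\ref{Wak-lem}, or its $h$-analogue obtained from Poisson summation, to each piece. This yields sums of $h_{k/2,16m}(4\tau)$ over odd $k$ with exponential coefficients $e^{\pi i j' k/(32m)}$.

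Next I would recombine the resulting theta functions of $4\tau$, using \eqref{iden-h-2} and \eqref{iden-h}, into theta series of level $m$ evaluated at $(\tau+1)/4$. This works because $g_{\ell,m}((\tau+1)/4)=\sum_k(-1)^k e^{\pi i m(k+\ell/2m)^2/2}q^{m(k+\ell/2m)^2/4}$, which is again a character-twisted theta series in the same lattice. Matching the two expansions term by term gives the coefficient of $g_{\ell,m}((\tau+1)/4)$ as a quadratic Gauss sum over $\mathbb{Z}/m$ (times powers of $e^{\pi i/2m}$). Evaluating that Gauss sum produces the factor $\sqrt{1/m}\,\epsilon_m$, with $\epsilon_m=1$ or $i$ according to $m\bmod 4$; this uses $m$ odd, which is implicit since $j$ and $\ell$ run over odd values. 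Completing the square with $4^{-1}\equiv\delta_m \pmod m$ produces the phases $e^{\pi i(1-(j\pm\ell)-(j\pm\ell-2)^2\delta_m)/2m}$. The two signs $\pm\ell$ arise from the symmetry $g_{\ell,m}=g_{-\ell,m}$, which folds the sum over all odd $\ell \bmod 2m$ onto $1\le \ell\le m-1$; the term $\ell=m$ needs to be checked to vanish or cancel.

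The main obstacle is this Gauss sum evaluation together with the bookkeeping of the phases. Keeping $e^{\pi i/4}$-type factors consistent between $\sqrt{-i\tau'}$ and $\sqrt{-\tau}$, and reconciling the shift by $1/4$ (which introduces $\delta_m$), is error-prone. As a fallback I would verify the final formula numerically for $m=3,5,7$ and fix the normalization that way. Alternatively, I would argue more cleanly through the Weil representation attached to the lattice $\sqrt{2m}\,\mathbb{Z}$ applied to $\begin{pmatrix}-1&-1\\4&0\end{pmatrix}$ after conjugation, whose matrix entries are exactly these Gauss sums.
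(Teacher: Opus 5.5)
The paper does not prove this lemma; it quotes it from \cite[Lemma 2.13]{Wang-Wang-2025-2}, so your plan has to stand on its own. Its strategy works: shift by $-\tfrac14$, apply the $S$-transformation, and compare $q$-expansions with those of $g_{\ell,m}(\frac{\tau+1}{4})$. However, you have put the difficulty in the wrong place. Splitting $k$ by parity is enough if you keep the alternating sign. With $\sigma=-1/(4\tau)$,
\[
g_{j,m}\Big(\sigma-\frac14\Big)=e^{-\frac{\pi i j^2}{8m}}\Big(g_{2j,4m}(\sigma)-e^{-\frac{\pi i (m+j)}{2}}\,g_{4m+2j,4m}(\sigma)\Big).
\]
Apply Lemma \ref{Wak-lem} at level $4m$ and use $h_{k/2,4m}(4\tau)=\sum_{N\equiv k\,(\mathrm{mod}\,16m)}q^{N^2/(16m)}$. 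This gives
\[
g_{j,m}\Big(-\frac{\tau+1}{4\tau}\Big)=\sqrt{\frac{-i\tau}{2m}}\sum_{k\ \mathrm{odd}}\big(1-i^{\,k-j-m}\big)\,e^{-\frac{\pi i (k-j)^2}{8m}}\,e^{\frac{\pi i k^2}{8m}}\,q^{\frac{k^2}{16m}}.
\]
Here $\sqrt{-i\tau}=e^{\pi i/4}\sqrt{-\tau}$ and $e^{\pi i/4}(1-i^{k-j-m})\in\{\sqrt2,\sqrt2\,i\}$. So the prefactor is already $\sqrt{-\tau/m}$ times $1$ or $i$. The $1/\sqrt m$ comes from Poisson summation, and the only character sum is the trivial one modulo $4$ hidden in $1-i^{k-j-m}$. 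No quadratic Gauss sum over $\mathbb{Z}/m$ occurs in your route, so the step you call the main obstacle is not where $\epsilon_m/\sqrt m$ comes from.

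On the right, $g_{\ell,m}(\frac{\tau+1}{4})=\sum_{N\equiv\ell\,(\mathrm{mod}\,2m)}(-1)^{\frac{N-\ell}{2m}}e^{\frac{\pi i N^2}{8m}}q^{\frac{N^2}{16m}}$. Both sides are therefore expanded in the same functions, and you just compare the coefficients attached to $\pm k$.

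The constants $\epsilon_m$ and $\delta_m$ enter only when you rewrite the stated phases. Write $4\delta_m=1+mt$, so that $t\equiv 3m\pmod 4$. Then $e^{\pi i\frac{1-x-(x-2)^2\delta_m}{2m}}=e^{-\frac{\pi i x^2}{8m}}e^{-\frac{\pi i (x-2)^2t}{8}}$. For even $x$, the number $\epsilon_m e^{-\pi i (x-2)^2t/8}$ equals $1$ if $x+m\equiv 3\pmod 4$ and $i$ if $x+m\equiv 1\pmod 4$. Put $k=\ell+2mp$ and $x=j\pm\ell$. The identity then reduces to a finite check in $p \bmod 2$ and $x+m \bmod 4$, and that check goes through. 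The same check shows that the left-hand coefficients vanish for $k\equiv m\pmod{2m}$, which matches $g_{m,m}\equiv 0$ from \eqref{iden-h}.

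Your route is more elementary than the Weil-representation route you mention at the end. In the variable $z=\frac{\tau+1}{4}$ the map is $z\mapsto z/(1-4z)$, given by $\begin{pmatrix}1&0\\-4&1\end{pmatrix}=ST^{4}S^{-1}$. In that route a genuine Gauss sum modulo $m$ and completing the square with $4^{-1}\equiv\delta_m$ do appear, which explains the shape of the stated formula.

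You cannot fall back on numerical checks for $m=3,5,7$ to ``fix the normalization''. That proves nothing for general $m$. The root-of-unity verification above is exactly the part that must be written out.
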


\begin{rem}
Notice that we can choose $\delta_{4r-1}=r$ and $\delta_{4r-3}=3r-2$.
\end{rem}

We also need the following lemma.

\begin{lem}\label{inv-S}
Let $N$ be an odd integer, $m=\frac{N-1}{2}$,
$S=(s_{j,k})_{m\times m}$,
where
$s_{j,k}=\sqrt{\frac{2}{N}}\cos\frac{(2j-1)(2k-1)\pi}{2N}$.
Then $2S^2=E_m$,
where $E_m$ is the $m\times m$ identity matrix.
\end{lem}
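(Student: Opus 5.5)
Since $S$ is symmetric, I would compute the entries of $S^2$ directly:
\[
(S^2)_{j,l}=\frac{2}{N}\sum_{k=1}^{m}\cos\frac{(2j-1)(2k-1)\pi}{2N}\cos\frac{(2k-1)(2l-1)\pi}{2N}.
\]
The goal is to show this equals $\frac12\delta_{j,l}$ for $1\le j,l\le m$. The plan is to reduce everything to one closed-form cosine sum.

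\textbf{Step 1: product-to-sum.} Using $2\cos A\cos B=\cos(A-B)+\cos(A+B)$ and the identity $(2j-1)\pm(2l-1)\in\{2(j-l),\,2(j+l-1)\}$, the inner sum becomes
\[
\frac12\bigl(C(j-l)+C(j+l-1)\bigr),\qquad C(p):=\sum_{k=1}^{m}\cos\frac{(2k-1)p\pi}{N}.
\]

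\textbf{Step 2: evaluate $C(p)$.} I would use the standard telescoping identity $\sum_{k=1}^{m}\cos((2k-1)x)=\frac{\sin(2mx)}{2\sin x}$, valid for $\sin x\neq0$. Take $x=p\pi/N$ and $2m=N-1$. Then $\sin((N-1)x)=\sin(p\pi-p\pi/N)=-(-1)^p\sin(p\pi/N)$, so
\[
C(p)=-\tfrac12(-1)^p \quad\text{whenever } N\nmid p,
\]
while trivially $C(0)=m$.

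\textbf{Step 3: case analysis on $j,l$.} For $1\le j,l\le m$ we have $|j-l|\le m-1<N$, and $1\le j+l-1\le 2m-1=N-2$. Hence $N\mid (j-l)$ only when $j=l$, and $N$ never divides $j+l-1$.

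If $j\neq l$, the integers $p_1=j-l$ and $p_2=j+l-1$ differ by the odd number $2l-1$, so they have opposite parity. Therefore $C(p_1)+C(p_2)=-\tfrac12\bigl((-1)^{p_1}+(-1)^{p_2}\bigr)=0$.

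If $j=l$, we get $C(0)+C(2j-1)=m+\tfrac12=\tfrac N2$. Then $(S^2)_{j,j}=\frac2N\cdot\frac12\cdot\frac N2=\frac12$.

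Together these give $2S^2=E_m$. The only delicate point is Step 2: one must check the sign identity for $\sin((N-1)p\pi/N)$ and confirm that the excluded case $\sin x=0$ occurs only on the diagonal. That check is handled by the range bounds in Step 3, and the rest is routine.
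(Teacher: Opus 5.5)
Your proof is correct and follows the paper's argument essentially step for step. Both use the product-to-sum reduction to $\frac{1}{N}\bigl(C(j-l)+C(j+l-1)\bigr)$, the evaluation $C(p)=\frac{(-1)^{p+1}}{2}$ for $N\nmid p$, and the same diagonal and opposite-parity split; the only cosmetic difference is that you evaluate $C$ by the telescoping identity $\sum_{k=1}^m\cos((2k-1)x)=\frac{\sin(2mx)}{2\sin x}$, whereas the paper takes the real part of a geometric series.
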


\begin{proof}
To prove $S^2=\frac{1}{2} E_m$, let $a_{j,k}$ denote the $(j,k)$-entry of $S^2$. By matrix multiplication,
\begin{align*}
a_{j,k}&=\sum_{\ell=1}^m s_{j,\ell} s_{\ell, k}\\[5pt]
&=\frac{2}{N}\sum_{\ell=1}^m \cos\frac{(2j-1)(2\ell-1)\pi}{2N}\cos\frac{(2k-1)(2\ell-1)\pi}{2N}\\[5pt]
&=\frac{1}{N}\left(\sum_{\ell=1}^m \cos\frac{(j+k-1)(2\ell-1)\pi}{N}+
\sum_{\ell=1}^m \cos\frac{(j-k)(2\ell-1)\pi}{N}\right).
\end{align*}
Next we show that
\begin{align*}
C(t):=\sum_{\ell=1}^m \cos\frac{t(2\ell-1)\pi}{N}=
\begin{cases}
m, &\text{if } t=0,\\
\frac{(-1)^{t+1}}{2}, & \text{if } t\neq 0 \text{ and } \sin\frac{t\pi}{N}\neq 0.
\end{cases}
\end{align*}
For $t=0$, it is easy to see that $C(t)=m$.
For  $t$ with $\sin\frac{t\pi}{N}\neq 0$,
\begin{align*}
C(t)&=\mathrm{Re} \sum_{\ell=1}^m e^{\frac{it(2\ell-1)\pi}{N}}
=\mathrm{Re} \left(e^{ti\pi /N}\frac{1-e^{2mti\pi/N}}{1-e^{2ti\pi/N}}\right)\\[5pt]
&=\mathrm{Re}\left(\frac{1-(-1)^t e^{-ti\pi/N}}{-2i\sin\frac{t\pi}{N}}\right)\\[5pt]
&=\mathrm{Re}\left(\frac{1-(-1)^t (\cos\frac{t\pi}{N}-i\sin\frac{t\pi}{N})}{-2i\sin\frac{t\pi}{N}}\right)
=\frac{(-1)^{t+1}}{2}.
\end{align*}
Thus, for  $1\leq j=k\leq m$,
\begin{align*}
a_{j,j}=\frac{1}{N}(C(2j-1)+C(0))=\frac{1}{N}\left(\frac{1}{2}+m\right)=\frac{1}{2},
\end{align*}
and for  $1\leq j,k\leq m$ with $j\neq k$, we have
$1\leq j+k-1\leq 2m-1<N$ and $1\leq |j-k|\leq m-1< N$.
So
\begin{align*}
a_{j,k}=\frac{1}{N}(C(j+k-1)+C(j-k))=\frac{1}{N}\left(\frac{(-1)^{j+k}}{2}+\frac{(-1)^{j-k+1}}{2}\right)=0.
\end{align*}
Therefore, $S^2=\frac{1}{2}E_m$.
\end{proof}

\subsection{Proof of Theorem \ref{vmf-G-AF}}\label{Proof-thmG}

Recall that
\begin{align*}
\boldsymbol{g}_{2r-1}(\tau)=(g_1(\tau), g_2(\tau), \ldots, g_{2r-1}(\tau))^T,
\quad
\boldsymbol{g}_{2r-1}^{\vee}(\tau)=(g^{\vee}_1(\tau), g^{\vee}_2(\tau), \ldots, g^{\vee}_{2r-1}(\tau))^T,
\end{align*}
where ${g}_j(\tau)$ and ${g}_j^{\vee}(\tau)$ are given in \eqref{gv-def} and \eqref{g-def}.

We obtain the following modular transformation formula between
 $\boldsymbol{g}_{2r-1}(\tau)$
and $\boldsymbol{g}_{2r-1}^{\vee}(\tau)$, which is a ``Langlands dual'' pair similar to that of \cite{Mizuno-2025}.

\begin{thm}\label{vmf-G}
For  $r\geq 2$,
\begin{align*}
\boldsymbol{g}_{2r-1}\Big(-\frac{1}{2\tau}\Big)=2S\, \boldsymbol{g}_{2r-1}^{\vee}(\tau),
\quad
\boldsymbol{g}_{2r-1}^{\vee}\bigg(-\frac{1}{2\tau}\bigg)=S\,\boldsymbol{g}_{2r-1}(\tau),
\end{align*}
where
$S=(s_{j,k})_{(2r-1)\times (2r-1)}$
and $s_{j,k}=\sqrt{\frac{2}{4r-1}}\cos\frac{(2j-1)(2k-1)\pi}{2(4r-1)}$.
\end{thm}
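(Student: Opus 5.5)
Since Lemma \ref{inv-S} gives $2S^2=E_{2r-1}$ (with $N=4r-1$, $m=2r-1$), the two formulas in Theorem \ref{vmf-G} are equivalent. Indeed, if $\boldsymbol{g}^{\vee}_{2r-1}(-1/(2\tau))=S\,\boldsymbol{g}_{2r-1}(\tau)$, replacing $\tau$ by $-1/(2\tau)$ gives $\boldsymbol{g}^{\vee}_{2r-1}(\tau)=S\,\boldsymbol{g}_{2r-1}(-1/(2\tau))$. Multiplying by $2S$ then yields the first formula. So I only need to prove one of them, and I will choose whichever makes the theta-function computation cleaner.

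The plan is to rewrite every component as a single theta series divided by a Weber/eta function, and then apply Lemma \ref{Wak-lem}.

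For $g_j(\tau)$, I will use \eqref{def-h} and \eqref{def-g} with $m=4r-1$ and $q\mapsto\pm q$ to express the two products in \eqref{gv-def}. The denominators $(q,q^3,q^4;q^4)_\infty$ and $(q^2,q^2,q^4;q^4)_\infty$ are eta/Weber quotients, e.g. $(q;q^2)_\infty(q^4;q^4)_\infty$ and $(q^2;q^2)_\infty(q^2;q^4)_\infty$. I expect the sum of the two terms, including the prefactor $q^{(4r-4j+1)^2/(32r-8)}$, to collapse via \eqref{iden-h-a}--\eqref{iden-g-h} to a common denominator, roughly $\eta(2\tau)$ times a Weber function. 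The numerator should be a single theta function of the shape $g_{2j-1,\,(4r-1)/2}$ evaluated at a suitable argument (the second product, at $-q$, supplies the odd/even residue complement).

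Likewise, for $g^{\vee}_j(2\tau)$ the average of $F(q)$ and $\pm F(-q)$ in \eqref{g-def} extracts a parity class of the exponents. With \eqref{iden-h-2}, this should turn $g^{\vee}_j(\tau)$ into $h_{(2j-1)/2,\,(4r-1)/2}(\tau)$, or a closely related theta series, divided by the dual denominator. I will check these closed forms on the first few $q$-coefficients for $r=2$ before relying on them.

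Next I will apply Lemma \ref{Wak-lem} with $m=(4r-1)/2\in\frac12\mathbb{N}$ at $\tau\mapsto 2\tau$. This gives
\[
g_{2j-1,m}(-1/(2\tau))=\frac{(-2i\tau)^{1/2}}{\sqrt{2m}}\sum_{k \text{ odd}} e^{\pi i(2j-1)k/(4r-1)}\,h_{k/2,m}(2\tau).
\]
The sum over odd $k\in[0,8r-3]$ folds to $k=2\ell-1$ with $1\le \ell\le 2r-1$ using the symmetries \eqref{iden-h}: $h_{k/2,m}=h_{-k/2,m}=h_{2m+k/2,m}$. Pairing $k$ with $-k$ and with $k+4m$ turns the exponentials into $2\cos\frac{(2j-1)(2\ell-1)\pi}{2(4r-1)}$, up to a sign pattern. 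The denominators transform by \eqref{eta-trans}--\eqref{f-p-trans}; here $\eta$ supplies $\sqrt{-i\tau}$, which cancels the theta weight factor, and the Weber functions swap $\mathfrak f_1\leftrightarrow\mathfrak f_2$ with factors $\sqrt2$. Collecting constants should produce exactly $\sqrt{2/(4r-1)}\cos(\cdot)=s_{j,\ell}$ and the overall factor $2$ or $1$, consistent with the asymmetric normalisation $\frac12$ in $g^\vee_j$.

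The main obstacle is the first step: pinning down the exact single-theta closed forms for $g_j$ and $g_j^{\vee}$. This includes the half-integer indices, the sign $(-1)^{j(j-1)/2}$, and the folding sign, so that the Wakimoto sum reproduces the cosine matrix with the right signs and no leftover phase. I will handle this by matching exponents, comparing $q^{(4r-4j+1)^2/(32r-8)}$ with $q^{j'^2/(4m)}$, to identify $j'$. I will then verify numerically for $r=2$ that the resulting matrix is $S$ and not, for example, $S$ with an alternating sign conjugation.
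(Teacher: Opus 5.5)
Your reduction via $2S^2=E_{2r-1}$ is correct and matches the paper's first step. The rest of the plan, however, depends on a claim that is false: neither $g_j$ nor $g_j^{\vee}$ collapses to a single theta series over a single Weber/eta denominator.

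Since $1/(q,q^3,q^4;q^4)_\infty=(-q;q^2)_\infty/(q^2;q^2)_\infty$ while $1/(q^2,q^2,q^4;q^4)_\infty=(-q^2;q^2)_\infty/(q^2;q^2)_\infty$, the two summands of $g_j(\tau)$ are
\[
V_j(\tau)=\frac{\mathfrak{f}(2\tau)}{\eta(2\tau)}\,g_{2j-1,4r-1}(2\tau),\qquad
V_j^*(\tau)=e^{\pi i\frac{(4r-4j+1)^2}{32r-8}}\,\frac{\mathfrak{f}_2(2\tau)}{\eta(2\tau)}\,g_{|4r-4j+1|,4r-1}\Big(\frac{2\tau-1}{4}\Big).
\]
They carry different Weber factors and different theta arguments. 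Equivalently, by the triple product, the numerator of $V_j^*$ is $\sum_n(-1)^{n(n-1)/2}q^{\frac{4r-1}{2}(n+\frac{4j-4r-1}{2(4r-1)})^2}$, which has a mod-$4$ character rather than $(-1)^n$. Identities \eqref{iden-h-a}--\eqref{iden-g-h} only relate theta series to each other, so they cannot merge $\mathfrak f$ with $\mathfrak f_2$. Your planned $r=2$ coefficient check will therefore not confirm any closed form of the shape $g_{2j-1,(4r-1)/2}$.

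The dual side fails the same way. The two terms in \eqref{g-def} have denominators $(q,q^3,q^4;q^4)_\infty$ and $(-q,-q^3,q^4;q^4)_\infty$, which give factors $\mathfrak f(2\tau)$ and $\mathfrak f_1(2\tau)$. So averaging over $q\mapsto -q$ does not extract a parity class of one theta series.

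What survives is half of the argument. Split $g_j=V_j+V_j^*$ and $g_j^{\vee}(2\tau)=\frac12\big(U_j(2\tau)+U_j^*(2\tau)\big)$. Then your Wakimoto-plus-folding computation does prove $\boldsymbol V_{2r-1}(-1/(2\tau))=S\,\boldsymbol U_{2r-1}(\tau)$. For this, use $m=4r-1$ at argument $2\tau$: since $g_{2j-1,4r-1}(2\tau)=g_{(2j-1)/2,(4r-1)/2}(4\tau)$, your index $(4r-1)/2$ would need a non-integral first index, which Lemma \ref{Wak-lem} does not allow.

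The missing idea is the starred half, $\boldsymbol V^*_{2r-1}(-1/(2\tau))=S\,\boldsymbol U^*_{2r-1}(\tau)$. Under $\tau\mapsto-1/(2\tau)$ the theta argument $(2\tau-1)/4$ becomes $-(\tau+1)/(4\tau)$, which is not an $S$-image of a theta argument, so Lemma \ref{Wak-lem} does not apply. You need a transformation law for $g_{j,m}\big(-\frac{\tau+1}{4\tau}\big)$, such as Lemma \ref{ww-lem} with its Gauss-sum constant $\epsilon_m$ and quadratic phases involving $\delta_m$. This has to be combined with three further relations:
\begin{itemize}
\item $\mathfrak f_2(-1/\tau)=\mathfrak f_1(\tau)/\sqrt2$;
\item $\mathfrak f(\tau+1)=e^{-\pi i/24}\mathfrak f_1(\tau)$;
\item $U_j^*(2\tau)=(-1)^{j(j-1)/2}e^{-\pi i\frac{2j^2-2j-2r+1}{16r-4}}U_j(2\tau+1)$.
\end{itemize}
You must then check that the product of three matrices collapses exactly to the cosine matrix $S$. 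These are the reindexing matrix sending $|4r-4j+1|$ to $2k-1$, the phase matrix from Lemma \ref{ww-lem}, and the inverse of the diagonal phase matrix. The sign $(-1)^{j(j-1)/2}$ in $g_j^{\vee}$ is accounted for only in this step, so a Wakimoto-only argument cannot produce it.
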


\begin{proof} By Lemma \ref{inv-S}, it suffices to prove
\begin{align}
\boldsymbol{g}_{2r-1}\Big(-\frac{1}{2\tau}\Big)=2S\; \boldsymbol{g}_{2r-1}^{\vee}(\tau).\label{trans-gv}
\end{align}

Let
\begin{align}
V_j(\tau)&=q^{\frac{16 j^2-16 j-4 r+5}{32r-8}}\frac{(q^{8r-4j},q^{8r+4j-4},q^{16r-4};q^{16r-4})_\infty}{(q,q^3,q^4;q^4)_\infty},\notag\\[5pt]
V^*_j(\tau)&
=q^{\frac{(4r-4j+1)^2}{32r-8}}\frac{(-q^{2j-1},q^{4r-2j},-q^{4r-1};-q^{4r-1})_\infty}
{(q^2,q^2,q^4;q^4)_\infty},\notag\\[5pt]
U_j(2\tau)&=q^{\frac{2j^2-2j-2r+1}{16r-4}}
\frac{(q^{2r-j},q^{2r+j-1},q^{4r-1};q^{4r-1})_\infty}{(q,q^3,q^4;q^4)_\infty},
\label{def-Uj}\\[5pt]
U^*_j(2\tau)&=(-1)^{\frac{j(j-1)}{2}}q^{\frac{2j^2-2j-2r+1}{16r-4}}\frac{((-q)^{2r-j},(-q)^{2r+j-1},-q^{4r-1};-q^{4r-1})_\infty}
{(-q,-q^3,q^4;q^4)_\infty}.\label{def-Ujs}
\end{align}
Then for  $1\leq j \leq 2r-1$,
\begin{align}
g_j(\tau)=V_j(\tau)+V^*_j(\tau),\quad
g_j^{\vee}(2\tau)=\frac{1}{2}(U_j(2\tau)+U_j^*(2\tau)).\label{g-dis}
\end{align}
Let
\[\boldsymbol{V}_{2r-1}(\tau)=(V_1(\tau), V_2(\tau), \ldots, V_{2r-1}(\tau))^T,\]

\[\boldsymbol{V}_{2r-1}^*(\tau)=(V^*_1(\tau), V^*_2(\tau), \ldots, V^*_{2r-1}(\tau))^T,\]

\[\boldsymbol{U}_{2r-1}(\tau)=(U_1(\tau), U_2(\tau), \ldots, U_{2r-1}(\tau))^T\]

and

\[\boldsymbol{U}_{2r-1}^*(\tau)=(U^*_1(\tau), U^*_2(\tau), \ldots, U^*_{2r-1}(\tau))^T.\]

We have
\begin{align*}
\boldsymbol{g}_{2r-1}(\tau)=\boldsymbol{V}_{2r-1}(\tau)+\boldsymbol{V}_{2r-1}^*(\tau),\quad
\boldsymbol{g}_{2r-1}^{\vee}(2\tau)=\frac{1}{2}(\boldsymbol{U}_{2r-1}(2\tau)+\boldsymbol{U}_{2r-1}^*(2\tau)).
\end{align*}
Hence, in order to prove \eqref{trans-gv},
we only need to prove
\begin{align}
\boldsymbol{V}_{2r-1}\bigg(-\frac{1}{2\tau}\bigg) =S\; \boldsymbol{U}_{2r-1}(\tau),\label{trans-V-U}
\end{align}
and
\begin{align}
\boldsymbol{V}_{2r-1}^*\Big(-\frac{1}{2\tau}\Big)=S\; \boldsymbol{U}_{2r-1}^*(\tau).\label{trans-Vs-Us}
\end{align}

Next we prove \eqref{trans-V-U} and \eqref{trans-Vs-Us}, respectively.

\textbf{Proof of  \eqref{trans-V-U}.}~
In view of \eqref{def-g} and the following identity
 \begin{align*}
    \frac{1}{(q,q^3,q^4;q^4)_\infty}=\frac{(-q;q^2)_\infty}{(q^2;q^2)_\infty},
\end{align*}
we see that
\begin{align*}
    V_j(\tau)
    &=q^{\frac{16 j^2-16 j-4 r+5}{32r-8}}\frac{(-q;q^2)_\infty(q^{8r-4j},q^{8r+4j-4},q^{16r-4};q^{16r-4})_\infty}{(q^2;q^2)_\infty}\\[5pt]
    &=\frac{\mathfrak{f}(2\tau)}{\eta(2\tau)}g_{2j-1,4r-1}(2\tau),
\end{align*}
and
\begin{align}
U_j(2\tau)&
=\frac{\mathfrak{f}(2\tau)}{\eta(2\tau)}g_{2j-1,4r-1}\bigg(\frac{\tau}{2}\bigg).
\label{Uj-equi}
\end{align}
Then
\begin{align*}
    V_j\bigg(-\frac{1}{4\tau}\bigg)=\frac{\mathfrak{f}(-\frac{1}{2\tau})}{\eta(-\frac{1}{2\tau})}g_{2j-1,4r-1}\left(-\frac{1}{2\tau}\right).
\end{align*}
By means of \eqref{eta-trans}, \eqref{f-p-trans} and Lemma \ref{Wak-lem}, we have
\begin{align*}
    V_j\bigg(-\frac{1}{4\tau}\bigg)&=\frac{1}{\sqrt{8r-2}}\frac{\mathfrak{f}(2\tau)}{\eta(2\tau)}
     \sum_{\substack{0 \leq k \leq 16r-5 \\ k \text { odd }}} e^{\frac{\pi i (2j-1) k}{2(4r-1)}} h_{\frac{k}{2}, 4r-1}(2\tau)\\[5pt]
     &=\frac{1}{\sqrt{8r-2}}\frac{\mathfrak{f}(2\tau)}{\eta(2\tau)}
     \sum_{k=0}^{8r-3} e^{\frac{\pi i (2j-1) (2k+1)}{2(4r-1)}} h_{k+\frac{1}{2}, 4r-1}(2\tau).
\end{align*}
In view of \eqref{iden-h}, we deduce that
\begin{align*}
    &V_j\bigg(-\frac{1}{4\tau}\bigg)\\[5pt]
    &=\frac{1}{\sqrt{8r-2}}\frac{\mathfrak{f}(2\tau)}{\eta(2\tau)}
    \sum_{k=0}^{4r-2}\left(e^{\frac{\pi i (2j-1)(2k+1)}{2(4r-1)}} h_{k+\frac{1}{2}, 4r-1}(2\tau)
    +e^{\frac{\pi i (2j-1)(-2k-1)}{2(4r-1)}} h_{k+\frac{1}{2}, 4r-1}(2\tau)\right)\\[5pt]
    &=\sqrt{\frac{2}{4r-1}}\frac{\mathfrak{f}(2\tau)}{\eta(2\tau)}\sum_{k=0}^{4r-2}
    \cos\frac{\pi(2j-1)(2k+1)}{2(4r-1)} h_{k+\frac{1}{2}, 4r-1}(2\tau)\\[5pt]
    &=\sqrt{\frac{2}{4r-1}}\frac{\mathfrak{f}(2\tau)}{\eta(2\tau)}\sum_{k=0}^{2r-2}
    \cos\frac{\pi(2j-1)(2k+1)}{2(4r-1)} (h_{k+\frac{1}{2}, 4r-1}(2\tau)-h_{4r-k-\frac{3}{2},4r-1}(2\tau)),
\end{align*}
which, together with \eqref{iden-h-2}, yields that
\begin{align*}
    &V_j\bigg(-\frac{1}{4\tau}\bigg)\\[5pt]
    &=\sqrt{\frac{2}{4r-1}}\frac{\mathfrak{f}(2\tau)}{\eta(2\tau)}\sum_{k=0}^{2r-2}
    \cos\frac{\pi(2j-1)(2k+1)}{2(4r-1)} \bigg(h_{4k+2, 16r-4}\Big(\frac{\tau}{2}\Big)-
    h_{16r-4k-6,16r-4}\Big(\frac{\tau}{2}\Big)\bigg).
\end{align*}
From \eqref{iden-g-h}, we have
\begin{align*}
    V_j\bigg(-\frac{1}{4\tau}\bigg)&=\sqrt{\frac{2}{4r-1}}\frac{\mathfrak{f}(2\tau)}{\eta(2\tau)}\sum_{k=0}^{2r-2}
    \cos\frac{\pi(2j-1)(2k+1)}{2(4r-1)} g_{2k+1, 4r-1}\Big(\frac{\tau}{2}\Big)\\[5pt]
    &=\sqrt{\frac{2}{4r-1}}\sum_{k=1}^{2r-1}
    \cos\frac{\pi(2j-1)(2k-1)}{2(4r-1)}
    \frac{\mathfrak{f}(2\tau)}{\eta(2\tau)}g_{2k-1, 4r-1}\Big(\frac{\tau}{2}\Big).
\end{align*}
Hence,
\begin{align*}
    V_j\bigg(-\frac{1}{2\tau}\bigg)&=\sqrt{\frac{2}{4r-1}}\sum_{k=1}^{2r-1}
    \cos\frac{\pi(2j-1)(2k-1)}{2(4r-1)}
    \frac{\mathfrak{f}(\tau)}{\eta(\tau)}g_{2k-1, 4r-1}\Big(\frac{\tau}{4}\Big)\\[5pt]
    &=\sqrt{\frac{2}{4r-1}}\sum_{k=1}^{2r-1}
    \cos\frac{\pi(2j-1)(2k-1)}{2(4r-1)} U_k(\tau)~(\text{by } \eqref{Uj-equi}),
\end{align*}
which gives \eqref{trans-V-U}.

\textbf{Proof of  \eqref{trans-Vs-Us}.}~
Since
\begin{align*}
    \frac{1}{(q^2,q^2,q^4;q^4)}=\frac{(-q^2;q^2)_\infty}{(q^2;q^2)_\infty},
\end{align*}
we have
\begin{align*}
    V_j^*(\tau+\frac{1}{2})&=e^{\pi i\frac{(4r-4j+1)^2}{32r-8}}
    q^{\frac{(4r-4j+1)^2}{32r-8}}\frac{(-q^2;q^2)_\infty(q^{2j-1},q^{4r-2j},q^{4r-1};q^{4r-1})_\infty}{(q^2;q^2)_\infty}\\[5pt]
    &=
    \begin{cases}
        e^{\pi i\frac{(4r-4j+1)^2}{32r-8}} \frac{\mathfrak{f}_2(2\tau)}{\eta(2\tau)}g_{4r-4j+1,4r-1}\big( \frac{\tau}{2}\big), & \text{if }
        1\leq j\leq r,\\[8pt]
        e^{\pi i\frac{(4r-4j+1)^2}{32r-8}} \frac{\mathfrak{f}_2(2\tau)}{\eta(2\tau)}g_{4j-4r-1,4r-1}\big( \frac{\tau}{2}\big), & \text{if }
        r+1\leq j\leq 2r-1.
    \end{cases}
\end{align*}
So
\begin{align*}
    V_j^*(\tau)=
        e^{\pi i\frac{(4r-4j+1)^2}{32r-8}} \frac{\mathfrak{f}_2(2\tau-1)}{\eta(2\tau-1)}g_{|4r-4j+1|,4r-1}\Big( \frac{2\tau-1}{4}\Big).
\end{align*}
By means of \eqref{eta-trans} and \eqref{f-a-trans}, we have
\begin{align*}
    V_j^*(\tau)=
        e^{\pi i\frac{(4r-4j+1)^2}{32r-8}} \frac{\mathfrak{f}_2(2\tau)}{\eta(2\tau)}g_{|4r-4j+1|,4r-1}\Big( \frac{2\tau-1}{4}\Big).
\end{align*}
For $1\leq j\leq 2r-1$, let
\begin{align}
    W_j(\tau)=\frac{\mathfrak{f}_2(2\tau)}{\eta(2\tau)}g_{2j-1,4r-1}\Big( \frac{2\tau-1}{4}\Big),\label{def-Wj}
\end{align}
and let $\boldsymbol{W}_{2r-1}(\tau)=(W_1(\tau), \ldots, W_{2r-1}(\tau))^T$.
Then
\begin{align}
    \boldsymbol{V}_{2r-1}^*(\tau)=P\; \boldsymbol{W}_{2r-1}(\tau),\label{iden-VW}
\end{align}
where
$P=(p_{j,k})_{(2r-1)\times (2r-1)}$, and
\begin{align*}
  p_{j,k}=
  \begin{cases}
     e^{\pi i\frac{(4r-4j+1)^2}{32r-8}}, & \text{if } 1\leq j\leq 2r-1,
     k=2r-2j+1 \text{ or } 2j-2r,\\[5pt]
     0, & \text{otherwise}.
  \end{cases}
\end{align*}
It follows from \eqref{def-Wj} that
\begin{align*}
    W_j\left(-\frac{1}{2\tau}\right) &=\frac{\mathfrak{f}_2(-1/\tau)}{\eta(-1/\tau)}
    g_{2j-1,4r-1}\Big(-\frac{\tau+1}{4\tau}\Big).
\end{align*}
Applying Lemma \ref{ww-lem} and using \eqref{eta-trans} and \eqref{f-p-trans}, we have
\begin{align}
    &W_j\left(-\frac{1}{2\tau}\right) \notag\\[5pt]
    &=\sqrt{\frac{i}{2(4r-1)}}
    \frac{\mathfrak{f}_1(\tau)}{\eta(\tau)}
    \sum_{\substack{1\leq \ell\leq 4r-2 \atop \ell \text{ odd}}}
    \left( e^{\pi i \frac{2-2j-\ell-(2j+\ell-3)^2r}{8r-2}}
    +e^{\pi i \frac{2-2j+\ell-(2j-\ell-3)^2r}{8r-2}}\right) g_{\ell,4r-1}
    \Big(\frac{\tau+1}{4}\Big)\notag\\[5pt]
    &=\sqrt{\frac{i}{2(4r-1)}}
    \sum_{\ell=1}^{2r-1}
    \frac{\mathfrak{f}_1(\tau)}{\eta(\tau)}
    \left( e^{-\pi i \frac{2j+2\ell-3+(2j+2\ell-4)^2r}{8r-2}}
    +e^{-\pi i \frac{2j-2\ell-1+(2j-2\ell-2)^2r}{8r-2}}\right) g_{2\ell-1,4r-1}
    \Big(\frac{\tau+1}{4}\Big).\label{trans-W}
\end{align}
By \eqref{Uj-equi}, we obtain
\begin{align}
    U_j(\tau+1)&=\frac{\mathfrak{f}(\tau+1)}{\eta(\tau+1)}g_{2j-1,4r-1}
    \Big(\frac{\tau+1}{4}\Big)\notag\\[5pt]
    &=e^{-\frac{\pi i}{8}}\frac{\mathfrak{f}_1(\tau)}{\eta(\tau)}
    g_{2j-1,4r-1}\Big(\frac{\tau+1}{4}\Big) (\text{by } \eqref{eta-trans} \text{ and } \eqref{f-a-trans}).\label{trans-U-a}
\end{align}
It follows from \eqref{trans-W} and \eqref{trans-U-a} that
\begin{align*}
W_j(-\frac{1}{2\tau})=
\frac{1}{\sqrt{8r-2}}e^{\frac{3\pi i}{8}}
\sum_{\ell=1}^{2r-1}
\left( e^{-\pi i \frac{2j+2\ell-3+(2j+2\ell-4)^2r}{8r-2}}
+e^{-\pi i \frac{2j-2\ell-1+(2j-2\ell-2)^2r}{8r-2}}\right) U_\ell(\tau+1),
\end{align*}
which leads to
\begin{align}
    \boldsymbol{W}_{2r-1}\Big(-\frac{1}{2\tau}\Big)=X\; \boldsymbol{U}_{2r-1}(\tau+1),\label{iden-WU}
\end{align}
where $X=(x_{j,\ell})_{(2r-1)\times (2r-1)}$,
and
\begin{align*}
    x_{j,\ell}=\frac{1}{\sqrt{8r-2}}e^{\frac{3\pi i}{8}}
    (e^{-\pi i \frac{2j+2\ell-3+(2j+2\ell-4)^2r}{8r-2}}
    +e^{-\pi i \frac{2j-2\ell-1+(2j-2\ell-2)^2r}{8r-2}}).
\end{align*}
In view of \eqref{def-Uj} and \eqref{def-Ujs}, we have
\begin{align*}
U^*_j(2\tau)=(-1)^{\frac{j(j-1)}{2}}e^{-\frac{2j^2-2j-2r+1}{16r-4}\pi i} U_j(2\tau+1).
\end{align*}
Then
\begin{align}
    \boldsymbol{U}_{2r-1}^*(\tau)=\Lambda\; \boldsymbol{U}_{2r-1}(\tau+1),\label{trans-U-Us-g}
\end{align}
where
$\Lambda=(\lambda_{j,k})_{(2r-1)\times (2r-1)}$ and
\begin{align*}
\lambda_{j,k}=
\begin{cases}
    0, &\text{if } j\neq k,\\
    (-1)^{\frac{j(j-1)}{2}}e^{-\frac{2j^2-2j-2r+1}{16r-4}\pi i},
    & \text{if } j=k.
\end{cases}
\end{align*}
Combining \eqref{iden-VW}, \eqref{iden-WU} and \eqref{trans-U-Us-g}
yields
\begin{align}
    \boldsymbol{V}_{2r-1}^*\Big(-\frac{1}{2\tau}\Big)=PX\Lambda^{-1}\; \boldsymbol{U}_{2r-1}^*(\tau).\label{trans-VUS}
\end{align}
Next we show that $PX\Lambda^{-1}=S$.
By the definitions of $P, X$ and $\Lambda$,
for $1\leq j \leq r$, the $(j,k)$-entry of $PX\Lambda^{-1}$
is equal to
\begin{align*}
    &\frac{1}{\sqrt{8r-2}}(-1)^{\frac{k(k-1)}{2}}
    e^{\frac{\pi i (4r-4j+1)^2}{32r-8}+\frac{3\pi i}{8}+\frac{\pi i (2k^2-2k-2r+1)}{16r-4}}\\[5pt]
   &\quad \cdot \Big(e^{-\pi i \frac{4r-4j+2k-1+4r(2r-2j+k-1)^2}{8r-2}}+e^{-\pi i
    \frac{4r-4j-2k+1+4r(2r-2j-k)^2}{8r-2}}\Big)\\[5pt]
    &=\sqrt{\frac{2}{4r-1}}\cos\frac{(2j-1)(2k-1)\pi}{2(4r-1)},
\end{align*}
and for $r+1\leq j \leq 2r-1$, the $(j,k)$-entry of $PX\Lambda^{-1}$
is equal to
\begin{align*}
    &\frac{1}{\sqrt{8r-2}}(-1)^{\frac{k(k-1)}{2}}
    e^{\frac{\pi i (4r-4j+1)^2}{32r-8}+\frac{3\pi i}{8}+\frac{\pi i (2k^2-2k-2r+1)}{16r-4}}\\[5pt]
   &\quad \cdot \Big(e^{-\pi i \frac{4j-4r+2k-3+4r(2r-2j-k+2)^2}{8r-2}}+e^{-\pi i
    \frac{4j-4r-2k-1+4r(2r-2j+k+1)^2}{8r-2}}\Big)\\[5pt]
    &=\sqrt{\frac{2}{4r-1}}\cos\frac{(2j-1)(2k-1)\pi}{2(4r-1)}.
\end{align*}
Therefore, $PX\Lambda^{-1}=S$,
so \eqref{trans-Vs-Us} holds. This completes the proof of Theorem \ref{vmf-G}.
\end{proof}

We also derived the following transformation formulas for
$\boldsymbol{g}_{2r-1}(\tau)$ and $\boldsymbol{g}_{2r-1}^{\vee}(\tau)$.

\begin{thm}\label{tran-transl-G}
For any integer $r$,
\begin{align}\label{trans-G2}
    \boldsymbol{g}_{2r-1}(\tau+2)=
    T\,
\boldsymbol{g}_{2r-1}(\tau) \text{ and }
\boldsymbol{g}_{2r-1}^{\vee}(\tau+1)=
    {T}^{\vee}\,
\boldsymbol{g}_{2r-1}^{\vee}(\tau)
\end{align}
where
$T=(t_{j,k})_{(2r-1)\times (2r-1)},
{T}^{\vee}=({t}^{\vee}_{j,k})_{(2r-1)\times (2r-1)}$, and
\begin{align*}
    t_{j,k}=
    \begin{cases}
    0, & \text{if } j\neq k,\\
    e^{\frac{(4r-4j+1)^2}{8r-2}\pi i}, & \text{if } j=k,
    \end{cases}\quad
    {t}^{\vee}_{j,k}=
    \begin{cases}
    0, & \text{if } j\neq k,\\
    (-1)^{\frac{j(j-1)}{2}}e^{\frac{2j^2-2j-2r+1}{16r-4}\pi i}, & \text{if } j=k.
    \end{cases}
\end{align*}
Moreover, if $r$ is odd, then
\begin{align}\label{trans-G1}
\boldsymbol{g}_{2r-1}(\tau+1)=
        \widehat{T}\; \boldsymbol{g}_{2r-1}(\tau),
\end{align}
where $\widehat{T}=(\hat{t}_{j,k})_{(2r-1)\times (2r-1)}$, and
\begin{align*}
    \hat{t}_{j,k}=
    \begin{cases}
    0, & \text{if } j\neq k,\\
    e^{\frac{(4r-4j+1)^2}{16r-4}\pi i}, & \text{if } j=k.
    \end{cases}
\end{align*}
\end{thm}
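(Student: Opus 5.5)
The plan is to read off each component's $q$-expansion shape and compare with the claimed diagonal entries of $T$, $T^{\vee}$ and $\widehat T$. Each $g_j$ and $g_j^{\vee}$ is a fixed rational power of $q$ times a power series whose exponents lie in a single coset. A translation $\tau\mapsto\tau+h$ therefore only multiplies each component by a constant, which shows in particular that all three matrices are diagonal.

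\emph{The vector $\boldsymbol g_{2r-1}$.} By \eqref{g-dis}, $g_j=V_j+V_j^*$. Set $c_j=\frac{(4r-4j+1)^2}{32r-8}$. Then $V_j^*(\tau)=q^{c_j}\cdot(\text{power series in }q)$, because the products $(-q^{2j-1},q^{4r-2j},-q^{4r-1};-q^{4r-1})_\infty$ and $1/(q^2,q^2,q^4;q^4)_\infty$ involve only integral powers of $q$. Likewise $V_j(\tau)=q^{\frac{16j^2-16j-4r+5}{32r-8}}\cdot(\text{power series in }q)$. The key computation is the difference of the two leading exponents:
\[
c_j-\frac{16j^2-16j-4r+5}{32r-8}=\frac{4(4r^2-8rj+3r+2j-1)}{8(4r-1)}=\frac{(4r-1)(r-2j+1)}{2(4r-1)}=\frac{r-2j+1}{2}.
\]
Hence $g_j(\tau)=q^{c_j}\bigl(q^{\frac{2j-r-1}{2}}A_j(q)+B_j(q)\bigr)$ with $A_j,B_j\in\mathbb{Z}[[q]]$, which agrees with \eqref{gv-def}.

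\emph{Translation by $2$ and by $1$.} Under $\tau\mapsto\tau+2$, every half-integral power of $q$ is invariant, since $e^{2\pi i\cdot 2\cdot\frac{m}{2}}=1$. Only $q^{c_j}$ contributes, giving the factor $e^{4\pi i c_j}=e^{\frac{(4r-4j+1)^2}{8r-2}\pi i}=t_{j,j}$. If $r$ is odd, then $\frac{2j-r-1}{2}\in\mathbb{Z}$, so the bracket is a genuine power series in $q$ and is invariant under $\tau\mapsto\tau+1$. The prefactor then contributes $e^{2\pi i c_j}=e^{\frac{(4r-4j+1)^2}{16r-4}\pi i}=\hat t_{j,j}$, which proves \eqref{trans-G1}.

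\emph{The vector $\boldsymbol g^{\vee}_{2r-1}$.} Set $e_j=\frac{2j^2-2j-2r+1}{16r-4}$ and $\varepsilon_j=(-1)^{j(j-1)/2}$. By \eqref{g-def}, $g_j^{\vee}(2\tau)=q^{e_j}\cdot\frac12\bigl(F_j(q)+\varepsilon_jF_j(-q)\bigr)$. Here $F_j(x)$ is the power series $\frac{(x^{2r-j},x^{2r+j-1},x^{4r-1};x^{4r-1})_\infty}{(x,x^3,x^4;x^4)_\infty}$. The second term of \eqref{g-def} is exactly $F_j(-q)$: substituting $x=-q$ turns $x^{4r-1}$ into $-q^{4r-1}$ and $x^4$ into $q^4$. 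Thus $\frac12(F_j(x)+\varepsilon_jF_j(-x))$ is the even part of $F_j$ when $\varepsilon_j=1$ and the odd part when $\varepsilon_j=-1$; call it $G_j$, so that $G_j(-x)=\varepsilon_jG_j(x)$. Replacing $\tau$ by $\tau/2$ gives $g_j^{\vee}(\tau)=e^{\pi i e_j\tau}\,G_j(e^{\pi i\tau})$. Under $\tau\mapsto\tau+1$ the variable $e^{\pi i\tau}$ becomes $-e^{\pi i\tau}$, so $G_j$ acquires the factor $\varepsilon_j$ and the prefactor acquires $e^{\pi i e_j}$. Their product is $t^{\vee}_{j,j}$.

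The only real obstacle is the exponent bookkeeping: the factorization $(4r-1)(r-2j+1)$ above, and correctly identifying the second term of \eqref{g-def} as $F_j(-q)$. Everything else follows directly from the definitions \eqref{gv-def}, \eqref{g-def} and \eqref{g-dis}.
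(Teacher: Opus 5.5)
Your proposal is correct and follows the paper's own argument, which reads the translation factors directly off the $q$-expansions in \eqref{gv-def} and \eqref{g-def}; you simply write out the exponent bookkeeping that the paper dismisses as ``easy to see''. One minor imprecision: for small $j$ the exponent $\frac{2j-r-1}{2}$ is negative, so when $r$ is odd the bracket is a Laurent series in integral powers of $q$ rather than a power series, but this does not affect its invariance under $\tau\mapsto\tau+1$.
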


\begin{proof}
By the definitions of $\boldsymbol{g}_{2r-1}(\tau)$ and $\boldsymbol{g}_{2r-1}^{\vee}(\tau)$,
it is easy to see that for integer $r$ and $0\leq j\leq 2r-1$,
\begin{align*}
g_j(\tau+2)&= e^{\frac{(4r-4j+1)^2}{8r-2}\pi i} g_j(\tau),\\[5pt]
g_j^{\vee}(2\tau+1)&=(-1)^{\frac{j(j-1)}{2}}e^{\frac{2j^2-2j-2r+1}{16r-4}\pi i} g_j^{\vee}(2\tau),
\end{align*}
which implies \eqref{trans-G2}.

If $r$ is odd,
then
\begin{align*}
g_j(\tau+1)&= e^{\frac{(4r-4j+1)^2}{16r-4}\pi i} g_j(\tau),
\end{align*}
which implies \eqref{trans-G1}.
\end{proof}

Now we can give a proof of Theorem \ref{vmf-G-AF}.

\begin{proof}[Proof of  Theorem \ref{vmf-G-AF}]
Let
\begin{align*}
    M= \begin{pmatrix}
        \boldsymbol{O} &2S\\
        S & \boldsymbol{O}
    \end{pmatrix},\quad
    Q=\begin{pmatrix}
        T & \boldsymbol{O}\\
        \boldsymbol{O} & ({T}^{\vee})^2
    \end{pmatrix}, \quad
    \widehat{Q}=\begin{pmatrix}
        \widehat{T} & \boldsymbol{O}\\
        \boldsymbol{O} & {T}^{\vee}
    \end{pmatrix},
\end{align*}
where $S$ is defined in Theorem \ref{vmf-G},
$T, {T}^{\vee}$ and $\widehat{T}$ are given in Theorem \ref{tran-transl-G}. From Theorem \ref{vmf-G} and Theorem \ref{tran-transl-G},
we see that for  $r\geq 2$,
\begin{align*}
\boldsymbol{G}_{4r-2}(\tau+2)&=Q\; \boldsymbol{G}_{4r-2}(\tau),\\[5pt]
\boldsymbol{G}_{4r-2}\left(\frac{\tau}{4\tau+1}\right)&=\boldsymbol{G}_{4r-2}\left(-\frac{1}{-2(2+\frac{1}{2\tau})}\right)=M\; \boldsymbol{G}_{4r-2}\left(-2-\frac{1}{2\tau}\right)\\[5pt]
&=MQ^{-1}\; \boldsymbol{G}_{4r-2}\left(-\frac{1}{2\tau}\right)
=MQ^{-1}M\; \boldsymbol{G}_{4r-2}(\tau).
\end{align*}
Let $\Gamma'$ be the group generated by $\gamma_1=\left(1~~2\atop 0~~1 \right)$
and $\gamma_2=\left(1~~0\atop 4~~1\right)$, and let
the multiplicative function
$\rho\colon \Gamma'\rightarrow GL_{4r-2}(\mathbb{C})$
satisfy that $\rho(\gamma_1)=Q$ and $\rho(\gamma_2)=MQ^{-1}M$.
Hence
$\boldsymbol{G}_{4r-2}(\tau)$ is a vector-valued automorphic form
of multiplier $\rho$ for $\Gamma'$.

If $r$ is odd, then
\begin{align}
\boldsymbol{G}_{4r-2}(\tau+1)&=\widehat{Q}\ \boldsymbol{G}_{4r-2}(\tau),\label{G-1-tansl}\\[5pt]
\boldsymbol{G}_{4r-2}\left(\frac{\tau}{2\tau+1}\right)
&=\boldsymbol{G}_{4r-2}\left(-\frac{1}{-2(1+\frac{1}{2\tau})}\right)
=M\ \boldsymbol{G}_{4r-2}\left(-\frac{1}{2\tau}-1\right)\notag\\
&=M\widehat{Q}^{-1}\ \boldsymbol{G}_{4r-2}\left(-\frac{1}{2\tau}\right)
=M\widehat{Q}^{-1}M\ \boldsymbol{G}_{4r-2}(\tau).\label{trans-G-2g}
\end{align}
Notice that
\begin{align*}
\widehat{\gamma}_1=
\begin{pmatrix}
1& 1\\
0& 1
\end{pmatrix} \text{ and }
\widehat{\gamma}_2=
\begin{pmatrix}
1&0\\
2&1
\end{pmatrix}
\end{align*}
are all generators of $\Gamma_0(2)$.
Let
the multiplicative function
$\widehat{\rho}\colon \Gamma_0(2)\rightarrow GL_{4r-2}(\mathbb{C})$
satisfy that $\widehat{\rho}(\widehat{\gamma}_1)=\widehat{Q}$ and
$\widehat{\rho}(\widehat{\gamma}_2)=M\widehat{Q}^{-1}M$.
It follows from \eqref{G-1-tansl} and \eqref{trans-G-2g}
that for any $\gamma\in\Gamma_0(2)$,
\begin{align*}
\boldsymbol{G}_{4r-2}(\gamma\tau)=\widehat{\rho}(\gamma) \boldsymbol{G}_{4r-2}(\tau).
\end{align*}
Since for any $1\leq j\leq 2r-1$,
$g_j(\tau)$ and $g_j^{\vee}(\tau)$ can be expressed as sums of generalized
eta-quotients, by \cite[Lemma 2.6]{CDZ-2019},
for any $\gamma\in\Gamma$, there exists positive integers $N_{\gamma,j}$
and $N^{\vee}_{\gamma,j}$
such that
\[g_j(\gamma\tau)=\sum_{n}a_{n,j}q^{n/N_{\gamma,j}}\]
and
\[g_j^{\vee}(\gamma\tau)=\sum_{n}a^{\vee}_{n,j}q^{n/N^{\vee}_{\gamma,j}},\]
where $a_{n,j}=0$ and $a^{\vee}_{n,j}=0$ for $n \ll 0$.
Therefore,
$\boldsymbol{G}_{4r-2}(\tau)$ is a vector-valued modular function
associated with $\widehat{\rho}$ for $\Gamma_0(2)$.
\end{proof}

\subsection{Proof of Theorem \ref{vmf-H-AF}}\label{proof-thmH}

For  $r\geq 2$  and $1\leq j\leq 2r-2$, recall that
\begin{align*}
h_j(\tau)&=q^{\frac{(4 j-4 r+1)^2}{32 r-24}}\left(
q^{\frac{4 j-2 r-1}{4}}
\frac{(q^{4(2r-1-j)}, q^{8r+4j-8},q^{16r-12}; q^{16r-12})_\infty}{(q,q^3,q^4;q^4)_\infty}\right.\\[5pt]
&\quad \left.
+
\frac{(q^{2(2r-j-1)},-q^{2j-1},-q^{4r-3};-q^{4r-3})_\infty}{(q^2,q^2,q^4;q^4)_\infty}\right),\\
h^{\vee}_j(2\tau)&=\frac{1}{2}q^{\frac{j^2-j-r+1}{8r-6}}
\left(\frac{(q^{2r-j-1},q^{2r+j-2},q^{4r-3};q^{4r-3})_\infty}
{(q,q^3,q^4;q^4)_\infty}\right.\\[5pt]
&\quad \left.+
\frac{((-q)^{2r-j-1},(-q)^{2r+j-2},-q^{4r-3};-q^{4r-3})_\infty}
{(-q,-q^3,q^4;q^4)_\infty}\right).
\end{align*}

Let
\begin{align*}
\boldsymbol{h}_{2r-2}(\tau)=(h_1(\tau), \ldots, h_{2r-2}(\tau))^T,\quad
\boldsymbol{h}_{2r-2}^{\vee}(\tau)=(h_1^{\vee}(\tau), \ldots, h_{2r-2}^{\vee}(\tau))^T.
\end{align*}

We derive the following transformation formula on
the ``Langlands dual'' pair
$\boldsymbol{h}_{2r-2}(\tau)$
and
$\boldsymbol{h}_{2r-2}^{\vee}(\tau)$.

\begin{thm}\label{vmf-H}
For  $r\geq 2$,
\begin{align}
\boldsymbol{h}_{2r-2}\left(-\frac{1}{2\tau}\right)=2\widetilde{S}\; \boldsymbol{h}_{2r-2}^{\vee}(\tau),\quad
\boldsymbol{h}_{2r-2}^{\vee}\left(-\frac{1}{2\tau}\right)=\widetilde{S}\; \boldsymbol{h}_{2r-2}(\tau),
\label{trans-hs-hv}
\end{align}
where $\widetilde{S}=(\widetilde{s}_{j,k})_{(2r-2)\times (2r-2)}$,
and
$\widetilde{s}_{j,k} =\sqrt{\frac{2}{4r-3}}\cos\frac{(2j-1)(2k-1)}{8r-6}\pi$.
\end{thm}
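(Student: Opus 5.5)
By Lemma \ref{inv-S} applied with $N=4r-3$ (so $m=2r-2$), we have $2\widetilde{S}^2=E_{2r-2}$. Hence, exactly as in the proof of Theorem \ref{vmf-G}, it suffices to prove the first identity
\[
\boldsymbol{h}_{2r-2}\big(-\tfrac{1}{2\tau}\big)=2\widetilde{S}\,\boldsymbol{h}^{\vee}_{2r-2}(\tau).
\]
The second identity then follows by replacing $\tau$ with $-1/(2\tau)$ and multiplying by $\widetilde{S}$.

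The first step is to split each component into two pieces, as in \eqref{g-dis}: write $h_j=\widetilde V_j+\widetilde V_j^*$ and $h_j^{\vee}(2\tau)=\tfrac12(\widetilde U_j(2\tau)+\widetilde U_j^*(2\tau))$. Here $\widetilde V_j$ is the $(q,q^3,q^4;q^4)_\infty$-term of $h_j$ and $\widetilde V^*_j$ is the $(q^2,q^2,q^4;q^4)_\infty$-term. Likewise $\widetilde U_j$ and $\widetilde U_j^*$ are the two terms of $h_j^\vee$, each carrying the prefactor $q^{(j^2-j-r+1)/(8r-6)}$.

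Next we use \eqref{def-g} together with $1/(q,q^3,q^4;q^4)_\infty=(-q;q^2)_\infty/(q^2;q^2)_\infty$. Checking exponents, this should give
\[
\widetilde V_j(\tau)=\frac{\mathfrak f(2\tau)}{\eta(2\tau)}\,g_{2j-1,4r-3}(2\tau),\qquad \widetilde U_j(2\tau)=\frac{\mathfrak f(2\tau)}{\eta(2\tau)}\,g_{2j-1,4r-3}(\tau/2).
\]
To confirm the first, note that $g_{2j-1,4r-3}(2\tau)$ has product $(q^{8r+4j-8},q^{8r-4j-4},q^{16r-12};q^{16r-12})_\infty$ and exponent $(2j-1)^2/(8r-6)$. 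The $q$-power must match $\frac{(4j-4r+1)^2}{32r-24}+\frac{4j-2r-1}{4}$ after subtracting the $\mathfrak f/\eta$ offset $-1/8$; this is a routine check. With these expressions in hand, the proof of $\widetilde{\boldsymbol V}(-1/(2\tau))=\widetilde S\,\widetilde{\boldsymbol U}(\tau)$ copies the proof of \eqref{trans-V-U} with $4r-1$ replaced by $4r-3$. The steps are: Lemma \ref{Wak-lem}; fold the sum over odd $k$ using \eqref{iden-h}; convert with \eqref{iden-h-2} and \eqref{iden-g-h}; then use \eqref{Uj-equi}'s analogue. The resulting cosine coefficients are exactly $\widetilde s_{j,k}$, with $k$ running over $1\le k\le 2r-2$ since $(4r-3-1)/2=2r-2$.

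For the starred parts, we compute $\widetilde V_j^*(\tau+\tfrac12)$. This turns the $-q$ products into $(q^{2(2r-j-1)},q^{2j-1},q^{4r-3};q^{4r-3})_\infty$, which equals $g_{|4r-4j-1|,4r-3}$ evaluated at $\tau/2$ up to a root of unity; the factor involved is $\mathfrak f_2(2\tau)/\eta(2\tau)$. This yields $\widetilde{\boldsymbol V}^*=\widetilde P\,\widetilde{\boldsymbol W}$. Here $\widetilde W_j=\frac{\mathfrak f_2(2\tau)}{\eta(2\tau)}g_{2j-1,4r-3}(\frac{2\tau-1}{4})$, and $\widetilde P$ is a monomial permutation matrix sending $j$ to the index $k$ with $2k-1=|4r-4j-1|$. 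Next, Lemma \ref{ww-lem} with $m=4r-3$, $\epsilon_m=1$ (since $4r-3\equiv1\pmod 4$) and $\delta_m=3r-2$, combined with \eqref{f-p-trans}, gives $\widetilde{\boldsymbol W}(-1/(2\tau))=\widetilde X\,\widetilde{\boldsymbol U}(\tau+1)$. Finally, \eqref{f-a-trans} gives $\widetilde{\boldsymbol U}^*(\tau)=\widetilde\Lambda\,\widetilde{\boldsymbol U}(\tau+1)$ with $\widetilde\Lambda$ diagonal. Unlike the $g^\vee$ case, no sign $(-1)^{j(j-1)/2}$ appears in $h^\vee_j$, so the diagonal entries of $\widetilde\Lambda$ come purely from the $q$-power and the shift of the products in $-q$.

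The main obstacle is the final identity $\widetilde P\widetilde X\widetilde\Lambda^{-1}=\widetilde S$. This is a root-of-unity computation that must be split into the cases $1\le j\le r-1$ and $r\le j\le 2r-2$, according to the sign of $4r-4j-1$. In each case the goal is to show that the exponent combinations from $\widetilde P$, the factor $e^{3\pi i/8}$ from $\widetilde X$, and $\widetilde\Lambda^{-1}$ reduce, modulo $2$, to $\pm\frac{(2j-1)(2k-1)}{2(4r-3)}$. I would expand the quadratic exponents, use $4(3r-2)\equiv1\pmod{4r-3}$ to simplify the $\delta_m$ terms, and pair the two exponentials into a cosine. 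If a stray sign or a factor $i^{\pm1}$ survives, I would first re-examine the sign conventions for $\widetilde U^*_j$, whose absence of the $(-1)^{j(j-1)/2}$ factor is suspicious and may have to be absorbed via the odd shift $2r-j-1$ versus $2r+j-2$ in the $-q$ products.
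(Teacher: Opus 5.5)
Your outline is the paper's proof step for step. It has the same reduction via Lemma~\ref{inv-S} with $N=4r-3$ and the same splitting into $\widetilde V_j,\widetilde V_j^*,\widetilde U_j,\widetilde U_j^*$. It uses Lemma~\ref{Wak-lem} for $\widetilde{\boldsymbol V}_{2r-2}(-\frac1{2\tau})=\widetilde S\,\widetilde{\boldsymbol U}_{2r-2}(\tau)$, and that half is correct. For the starred half it uses $\widetilde P,\widetilde X,\widetilde\Lambda$ from Lemma~\ref{ww-lem}.

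\textbf{The gap.} The final step cannot be closed as the statement stands. Your suspicion about the missing $(-1)^{j(j-1)/2}$ is well founded, but the escape route you suggest does not exist. Substituting $q\mapsto -q$ in \eqref{def-U-H}, using $(-q)^{4r-3}=-q^{4r-3}$ and $(-q)^4=q^4$, gives exactly $\widetilde U_j^*(\tau)=e^{-\pi i\frac{j^2-j-r+1}{8r-6}}\widetilde U_j(\tau+1)$. So no sign is hidden in the $(-q)$-products.

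Now carry out the reduction you describe, with $m=4r-3$, $16(3r-2)=12m+4$, and $2J-1=\pm\bigl(m-2(2j-1)\bigr)$ for the column $J$ selected by $\widetilde P$ in row $j$. Multiply in the prefactors from $\widetilde P$, $e^{-\pi i/8}$ and $\widetilde\Lambda^{-1}$. The two exponentials in the $(j,k)$ entry then reduce to $(-1)^{k(k-1)/2}e^{\pm\pi i(2j-1)(2k-1)/(2m)}$. Hence, in your convention (the paper's $\widetilde P\widetilde X\widetilde\Lambda$),
\[
\widetilde P\widetilde X\widetilde\Lambda^{-1}=\widetilde S\,\mathrm{diag}\bigl((-1)^{k(k-1)/2}\bigr)_{1\le k\le 2r-2}\neq\widetilde S .
\]

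\textbf{A concrete check.} Take $r=2$ and $(j,k)=(1,2)$. The prefactor exponent, in units of $\pi i$, is $-\frac18+\frac9{40}+\frac1{10}=\frac15$. The two exponents from $\widetilde X$ are $-\frac{69}{10}$ and $-\frac{15}{10}$. The entry is therefore $\frac1{\sqrt{10}}\bigl(e^{13\pi i/10}+e^{-13\pi i/10}\bigr)=-\widetilde s_{1,2}$. Likewise the $(2,2)$ entry is $-\widetilde s_{2,2}$, while the $k=1$ column is correct.

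Poisson summation applied directly to
\[
\widetilde V_j^*(\tau)=\frac{\mathfrak f_2(2\tau)}{\eta(2\tau)}\sum_{n}(-1)^{n(n+1)/2}q^{(2mn+4r-4j-1)^2/(8m)}
\]
gives the same sign, independently of Lemma~\ref{ww-lem}. Since $\widetilde S$ is invertible and $\widetilde U_2^*\not\equiv0$, both displayed identities of the theorem are false for every $r\ge2$.

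\textbf{The paper has the same defect.} Its proof asserts $\widetilde P\widetilde X\widetilde\Lambda=\widetilde S$ at exactly this point without displaying the reduction. Compare the $G$ case: there the displayed $(j,k)$ entry of $PX\Lambda^{-1}$ carries $(-1)^{k(k-1)/2}$. That factor comes from the sign built into $g_k^{\vee}$, and it cancels the identical sign produced by the exponentials.

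\textbf{What is actually proved.} Your argument, like the paper's, establishes the theorem with
\[
h_j^{\vee}(2\tau)=\tfrac12\bigl(\widetilde U_j(2\tau)+(-1)^{j(j-1)/2}\,\widetilde U_j^*(2\tau)\bigr),
\]
the exact analogue of \eqref{g-def}. With this definition $\widetilde\Lambda$ acquires the factor $(-1)^{k(k-1)/2}$ and the final identity holds. The rest of your outline then goes through unchanged, including deducing the second identity from the first via $2\widetilde S^2=E_{2r-2}$.

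A minor slip: since $\epsilon_{4r-3}=1$, the constant in $\widetilde X$ is $e^{-\pi i/8}$, not the $e^{3\pi i/8}$ of the $G$ case.
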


\begin{proof}
Taking $N=4r-3$ in Lemma \ref{inv-S},
in order to prove \eqref{trans-hs-hv},
we only need to prove
\begin{align}
\boldsymbol{h}_{2r-2}\left(-\frac{1}{2\tau}\right)=
2\widetilde{S}\; \boldsymbol{h}_{2r-2}^{\vee}(\tau),\label{trans-hs-h}
\end{align}

For $1\leq j\leq 2r-2$,
let
\begin{align}
\widetilde{V}_j(\tau)&=q^{\frac{16j^2-16j-4r+7}{32r-24}}
\frac{(q^{4(2r-1-j)}, q^{8r+4j-8},q^{16r-12}; q^{16r-12})_\infty}{(q,q^3,q^4;q^4)_\infty},\label{H-V-j-1}\\
\widetilde{V}_j^*(\tau)&=q^{\frac{(4 j-4 r+1)^2}{32 r-24}}
\frac{(q^{2(2r-j-1)},-q^{2j-1},-q^{4r-3};-q^{4r-3})_\infty}{(q^2,q^2,q^4;q^4)_\infty},\label{H-V-j}\\
\widetilde{U}_j(2\tau)&=q^{\frac{j^2-j-r+1}{8r-6}}
\frac{(q^{2r-j-1},q^{2r+j-2},q^{4r-3};q^{4r-3})_\infty}
{(q,q^3,q^4;q^4)_\infty},\label{def-U-H}\\
\widetilde{U}_j^*(2\tau)&=q^{\frac{j^2-j-r+1}{8r-6}}
\frac{((-q)^{2r-j-1},(-q)^{2r+j-2},-q^{4r-3};-q^{4r-3})_\infty}
{(-q,-q^3,q^4;q^4)_\infty}.\label{def-U-s-H}
\end{align}
Then
\begin{align}
h_j(\tau)=\widetilde{V}_j(\tau)+\widetilde{V}_j^*(\tau),\quad
h^{\vee}_j(\tau)=\frac{1}{2}(\widetilde{U}_j(\tau)+\widetilde{U}_j^*(\tau)).\label{def-h-v-j}
\end{align}
Let
\begin{align*}
\boldsymbol{\widetilde{V}}_{2r-2}(\tau)&=(\widetilde{V}_1(\tau), \ldots, \widetilde{V}_{2r-2}(\tau))^T,\\[5pt]
\boldsymbol{\widetilde{V}}_{2r-2}^*(\tau)&=(\widetilde{V}^*_1(\tau), \ldots, \widetilde{V}^*_{2r-2}(\tau))^T,\\[5pt]
\boldsymbol{\widetilde{U}}_{2r-2}(\tau)&=(\widetilde{U}_1(\tau),\ldots, \widetilde{U}_{2r-2}(\tau))^T,
\end{align*}
and
$$\boldsymbol{\widetilde{U}}_{2r-2}^*(\tau)=(\widetilde{U}_1^*(\tau), \ldots, \widetilde{U}_{2r-2}^*(\tau))^T.$$
Then
\begin{align*}
\boldsymbol{h}_{2r-2}(\tau)=\boldsymbol{\widetilde{V}}_{2r-2}(\tau)+\boldsymbol{\widetilde{V}}_{2r-2}^*(\tau),\quad
\boldsymbol{h}_{2r-2}^{\vee}(\tau)=\frac{1}{2}(\boldsymbol{\widetilde{U}}_{2r-2}(\tau)+\boldsymbol{\widetilde{U}}_{2r-2}^*(\tau)).
\end{align*}
Hence, in order to prove \eqref{trans-hs-h},
it suffices to prove
\begin{align}\label{trans-V-h}
\boldsymbol{\widetilde{V}}_{2r-2}\left(-\frac{1}{2\tau}\right)=\widetilde{S}\; \boldsymbol{\widetilde{U}}_{2r-2}(\tau),
\end{align}
and
\begin{align}\label{trans-V-s-h}
\boldsymbol{\widetilde{V}}_{2r-2}^*\left(-\frac{1}{2\tau}\right)=\widetilde{S}\; \boldsymbol{\widetilde{U}}_{2r-2}^*(\tau).
\end{align}

\textbf{Proof of  \eqref{trans-V-h}.}~
It follows from \eqref{H-V-j-1} and \eqref{H-V-j} that
\begin{align}
\widetilde{V}_j(\tau)&=\frac{\mathfrak{f}(2\tau)}{\eta(2\tau)}
g_{2j-1,4r-3}(2\tau),\notag\\[5pt]
\widetilde{U}_j(2\tau)&=\frac{\mathfrak{f}(2\tau)}{\eta(2\tau)}
g_{2j-1,4r-3}\left(\frac{\tau}{2}\right).\label{def-U}
\end{align}
So,
\begin{align*}
\widetilde{V}_j\left(-\frac{1}{4\tau}\right)
=\frac{\mathfrak{f}\left(-\frac{1}{2\tau}\right)}{\eta\left(-\frac{1}{2\tau}\right)}
g_{2j-1,4r-3}\left(-\frac{1}{2\tau}\right).
\end{align*}
By means of \eqref{eta-trans}, \eqref{f-p-trans} and Lemma \ref{Wak-lem}, we have
\begin{align*}
&\quad \widetilde{V}_j\left(-\frac{1}{4\tau}\right)\\[5pt]
&=\frac{1}{\sqrt{8r-6}}
\frac{\mathfrak{f}(2\tau)}{\eta(2\tau)}
\sum_{k=0}^{8r-7}e^{\frac{(2j-1)(2k+1)}{8r-6}\pi i}h_{k+\frac{1}{2},4r-3}(2\tau)\\[5pt]
&=\frac{1}{\sqrt{8r-6}}
\frac{\mathfrak{f}(2\tau)}{\eta(2\tau)}
\sum_{k=0}^{4r-4}
\left(e^{\frac{(2j-1)(2k+1)}{8r-6}\pi i}h_{k+\frac{1}{2},4r-3}(2\tau)
+e^{-\frac{(2j-1)(2k+1)}{8r-6}\pi i}h_{k+\frac{1}{2},4r-3}(2\tau)\right)\\[5pt]
&=\sqrt{\frac{2}{4r-3}}\sum_{k=0}^{4r-4}
\cos\frac{(2j-1)(2k+1)\pi}{8r-6}
\frac{\mathfrak{f}(2\tau)}{\eta(2\tau)}
h_{k+\frac{1}{2},4r-3}(2\tau)\\[5pt]
&=\sqrt{\frac{2}{4r-3}}\sum_{k=0}^{2r-3}
\cos\frac{(2j-1)(2k+1)\pi}{8r-6}
\frac{\mathfrak{f}(2\tau)}{\eta(2\tau)}
(h_{k+\frac{1}{2},4r-3}(2\tau)-h_{4r-k-\frac{7}{2},4r-3}(2\tau)).
\end{align*}
In view of \eqref{iden-h-2}, the above identity can be rewritten as
\begin{align*}
&\quad \widetilde{V}_j\left(-\frac{1}{4\tau}\right)\\[5pt]
&=\sqrt{\frac{2}{4r-3}}\sum_{k=0}^{2r-3}
\cos\frac{(2j-1)(2k+1)\pi}{8r-6}
\frac{\mathfrak{f}(2\tau)}{\eta(2\tau)}
\left(h_{4k+2,16r-12}\left(\frac{\tau}{2}\right)
-h_{16r-4k-14,16r-12}\left(\frac{\tau}{2}\right)\right)\\[5pt]
&=\sqrt{\frac{2}{4r-3}}\sum_{k=0}^{2r-3}
\cos\frac{(2j-1)(2k+1)\pi}{8r-6}
\frac{\mathfrak{f}(2\tau)}{\eta(2\tau)}
g_{2k+1,4r-3}\left(\frac{\tau}{2}\right)\\[5pt]
&=\sqrt{\frac{2}{4r-3}}\sum_{k=1}^{2r-2}
\cos\frac{(2j-1)(2k-1)\pi}{8r-6}
\frac{\mathfrak{f}(2\tau)}{\eta(2\tau)}
g_{2k-1,4r-3}\left(\frac{\tau}{2}\right)\\[5pt]
&=\sqrt{\frac{2}{4r-3}}\sum_{k=1}^{2r-2}
\cos\frac{(2j-1)(2k-1)\pi}{8r-6}\widetilde{U}_k(2\tau)~(\text{by } \eqref{def-U}).
\end{align*}
Thus,
\begin{align}\label{trans-Vj}
\widetilde{V}_j\left(-\frac{1}{2\tau}\right)=
\sqrt{\frac{2}{4r-3}}\sum_{k=1}^{2r-2}
\cos\frac{(2j-1)(2k-1)\pi}{8r-6}\widetilde{U}_k(\tau),
\end{align}
which implies \eqref{trans-V-h}.
%$S=(s_{j,k})_{(2r-2)\times (2r-2)}$,
%and
%\begin{align*}
%s_{j,k} =\sqrt{\frac{2}{4r-3}}\cos\left(\frac{(2j-1)(2k-1)}{8r-6}\pi\right).
%\end{align*}

\textbf{Proof of  \eqref{trans-V-s-h}.}~
By \eqref{H-V-j}, it is easy to see that
\begin{align*}
\widetilde{V}_j^*\left(\tau+\frac{1}{2}\right)
&= e^{\frac{(4r-4j-1)^2}{32r-24}\pi i}
q^{\frac{(4r-4j-1)^2}{32r-24}}
\frac{(q^{2(2r-j-1)},q^{2j-1},q^{4r-3};q^{4r-3})_\infty}
{(q^2,q^2,q^4;q^4)_\infty}\\[5pt]
&=\begin{cases}
e^{\frac{(4r-4j-1)^2}{32r-24}\pi i}\frac{\mathfrak{f}_2(2\tau)}{\eta(2\tau)}
g_{4r-4j-1,4r-3}\left(\frac{\tau}{2}\right), & \text{if } 1\leq j\leq r-1,\\
e^{\frac{(4r-4j-1)^2}{32r-24}\pi i}\frac{\mathfrak{f}_2(2\tau)}{\eta(2\tau)}
g_{4j-4r+1,4r-3}\left(\frac{\tau}{2}\right), & \text{if } r\leq j\leq 2r-2.
\end{cases}
\end{align*}
So,
\begin{align*}
 \widetilde{V}_j^*(\tau)&=
 e^{\frac{(4r-4j-1)^2}{32r-24}\pi i}\frac{\mathfrak{f}_2(2\tau-1)}{\eta(2\tau-1)}
g_{|4r-4j-1|,4r-3}\left(\frac{2\tau-1}{4}\right)\\[5pt]
&=e^{\frac{(4r-4j-1)^2}{32r-24}\pi i}\frac{\mathfrak{f}_2(2\tau)}{\eta(2\tau)}
g_{|4r-4j-1|,4r-3}\left(\frac{2\tau-1}{4}\right)~(\text{by } \eqref{eta-trans}
\text{ and } \eqref{f-a-trans}).
\end{align*}
Let
\begin{align}
\widetilde{W}_j(\tau)=\frac{\mathfrak{f}_2(2\tau)}{\eta(2\tau)}
g_{2j-1,4r-3}\left(\frac{2\tau-1}{4}\right),\label{def-W-H}
\end{align}
and $\boldsymbol{\widetilde{W}}_{2r-2}(\tau)=(\widetilde{W}_1(\tau), \ldots, \widetilde{W}_{2r-2}(\tau))^T$.
Then
\begin{align}
\boldsymbol{\widetilde{V}}_{2r-2}^*(\tau)=\widetilde{P}\; \boldsymbol{\widetilde{W}}_{2r-2}(\tau),\label{trans-Vs-W}
\end{align}
where
$\widetilde{P}=(\widetilde{p}_{j,k})_{(2r-2)\times (2r-2)}$,
\begin{align*}
\widetilde{p}_{j,k}=\begin{cases}
e^{\frac{(4r-4j-1)^2}{32r-24}\pi i}, & \text{if } 1\leq j \leq 2r-2,
k=2r-2j, \text{or } k=2j-2r+1,\\[5pt]
0, & \text{otherwise}.
\end{cases}
\end{align*}
By means of \eqref{def-W-H}, we have
\begin{align*}
&\widetilde{W}_j\left(-\frac{1}{2\tau}\right)\\[5pt]
&=
\frac{\mathfrak{f}_2\left(-\frac{1}{\tau}\right)}{\eta\left(-\frac{1}{\tau}\right)}g_{2j-1, 4r-3}\left(-\frac{\tau+1}{4\tau}\right)\\[5pt]
&=\frac{1}{\sqrt{2i(4r-3)}}\frac{\mathfrak{f}_1(\tau)}{\eta(\tau)}\\[5pt]
&\quad \sum_{1\leq \ell \leq 4r-4\atop \ell \text{ odd}}
\left(e^{\frac{2-2j-\ell-(2j+\ell-3)^2(3r-2)}{8r-6}\pi i}
+e^{\frac{2-2j+\ell-(2j-\ell-3)^2(3r-2)}{8r-6}\pi i}\right)
g_{\ell, 4r-3}\left(\frac{\tau+1}{4}\right)\\[5pt]
&=\frac{1}{\sqrt{2i(4r-3)}}\frac{\mathfrak{f}_1(\tau)}{\eta(\tau)}\\[5pt]
&\quad\quad \sum_{\ell=1}^{2r-2}\left(e^{-\frac{2j+2\ell-3+(2j+2\ell-4)^2(3r-2)}{8r-6}\pi i}+ e^{-\frac{2j-2\ell-1+(2j-2\ell-2)^2(3r-2)}{8r-6}\pi i}\right)
g_{2\ell-1, 4r-3}\left(\frac{\tau+1}{4}\right)\\[5pt]
&=\frac{1}{\sqrt{2(4r-3)}}e^{-\frac{\pi i}{8}}
\sum_{\ell=1}^{2r-2}\left(e^{-\frac{2j+2\ell-3+(2j+2\ell-4)^2(3r-2)}{8r-6}\pi i}+ e^{-\frac{2j-2\ell-1+(2j-2\ell-2)^2(3r-2)}{8r-6}\pi i}\right)
\widetilde{U}_j(\tau+1),
\end{align*}
where the last identity follows from
\begin{align*}
\widetilde{U}_j(\tau+1)&=\frac{\mathfrak{f}(\tau+1)}{\eta(\tau+1)}
g_{2j-1,4r-3}\left(\frac{\tau+1}{4}\right)\\[5pt]
&=e^{-\frac{\pi i}{8}}
\frac{\mathfrak{f}_1(\tau)}{\eta(\tau)}
g_{2j-1,4r-3}\left(\frac{\tau+1}{4}\right)~(\text{by } \eqref{eta-trans}
\text{ and } \eqref{f-a-trans}).
\end{align*}
So
\begin{align}
\boldsymbol{\widetilde{W}}_{2r-2}\left(-\frac{1}{2\tau}\right)=\widetilde{X}\;  \boldsymbol{\widetilde{U}}_{2r-2}(\tau+1),\label{trans-W-U}
\end{align}
where
$\widetilde{X}=(\widetilde{x}_{j,\ell})_{(2r-2)\times (2r-2)}$ and
\begin{align*}
\widetilde{x}_{j, \ell}=\frac{1}{\sqrt{2(4r-3)}}e^{-\frac{\pi i}{8}}\left(e^{-\frac{2j+2\ell-3+(2j+2\ell-4)^2(3r-2)}{8r-6}\pi i}+ e^{-\frac{2j-2\ell-1+(2j-2\ell-2)^2(3r-2)}{8r-6}\pi i}\right).
\end{align*}
By \eqref{def-U-H} and \eqref{def-U-s-H}, we have
\begin{align*}
\widetilde{U}_j(2\tau+1)=e^{\frac{j^2-j-r+1}{8r-6}\pi i}\widetilde{U}_j^*(2\tau),
\end{align*}
and so
\begin{align*}
\widetilde{U}_j(\tau+1)=e^{\frac{j^2-j-r+1}{8r-6}\pi i}\widetilde{U}_j^*(\tau).
\end{align*}
Thus,
\begin{align}
\boldsymbol{\widetilde{U}}_{2r-2}(\tau+1)=\widetilde{\Lambda}\; \boldsymbol{\widetilde{U}}_{2r-2}^*(\tau),\label{trans-U-Us}
\end{align}
where $\widetilde{\Lambda}=(\widetilde{\lambda}_{j,k})_{(2r-2)\times (2r-2)}$,
and
\begin{align*}
\widetilde{\lambda}_{j,k}=\begin{cases}
e^{\frac{j^2-j-r+1}{8r-6}\pi i}, &\text{if } j=k,\\
0, &\text{otherwise}.
\end{cases}
\end{align*}
Therefore, from \eqref{trans-Vs-W}, \eqref{trans-W-U} and \eqref{trans-U-Us},
we have
\begin{align*}
\boldsymbol{\widetilde{V}}_{2r-2}^*\left(-\frac{1}{2\tau}\right)=\widetilde{P}\widetilde{X}\widetilde{\Lambda}\; \boldsymbol{\widetilde{U}}_{2r-2}^*(\tau).
\end{align*}
For $1\leq j\leq r-1$, the $(j,k)$-entry of $\widetilde{P}\widetilde{X}\widetilde{\Lambda}$ is equal to
\begin{align*}
&\frac{1}{\sqrt{2(4r-3)}}e^{-\frac{\pi i}{8}+\frac{(4r-4j-1)^2}{32r-24}\pi i+\frac{k^2-k-r+1}{8r-6}\pi i}
\left(e^{-\frac{2(2r-2j)+2k-3+(2(2r-2j)+2k-4)^2(3r-2)}{8r-6}\pi i}\right.\\[5pt]
&\quad\left.+ e^{-\frac{2(2r-2j)-2k-1+(2(2r-2j)-2k-2)^2(3r-2)}{8r-6}\pi i}\right)\\[5pt]
&=\sqrt{\frac{2}{4r-3}}\cos\frac{(2j-1)(2k-1)}{2(4r-3)}\pi,
\end{align*}
and
for $r\leq j \leq 2r-2$, the $(j,k)$-entry of $\widetilde{P}\widetilde{X}\widetilde{\Lambda}$ is equal to
\begin{align*}
&\frac{1}{\sqrt{2(4r-3)}}e^{-\frac{\pi i}{8}+\frac{(4r-4j-1)^2}{32r-24}\pi i+\frac{k^2-k-r+1}{8r-6}\pi i}
\left(e^{-\frac{2(2j-2r+1)+2k-3+(2(2j-2r+1)+2k-4)^2(3r-2)}{8r-6}\pi i}\right.\\[5pt]
&\quad\left.+ e^{-\frac{2(2j-2r+1)-2k-1+(2(2j-2r+1)-2k-2)^2(3r-2)}{8r-6}\pi i}\right)\\[5pt]
&=\sqrt{\frac{2}{4r-3}}\cos\frac{(2j-1)(2k-1)}{2(4r-3)}\pi,
\end{align*}
so $\widetilde{P}\widetilde{X}\widetilde{\Lambda}=\widetilde{S}$.
Thus, \eqref{trans-V-s-h} holds.
\end{proof}

\begin{thm}\label{thm-trans-transl-H}
For any $r\geq 2$,
\begin{align}\label{tran-tansl-H}
\boldsymbol{h}_{2r-2}(\tau+4)=\widetilde{T}\, \boldsymbol{h}_{2r-2}(\tau),\quad
\boldsymbol{h}_{2r-2}^\vee(\tau+1)=\widetilde{T}^{\vee}\, \boldsymbol{h}_{2r-2}^\vee(\tau),
\end{align}
where $\widetilde{T}=(\widetilde{t}_{j,k})_{(2r-2)\times (2r-2)}$, $\widetilde{T}^{\vee}=(\tilde{t}^{\vee}_{j,k})_{(2r-2)\times (2r-2)}$,
\begin{align*}
\widetilde{t}_{j,k}=\begin{cases}
e^{\frac{(4j-4r+1)^2}{4r-3}\pi i}, &\text{if } j=k,\\
0, &\text{otherwise},
\end{cases}\quad
\tilde{t}^{\vee}_{j,k}=\begin{cases}
e^{\frac{j^2-j-r+1}{8r-6}\pi i}, &\text{if } j=k,\\
0, &\text{otherwise}.
\end{cases}
\end{align*}
\end{thm}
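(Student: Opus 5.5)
The plan is to read both identities off the $q$-expansions in \eqref{hv-def} and \eqref{h-def}, one component at a time. Both matrices $\widetilde{T}$ and $\widetilde{T}^{\vee}$ are diagonal, so it suffices to show that each $h_j$ and each $h^{\vee}_j$ is an eigenfunction of the relevant translation, with the stated eigenvalue. Throughout, $q=e^{2\pi i\tau}$, so $q^{a}$ becomes $e^{2\pi i a s}q^{a}$ under $\tau\mapsto\tau+s$.

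\textbf{First identity.} In \eqref{hv-def}, $h_j(\tau)$ is the prefactor $q^{\frac{(4j-4r+1)^2}{32r-24}}$ times a bracket of two terms.
\begin{itemize}
\item The second term is a power series in integral powers of $q$, so it is invariant under $\tau\mapsto\tau+4$.
\item The first term is $q^{\frac{4j-2r-1}{4}}$ times a power series in integral powers of $q$. Under $\tau\mapsto\tau+4$ it picks up $e^{2\pi i(4j-2r-1)}=1$, since $4\cdot\frac{4j-2r-1}{4}\in\mathbb{Z}$.
\end{itemize}
So the bracket is invariant. The prefactor contributes
\[
e^{2\pi i\cdot 4\cdot\frac{(4j-4r+1)^2}{32r-24}}=e^{\frac{(4j-4r+1)^2}{4r-3}\pi i},
\]
which is exactly $\widetilde{t}_{j,j}$. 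This holds for every $1\le j\le 2r-2$, giving $\boldsymbol{h}_{2r-2}(\tau+4)=\widetilde{T}\,\boldsymbol{h}_{2r-2}(\tau)$.

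\textbf{Second identity.} Since \eqref{h-def} defines $h^{\vee}_j$ through its value at $2\tau$, write
\[
h^{\vee}_j(2\tau+1)=h^{\vee}_j\bigl(2(\tau+\tfrac12)\bigr).
\]
The effect of $\tau\mapsto\tau+\frac12$ on the right side of \eqref{h-def} is $q\mapsto -q$ on the integral powers, together with $q^{a}\mapsto e^{\pi i a}q^{a}$ on the fractional prefactor.

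The key observation is that $q\mapsto -q$ swaps the two terms of the bracket.
\begin{itemize}
\item $(q^{2r-j-1},q^{2r+j-2},q^{4r-3};q^{4r-3})_\infty$ becomes $((-q)^{2r-j-1},(-q)^{2r+j-2},(-q)^{4r-3};(-q)^{4r-3})_\infty$. Since $4r-3$ is odd, $(-q)^{4r-3}=-q^{4r-3}$.
\item $(q,q^3,q^4;q^4)_\infty$ becomes $(-q,-q^3,q^4;q^4)_\infty$.
\item Conversely, the second term maps back to the first, using $(-(-q))=q$ and $-(-q)^{4r-3}=q^{4r-3}$.
\end{itemize}
Hence the bracket is invariant. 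The prefactor $q^{\frac{j^2-j-r+1}{8r-6}}$ contributes $e^{\frac{j^2-j-r+1}{8r-6}\pi i}=\tilde{t}^{\vee}_{j,j}$. This gives
\[
h^{\vee}_j(2\tau+1)=\tilde t^{\vee}_{j,j}\,h^{\vee}_j(2\tau),
\]
and replacing $2\tau$ by $\tau$ yields $\boldsymbol{h}_{2r-2}^{\vee}(\tau+1)=\widetilde{T}^{\vee}\boldsymbol{h}_{2r-2}^{\vee}(\tau)$.

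\textbf{Expected obstacle.} The only delicate point is the sign bookkeeping in the swap for the second identity, namely that each pochhammer symbol maps exactly onto its partner. This relies on $4r-3$ being odd, in the same way the analogous computation for $U_j$ and $U_j^*$ (and for $\widetilde{U}_j$, $\widetilde{U}_j^*$) was carried out earlier. It also relies on the correct branch of $q^{a}$ being $e^{2\pi i a\tau}$, which fixes the phase $e^{\pi i a}$.
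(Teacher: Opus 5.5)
Your proof is correct and is the same argument as the paper's, which simply asserts as obvious that each $h_j$ picks up the factor $e^{\frac{(4j-4r+1)^2}{4r-3}\pi i}$ under $\tau\mapsto\tau+4$, and each $h^{\vee}_j(2\tau)$ picks up $e^{\frac{j^2-j-r+1}{8r-6}\pi i}$ under $\tau\mapsto\tau+\frac12$. You supply the details the paper omits: $4\cdot\frac{4j-2r-1}{4}$ is an integer, and $q\mapsto -q$ swaps the two terms of the bracket in \eqref{h-def}, which uses that $4r-3$ is odd.
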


\begin{proof}
 It is obvious that for  $1\leq j\leq 2r-2$,
\begin{align*}
h_j(\tau+4)&=e^{\frac{(4j-4r+1)^2}{4r-3}\pi i}h_j(\tau),\\[5pt]
h_j^\vee(2\tau+1)&=e^{\frac{j^2-j-r+1}{8r-6}\pi i}h_j^\vee(2\tau).
\end{align*}
The the proof is complete.
\end{proof}

\begin{proof}[Proof of  Theorem \ref{vmf-H-AF}]
Let
\begin{align*}
\widetilde{M}=\begin{pmatrix}
\boldsymbol{O} & 2 \widetilde{S}\\
\widetilde{S} & \boldsymbol{O}
\end{pmatrix},\quad
\widetilde{Q}=\begin{pmatrix}
    \widetilde{T}& \boldsymbol{O}\\
    \boldsymbol{O} &(\widetilde{T}^{\vee})^4
\end{pmatrix},
\end{align*}
where $\widetilde{S}$ is given in
Theorem \ref{vmf-H},
$\widetilde{T}$ and $\widetilde{T}^{\vee}$ are defined in
Theorem \ref{thm-trans-transl-H}.
By Theorem \ref{vmf-H} and Theorem \ref{thm-trans-transl-H}, we have
\begin{align*}
\boldsymbol{H}_{4r-4}(\tau+4)&=\widetilde{Q}\ \boldsymbol{H}_{4r-4}(\tau),\\[5pt]
\boldsymbol{H}_{4r-4}\left(\frac{\tau}{8\tau+1}\right)&
=\boldsymbol{H}_{4r-4}\left(-\frac{1}{-2(4+\frac{1}{2\tau})}\right)
=\widetilde{M}\; \boldsymbol{H}_{4r-4}\left(-4-\frac{1}{2\tau}\right)\\[5pt]
&=\widetilde{M}\widetilde{Q}^{-1}\; \boldsymbol{H}_{4r-4}\left(-\frac{1}{2\tau}\right)
=\widetilde{M}\widetilde{Q}^{-1}\widetilde{M}\; \boldsymbol{H}_{4r-4}(\tau).
\end{align*}
Let $\Gamma'$ be the group generated by $\gamma_1=\left(1~~4\atop 0~~1\right)$
and $\gamma_2=\left(1~~0\atop 8~~1\right)$ and
the multiplicative function $\widetilde{\rho}\colon \Gamma'\rightarrow GL_{4r-4}(\mathbb{C})$
satisfy that $\widetilde{\rho}(\gamma_1)=\widetilde{Q}$
and $\widetilde{\rho}(\gamma_2)=\widetilde{M}\widetilde{Q}^{-1}\widetilde{M}$.
Then for any $\gamma\in\Gamma'$,
\begin{align*}
\boldsymbol{H}_{4r-4}(\gamma\tau)=\widetilde{\rho}(\gamma) \boldsymbol{H}_{4r-4}(\tau),
\end{align*}
which
implies that $\boldsymbol{H}_{4r-4}(\tau)$ is a vector-valued automorphic form
of the multiplier $\widetilde{\rho}$
for $\Gamma'$.
\end{proof}

\vskip 0.2cm
\noindent{\bf Acknowledgment.} We thank Shashank Kanade for bringing references \cite{Kanade-Russell-2019} and \cite{Kursungoz-2019} to our intention. This work
was supported by the National Natural Science Foundation of China and the Hebei Natural Science Foundation (A2024205012).

\end{document}